\newtheorem{theorem}{Theorem}
\newtheorem*{theorem*}{Theorem}
\newtheorem*{question*}{Question}
\newtheorem*{conjecture*}{Conjecture}
\newtheorem*{convention*}{Convention}
\newtheorem*{assumption*}{Assumption}
\newtheorem{corollary}[theorem]{Corollary}
\newtheorem*{corollary*}{Corollary}
\newtheorem*{remark*}{Remark}
\newtheorem{proposition}[theorem]{Proposition}
\newtheorem*{proposition*}{Proposition}
\newtheorem{lemma}[theorem]{Lemma}
\newtheorem*{lemma*}{Lemma}
\newtheorem{fact}[theorem]{Fact}
\newtheorem*{fact*}{Fact}
\newtheorem{theoremA}{Theorem}
\theoremstyle{definition}
\newtheorem*{definition*}{Definition}
\newtheorem*{example*}{Example}
\numberwithin{theorem}{section}
\numberwithin{equation}{section}
\DeclareMathOperator{\an}{an}
\DeclareMathOperator{\nl}{nl}
\DeclareMathOperator{\nt}{nt}
\DeclareMathOperator{\res}{res}
\DeclareMathOperator{\rc}{rc}
\DeclareMathOperator{\Th}{Th}
\newcommand{\N}{\mathbb{N}}
\newcommand{\Q}{\mathbb{Q}}
\newcommand{\R}{\mathbb{R}}
\newcommand{\T}{\mathbb{T}}
\newcommand{\Z}{\mathbb{Z}}
\newcommand{\cA}{\mathcal A}
\newcommand{\cL}{\mathcal L}
\newcommand{\cO}{\mathcal O}
\newcommand{\cR}{\mathcal R}
\newcommand{\upl}{\uplambda}
\newcommand{\Upl}{\Uplambda}
\newcommand{\upo}{\upomega}
\newcommand{\Upo}{\Upomega}
\newcommand{\inv}{^{-1}}
\newcommand{\No}{\mathbf{No}}
\newcommand{\pow}{_{\operatorname{pow}}}
\newcommand{\Cpow}{_{C\text{-}\!\operatorname{pow}}}
\newcommand{\Rpow}{_{\mathbb{R}\text{-}\!\operatorname{pow}}}
\newcommand{\dsmall}{_{\operatorname{pow},\operatorname{sm}}}
\newcommand{\dlarge}{_{\operatorname{pow},\operatorname{lg}}}
\newcommand{\wpow}{_{\upo,\operatorname{pow}}}
\newcommand{\Hp}{H\pow}
\renewcommand{\preceq}{\preccurlyeq}
\renewcommand{\succeq}{\succcurlyeq}
\renewcommand{\geq}{\geqslant}
\renewcommand{\leq}{\leqslant}
\renewcommand{\epsilon}{\varepsilon}
\renewcommand{\d}{\operatorname{d}}
\renewcommand{\rc}{\operatorname{rc}}
\DeclareFontFamily{OMS}{smallo}{}
\DeclareFontShape{OMS}{smallo}{m}{n}{<->s*[.65]cmsy10}{}
\DeclareSymbolFont{smallo@m}{OMS}{smallo}{m}{n}
\DeclareMathSymbol{\smallo}{\mathord}{smallo@m}{79}
\DeclareFontFamily{U}{fsy}{}
\DeclareFontShape{U}{fsy}{m}{n}{<->s*[.9]psyr}{}
\DeclareSymbolFont{der@m}{U}{fsy}{m}{n}
\DeclareMathSymbol{\der}{\mathord}{der@m}{182}
\DeclareSymbolFont{der@m}{U}{fsy}{m}{n}
\DeclareMathSymbol{\derdelta}{\mathord}{der@m}{100}
\author{Elliot Kaplan}
\email{ekaplan@mpim-bonn.mpg.de}
\address{Max Planck Institute for Mathematics, Bonn, Germany}
\subjclass[2020]{Primary 03C64, 03C10. Secondary 34E05, 12H05, 12J10}
\title{Constant power maps on Hardy fields and transseries}
\date{\today}
\begin{document}

\maketitle
\begin{abstract}
Let $\T$ be the differential field of logarithmic-exponential transseries. We consider the expansion of $\T$ by the binary map that sends a real number $r$ and a positive transseries $f$ to the transseries $f^r$. Building on recent work of Aschenbrenner, van den Dries, and van der Hoeven, we show that this expansion is model complete, and we give an axiomatization of the theory of this expansion that is effective relative to the theory of the real exponential field. We show that maximal Hardy fields, equipped with the same map $(f,r)\mapsto f^r$, enjoy the same theory as $\T$, and we use this to establish a transfer theorem between Hardy fields and transseries.
\end{abstract}
\setcounter{tocdepth}{1}
\tableofcontents

%----------------------------------------------------------------------------------%
\section*{Introduction}
%----------------------------------------------------------------------------------%
A \textbf{Hardy field} $H$ is a field of germs of real-valued unary functions at $+\infty$ that is closed under differentiation:\ if the germ of $f$ belongs to $H$, then $f$ must be eventually differentiable and the germ of $f'$ must also belong to $H$. These axioms rule out any sort of oscillatory behaviour; consequently, any Hardy field is a \emph{totally ordered} differential field.

The quintessential example of a Hardy field is $H^{LE}$, the field of germs of Hardy's \emph{logarithmico-exponential functions}. These are the functions $f\colon \R\to \R$ built from the identity function $x$ by applying exponentials, logarithms, and algebraic operations. This example predates and inspires the general definition, due to Bourbaki. The study of Hardy fields and their Hardy field extensions was greatly advanced in the last quarter of the 20th century by Rosenlicht, Boshernitzan, and others. In the 1990s, Hardy fields became intimately linked with o-minimality, stemming from the observation that an expansion $\cR$ of the real field is o-minimal if and only if the collection $H(\cR)$ of germs of unary $\cR$-definable functions at $+\infty$ forms a Hardy field. 

The cardinality of any Hardy field is at most that of the continuum, so any Hardy field is contained in a \emph{maximal} Hardy field by Zorn's lemma. 
The study of Hardy fields has taken a great leap forward with the recent work of Aschenbrenner, van den Dries, and van der Hoeven, who showed that all maximal Hardy fields have the same first-order theory (as differential fields)~\cite{ADH24}. When considered as \emph{valued} differential fields with (existentially definable) valuation ring consisting of elements bounded by the germ of a constant function, the theory of maximal Hardy fields is even model complete. This theory can be effectively axiomatized; hence, it is decidable.

This theory has another model of interest---the differential field $\T$ of logarithmic-exponential transseries. This field of generalized power series over $\R$ was introduced independently by \'{E}calle~\cite{Ec92} in his solution to the Dulac conjecture and by Dahn and G\"{o}ring~\cite{DG87} in their work on Tarski’s problem on real exponentiation. The elementary theory of $\T$ (and the elementary theory of maximal Hardy fields) is axiomatized by one of two completions of $T^{\nl}$---the theory of $\upo$-free newtonian Liouville closed $H$-fields. Introduced by van den Dries and Aschenbrenner~\cite{AD02}, $H$-fields form a class of ordered valued differential fields that includes $\T$, all differential subfields of $\T$ containing $\R$, and any Hardy field containing $\R$. In their book~\cite{ADH17}, Aschenbrenner, van den Dries, and van der Hoeven proved that the theory $T^{\nl}$ is the model companion of the theory of $H$-fields, gave a quantifier elimination result for this theory in an extended language, and established various model-theoretic properties of this theory. These properties include stable embeddedness of the constant field (as a real closed ordered field) and o-minimality at $+\infty$. Note that for $\T$ and any Hardy field containing $\R$, the constant field is exactly the field $\R$.

Let $H$ be a Hardy field and suppose that $H$ admits an ordered differential field embedding into $\T$. Examples abound---$H^{LE}$ admits such an embedding, as does $H(\R_{\an,\exp})$~\cite{DMM97} and, more generally, $H(\R_{\cA,\exp})$ for suitable generalized quasianalytic classes $\cA$~\cite{RSS24}. Extending the aforementioned results, 
Aschenbrenner, van den Dries, and van der Hoeven show that any maximal Hardy field extension $M \supseteq H$ has the same first-order theory as $\T$ over $H$ (that is, as a differential field with constant symbols for elements of $H$, where $H$ is identified with its image in $\T$). Thus, a system of algebraic differential equations, inequalities, and valuation inequalities over $H$ has a solution in $M$ if and only if it has one in $\T$~\cite[Corollary 6]{ADH24}.

For the remainder of this introduction, we fix a maximal Hardy field $M$. Boshernitzan showed that each element in $M$ has an exponential in $M$, and each positive element has a logarithm in $M$~\cite{Bo76}. Thus, for $f \in M^>$ and $r \in \R$, the germ $f^r \coloneqq \exp(r\log f)$ belongs to $M^>$. We call the binary map $(f,r)\mapsto f^r\colon M^>\times \R \to M^>$ the \emph{constant power map} on $M$, and we let $M\pow$ denote the expansion of the valued differential field $M$ by this map. The transseries also admit a natural exponential, so we similarly obtain a constant power map $(f,r)\mapsto f^r\colon \T^>\times \R\to \T^>$. We let $\T\pow$ denote the expansion of the valued differential field $\T$ by this map.

\begin{theoremA}[Corollaries~\ref{cor:TTnlsmall} and~\ref{cor:HardyTnlsmall}]\label{thm:A}
Both $M\pow$ and $\T\pow$ have the same first-order theory. This theory is model complete. 
\end{theoremA}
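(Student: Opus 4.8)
The plan is to derive Theorem~\ref{thm:A} from the known results of Aschenbrenner, van den Dries, and van der Hoeven by adjoining the constant power map as extra structure and showing it is "tame" over the underlying theory $T^{\nl}$. First I would set up the language: let $\cL^{\pow}$ be the language of valued differential fields augmented by a sort for the constants with the real ordered field structure (so that $\R$, being stably embedded and o-minimal at $+\infty$ in models of $T^{\nl}$, carries exactly its own first-order structure), together with a function symbol for $(f,r)\mapsto f^r$ defined on $H^>\times C$. The first step is to observe that $f^r$ is \emph{definable} in the expansion by $\exp$ and $\log$ via $f^r=\exp(r\log f)$, so $M^{\pow}$ and $\T^{\pow}$ are each interpretable in the exponential-logarithmic expansions studied in \cite{ADH24}, \cite{ADH17}; this already shows they have well-defined complete theories and reduces the problem to understanding the power map rather than full exponentiation. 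The key algebraic input is that for positive $f,g$ and $r,s\in C$ one has the identities $f^{r+s}=f^rf^s$, $(fg)^r=f^rg^r$, $(f^r)^s=f^{rs}$, $(f^r)'=rf^{r-1}f'$, $\log(f^r)=r\log f$ when $\log f$ is defined, and $f^r>0$, $f^1=f$, $f^0=1$; these must be recorded as axioms, and one must also axiomatize the interaction with the valuation, namely $v(f^r)=rv(f)$ (so the value group becomes a $C$-vector space) and the behaviour on the residue field/constants.

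The heart of the argument, and the step I expect to be the main obstacle, is a \emph{relative quantifier elimination / model completeness} statement: I would show that every $\cL^{\pow}$-formula is, modulo the new theory, equivalent to an existential $\cL^{\pow}$-formula, by reducing power terms to a normal form and then invoking the model completeness of $T^{\nl}$ (as a valued differential field) in the background. Concretely, given a tuple from a model $H^{\pow}$, one should be able to absorb all occurrences of the power map into finitely many auxiliary constants: the subfield generated by a finite tuple together with all its $C$-powers is, as a differential field, of a controlled shape because $f^r$ depends on $r$ only through the $\Q$-linear (indeed $C$-linear) structure on the "logarithmic part" of $f$. The cleanest route is an embedding test: given $H_1^{\pow}\subseteq H_2^{\pow}$ with $H_1^{\pow}$ $|H_1|^+$-saturated and $H_2^{\pow}$ any model, embed $H_2$ into $H_1$ over a common substructure as valued differential fields using the model completeness from \cite{ADH24}/\cite{ADH17}, and then check that such an embedding automatically respects the power map. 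This last check reduces, after passing to the $\exp$–$\log$ expansion, to the fact that an embedding of $\upo$-free newtonian Liouville closed $H$-fields respecting exponentiation respects $f^r=\exp(r\log f)$ — but one must be careful that the embedding can be chosen to respect the extra constants $r$, which is where the stable embeddedness and purity of the constant field $C$ (as a real closed field, with no induced structure beyond the ordered field) is used.

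Having established model completeness, completeness of the common theory follows by exhibiting a prime-ish model or by a back-and-forth over the prime model of $T^{\nl}$: since $\T$ embeds into $M$ respecting all the relevant structure (by \cite[Corollary 6]{ADH24}, suitably upgraded to respect $\exp$, $\log$, hence $f^r$), and both are models of the same model-complete theory, they are elementarily equivalent — and in fact one gets the stronger transfer statement over any common Hardy subfield. For effectivity relative to $\Th(\R_{\exp})$ as claimed in the abstract (though not needed for Theorem~\ref{thm:A} itself), one notes that the only non-effective ingredient is the theory of the constant field with its induced structure, which by stable embeddedness and o-minimality is just $\Th(\R_{\exp})$; all the $H$-field axioms and the power-map axioms listed above are recursive. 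I would organize the write-up so that Corollaries~\ref{cor:TTnlsmall} and~\ref{cor:HardyTnlsmall} each drop out by specializing the general model-completeness-and-transfer theorem to $H=\T$ and to $H=M$ respectively.
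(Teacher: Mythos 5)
Your central step --- embed $H_2$ into $H_1$ as valued differential fields using the model completeness of $T^{\nl}$ from \cite{ADH17}, ``and then check that such an embedding automatically respects the power map'' --- fails. The constant power map is \emph{not} definable in the valued differential field $\T$, and $H$-field embeddings do not automatically preserve it: Corollary~\ref{cor:nondef}(1) of the paper exhibits, for any infinite or infinitesimal $f\in\T^>$ and irrational $r$, a strongly $\R$-linear $H$-field automorphism $\sigma$ of $\T$ fixing the constants with $\sigma(f^r)\neq\sigma(f)^r$. Your fallback --- pass to the $\exp$--$\log$ expansion and use that embeddings respecting exponentiation respect $f^r=\exp(r\log f)$ --- begs the question, since producing such embeddings would require model completeness of $\T_{\exp}$, which the paper explicitly notes is open. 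Relatedly, your appeal to ``purity of the constant field $C$ (as a real closed field, with no induced structure beyond the ordered field)'' is false in the expanded language: $r\mapsto 2^r$ is definable in $\T\pow$, the constants are stably embedded only as a real \emph{exponential} field (Corollary~\ref{cor:stablyembedded}), and the axiom that the induced exponential on $C$ models $\Th(\R,x\mapsto 2^x)$ is an essential (and the only non-effective) part of the axiomatization.

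What the paper actually does, and what your outline is missing, is to rebuild the extension/embedding machinery of \cite{ADH17} in the category of $\Hp$-fields rather than trying to transfer it wholesale. The technical core is Proposition~\ref{prop:mainprop}: given an $H$-field extension $M$ of an $\Hp$-field $K$ satisfying a $C_M$-linear independence condition on logarithmic derivatives (condition (ii)), the power closure $M\pow$ is unique, carries a constant power map extending that of $K$, and has a universal property for $\Hp$-field embeddings. This linear independence hypothesis is exactly what rules out the ambiguity exploited by the automorphisms of Corollary~\ref{cor:nondef}, and it must be verified separately for each type of extension (newtonization, Liouville closure, constant field extension, the differentially transcendental extensions of \cite[Section 16.1]{ADH17}, etc.) before the embedding test for model completeness can be run. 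Without this ingredient --- or some substitute establishing that the relevant $H$-field embeddings \emph{can be chosen} to respect the power map --- your argument does not go through.
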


Of course, $\T\pow$ is a reduct of $\T_{\exp}$, the expansion of $\T$ by its exponential function. We show that $\T\pow$ is a proper reduct in Corollary~\ref{cor:nondef} below. Model completeness for $\T_{\exp}$ remains open.

To prove Theorem~\ref{thm:A}, we consider $H$-fields expanded by a map that behaves like the constant power map above. We call these structures \emph{$\Hp$-fields}, and we show that their theory admits a model companion (under some additional assumptions on the field of constants). Both $\T\pow$ and $M\pow$ are models of this model companion; indeed, this model companion extends the theory of $\Hp$-fields by the same three axioms:\ $\upo$-freeness, newtonianity, and Liouville closedness. Using this, we can strengthen the first part of Theorem~\ref{thm:A}. 

\begin{theoremA}[Theorem~\ref{thm:transfer}]\label{thm:B}
Let $H\subseteq M\pow$ be a Hardy field closed under the constant power map on $M\pow$, and let $\imath\colon H\to \T\pow$ be an ordered differential field embedding that commutes with this map. Then $M\pow$ and $\T\pow$ have the same first-order theory as differential fields with constant power maps and parameters from $H$.
\end{theoremA}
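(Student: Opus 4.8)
The plan is to deduce Theorem~\ref{thm:B} from the completeness-and-model-completeness statement of Theorem~\ref{thm:A} together with the machinery of $\Hp$-fields and their model companion. The key observation is that once we know the model companion $T$ of the theory of $\Hp$-fields (subject to the stated constant-field hypotheses) is obtained from the base theory by adjoining the three axioms of $\upo$-freeness, newtonianity, and Liouville closedness, then $T$ is \emph{model complete}; and since both $M\pow$ and $\T\pow$ are models of $T$, to conclude that they have the same theory over $H$ it suffices to exhibit a common model of $T$ into which both embed over $H$ (or, more precisely, to run an appropriate back-and-forth / embedding argument relative to $H$). Concretely, $H$ is (the underlying $\Hp$-substructure attached to) a common substructure of $M\pow$ and $\T\pow$ via $\imath$, so the first step is to check that $H$, with the restricted constant power map, is genuinely an $\Hp$-field and that $\imath$ is an embedding of $\Hp$-fields. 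This is essentially bookkeeping: the constant power map axioms are universal, so they descend to the substructure $H$, and $\imath$ commuting with the map plus being an ordered differential field embedding is exactly what it means to be an $\Hp$-embedding.

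Next I would invoke model completeness of $T$ in the following standard way. Since $M\pow \models T$ and $\T\pow \models T$ and both contain (an isomorphic copy of) the $\Hp$-field $H$, take a $|H|^+$-saturated elementary extension $M^* \succeq \T\pow$. Because $H$ embeds into $M\pow$, and $M\pow$ is a model of $T$ while $M^*$ is an $|H|^+$-saturated model of $T$ containing $H$, one can try to embed $M\pow$ into $M^*$ over $H$ using that $T$ is model complete together with a suitable amalgamation/existential-closedness property of models of $T$ over common $\Hp$-subfields. The cleanest route, if available from the earlier sections, is: models of $T$ are exactly the existentially closed $\Hp$-fields (that is what "model companion" gives), so any $\Hp$-field embeds into a model of $T$; apply this to the pushout of $M\pow$ and $\T\pow$ over $H$ in the category of $\Hp$-fields to get a model $N \models T$ receiving both $M\pow$ and $\T\pow$ over $H$. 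Then model completeness of $T$ makes both $M\pow \preceq N$ and $\T\pow \preceq N$ as $\cL_{\Hp}(H)$-structures (after adding constants for $H$), whence $M\pow \equiv_H \T\pow$.

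The main obstacle I anticipate is justifying the amalgamation step over $H$: one needs that $M\pow$ and $\T\pow$, as $\Hp$-fields sharing the $\Hp$-subfield $H$, actually admit a common $\Hp$-field extension. For ordered differential fields this is delicate (the orderings and the differential structure must be made compatible on a compositum), and the presence of the constant power map adds the constraint that powers be respected. However, since $\imath$ is assumed to commute with the constant power maps, the map on $H$ is "the same" on both sides, so the compositum inherits a well-defined partial constant power map; the remaining issue is to extend the ordering and the derivation compatibly, which should follow from the corresponding amalgamation result for $H$-fields (or ordered valued differential fields) proved or cited in the $\Hp$-field sections, upgraded to track the power operation. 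A subtlety here is the constant-field hypothesis needed for the model companion to exist: one must check that $\R$ — the common constant field of $M\pow$, $\T\pow$, and $H$ — satisfies those hypotheses, but that is immediate since the constant field of any Hardy field containing $\R$ and of $\T$ is exactly $\R$, as noted in the introduction.

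Alternatively, and perhaps more in keeping with how the earlier corollaries are set up, one can avoid explicit amalgamation by appealing directly to a relative-completeness statement: if $T$ is model complete and $H$ is an $\Hp$-subfield of some fixed model, then the theory of that model over $H$ is determined by the isomorphism type of $H$ together with the fact that the ambient model is existentially closed over $H$ among $\Hp$-fields — i.e., $T \cup \mathrm{Diag}(H)$ is complete. This is the form I would actually prove: show $T \cup \mathrm{Diag}(H)$ is complete whenever $H$ is an $\Hp$-field (of the relevant type), using model completeness of $T$ plus the fact that $H$ embeds into a single model of $T$ that is "prime" or at least embeds into every model of $T$ extending $H$ — and then Theorem~\ref{thm:B} is the special case where $H$ is a Hardy field and the two models are $M\pow$ and $\T\pow$. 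The one genuinely new ingredient over Theorem~\ref{thm:A} is thus the relativization to arbitrary $\Hp$-parameter sets $H$, and the hard part is exactly the embedding lemma that any $\Hp$-field of the right kind sits inside a model of $T$ in a way compatible with a prescribed embedding of a common $\Hp$-subfield — which I expect to follow from the construction of the model companion in the body of the paper, reading off that its proof is uniform in (i.e. can be carried out over) any $\Hp$-subfield.
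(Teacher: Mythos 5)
There is a genuine gap, and it sits exactly at the step you flag as the ``main obstacle'' and then defer: the amalgamation of $M\pow$ and $\T\pow$ over an arbitrary $\Hp$-field $H$, equivalently the completeness of $T^{\nl}\pow\cup\operatorname{Diag}(H)$. No such general result is available, and it is false for arbitrary $\Hp$-fields: Theorem~\ref{thm:mainthm} makes $T^{\nl}\pow$ the model \emph{completion} only of $T^{\upo}\pow$ (the $\upo$-free $\Hp$-fields), while over $T\pow$ it is merely a model \emph{companion}, which gives no completeness over a prescribed substructure. The obstruction is real: over a non-$\upo$-free base, two newtonian Liouville closed extensions can make incompatible choices (e.g.\ about how gaps arising over $H$ are filled, cf.\ Lemmas~\ref{lem:gapgoesup} and~\ref{lem:gapgoesdown}, or about $\upo$-type behaviour), and these choices are first-order over $H$. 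So ``the pushout of $M\pow$ and $\T\pow$ over $H$ in the category of $\Hp$-fields'' need not exist, and your alternative route of proving $T\cup\operatorname{Diag}(H)$ complete for $H$ ``of the relevant type'' begs the question of identifying that type --- which is the actual content of the theorem's proof. The paper itself points this out: the argument ``relies on the fact that any $H$-subfield of $\T$ that properly contains $\R$ is either grounded or $\upo$-free, a dichotomy that does not apply to all Hardy fields.''

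What the paper does instead is a case analysis exploiting the hypothesis that $H$ embeds into $\T\pow$. If $H$ is $\upo$-free, the model-completion property over $T^{\upo}\pow$ applies directly and gives $M\pow\equiv_H\T\pow$. If $H$ is grounded, Lemma~\ref{lem:groundedtoomegafree} replaces $H$ by the $\upo$-free $H\wpow$, which embeds compatibly into \emph{every} Liouville closed $\Hp$-extension, hence into both $M\pow$ and $\T\pow$, reducing to the first case. If $H=\R$, one uses Corollary~\ref{cor:bigembedding} (both structures have small derivation) to embed $M\pow$ into a saturated elementary extension of $\T\pow$ over $\R$, and model completeness makes this embedding elementary. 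The remaining case, $H\supsetneq\R$ ungrounded, is shown to collapse into the $\upo$-free case: the iterated logarithms $\log_n g$ of any positive infinite $g\in\imath(H)$ are coinitial among the positive infinite elements of $\T$, so $\imath(\Gamma_H^<)$ is cofinal in $\Gamma_{\T}^<$ and Fact~\ref{fact:dalgupofree} yields $\upo$-freeness of $H$. This last step is the genuinely new ingredient that your proposal is missing; without it (or some substitute such as the $\Upl\Upo$-field formalism mentioned at the end of Section~\ref{sec:Hardy}), the argument does not close.
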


Thus, the transfer theorem for systems of algebraic differential equations over $H$ can be extended to also include \emph{signomial} differential equations (equations involving differential polynomials with real exponents). Both $H^{LE}$ and $H(\R_{\cA,\exp})$, with their natural embeddings into $\T$, satisfy the hypotheses on $H$ in the theorem above.

One could, of course, establish a similar transfer theorem by expanding $M$ and $\T$ by unary \emph{real power functions}:\ functions $f\mapsto f^r$ for each $r \in \R$. As ordered fields with these power functions, $M$ and $\T$ form elementary extensions of the real field with power functions as studied by Miller~\cite{Mi94B}. This expansion of the real field is o-minimal, and so the corresponding expansions of the valued differential fields $M$ and $\T$ by real power functions are \emph{$H_T$-fields} as studied in~\cite{Ka23}; see Subsection~\ref{subsec:nonuni}. Though model completeness for the expansions of $M$ and $\T$ by real power functions does not follow from our results, it seems fairly easy (though cumbersome) to establish, modifying some of the embedding lemmas here and using the results in~\cite{ADH17}.

That said, our framework here is more robust, in that it allows for constant powers to be taken \emph{uniformly}. We can use this uniformity to analyze families of signomial ODEs, parametrized by real exponents. As an example of such a family, consider for $\sigma, \lambda, \rho \in \R$ the Emden--Fowler equation
\begin{equation}
\tag{$\operatorname{EF}_{\sigma,\rho,\lambda}$}
\frac{d}{dx}\big(x^\sigma \frac{dy}{dx}\big)+ x^\rho y^\lambda=0. 
\end{equation}
The asymptotic behavior of solutions to this equation for various values of the three real parameters was studied by Bellman~\cite{Be53}. This equation is definable over $H^{LE}$, so Theorem~\ref{thm:B} tells us that ($\operatorname{EF}_{\sigma,\rho,\lambda}$) has a positive Hardy field solution $y(x)$ if and only if it has a positive solution in $\T$. If we are interested in the structure of the set 
\[
\{(\sigma,\rho,\lambda) \in \R^3: \text{($\operatorname{EF}_{\sigma,\rho,\lambda}$) has a positive solution in $\T$}\},
\]
we need to know what structure is induced on the constant field $\R$. We note that $\R$ is no longer stably embedded as a pure field, as the map $r\mapsto 2^r\colon \R\to \R$ is definable in $\T\pow$. This turns out to be the only new structure:

\begin{theoremA}[Corollary~\ref{cor:stablyembedded}]\label{thm:C}
In $\T\pow$, the real numbers are stably embedded as a real exponential field: any definable subset of $\R^n$ is definable in the structure $\R_{\exp}$. 
\end{theoremA}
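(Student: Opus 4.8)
The plan is to deduce Theorem~\ref{thm:C} from the structural results established earlier, in particular the model completeness and axiomatization from Theorem~\ref{thm:A}, together with the quantifier-elimination machinery for the model companion of $\Hp$-fields. The key point is that in an $H$-field, the constant field is stably embedded (as a real closed ordered field) by the results of \cite{ADH17}, and the constant power map on $\T\pow$, when restricted to pairs $(c,r)$ of constants, recovers exactly the real exponential structure, since $c^r = \exp(r\log c)$ and $\exp$, $\log$ restrict to the usual real exponential and logarithm on $\R^>$. So the content is that \emph{no further} structure is induced: a definable subset of $\R^n$ in $\T\pow$ is already definable in $\R_{\exp}$.

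First I would recall (or cite the relevant earlier section) the quantifier-simplification result for the model companion $T\pow^{\nl}$ of $\Hp$-fields: every formula is equivalent to one in which the power map is only applied in a controlled way, and the new structure on the constants is governed by the ``constant part'' of such formulas. Concretely, I expect the embedding lemmas used to prove Theorem~\ref{thm:A} to yield the following: given a small subset of $\T\pow$, its definable closure intersected with $\R$ is contained in the $\R_{\exp}$-definable closure of the constant parameters involved, and more generally types over $\R$ in $\T\pow$ are determined by the $\R_{\exp}$-type together with the $H$-field type. The standard route is: let $X\subseteq \R^n$ be definable in $\T\pow$ with parameters $\bar a$ from $\T\pow$; one shows $X$ is definable with parameters from $\R$ by a stable-embeddedness argument (automorphisms of a monster model over $\R$ fixing $\bar a$'s type suffice), and then one shows $X$ is $\R_{\exp}$-definable by analyzing which subsets of $\R^n$ are $\T\pow$-definable with parameters from $\R$.

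The core step is the second one: characterizing $\T\pow$-definable subsets of $\R^n$ with constant parameters. Here I would use model completeness of $\T\pow$ (Theorem~\ref{thm:A}) to reduce to existential formulas $\exists \bar y\,\varphi(\bar x,\bar y,\bar c)$ with $\bar c\in\R$, and then argue that the projection to $\R^n$ can be computed inside an o-minimal-at-$+\infty$ analysis: the ``constant points'' of a definable set in an $H$-field with constant power map are governed by the real exponential field because exponentiation and logarithm restrict correctly to $\R^>$ and the only way the power map produces a constant from constants is via $c^r=\exp(r\log c)$. This should mirror the proof that $\R$ is stably embedded as a real closed field in $\T$ from \cite{ADH17}, now bookkeeping the extra unary function $\exp\restriction\R^>$ (equivalently, $2^{(\cdot)}$) that the power map adjoins. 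I would isolate a lemma stating that for a monster model $\cU\pow \models T\pow^{\nl}$, the induced structure on its constant field $C$ is precisely $C_{\exp}$, proved by showing any $C$-definable subset of $C^n$ is a Boolean combination of sets defined by exponential-polynomial (in)equalities, using the relative quantifier elimination for $T\pow^{\nl}$ over $\Hp$-fields.

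The main obstacle I anticipate is controlling the interaction between the valued-differential-field part and the power map at the level of \emph{definable} (not just type-definable) sets on the constants: one must verify that solving systems of signomial differential equations and valuation inequalities over $\R$ never forces a solution whose constant coordinates satisfy some genuinely new (non-$\R_{\exp}$) constraint. Handling this requires the full strength of the embedding lemmas underlying Theorem~\ref{thm:A}—specifically, that over the constants one has enough freedom to realize any $\R_{\exp}$-consistent configuration of constant parameters by an automorphism—so the proof will be a careful packaging of those lemmas rather than a new argument. Once that is in place, the deduction that every $\T\pow$-definable subset of $\R^n$ is $\R_{\exp}$-definable is formal: combine stable embeddedness of the constants (inherited from the $H$-field structure, now in the expanded language) with the lemma identifying the induced structure as $C_{\exp}$.
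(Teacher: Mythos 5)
Your overall target is right---show that the $\cL_{\exp}$-type of a constant tuple over $C$ determines its $\cL\pow$-type over the whole field---but the route you propose has a genuine gap. You repeatedly lean on ``the quantifier-elimination machinery'' and ``the relative quantifier elimination for $T^{\nl}\pow$ over $\Hp$-fields,'' and your core lemma (every $C$-definable subset of $C^n$ is a Boolean combination of exponential-polynomial conditions) is derived from that QE. No such quantifier elimination is established in the paper or available off the shelf: the paper proves only model completeness for $T^{\nl}\pow$, and it explicitly contrasts its method with the QE-based proof of stable embeddedness for the reduct $T^{\nl}$ in \cite{ADH17}. Likewise, your ``core step''---reducing to existential formulas and analyzing which constant tuples lie in a projection---is exactly where all the difficulty sits, and the heuristic that ``the only way the power map produces a constant from constants is via $c^r=\exp(r\log c)$'' does not address it: the issue is whether solvability in the big field of a system involving non-constant witnesses can cut out a non-$\R_{\exp}$-definable set of constant parameters.

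The paper's actual proof is short and avoids both problems. Work in a $|K|^+$-saturated $K^*\succeq K$ and take $c_1,c_2\in C_{K^*}^n$ with the same $\cL_{\exp}$-type over $C$; it suffices to show they have the same $\cL\pow$-type over $K$. Let $C_i$ be the $\cL_{\exp}$-definable closure of $C(c_i)$ in $C_{K^*}$; by o-minimality of $\Th(\R_{\exp})$ this is an elementary extension of $C$, and there is an ordered exponential field isomorphism $C_1\to C_2$ over $C$ sending $c_1$ to $c_2$. Proposition~\ref{prop:addconstants} (the universal property of the constant field extension $K(C^*)\pow$) extends this to an $\Hp$-field isomorphism $K(C_1)\pow\to K(C_2)\pow$ over $K$, and since each $K(C_i)\pow$ is $\upo$-free by Fact~\ref{fact:dalgupofree}, Theorem~\ref{thm:mainthm} (model completion of $T^{\upo}\pow$) makes this a partial $\cL\pow(K)$-elementary map. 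This is the ``enough freedom to realize any $\R_{\exp}$-consistent configuration by an automorphism'' that you gesture at in your last paragraph, but the content lies in identifying Proposition~\ref{prop:addconstants} as the lemma that does it, not in any quantifier reduction.
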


The structure $\R_{\exp}$ was shown to be model complete and o-minimal by Wilkie~\cite{Wi96}. Given a family of signomial ODEs parametrized by real exponents, like the family $(\operatorname{EF}_{\sigma,\rho,\lambda})$ above, it follows that whether the equations in this family have a transseries or Hardy field solution only depends on the parameters in a geometrically tame way---e.g., it is never dependent on rationality or algebraicity of the exponents.

The fact that the exponential on $\R$ is definable in $\T\pow$ tells us that the theory of this structure is at least as complicated as that of $\R_{\exp}$. This is the only additional complication.

\begin{theoremA}[Corollary~\ref{cor:decidable}]\label{thm:D}
The theory of $\T\pow$ is decidable relative to the theory of $\R_{\exp}$.
\end{theoremA}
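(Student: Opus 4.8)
The plan is to deduce Theorem~\ref{thm:D} from the effective model completeness statement established in Theorem~\ref{thm:A} together with the stable embeddedness statement of Theorem~\ref{thm:C}. Recall that Theorem~\ref{thm:A} does not merely assert model completeness of $\Th(\T\pow)$ but comes with an \emph{explicit axiomatization} of this theory: it is the model companion of the theory of $\Hp$-fields, extended by $\upo$-freeness, newtonianity, and Liouville closedness (plus the assumption pinning down the constant field). The key point is that all of these axioms are recursive \emph{once we are allowed to quote $\Th(\R_{\exp})$}---the only non-algebraic ingredient is the behaviour of the constant power map on the constant field $\R$, which by construction is governed by real exponentiation. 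So the first step is to write out this axiomatization carefully and observe that it is a recursive set of sentences in an oracle for $\Th(\R_{\exp})$; concretely, there is a recursive function that, given a decision procedure for $\Th(\R_{\exp})$, outputs (a decision procedure for membership in) the axiom set of $\Th(\T\pow)$.

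The second step is the standard observation that a recursively axiomatized complete theory is decidable. Here ``complete'' is exactly the first clause of Theorem~\ref{thm:A} (both $M\pow$ and $\T\pow$ satisfy the model companion, and the model companion is complete---this is where we use that the constant field structure is fixed, which is what makes the model companion a complete theory rather than merely model complete). Thus, enumerating consequences of the (relatively recursive) axiom set will eventually prove $\phi$ or $\neg\phi$ for every sentence $\phi$, and this enumeration procedure is recursive relative to $\Th(\R_{\exp})$. This gives decidability of $\Th(\T\pow)$ relative to $\Th(\R_{\exp})$.

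One subtlety to address carefully: the axiomatization coming from Theorem~\ref{thm:A} is phrased over the algebraic part of the theory of $H$-fields (which is itself effectively axiomatized over $\RCF$, hence absolutely recursive by~\cite{ADH17}), so the only place an oracle is genuinely needed is in the $\Hp$-field axioms constraining $(f,r)\mapsto f^r$ on constants and in the ``constant field assumption.'' I would make explicit that these can be expressed by a scheme asserting, for each $\Th(\R_{\exp})$-sentence $\psi$ in the appropriate sublanguage, ``$\psi$ holds in the constant field'' (using stable embeddedness / Theorem~\ref{thm:C} to know this sublanguage is just $\R_{\exp}$ and no more), and that relativized to an $\Th(\R_{\exp})$-oracle this scheme is decidable. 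The main obstacle---really the only one---is bookkeeping: checking that every axiom in the list produced by the proof of Theorem~\ref{thm:A} is either outright recursive or recursive-in-$\Th(\R_{\exp})$, with no hidden appeal to a non-effective choice (e.g.\ confirming that the newtonianity and $\upo$-freeness axiom schemes, and the Liouville closedness axioms, are the same recursive schemes as in the differential-field setting and are untouched by the power map). Once that audit is complete, the decidability conclusion is immediate from the completeness half of Theorem~\ref{thm:A}.
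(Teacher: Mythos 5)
Your proposal is correct and follows essentially the same route as the paper: the axiomatization of $T^{\nl}\pow$ is recursive except for the ``real exponential constant field'' scheme, which is recursive relative to an oracle for $\Th(\R_{\exp})$, and then decidability follows from completeness of the relevant completion by the standard enumeration argument. One small correction: fixing the constant field structure does \emph{not} by itself make the model companion complete---$T^{\nl}\pow$ has exactly two completions, $T^{\nl}\dsmall$ and $T^{\nl}\dlarge$, distinguished by the (trivially recursive) small/large derivation axiom (Theorem~\ref{thm:completions}), and $\Th(\T\pow)=T^{\nl}\dsmall$ by Corollary~\ref{cor:TTnlsmall}; also, your appeal to Theorem~\ref{thm:C} is unnecessary, since the constant-field scheme is simply written into the axioms rather than extracted from induced structure.
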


Decidability for the theory of $\R_{\exp}$ is intricately linked with transcendental number theory (it is implied by Schanuel's conjecture; see~\cite{MW96}). Of course, Theorems~\ref{thm:C} and~\ref{thm:D} hold for $M\pow$ as well, by Theorem~\ref{thm:A}.
%In addition to the above results, we show that $\T\pow$ and $M\pow$ are both o-minimal at infinity (hence locally o-minimal).

At the end of the paper, we also study Conway's field of surreal numbers, equipped with the Berarducci--Mantova derivation and the natural constant power map induced by the Gonshor--Kruskal exponential. Building on results from~\cite{ADH19}, we show that this expansion of the surreals, denoted $\No\pow$, has the same first-order theory as $\T\pow$, and that every Hardy field closed under constant powers admits an embedding into the surreals.

%----------------------------------------%
\subsection*{Outline}
We collect necessary preliminary results, mostly from the book~\cite{ADH17}, in Section~\ref{sec:prelim}. In Section~\ref{sec:powexts}, we investigate power extensions and power closures of $H$-fields; much of the material needed here comes from~\cite[Sections 7 and 8]{AD05}. In Section~\ref{sec:HP}, we introduce $\Hp$-fields, our primary objects of study. The focus of this section is our central technical result:\ Proposition~\ref{prop:mainprop}. This proposition allows us to convert facts from~\cite{ADH17} on $H$-field embeddings and extensions to corresponding results about $\Hp$-fields. We apply this proposition throughout Section~\ref{sec:HPext}, where we collect the embedding results required for our main theorems. 

In Section~\ref{sec:Tpow}, we prove that the theory of $\Hp$-fields has a model companion, characterize the completions of this model companion, and show that $\T\pow$ is a model. We also study definable sets in the model companion, proving stable embeddedness of the constant field, local o-minimality, and non-definability of the exponential on $\T$. In Section~\ref{sec:Hardy}, we prove our results on maximal Hardy fields and the surreal numbers, including Theorem~\ref{thm:B}.

%----------------------------------------%
\subsection*{Acknowledgements}
Thanks to Matthias Aschenbrenner and Chris Miller for very helpful conversations around this paper. Thanks also to Nigel Pynn-Coates for hosting me at the Kurt G\"odel Research Center, where some of this research was conducted. Other parts of this research were conducted while I was hosted by the Max Planck Institute for Mathematics, and I thank the MPIM for its support and hospitality. This research was supported by the National Science Foundation under Award No.\ DMS-2103240.
%----------------------------------------------------------------------------------%
\section{Preliminaries}\label{sec:prelim}
%----------------------------------------------------------------------------------%
We draw heavily from~\cite{ADH17}, and we assume some familiarity with that book. We use the same notational conventions as~\cite{ADH17}, but will repeat what is needed in this paper. 

We let $m$, $n$, and $k$ range over $\N=\{0,1,2,\ldots\}$. For a ring $R$, we let $R^\times$ denote the invertible elements of $R$. By ``ordered set'' we mean ``totally ordered set''. For an ordered abelian group $\Gamma$, we put 
\[
\Gamma^>\coloneqq\{\gamma \in \Gamma:\gamma>0\},\qquad\Gamma^<\coloneqq\{\gamma \in \Gamma:\gamma<0\}.
\]
Given a (possibly infinite) index set $I$, we say that a tuple $(\gamma_i)_{i \in I}$ of elements from $\Gamma$ is \textbf{finitely supported} if the set $\{i \in I:\gamma_i\neq 0\}$ is finite. We let $\Q\Gamma\coloneqq \Q \otimes_\Z \Gamma$ denote the divisible hull of $\Gamma$, equipped with the unique ordering that makes $\Q\Gamma$ an ordered group extension of $\Gamma$. 
For $S \subseteq \Gamma$, we set
\[
S^{\downarrow} \coloneqq \{\gamma\in \Gamma: \gamma\leq \sigma \text{ for some }\sigma \in S\}.
\]
%and we say that \textbf{$A$ is downward closed in $S$} if $A = A^{\downarrow}$.

%------------------------------------%
\subsection{Ordered exponential fields, valued fields, and differential fields}

In this subsection, let $K$ be a field of characteristic zero.

\subsubsection*{Ordered exponential fields}
Let $<$ be a field ordering on $K$. An \textbf{exponential} on $K$ is an ordered group isomorphism $\exp\colon K\to K^>$ from the additive group of $K$ to the multiplicative group of positive elements of $K$. %We define an \textbf{ordered exponential field} to be an ordered field $K$ equipped with an exponential $\exp$. 
If $K$ and $L$ are ordered exponential fields (that is, ordered fields equipped with exponentials), then an ordered field embedding $\imath\colon K\to L$ is said to be an \textbf{ordered exponential field embedding} if $\imath(\exp a) = \exp \imath(a)$ for all $a \in K$. We let $\R_{\exp}$ denote the real ordered exponential field. This structure is o-minimal, and its first-order theory $\Th(\R_{\exp})$ is model complete in the natural language $\{0,1,+,-,\cdot,<,\exp\}$ of ordered exponential fields by Wilkie's theorem~\cite{Wi96}.

%------------------------------------%
\subsubsection*{Valued fields}

Let $\cO\subseteq K$ be a valuation ring on $K$, so each element of $K$ either belongs to $\cO$ or has a multiplicative inverse in $\cO$. We denote the unique maximal ideal of $\cO$ by $\smallo$, the residue field $\cO/\smallo$ by $\res(K)$, the value group of $K$ (written additively) by $\Gamma$, and the surjective valuation map $K^\times \to \Gamma$ by $v$. The group $\Gamma$ is totally ordered, where $va\geq 0\Longleftrightarrow a \in \cO$ and $va> 0\Longleftrightarrow a \in \smallo$. We extend $v$ to a map $K\to \Gamma_\infty \coloneqq \Gamma \cup \{\infty\}$ by setting $v(0)\coloneqq \infty>\Gamma$. For $a,b \in K$ we have the following notations:
\[
a\asymp b :\Longleftrightarrow va = vb,\qquad a\preceq b :\Longleftrightarrow va \geq vb,\qquad a\prec b :\Longleftrightarrow va > vb,\qquad a\sim b :\Longleftrightarrow a-b\prec a.
\]
Note that $a\sim b$ if and only if $a,b \neq 0$ and $a/b \in 1+\smallo$. If $L$ is also a valued field, then we denote its value group, valuation ring, and maximal ideal by $\Gamma_L$, $\cO_L$, and $\smallo_L$, respectively. Let $L$ be a valued field extension of $K$. We identify $\Gamma$ with a subgroup of $\Gamma_L$ and $\res(K)$ with a subfield of $\res(L)= \cO_L/\smallo_L$ in the natural way. %The Zariski-Abhyankar Inequality states that the transcendence degree of $L$ over $K$ is at least the transcendence degree of $\res(L)$ over $\res(K)$ plus the $\Q$-linear dimension of $\Q\Gamma_L$ over $\Q\Gamma$, where $\Q\Gamma$ is the divisible hull of $\Gamma$. 
We say that $L$ is an \textbf{immediate extension of $K$} if $\Gamma_L = \Gamma$ and $\res(L) = \res(K)$ under this identification.

%------------------------------------%
\subsubsection*{Differential fields}

Let $\der\colon K \to K$ be a derivation on $K$, so $\der(a+b) = \der a+\der b$ and $\der(ab) = a\der b + b\der a$. We let $C\coloneqq \ker(\der)$ denote the constant field of $K$. For $a \in K$, we often write $a'$ instead of $\der a$, and if $a \neq 0$, we let $a^\dagger \coloneqq a'/a$ denote the logarithmic derivative of $a$. 
We set $(K^\times)^\dagger\coloneqq \{a^\dagger: a \in K^\times\}$. Then $(K^\times)^\dagger$ is a subgroup of $K$, since $(a^{-1})^\dagger = -a^\dagger$ and $a^\dagger + b^\dagger = (ab)^\dagger$ for $a,b \in K^\times$. 
An \textbf{integral} of $a \in K$ is an element $f \in K$ with $f' = a$, and an \textbf{exponential integral} of $a$ is an element $g \in K^\times$ with $g^\dagger = a$. If $f_1,f_2$ are integrals of $a$ and $g_1,g_2$ are exponential integrals of $a$, then $f_1-f_2\in C$ and $g_1/g_2 \in C^\times$. 

If $L$ is also a differential field, then we denote its constant field by $C_L$. 
If $y$ is an element of a differential field extension of $K$, then let $K\langle y\rangle\coloneqq K(y,y',y'',\ldots)$ denote the differential field extension of $K$ generated by $y$. We say that $y$ is \textbf{$\d$-transcendental over $K$} if the sequence $y,y',y'',\ldots$ is algebraically independent over $K$, and we say that $y$ is \textbf{$\d$-algebraic over $K$} otherwise. %Equivalently, $y$ is $\d$-transcendental over $K$ if and only if $K\langle y\rangle$ has infinite transcendence degree over $K$.

%----------------------------------------%
\subsection{$H$-fields}
Let $K$ be an ordered differential field with constant field $C$. We let $\cO$ be the convex hull of $C$ in $K$, and we view $K$ as an ordered valued differential field with valuation ring $\cO$. 

We say that $K$ is an \textbf{$H$-field} if
\begin{enumerate}
\item[(H1)] $f'>0$ for all $f \in K$ with $f>\cO$;
\item[(H2)] $\cO = C+\smallo$.
\end{enumerate}
The class of $H$-fields was introduced by Aschenbrenner and van den Dries~\cite{AD02}. Any ordered field with trivial derivation is a trivially valued $H$-field, and every $H$-field with nontrivial derivation has a nontrivial valuation ring.

For the remainder of this section, let $K$ be an $H$-field. % with valuation ring $\cO$, value group $\Gamma$, and constant field $C$. 
An \textbf{$H$-field embedding} is an ordered valued differential field embedding of $K$ into an $H$-field $L$. By (H2), the projection map $\cO \rightarrow \res(K)$ maps $C$ isomorphically onto $\res(K)$. Consequently, an $H$-field extension $L$ of $K$ is an immediate extension of $K$ if and only if $\Gamma_L = \Gamma$ and $C_L = C$. By (H1), we have for $f \in K^>$ that 
\[
f \succ 1 \Longrightarrow f^\dagger > 0,\qquad f \prec 1 \Longrightarrow f^\dagger = -(f\inv)^\dagger < 0.
\]

\begin{lemma}\label{lem:daggercut}
Let $s \in K$, let $M_1,M_2$ be $H$-field extensions of $K$, and for $i = 1,2$, let $f_i \in M_i^>$ with $f_i^\dagger = s$ and $vf_i \not\in \Q\Gamma$. Then there is a unique $H$-field embedding $K(f_1)\to M_2$ over $K$ that sends $f_1$ to $f_2$.
\end{lemma}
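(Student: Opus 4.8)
The plan is to build the embedding by hand using the universal property of purely transcendental extensions, together with the $H$-field compatibility conditions encoded in the valuation and the ordering. First I would observe that since $vf_1 \notin \Q\Gamma$, the element $vf_1$ is not a torsion element over $\Gamma$ in $\Gamma_{M_1}/\Gamma$, so in particular $f_1$ is transcendental over $K$: if $f_1$ were algebraic over $K$, then some nonzero power $f_1^n$ would be $\asymp$ to an element of $K$, forcing $nvf_1 \in \Gamma$ and hence $vf_1 \in \Q\Gamma$. The same holds for $f_2$ over $K$. So there is a unique field isomorphism $\phi\colon K(f_1)\to K(f_2)$ over $K$ sending $f_1\mapsto f_2$; the task is to check that $\phi$ is an ordered valued differential field embedding.

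Next I would verify that $\phi$ respects the derivation. For any $P \in K[X]$, the derivation on $M_i$ is determined on $K(f_i)$ by $f_i' = s\,f_i$ (from $f_i^\dagger = s$), together with the quotient rule; this is a purely formal computation that only depends on the relation $f_i' = sf_i$ and on $s \in K$ being shared, so $\phi(g') = \phi(g)'$ for all $g \in K(f_1)$. For the valuation and ordering I would argue as follows. Every nonzero element of $K(f_1)$ can be written as $c\,f_1^n \cdot u$ where $c \in K^\times$, $n \in \Z$, and $u$ is a ratio of polynomials in $f_1$ over $K$ each of which has a coefficient that is $\asymp 1$ after normalizing; the key point is that because $vf_1 \notin \Q\Gamma$, distinct monomials $c_j f_1^j$ (with $c_j \in K^\times$) in a polynomial have pairwise distinct valuations $vc_j + jvf_1$, so the valuation of a polynomial $\sum_j c_j f_1^j$ is simply $\min_j(vc_j + jvf_1)$, achieved by a unique dominant monomial. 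This gives an explicit description of $v$ on $K(f_1)^\times$ in terms of data living in $K$ and in $\Z vf_1$, and the identical description holds on $K(f_2)$. Since $vf_1$ and $vf_2$ are both ``generic'' in the same sense, the obvious map $\Gamma + \Z vf_1 \to \Gamma + \Z vf_2$ fixing $\Gamma$ and sending $vf_1\mapsto vf_2$ is an ordered group isomorphism, and it intertwines with $\phi$; hence $\phi$ is valuation-preserving. For the ordering, one checks that the sign of a polynomial $\sum_j c_j f_1^j$ agrees with the sign of its dominant monomial $c_{j_0}f_1^{j_0}$, i.e.\ with $\operatorname{sign}(c_{j_0})$ since $f_1 > 0$; the same holds on the $f_2$ side, so $\phi$ preserves the ordering. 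Finally, $K(f_1)$ with the induced valuation is an $H$-field because it is an ordered valued differential subfield of the $H$-field $M_1$ whose valuation ring is again the convex hull of the constants (the constant field does not grow, as $f_1$ is $\d$-transcendental-free but more simply because $vf_1 \notin \Q\Gamma$ forces $C_{K(f_1)} = C$); so the embedding $\phi\colon K(f_1)\to M_2$ is an $H$-field embedding. Uniqueness is immediate: any $H$-field embedding over $K$ sending $f_1\mapsto f_2$ must agree with $\phi$ on the generated field.

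The main obstacle I anticipate is the bookkeeping in the ``dominant monomial'' argument: one must be careful that for a general element $g = P(f_1)/Q(f_1) \in K(f_1)$, the valuation and sign really are controlled by single dominant monomials of $P$ and $Q$, and that this control is stable under the field operations so that it transfers cleanly through $\phi$. The hypothesis $vf_1 \notin \Q\Gamma$ is exactly what makes all the monomial valuations distinct and rules out any cancellation of dominant terms, so the argument is robust once set up correctly; I would expect this lemma to be, in spirit, a standard fact about adjoining a ``generic exponential'' element to an $H$-field (compare the treatment of such extensions in~\cite{ADH17}), and the proof in the paper is likely short, citing or paralleling results there.
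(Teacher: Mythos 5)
Your setup is fine through the transcendence of $f_i$, the field isomorphism $K(f_1)\to K(f_2)$ over $K$, the derivation check, and the dominant-monomial description of the valuation and sign on each side separately. But the proof has a genuine gap at the step ``Since $vf_1$ and $vf_2$ are both generic in the same sense, the obvious map $\Gamma+\Z vf_1\to\Gamma+\Z vf_2$ fixing $\Gamma$ and sending $vf_1\mapsto vf_2$ is an ordered group isomorphism.'' Genericity ($vf_i\notin\Q\Gamma$) only guarantees that each $vf_i$ is rationally independent from $\Gamma$; it says nothing about \emph{where} $vf_i$ sits relative to $\Q\Gamma$. For the map to be order-preserving you must know that $vf_1$ and $vf_2$ realize the \emph{same cut} over $\Q\Gamma$ (equivalently, that $nvf_1<m\gamma\Leftrightarrow nvf_2<m\gamma$ for all $\gamma\in\Gamma$ and integers $m,n$ with $n>0$); a priori one could have, say, $f_1\succ 1$ and $f_2\prec 1$. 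This is exactly the nontrivial content of the lemma, and it is the one place where the hypothesis $f_1^\dagger=f_2^\dagger=s$ must be used for the valuation (not just for the derivation): if $nvf_1<m\gamma<nvf_2$ for some $\gamma=vy$ with $y\in K^>$, then $f_1^n/y^m\succ 1$ while $f_2^n/y^m\prec 1$, so the $H$-field sign rules for logarithmic derivatives give $ns-my^\dagger>0$ and $ns-my^\dagger<0$ simultaneously, a contradiction. Your proposal never makes this argument, and without it the claimed ordered group isomorphism, and hence the order- and valuation-preservation of $\phi$, is unjustified.

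Apart from this, your route matches the paper's in spirit: the paper likewise reduces everything to the statement that $vf_1$ and $vf_2$ realize the same cut over $\Q\Gamma$ (leaving the dominant-monomial bookkeeping implicit) and then devotes its entire argument to establishing that cut equality by the logarithmic-derivative sign computation just described. So the fix is local: replace the appeal to ``genericity'' with that computation.
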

\begin{proof}
As $vf_i \not\in \Q\Gamma$, each $f_i$ is transcendental over $K$, so there is a field embedding $K(f_1)\to M_2$ over $K$ that sends $f_1$ to $f_2$. This is a differential field embedding, as $f_1^\dagger = f_2^\dagger$. To see that this is an $H$-field embedding, it is enough to show that $vf_1$ and $vf_2$ realize the same cut over $\Q\Gamma$. To see this, let $m,n \in \N$ with $n > 0$, let $\gamma \in \Gamma$, and suppose that $nvf_1<m\gamma$ but $m\gamma<nvf_2$. Take $y \in K^>$ with $vy = \gamma$, so $f_1^n/y^m\succ 1$ and $f_2^n/ y^m \prec 1$. As $f_1,f_2,y>0$, this gives $(f_1^n/y^m)^\dagger = ns-my^\dagger > 0$, whereas $(f_2^n/y^m)^\dagger = ns-my^\dagger < 0$, a contradiction.
\end{proof}

%----------------------------------------%
\subsection{Asymptotic couples and the trichotomy theorem}

As an $H$-field, $K$ is \textbf{$H$-asymptotic}:\ for all $f,g \in \smallo$, we have
\[
f\prec g \Longleftrightarrow f' \prec g',\qquad f\preceq g\Longrightarrow f^\dagger \succeq g^\dagger
\]
It follows that for $f \in K^\times$ with $f \not\asymp 1$, the values $v(f')$ and $v(f^\dagger)$ only depend on $vf$, so for $\gamma =vf$, we set
\[
\gamma^\dagger \coloneqq v(f^\dagger),\qquad \gamma' \coloneqq v(f') = \gamma+ \gamma^\dagger.
\]
This gives us a map
\[
\psi\colon \Gamma^{\neq}\to \Gamma,\qquad \psi(\gamma) \coloneqq\gamma^\dagger
\]
and, following Rosenlicht~\cite{Ro81}, we call the pair $(\Gamma,\psi)$ the \textbf{asymptotic couple of $K$}. 
We have the following important subsets of $\Gamma$:
\[
(\Gamma^<)' \coloneqq \{\gamma': \gamma \in \Gamma^<\},\qquad (\Gamma^>)' \coloneqq \{\gamma': \gamma \in \Gamma^>\},\qquad \Psi \coloneqq \{\gamma^\dagger: \gamma \in \Gamma^{\neq}\}. 
\]
It is always the case that $(\Gamma^<)' < (\Gamma^>)'$ and that $\Psi< (\Gamma^>)'$. For $u \in K$ with $u \asymp 1$, there is $c \in C$ with $u-c \prec 1$, so $v(u') = v(u-c)' \in (\Gamma^>)'$. then $v(u^\dagger) \in (\Gamma^>)'$ as well, since $u^\dagger = u'/u \asymp u'$. %Consequently, we have for $g \in K^\times$ that $g \asymp 1$ if and only if $v(g^\dagger) > \Psi$. 
We say that $K$ has \textbf{small derivation} if $\der \smallo \subseteq \smallo$ (equivalently, if $(\Gamma^>)'\subseteq \Gamma$). If this is not the case, then $K$ is said to have \textbf{large derivation}. Note that if $K$ has large derivation if and only if $0 \in (\Gamma^>)'$. 

If there is $\beta \in \Gamma$ with $\Psi <\beta< (\Gamma^>)'$, then we call $\beta$ a \textbf{gap in $K$}. There is at most one such $\beta$, and if $\Psi$ has a largest element, then there is no such $\beta$. If $K$ has trivial valuation (equivalently, if $K$ has trivial derivation, since $K$ is an $H$-field), then the three subsets above are empty, and $0$ is a gap in $K$. We say that \textbf{$K$ is grounded} if $\Psi$ has a largest element, and we say that \textbf{$K$ is ungrounded} otherwise. Finally, we say that \textbf{$K$ has asymptotic integration} if $\Gamma = (\Gamma^<)' \cup (\Gamma^>)'$. If $\beta$ is a gap in $K$ or if $\beta = \max \Psi$, then $\Gamma = (\Gamma^<)' \cup\{\beta\}\cup (\Gamma^>)'$. We have an important trichotomy for the structure of $H$-asymptotic fields:

\begin{fact}[{\cite[Corollary~9.2.16]{ADH17}}]\label{fact:trich}
If $K$ is an $H$-asymptotic field, then exactly one of the following is true:
\begin{enumerate}
\item $K$ has asymptotic integration;
\item $K$ has a gap;
\item $K$ is grounded.
\end{enumerate}
\end{fact}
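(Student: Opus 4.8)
The plan is to establish the trichotomy by a case analysis driven by the structure of the set $\Psi \subseteq \Gamma$ and its relationship to the successor set $(\Gamma^>)'$. Recall that in any $H$-asymptotic field we always have $\Psi < (\Gamma^>)'$, and also $(\Gamma^<)' < (\Gamma^>)'$, so the three distinguished subsets sit in a definite configuration. The first reduction is to observe that asymptotic integration is exactly the statement $\Gamma = (\Gamma^<)' \cup (\Gamma^>)'$, while the failure of asymptotic integration produces a ``hole'': some $\beta \in \Gamma$ lying in neither $(\Gamma^<)'$ nor $(\Gamma^>)'$. So the only real work is to analyze what happens when asymptotic integration fails, and to show this forces either a gap or groundedness, and that the three conditions are mutually exclusive.

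First I would handle \emph{mutual exclusivity}. If $K$ has a gap $\beta$, then by definition $\Psi < \beta < (\Gamma^>)'$, so $\beta$ cannot lie in $(\Gamma^<)'$ (since $(\Gamma^<)' \leq \Psi$ in the relevant sense, or more directly because one shows $(\Gamma^<)'$ has supremum at most $\sup\Psi$) nor in $(\Gamma^>)'$; hence asymptotic integration fails. Moreover a gap cannot coexist with groundedness: if $\Psi$ had a maximum $\gamma_0$, then $\gamma_0 < \beta$ forces $\beta$ to be a value strictly between $\max\Psi$ and $(\Gamma^>)'$, but one checks using the identity $\gamma' = \gamma + \psi(\gamma)$ and properties of $\psi$ that no element can sit strictly between $\max\Psi$ and $(\Gamma^>)'$ — more precisely, if $\max\Psi = \gamma_0^\dagger$ then $\gamma_0' = \gamma_0 + \gamma_0^\dagger \in (\Gamma^>)'$ or $(\Gamma^<)'$ depending on the sign of $\gamma_0$, and tracking this pins down the boundary. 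Finally, groundedness plus asymptotic integration is impossible because if $\Psi$ has a maximum $\beta = \max\Psi$ then $\Gamma = (\Gamma^<)' \cup \{\beta\} \cup (\Gamma^>)'$ with $\beta$ in neither successor set, contradicting $\Gamma = (\Gamma^<)' \cup (\Gamma^>)'$; this uses the standard description of $(\Gamma^<)'$ and $(\Gamma^>)'$ near $\max\Psi$.

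Next, for \emph{exhaustiveness}, suppose $K$ is neither grounded nor has a gap; I must show it has asymptotic integration. Since $K$ is ungrounded, $\Psi$ has no maximum. Suppose toward a contradiction that $\gamma \in \Gamma$ lies in neither $(\Gamma^<)'$ nor $(\Gamma^>)'$. Using that the map $\delta \mapsto \delta' = \delta + \psi(\delta)$ on $\Gamma^{\neq}$ is strictly increasing and the known inequalities $(\Gamma^<)' < (\Gamma^>)'$, the omitted value $\gamma$ must satisfy $(\Gamma^<)' < \gamma < (\Gamma^>)'$. The key point is then to show $\Psi < \gamma$, which would make $\gamma$ a gap — contradiction. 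If instead $\gamma \leq \psi(\delta_0)$ for some $\delta_0 \in \Gamma^{\neq}$, one derives a contradiction with $\gamma \notin (\Gamma^<)' \cup (\Gamma^>)'$ by the interlacing of $\Psi$ with the successor sets (again the identity $\gamma' = \gamma + \gamma^\dagger$ and monotonicity of $\psi$ on each side of $0$). The main obstacle I anticipate is precisely this last combinatorial step: carefully controlling the position of an omitted value relative to $\Psi$ using only the axioms of an $H$-asymptotic couple, without the extra structure (e.g. the Hahn-type or Liouville-closed hypotheses) that would make it routine. This is where one must invoke the detailed properties of asymptotic couples of $H$-type — that $\psi$ is strictly increasing on $\Gamma^<$ and strictly decreasing on $\Gamma^>$, that $[\Gamma^{\neq}]$-equivalence classes behave well, and the convexity of $\Psi^{\downarrow}$ — exactly the content developed in the relevant sections of~\cite{ADH17}. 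Since the statement is cited directly as \cite[Corollary~9.2.16]{ADH17}, I would at this point simply assemble these ingredients rather than reprove the full asymptotic-couple machinery.
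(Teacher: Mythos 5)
The paper offers no proof of this statement: it is imported as a Fact with a citation to \cite[Corollary~9.2.16]{ADH17}, so there is nothing internal to compare against. Your outline is a faithful sketch of how the argument goes in that reference --- mutual exclusivity via $(\Gamma^<)'\subseteq\Psi^{\downarrow}$ and the decomposition $\Gamma=(\Gamma^<)'\cup\{\beta\}\cup(\Gamma^>)'$ when $\beta$ is a gap or $\max\Psi$, and exhaustiveness from the fact that $\Gamma\setminus\big((\Gamma^<)'\cup(\Gamma^>)'\big)$ has at most one element --- and you correctly isolate the one genuinely nontrivial step (locating an omitted value relative to $\Psi$) as precisely the asymptotic-couple machinery of \cite[Section~9.2]{ADH17} that the citation is carrying, so deferring to it there is appropriate.
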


%We say that $s\in K$ is \textbf{a gap creator over $K$} if $vf$ is a gap in $K(f)$ for some $f$ in an $H$-field extension of $K$ with $f^\dagger = s$. In the lemma below, we summarize some facts about gap creators from~\cite[Section~11.5]{ADH17}. 

%\begin{lemma}\label{lem:gapcreationext}
%Suppose that $K$ has asymptotic integration and divisible value group. Then, $K$ has an immediate $H$-field extension with a gap creator. If $s\in K$ is a gap creator over $K$, then $vf$ is a gap in $K(f)$ for any nonzero $f$ in an $H$-field extension of $K$ with $f^\dagger = s$.
%\end{lemma}
%\begin{proof}
%By~\cite[Corollary~11.4.10]{ADH17}, $K$ has a spherically complete immediate $H$-field extension, and this extension contains a gap creator by~\cite[Lemma~11.5.14]{ADH17}. The second part of the lemma is the remark after~\cite[Lemma~11.5.14]{ADH17}.
%\end{proof}

%----------------------------------------%
\subsection{$\upo$-freeness, newtonianity and $\d$-algebraic maximality}

A major result in~\cite{ADH17} is that the theory of $H$-fields has a model companion, namely the theory $T^{\nl}$ of $\upo$-free newtonian Liouville closed $H$-fields. The $H$-field $K$ is \textbf{Liouville closed} if $K$ is real closed and for each $s \in K$, there is $a \in K$ and $f \in K^\times$ with $a' = f^\dagger = s$. 
 The axioms ``$\upo$-free'' and ``newtonian'' are more technical, and we will not define these axioms precisely, but we will list some facts about these axioms that will be useful later in this paper. 

The property of \textbf{$\upo$-freeness} is a rather subtle axiom that, among other things, rules out the existence of gaps both in $K$ and in various extensions of $K$. Every $\upo$-free $H$-field has asymptotic integration. This property is also quite robust; it passes to $\d$-algebraic $H$-field extensions, and it is inherited by certain $H$-subfields:

\begin{fact}[{\cite[Section~11.7 and Theorem~13.6.1]{ADH17}}]\label{fact:dalgupofree}
Suppose that $K$ is $\upo$-free. If $L$ is a $\d$-algebraic $H$-field extension of $K$, then $L$ is $\upo$-free. If $E$ is an $H$-subfield of $K$ with $\Gamma_E^<$ cofinal in $\Gamma^<$, then $E$ is $\upo$-free.
\end{fact}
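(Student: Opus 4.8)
The plan is to reduce both claims to the characterisation of $\upo$-freeness developed in~\cite[Chapter~11]{ADH17}: an $H$-asymptotic field with asymptotic integration is $\upo$-free precisely when, after fixing a logarithmic sequence $(\ell_\rho)$ and forming the associated sequence $\upo_\rho$ (apply the operator $z\mapsto-(z^2+2z')$ to the iterates $\upl_\rho=-\ell_\rho^{\dagger\dagger}$), the pseudo-Cauchy sequence $(\upo_\rho)$ has no pseudolimit in the field. Recall also that $\upo$-freeness entails asymptotic integration.

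For the first assertion, let $L\supseteq K$ be a $\d$-algebraic $H$-field extension. First I would check that $L$ has asymptotic integration: a $\d$-algebraic $H$-field extension of an $H$-field with asymptotic integration again has asymptotic integration, since a gap in $L$ would --- upon adjoining an integral --- produce a $\d$-algebraic element over $K$ whose value realises a cut over $\Gamma$ forbidden by asymptotic integration of $K$ (cf.~\cite[Chapter~11]{ADH17}). Then a logarithmic sequence for $K$ remains one for $L$, so the $\upo$-sequence of $L$ may be taken to be $(\upo_\rho)$, and it suffices to rule out a pseudolimit of $(\upo_\rho)$ in $L$. If $a\in L$ were one, then $a$ is $\d$-algebraic over $K$, so $(\upo_\rho)$ would be a pseudo-Cauchy sequence of $\d$-algebraic type over $K$; ADH17's analysis of such sequences --- passage to a minimal annihilating differential polynomial, substitution of iterated logarithmic derivatives, and control of the Newton degree under compositional conjugation --- forces $a\in K$, contradicting $\upo$-freeness of $K$. (Equivalently: from $a$ one builds a $\d$-algebraic $H$-field extension $L'\supseteq L$ carrying a gap, by solving the $\upo$-equation for the ``next'' $\upl$ and passing to the associated logarithm; as $L'$ is then $\d$-algebraic over $K$, this again contradicts $\upo$-freeness of $K$.)

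For the second assertion, let $E$ be an $H$-subfield of $K$ with $\Gamma_E^<$ cofinal in $\Gamma^<$. Since $\psi$ is increasing on $\Gamma^<$ and $\gamma\mapsto\gamma'$ is increasing on $\Gamma^{\neq}$ (both consequences of the $H$-asymptotic axioms), cofinality of $\Gamma_E^<$ in $\Gamma^<$ makes $\Psi_E$ cofinal in $\Psi$ and $(\Gamma_E^>)'$ coinitial in $(\Gamma^>)'$. Hence a largest element of $\Psi_E$ would be a largest element of $\Psi$, and a gap of $E$ would be a gap of $K$; both are impossible as $K$ has asymptotic integration, so $E$ has asymptotic integration by Fact~\ref{fact:trich}. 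Now a logarithmic sequence for $E$ is also a logarithmic sequence for $K$ (its $\psi$-values, being cofinal in $\Psi_E$, are cofinal in $\Psi$), so the two $\upo$-sequences agree; a pseudolimit of $(\upo_\rho)$ in $E$ would be a pseudolimit in $K$, contradicting $\upo$-freeness of $K$. Therefore $E$ is $\upo$-free.

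The main obstacle is the first assertion: proving that a $\d$-algebraic extension creates no $\upo$-pseudolimit (equivalently, no gap) genuinely requires the Newton-polynomial and Newton-degree machinery of~\cite[Chapters~11--13]{ADH17}, and I see no shortcut around it. The second assertion is, by contrast, a short computation inside the asymptotic couple, together with the (nearly definitional) stability of the $\upo$-sequence under the cofinal inclusion $E\subseteq K$.
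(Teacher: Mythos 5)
The paper states this as an imported fact from \cite{ADH17} and gives no proof of its own, so the only meaningful comparison is with the source; your sketch is a reconstruction of the arguments there. Your treatment of the second assertion is correct and is essentially the argument of \cite[Section~11.7]{ADH17}: cofinality of $\Gamma_E^<$ in $\Gamma^<$ transfers the relevant data of the asymptotic couple (cofinality of $\Psi_E$ in $\Psi$, coinitiality of $(\Gamma_E^>)'$ in $(\Gamma^>)'$), which by the trichotomy of Fact~\ref{fact:trich} rules out groundedness and gaps for $E$, and it makes the $\upo$-sequences of $E$ and $K$ agree cofinally, so a pseudolimit in $E$ would already be one in $K$.

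For the first assertion, however, there is a concretely false step: the claim that a $\d$-algebraic $H$-field extension of an $H$-field with asymptotic integration again has asymptotic integration. This fails in general. If $L$ is a spherically complete $H$-field with asymptotic integration that is not $\upo$-free, then by \cite[Lemma~11.5.14]{ADH17} there is $s\in L$ that creates a gap over $L$, and the extension $L(f)$ with $f^\dagger=s$ is $\d$-algebraic over $L$ yet has a gap --- this is precisely the construction used in Lemma~\ref{lem:makegap} of the present paper. What you actually need is that an $\upo$-free $K$ admits no gap creator in any $H$-asymptotic extension, and, for your reduction to the $\upo$-sequence, that $\Psi$ remains cofinal in $\Psi_L$ when $L$ is $\d$-algebraic over $K$; both of these are themselves part of the content of \cite[Theorem~13.6.1]{ADH17} and its surrounding results, not preliminaries deducible from asymptotic integration alone. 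So the ``easy reductions'' in your first paragraph presuppose pieces of the theorem being proved. You do correctly identify, and candidly flag, the genuine core: that the $\upl$- and $\upo$-sequences of an $\upo$-free field are divergent and of $\d$-transcendental type (\cite[Corollary~13.6.3]{ADH17}), which requires the Newton-degree machinery of Chapters~11--13 and admits no shortcut; granting that input, the remainder of your outline for the first assertion is the intended argument.
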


In connection with gaps, let us mention another consequence of $\upo$-freeness:

\begin{lemma}\label{lem:daggergap}
Let $K$ be $\upo$-free and let $f$ be an element in an $H$-field extension of $K$ such that $vf$ is a gap in $K\langle f\rangle$. Then $K\langle f^\dagger\rangle$ is an immediate extension of $K$. 
\end{lemma}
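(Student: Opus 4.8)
Let $g = f^\dagger$ and set $\gamma = vf$, a gap in $K\langle f\rangle$. The plan is to show that $K\langle g\rangle = K(g)$ is an immediate extension of $K$ by ruling out, in turn, that $g$ creates a new element of the value group and that $g$ creates a new residue field element; since $K$ is $\upo$-free, it has asymptotic integration, and I will leverage the gap hypothesis on $\gamma$ together with the relation $g = f'/f$, so that $vg = \gamma^\dagger = \psi(\gamma)$.

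First I would observe that $\gamma \notin \Gamma$ forces nothing directly, but the gap hypothesis tells us where $\gamma$ sits: since $vf$ is a gap in $K\langle f\rangle$, we have $\Psi_{K\langle f\rangle} < \gamma < (\Gamma_{K\langle f\rangle}^{>})'$. In particular $\Psi_K < \gamma$, so $\gamma$ lies just above the $\Psi$-set of $K$; and because $K$ has asymptotic integration (being $\upo$-free), $\gamma \notin \Gamma$, so $f$ is transcendental over $K$ and $vf$ is in fact a new element of $\Gamma_{K\langle f\rangle}$. The key computation is that $vg = \psi(\gamma)$ makes sense and lies in $\Psi_{K\langle f\rangle}$; I claim it actually lies in $\Gamma$, i.e. $g$ does not enlarge the value group. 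Indeed, since $\gamma$ is a gap sitting immediately above $\Psi_K$ in $K\langle f\rangle$, and since for any $\delta \in \Gamma^{\neq}$ the value $\psi(\gamma)$ must be compared with $\psi(\delta)$: the standard "$\psi$ is increasing on $\Gamma^{>}$ away from $0$" monotonicity, together with $\gamma$ being infinitesimally close to the top of $\Psi_K$, should pin $\psi(\gamma)$ to be an element of $\Psi_K \subseteq \Gamma$ — more precisely, $\psi(\gamma) = \max\Psi_K$ would hold if $K$ were grounded, but $K$ has asymptotic integration, so one argues that $\psi(\gamma) \in \Gamma$ by the description of $\Psi$ near a gap. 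This gives $\Gamma_{K\langle g\rangle} = \Gamma$.

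Next, for the residue field: suppose toward a contradiction that $\res\big(K\langle g\rangle\big) \supsetneq \res(K)$. Then there is $\phi \in K(g)$ with $\phi \asymp 1$ whose residue is transcendental over $\res(K)$, and since everything in sight is an $H$-field one can reduce to the case $\phi = P(g)/Q(g)$ for polynomials $P,Q \in \mathcal O[X]$ with $P(g),Q(g) \asymp 1$. The contradiction should come from differentiating: $g = f^\dagger$ satisfies the Riccati-type identity $g' + g^2 = f'{}^\dagger \cdot (\text{stuff})$... more carefully, $g' = (f'/f)' = f''/f - g^2$, and using that $vg$ is (eventually shown to be) the special value just computed, together with the gap condition which controls the valuations $v(g')$, one derives that any such $\phi$ must have $v(\phi - c) > 0$ for some $c \in C = C_K$, contradicting transcendence of the residue. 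Alternatively, and probably more cleanly, I would cite the machinery for $H$-field extensions generated by a logarithmic derivative together with $\upo$-freeness: an element $g$ with $g = f^\dagger$ for $f$ realizing a gap-cut generates no new constants (if it did, $K$ would fail $\upo$-freeness or the gap would propagate), hence no new residue field elements either, since new residue elements in an $H$-field extension with $\Gamma_{K\langle g\rangle} = \Gamma$ would be new constants.

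The main obstacle I anticipate is the residue-field step: showing that adjoining $g = f^\dagger$ introduces no new residue (equivalently no new constants), because $g$ itself is typically \emph{not} small and one cannot directly invoke the cleanest immediacy criteria. I expect the real content to be an interplay between the gap hypothesis on $vf$ in $K\langle f\rangle$ (which says $v(f')$ is \emph{larger} than everything in $(\Gamma_K^{<})' \cup \Psi_K$, i.e.\ $f$ is "close to an integral of $0$") and $\upo$-freeness of $K$ (which forbids $K$ from having such near-gaps internally). Concretely, if $\res(K(g)) \neq \res(K)$, one should be able to manufacture from a witnessing $\phi$ an element of $K$, or of a controlled $\d$-algebraic extension of $K$, exhibiting $\upo$-like behaviour — a pseudoconvergent sequence over $K$ with no pseudolimit — contradicting $\upo$-freeness (via Fact~\ref{fact:dalgupofree} applied to pass $\upo$-freeness down or up as needed). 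Packaging this last contradiction correctly, and making sure the transcendence degree bookkeeping ($\trdeg K(g)/K \le 1$) is consistent with immediacy, is where the care is needed.
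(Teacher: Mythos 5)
There is a genuine gap here — in fact two. First, your opening move sets $g=f^\dagger$ and asserts $K\langle g\rangle=K(g)$, with a closing remark about ``transcendence degree bookkeeping ($\trdeg K(g)/K\le 1$)''. This premise is false: the element $f^\dagger$ is $\d$-transcendental over $K$ (this is forced by $\upo$-freeness, as explained below), so $K\langle f^\dagger\rangle=K(f^\dagger,(f^\dagger)',\ldots)$ has \emph{infinite} transcendence degree over $K$. Immediate extensions of valued differential fields routinely have infinite transcendence degree, so there is no tension with the conclusion — but it means that even a complete argument that $v(f^\dagger)=\psi(vf)\in\Gamma$ and that $f^\dagger$ contributes no new residue would only treat the single element $f^\dagger$, not the differential field it generates. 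You would still have to control the valuations and residues of arbitrary differential rational functions of $f^\dagger$ over $K$, and your proposal never engages with this. Second, the two steps you do outline are not proved: the value-group step rests on ``should pin $\psi(\gamma)$ to be an element of $\Psi_K$'' and ``one argues that $\psi(\gamma)\in\Gamma$ by the description of $\Psi$ near a gap'', and the residue-field step is explicitly flagged as the anticipated obstacle, with only candidate strategies (a Riccati computation, or ``manufacture a pseudoconvergent sequence contradicting $\upo$-freeness''). These are exactly the places where the content of the lemma lives.

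The paper's proof takes a different and essentially unavoidable route through the pseudoconvergence machinery of \cite{ADH17}, which handles the whole of $K\langle f^\dagger\rangle$ at once: if $vf$ is a gap in $K\langle f\rangle$, then $-f^\dagger$ is a pseudolimit of a $\upl$-sequence in $K$ (Lemmas~11.5.6 and~11.5.9 of \cite{ADH17}); $\upo$-freeness of $K$ says precisely that every $\upl$-sequence in $K$ is divergent and of $\d$-transcendental type over $K$ (Corollary~13.6.3); and a pseudolimit of a divergent pc-sequence of $\d$-transcendental type generates an immediate extension (Lemma~11.4.7). Your closing intuition — that a failure of immediacy should produce a divergent pc-sequence over $K$ contradicting $\upo$-freeness — is pointing at the right circle of ideas, but run in the wrong direction: the efficient argument identifies $-f^\dagger$ itself as the pseudolimit of the canonical $\upl$-sequence and lets $\upo$-freeness do the rest, rather than attempting to verify immediacy element by element.
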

\begin{proof}
This follows from~\cite[Lemmas~11.4.7,~11.5.6, and~11.5.9]{ADH17}. For readers who are familiar with $\upl$-sequences from~\cite{ADH17}, we provide a proof. If $vf$ is a gap in $K\langle f\rangle$, then $-f^\dagger$ is a pseudolimit of a $\upl$-sequence in $K$ by~\cite[Lemmas~11.5.6 and~11.5.9]{ADH17}, but since $K$ is $\upo$-free, any $\upl$-sequence in $K$ is divergent and of $\d$-transcendental type over $K$~\cite[Corollary~13.6.3]{ADH17}, so using~\cite[Lemma~11.4.7]{ADH17}, we get that $K\langle f^\dagger\rangle$ is an immediate extension of $K$.
\end{proof}

The axiom of newtonianity is also rather subtle. It is, however, quite a strong axiom, especially when coupled with $\upo$-freeness:

\begin{fact}\label{fact:asdmax}
Let $K$ be an $\upo$-free newtonian $H$-field. Then $K$ is \textbf{asymptotically $\d$-algebraically maximal}, that is, $K$ has no proper immediate $\d$-algebraic $H$-field extensions.
\end{fact}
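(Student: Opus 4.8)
The plan is to argue by contradiction, following the strategy of~\cite{ADH17}. Suppose $L$ is a proper immediate $\d$-algebraic $H$-field extension of $K$ and fix $a\in L\setminus K$. Since $L$ is immediate over $K$ and $a\notin K$, standard valuation theory yields a divergent pseudo-Cauchy sequence $(a_\rho)$ in $K$ (i.e., one with no pseudolimit in $K$) having $a$ as a pseudolimit in $L$. As $a$ is $\d$-algebraic over $K$, this sequence cannot be of $\d$-transcendental type over $K$---a pseudolimit of a $\d$-transcendental-type pc-sequence is always $\d$-transcendental over $K$---so $(a_\rho)$ is of $\d$-algebraic type: there is a nonzero $P\in K\{Y\}$ such that $\bigl(P(a_\rho)\bigr)$ pseudo-converges to $0$. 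Replacing $(a_\rho)$ by an equivalent pc-sequence if needed, I would take $P$ to have \emph{minimal complexity} among all nonzero differential polynomials over $K$ (or over $H$-field extensions of $K$) with this property; this is a minimal differential polynomial of $(a_\rho)$.

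First I would put $P$ into a normal form to which newtonianity can be applied. The idea is that for all sufficiently large $\rho$, the additive conjugate of $P$ by the term $a_\rho$, followed by a multiplicative conjugate by an element $\fm_\rho\in K^\times$ capturing the rate at which $(a_\rho)$ converges, produces a differential polynomial $Q\in K\{Y\}$ of \emph{Newton degree $1$}. Here is where $\upo$-freeness does the essential work: it guarantees that $K$ and the $H$-field extensions encountered along the way all have asymptotic integration, so that Newton polynomials and Newton degrees are defined and well behaved; it controls the variation of Newton degree under additive and multiplicative conjugation and under passage to cofinal subsequences; and it excludes gaps in $K$ and in the relevant extensions, which would otherwise obstruct the reduction. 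Establishing the Newton-degree-$1$ claim rests on the dictionary, developed in~\cite{ADH17} for $\upo$-free $H$-asymptotic fields, between the complexity of a minimal differential polynomial of a divergent pc-sequence and the Newton degree of its conjugates.

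Given $Q$ of Newton degree $1$, newtonianity of $K$ applies directly and provides a zero $y\in\cO$ of $Q$. Undoing the two conjugations, $b\coloneqq a_\rho+\fm_\rho y$ lies in $K$ and satisfies $P(b)=0$, and by the choice of $\fm_\rho$ and $\rho$ the element $b$ is a pseudolimit of $(a_\rho)$ in $K$, contradicting divergence of $(a_\rho)$. Finally, since $K$ is an $H$-field, every immediate $H$-field extension of $K$ is automatically $H$-asymptotic, so this argument rules out all proper immediate $\d$-algebraic $H$-field extensions and shows that $K$ is asymptotically $\d$-algebraically maximal.

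The hard part will be the reduction in the second paragraph: extracting from minimality of $P$ that a suitable conjugate of $P$ has Newton degree exactly $1$. This needs the full apparatus of Newton polynomials and their transformation rules under additive, multiplicative, and compositional conjugation, together with careful bookkeeping tying complexity to Newton degree under the standing hypothesis of $\upo$-freeness. By comparison, producing the divergent pc-sequence, forcing it to be of $\d$-algebraic type, and the concluding appeal to newtonianity are routine once that machinery is available.
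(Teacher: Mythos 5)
The paper does not actually prove this statement: it is labelled a \emph{Fact} and imported from the literature, with the surrounding text citing \cite[Theorem~14.0.2]{ADH17} for the case of divisible value group and \cite{PC19} for the removal of that hypothesis. Your outline is a faithful high-level summary of the architecture of the proof in \cite{ADH17}: pass to a divergent pc-sequence in $K$ with pseudolimit $a$, note it must be of $\d$-algebraic type, take a minimal differential polynomial $P$, reduce via additive and multiplicative conjugation along the sequence (and compositional conjugation, which underlies the Newton degree itself) to a quasi-linear polynomial, and apply newtonianity to manufacture a pseudolimit in $K$. So the strategy is the right one.

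There are nonetheless two genuine gaps. First, the step you defer---that a minimal differential polynomial of the divergent pc-sequence has, eventually along the sequence, Newton degree $1$ after conjugation---is not a loose end to be tidied up: it is essentially the entire content of the relevant parts of Chapters 13 and 14 of \cite{ADH17} (unravelers, the behaviour of Newton degree in a cut, and the equivalence of several forms of newtonianity). A proof that consists of the routine bookkeeping plus ``the hard part will be the reduction'' is in substance a citation of \cite{ADH17}, which is exactly what the paper does more economically. Second, and more concretely, the argument you outline is carried out in \cite{ADH17} only under the additional hypothesis that $\Gamma$ is divisible, which is used in the Newton diagram machinery; the statement here makes no such assumption, and an $\upo$-free newtonian $H$-field need not be real closed, so its value group need not be divisible. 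Removing the divisibility hypothesis is a separate, nontrivial contribution of \cite{PC19}, which the paper explicitly flags immediately after stating the Fact; your sketch does not address it, so as written it proves a weaker statement than the one asserted.
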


Fact~\ref{fact:asdmax} was shown under the assumption that $K$ also has divisible value group~\cite[Theorem~14.0.2]{ADH17}, but this divisibility assumption can be removed; see~\cite{PC19}.

\begin{lemma}\label{lem:newtonint}
Suppose that $K$ is $\upo$-free and newtonian and let $s \in K$. Then there is $a \in K$ with $a' = s$, and if $vs>\Psi$, then there is $f \in K$ with $f^\dagger = s$.
\end{lemma}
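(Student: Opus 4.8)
The plan is to derive both statements from asymptotic $\d$-algebraic maximality (Fact~\ref{fact:asdmax}) together with asymptotic integration, which is available since $K$ is $\upo$-free. We may assume $s \neq 0$, since $a = 0$ and $f = 1$ work otherwise.

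\emph{Integral.} Suppose $s$ has no integral in $K$. Using asymptotic integration, pick $b_0 \in K^\times$ with $b_0' \sim s$, and recursively pick $b_{\rho+1} \in K^\times$ with $b_{\rho+1}' \sim s - (b_0 + \cdots + b_\rho)'$, taking partial sums at limit stages; since $s$ has no integral in $K$ this right-hand side is never $0$, and since the increments have strictly increasing valuations (using that $\gamma \mapsto \gamma'$ is strictly increasing on $\Gamma^{\neq}$) the construction produces a divergent pseudo-Cauchy sequence $(B_\rho)_\rho$ of partial sums. The order-one differential polynomial $Y' - s$ pseudo-vanishes on $(B_\rho)$, which is therefore a pc-sequence of $\d$-algebraic type. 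By the pc-sequence machinery of~\cite{ADH17}, adjoining a pseudolimit $a$ with $a' = s$ yields an immediate $\d$-algebraic $H$-field extension of $K$; by Fact~\ref{fact:asdmax} this extension is trivial, so $a \in K$ — contradiction.

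\emph{Exponential integral (assuming $vs > \Psi$).} First, $vs \in (\Gamma^>)'$: every element of $(\Gamma^<)'$ has the form $\gamma + \psi(\gamma)$ with $\gamma < 0$, hence is $< \psi(\gamma) < vs$, so $vs \notin (\Gamma^<)'$, and then $vs \in (\Gamma^>)'$ by asymptotic integration; note also that $(\Gamma^>)'$ is upward closed. Let $a \in K$ with $a' = s$, furnished by the first part; then $va \geq 0$, since $va < 0$ would give $v(a') = va + \psi(va) \in (\Gamma^<)'$, contradicting $v(a') = vs$. Writing $a = c + y_0$ with $c \in C$ and $y_0 \prec 1$, we have $y_0' = s$ and $v(y_0) > 0$. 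It suffices to find $y \prec 1$ with $y' = s(1 + y)$, for then $f := 1 + y \in K^\times$ satisfies $f^\dagger = s$. Set $a_0 := y_0$ and recursively let $a_{\rho+1} \in K$ be an integral of $s a_\rho$ (available by the first part), so $v(a_{\rho+1}') = vs + v(a_\rho)$; since $v(a_0) > 0$ and $(\Gamma^>)'$ is upward closed and contains $vs$, induction using the monotonicity of $\gamma \mapsto \gamma'$ shows that the $v(a_\rho)$ are positive and strictly increasing. Hence each $a_\rho \in \smallo$, the partial sums $(a_0 + \cdots + a_\rho)_\rho$ form a pc-sequence (extended transfinitely as before if needed) on which $Y' - sY - s$ pseudo-vanishes, and arguing exactly as in the first part — passing to a pseudolimit $y$ with $y' = s(1 + y)$ and applying Fact~\ref{fact:asdmax} — we obtain $y \in K$ with $y' = s(1 + y)$; since $v(y) = v(a_0) > 0$ we have $y \prec 1$, as required.

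The step I expect to be the main obstacle is the bookkeeping behind the two pc-sequence constructions: checking that the partial sums genuinely form pseudo-Cauchy sequences of $\d$-algebraic type with $Y' - s$, respectively $Y' - sY - s$, pseudo-vanishing, handling the passage to limit stages (where a pseudolimit already in $K$ must be absorbed rather than ending the construction), and confirming that the iterates in the second construction remain in $\smallo$. All of this rests on the strict monotonicity of $\gamma \mapsto \gamma'$ on $\Gamma^{\neq}$, on $(\Gamma^>)'$ being upward closed, and on the hypothesis $vs > \Psi$; granting these, the conclusion follows immediately from Fact~\ref{fact:asdmax}.
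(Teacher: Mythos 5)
Your proof follows the same route as the paper: produce the integral (resp.\ the exponential integral, via the substitution $f = 1+y$ with $vs \in (\Gamma^>)'$) in an immediate $\d$-algebraic $H$-field extension of $K$, and then invoke Fact~\ref{fact:asdmax} to conclude that the solution already lies in $K$. The only difference is that the paper obtains the elements of the immediate extension by citing \cite[Proposition~10.2.6]{ADH17} and \cite[Lemma~10.4.3]{ADH17}, whereas you re-derive those results by hand via pc-sequences; your sketch of that derivation is sound in outline (in particular the limit-stage worry resolves itself, since any pseudolimit $B^*\in K$ of $(B_\rho)_{\rho<\lambda}$ satisfies $v\big(s-(B^*)'\big)\geq v(s-B_{\rho+1}')>v(s-B_\rho')$ for each $\rho<\lambda$ and so can be absorbed as the next term), but what you are reproving is exactly the content of the two cited lemmas.
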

\begin{proof}
By~\cite[Proposition 10.2.6]{ADH17}, we can find an element $a$ in an immediate $H$-field extension of $K$ with $a' = s$. As $K$ is asymptotically $\d$-algebraically maximal, $a$ belongs to $K$. Now suppose that $vs >\Psi$. As $K$ has asymptotic integration, $vs \in (\Gamma^>)'$, so~\cite[Lemma 10.4.3]{ADH17} gives $f$ in an immediate $H$-field extension of $K$ with $f\neq 0$ and $f^\dagger = s$. %(the assumption in that lemma that $K$ is henselian is satisfied, as newtonianity implies henselianity). 
Again, asymptotic $\d$-algebraic maximality gives $f \in K$. 
\end{proof}

If $K$ is $\upo$-free and newtonian, then by Fact~\ref{fact:asdmax}, any $y$ in an immediate $H$-field extension of $K$ must be $\d$-transcendental over $K$. By~\cite[Lemmas~2.4.2 and~11.4.7]{ADH17}, together with other facts from~\cite[Section~11.4]{ADH17}, the $H$-field extension $K\langle y\rangle$ is completely determined by the cut of $y$ over $K$:

\begin{fact}\label{fact:immextmap}
Let $K$ be $\upo$-free and newtonian and let $K\langle y \rangle$ be an immediate $H$-field extension of $K$. Let $L$ be an $H$-field extension of $K$ and let $z \in L$ realize the same cut as $y$ over $K$. Then there is an $H$-field embedding $K\langle y \rangle \to L$ over $K$ that sends $y$ to $z$.
\end{fact}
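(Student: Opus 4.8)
The plan is to reduce the statement to the classical theory of immediate valued-field extensions via pseudo-Cauchy sequences, invoking newtonianity only through Fact~\ref{fact:asdmax}. Assume $y\notin K$ (otherwise there is nothing to prove). First I would note that $y$ must be $\d$-transcendental over $K$: a proper immediate $\d$-algebraic $H$-field extension would contradict asymptotic $\d$-algebraic maximality. Since the extension is immediate, $\{v(y-a):a\in K\}$ has no largest element---a putative largest value $v(y-a_0)$ could be improved using $\cO=C+\smallo$ and the fact that the residue field is not enlarged---so $y$ is the pseudolimit of a pseudo-Cauchy sequence $(a_\rho)$ in $K$ that diverges in $K$. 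Such a sequence must be of $\d$-transcendental type over $K$, since one of $\d$-algebraic type would again produce a proper immediate $\d$-algebraic extension.

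The core of the argument is to show that $(a_\rho)$ also pseudo-converges to $z$, and here the $H$-field structure is essential: the valuation ring of $K$ is the convex hull of $C$, so for $a\in K$ and $d\in K^\times$ the relation $y-a\asymp d$ is equivalent to $c_1|d|\leq|y-a|\leq c_2|d|$ for some $c_1,c_2\in C^{>0}$, i.e.\ to a statement about the position of $y$ relative to the elements $a\pm c_i|d|$ of $K$---hence a consequence of the cut of $y$ over $K$. For each large $\rho$ one has $v(y-a_\rho)=v(a_{\rho'}-a_\rho)$ for all sufficiently large $\rho'$; taking $d=a_{\rho'}-a_\rho$ and applying the hypothesis on $z$ yields $c_1|d|\leq|z-a_\rho|\leq c_2|d|$, and since $c_1,c_2\in C\subseteq C_L$ this forces $v(z-a_\rho)=v(a_{\rho'}-a_\rho)$ in $L$. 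Thus $v(z-a_\rho)$ is eventually strictly increasing, so $(a_\rho)$ pseudo-converges to $z$; in particular $z\notin K$, as $(a_\rho)$ diverges in $K$. I expect this to be the main obstacle: converting the order-theoretic ``same cut'' hypothesis into the valuation-theoretic pseudo-convergence statement, while keeping track of the possibility that $L$ has a strictly larger constant field than $K$.

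To conclude, I would apply the uniqueness of the immediate extension determined by a pseudo-Cauchy sequence of $\d$-transcendental type, as formalized in~\cite[Lemmas~2.4.2 and~11.4.7]{ADH17}. Since $(a_\rho)$ is of $\d$-transcendental type and pseudo-converges to both $y$ and $z$, both $K\langle y\rangle$ and $K\langle z\rangle$ are immediate extensions of $K$ (hence $H$-fields, as (H1) and (H2) are inherited along immediate extensions), and for every nonzero differential polynomial $P$ over $K$ the valuation and sign of $P(y)$ and of $P(z)$ agree with those of $P(a_\rho)$ for all large $\rho$. Consequently the differential-field isomorphism $K\langle y\rangle\to K\langle z\rangle$ over $K$ determined by $y\mapsto z$ (well defined since both elements are $\d$-transcendental) respects the ordering and the valuation, so it is an $H$-field isomorphism; composing with the inclusion $K\langle z\rangle\hookrightarrow L$ gives the desired embedding.
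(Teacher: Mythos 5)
Your proposal is correct and follows essentially the same route as the paper, which justifies this fact by combining asymptotic $\d$-algebraic maximality (Fact~\ref{fact:asdmax}) with \cite[Lemmas~2.4.2 and~11.4.7]{ADH17}: your first paragraph is the reduction to a divergent pc-sequence of $\d$-transcendental type, your second paragraph unwinds the content of Lemma~2.4.2 (the convexity of $\cO$ converts ``same cut'' into ``same pseudolimits''), and your third is Lemma~11.4.7. The only difference is that you supply the details the paper delegates to the citations.
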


If $K$ is $\upo$-free, then a \textbf{newtonization of $K$} is by definition an immediate newtonian $H$-field extension of $K$ that embeds over $K$ into any newtonian $H$-field extension of $K$.

\begin{fact}\label{fact:ntexist}
If $K$ is $\upo$-free, then $K$ has a newtonization $K^{\nt}$ that is $\d$-algebraic over $K$. Any two newtonizations of $K$ are isomorphic over $K$.
\end{fact}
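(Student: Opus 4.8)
The plan is to construct $K^{\nt}$ by transfinite recursion, at each step removing a single obstruction to newtonianity. I would set $K_0 \coloneqq K$ and, given an $\upo$-free $\d$-algebraic $H$-field extension $K_\mu$ of $K$ (both properties persist along the construction --- $\upo$-freeness by Fact~\ref{fact:dalgupofree}, $\d$-algebraicity by transitivity), proceed as follows: if $K_\mu$ is newtonian, stop; otherwise pick, among the differential polynomials over $\cO_{K_\mu}$ of Newton degree $1$ having no zero in $\smallo_{K_\mu}$, one of minimal complexity, and let $K_{\mu+1}$ be an immediate $\d$-algebraic $H$-field extension of $K_\mu$ in which it acquires a zero. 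At limit stages take unions. Here I would invoke the theory of quasilinear differential equations and Newton diagrams over $\upo$-free $H$-fields from~\cite{ADH17}, both to produce the required immediate extension and to see that the chain terminates (up to isomorphism over $K$ there are only set-many immediate $\d$-algebraic $H$-field extensions of $K$, all of them embedding over $K$ into a fixed spherically complete immediate extension, so the strictly increasing chain must stabilize, say at $K^{\nt}$). The minimal-complexity bookkeeping then forces $K^{\nt}$ to be newtonian, and by construction it is an immediate $\d$-algebraic extension of $K$. An equivalent route is to take for $K^{\nt}$ a $\d$-algebraically maximal immediate $H$-field extension of $K$: since $K$ is $\upo$-free, such an extension exists, is newtonian, and is unique over $K$ up to isomorphism by~\cite{ADH17}.

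For the universal property I would show that $K^{\nt}$ embeds over $K$ into an arbitrary newtonian $H$-field extension $L$ of $K$, by extending a $K$-embedding stage by stage along the tower above. At a successor stage $K_{\mu+1} = K_\mu\langle y\rangle$, the element $y$ is a zero of a Newton-degree-$1$ differential polynomial $P$ over $K_\mu$ and $K_\mu\langle y\rangle$ is immediate over $K_\mu$; hence $y$ is a pseudolimit of a divergent pseudo-Cauchy sequence over $K_\mu$, and $K_\mu\langle y\rangle$ is determined over $K_\mu$ by that sequence together with the minimal annihilator of $y$, by the uniqueness theory for immediate differential-valued extensions in~\cite{ADH17} (the same §11.4 circle of ideas behind Fact~\ref{fact:immextmap}). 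Newtonianity of $L$ produces a zero of the image of $P$ in $L$ realizing the same data, so the embedding $K_\mu\hookrightarrow L$ extends to $K_\mu\langle y\rangle\hookrightarrow L$; taking the union over all stages gives $K^{\nt}\hookrightarrow L$.

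Uniqueness is then formal. Given two newtonizations $N_1,N_2$ of $K$, the universal property supplies $K$-embeddings each way, so it suffices to check that any $K$-embedding $f\colon N_1 \to N_1$ is surjective. The image $f(N_1)$ is an $H$-subfield of $N_1$ containing $K$ and isomorphic to $N_1$, hence newtonian and $\upo$-free, hence asymptotically $\d$-algebraically maximal by Fact~\ref{fact:asdmax}. Because $N_1$ is immediate over $K$, its value group and residue field coincide with those of $K$, and therefore with those of the intermediate field $f(N_1)$; so $N_1$ is an immediate $\d$-algebraic $H$-field extension of $f(N_1)$, which forces $N_1 = f(N_1)$. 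Thus $N_1\cong N_2$ over $K$.

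The hard part will be the first paragraph: solving a minimal-complexity Newton equation inside an immediate $\d$-algebraic extension and bounding the length of the resulting tower. This is precisely the substance of the Newton-diagram and quasilinearity machinery of~\cite{ADH17}, and the uniqueness of immediate extensions used for the universal property is of comparable depth; by contrast, the persistence of $\upo$-freeness and the valuation-theoretic sandwiching in the uniqueness argument are routine.
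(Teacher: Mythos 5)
The paper offers no proof of this statement: it is imported as a Fact, with the accompanying remark that it is \cite[Corollaries~14.3.12 and~14.5.4]{ADH17} in the case of divisible value group, and that the divisibility hypothesis is removed by \cite{PC19}. So there is no internal argument to compare yours against; what you have written is a sketch of the external proof. Its architecture is broadly the right one, and your uniqueness paragraph is correct and essentially complete: composing the two embeddings supplied by the universal property gives a $K$-embedding $f\colon N_1\to N_1$ whose image is a newtonian, $\upo$-free (being $\d$-algebraic over $K$, via Fact~\ref{fact:dalgupofree}), hence asymptotically $\d$-algebraically maximal $H$-subfield by Fact~\ref{fact:asdmax}, of which $N_1$ is an immediate $\d$-algebraic extension; this forces surjectivity.

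The genuine gaps are in the first two paragraphs. First, your termination argument leans on the claim that every immediate $\d$-algebraic $H$-field extension of $K$ embeds over $K$ into one \emph{fixed} spherically complete immediate extension; that is a universality statement about spherically complete immediate extensions of asymptotic fields which is itself a deep theorem and in \cite{ADH17} is only available under newtonianity-type hypotheses, so as stated this is circular in flavor. (It is also unnecessary: a raw cardinality bound on immediate extensions, via Krull's theorem, already stabilizes the chain.) Second, and more seriously, the step ``newtonianity of $L$ produces a zero of the image of $P$ in $L$ realizing the same data'' is not a consequence of the \S11.4 uniqueness theory; it \emph{is} the newtonization theorem. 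Newtonianity hands you some zero of a quasilinear polynomial in $\cO_L$, and arranging that the relevant zero pseudo-converges along the given pc-sequence and has the correct minimal differential polynomial is exactly what the unravelling and differential Newton diagram machinery of Chapters~13--14 of \cite{ADH17} exists to do; your second paragraph is therefore at least as hard as your first, not a formal consequence of Fact~\ref{fact:immextmap}-style arguments. Finally, you do not engage with the one ingredient the paper explicitly flags as nontrivial beyond \cite{ADH17}, namely removing the assumption that $\Gamma$ is divisible, which is the content of \cite{PC19} and cannot be waved through the divisible-value-group proofs unchanged.
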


Fact~\ref{fact:ntexist} was shown under the assumption that $\Gamma$ is divisible~\cite[Corollaries~14.3.12 and~14.5.4]{ADH17} but again, this divisibility assumption can be removed by~\cite{PC19}. %Both $\upo$-freeness and newtonianity are preserved under compositional conjugation by elements of $K^>$.

When combined with Liouville closedness, $\upo$-freeness and newtonianity give $\d$-algebraic maximality up to constant field extensions:
\begin{fact}[{\cite[Theorem~16.0.3]{ADH17}}]\label{fact:dmax}
Let $K$ be an $\upo$-free newtonian Liouville closed $H$-field. Then $K$ has no proper $\d$-algebraic $H$-field extensions with the same constant field.
\end{fact}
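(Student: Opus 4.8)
This is \cite[Theorem~16.0.3]{ADH17}; here is how I would organize a proof. Let $L$ be a $\d$-algebraic $H$-field extension of $K$ with $C_L = C$; I must show $L = K$. Since it suffices to prove $y \in K$ for each $y \in L$, and since $C_{K\langle y\rangle} = C$ (being squeezed between $C$ and $C_L$), I would first reduce to the case $L = K\langle y\rangle$ with $y$ $\d$-algebraic over $K$. As $L$ is $\d$-algebraic over the $\upo$-free field $K$, Fact~\ref{fact:dalgupofree} shows $L$ is $\upo$-free, so $L$ has asymptotic integration. By (H2) the residue fields of $K$ and of $L$ are both canonically identified with $C$, so $\res(L) = \res(K)$, and an $H$-field extension with this property is immediate precisely when the value group does not grow. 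This gives a clean dichotomy: either $\Gamma_L = \Gamma$, in which case $L$ is an immediate $\d$-algebraic $H$-field extension of $K$ and Fact~\ref{fact:asdmax} (asymptotic $\d$-algebraic maximality) forces $L = K$, as wanted; or $\Gamma_L \supsetneq \Gamma$, which must be excluded. Note that $\Gamma$ is divisible since $K$ is real closed, so any element of $\Gamma_L \setminus \Gamma$ lies outside $\Q\Gamma$.

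The substance of the argument is to rule out $\Gamma_L \supsetneq \Gamma$. The plan is to invoke the extension theory of $H$-fields from \cite[Chapters~10--14]{ADH17} to locate, inside some $\d$-algebraic $H$-field extension $L'$ of $K$ with $C_{L'} = C$, an element $z$ with $vz \notin \Gamma$ and with either $z^\dagger \in K$ or $z' \in K$ --- that is, a one-step valuation-increasing extension of $K$ arising from exponential integration or integration of a single element of $K$. Granting such a $z$, I would finish as follows. If $z^\dagger = s \in K$, then by Liouville closedness there is $f \in K^\times$ with $f^\dagger = s$, so $(z/f)^\dagger = 0$ and $z/f \in C_{L'}^\times = C^\times \subseteq K$; hence $vz = vf \in \Gamma$, contradicting $vz \notin \Gamma$. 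If instead $z' = s \in K$, then Liouville closedness gives $a \in K$ with $a' = s$, so $z - a \in C_{L'} = C \subseteq K$ and $vz = v(a + (z - a)) \in \Gamma$, again a contradiction. (Lemma~\ref{lem:newtonint} would also supply such an $a$, and such an $f$ when $vs > \Psi$, but Liouville closedness is the cleaner tool here since it puts no restriction on $s$.) Thus $\Gamma_L = \Gamma$, which by the first paragraph forces $L = K$.

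The main obstacle is exactly the bridge step: extracting the simple element $z$ once the value group is known to have grown. This is not formal --- it is the heart of \cite{ADH17}, spread across Chapters~13--16, resting on the analysis of pseudolimits of $\upl$- and $\upo$-sequences, the construction and behaviour of newtonizations (Fact~\ref{fact:ntexist}), and the delicate interaction of Liouville closedness with the asymptotic couple. I would not attempt to reprove it; in the paper one simply cites \cite[Theorem~16.0.3]{ADH17}.
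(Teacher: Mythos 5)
The paper offers no proof of this statement at all---it is recorded as a Fact with only the citation to \cite[Theorem~16.0.3]{ADH17}---and your proposal correctly ends by deferring to that same citation, so you are in agreement with the paper. Your supplementary outline (reduce to $L=K\langle y\rangle$, note $\res(L)=\res(K)$ via (H2), dispose of the immediate case by Fact~\ref{fact:asdmax}, and kill a value-group-increasing extension by producing an element $z$ with $z'\in K$ or $z^\dagger\in K$ and invoking Liouville closedness) is sound as far as it goes, but, as you acknowledge, the bridge step producing such a $z$ is precisely the non-formal content of the cited theorem, so nothing beyond the citation is actually established here.
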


If $K$ is $\upo$-free, then a \textbf{Newton-Liouville closure of $K$} is by definition a newtonian Liouville closed $H$-field extension of $K$ that embeds over $K$ into any newtonian Liouville closed $H$-field extension of $K$.

\begin{fact}[{\cite[Corollaries~14.5.10 and~16.2.2]{ADH17}}]\label{fact:nlexist}
Suppose that $K$ is $\upo$-free. Then $K$ has a Newton-Liouville closure $K^{\nl}$ that is $\d$-algebraic over $K$. The constant field of $K^{\nl}$ is a real closure of $C$, and any other Newton-Liouville closure of $K$ is isomorphic to $K^{\nl}$ over $K$. %The only newtonian Liouville closed subfield of $K^{\nl}$ containing $K$ is $K^{\nl}$ itself.
\end{fact}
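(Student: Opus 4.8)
The plan is to obtain $K^{\nl}$ as the union of a tower of ``small'' $\d$-algebraic $H$-field extensions of $K$, to verify the universal embedding property by extending partial embeddings one step at a time, and to deduce uniqueness formally from Fact~\ref{fact:dmax}.

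\emph{Existence.} Build an increasing chain $K=K_0\subseteq K_1\subseteq\cdots$ of $H$-fields: at successor stages, cycle through (i) passing to a real closure, (ii) adjoining, for every $s$ in the current field, an integral $a$ with $a'=s$ and an exponential integral $g$ with $g^\dagger=s$, and (iii) passing to the newtonization provided by Fact~\ref{fact:ntexist}. Each single step is a $\d$-algebraic $H$-field extension: real closures are algebraic, the Liouville extensions $E(a)$ and $E(g)$ are $\d$-algebraic because $a'=s$ and $g'=sg$, and newtonizations are $\d$-algebraic by Fact~\ref{fact:ntexist}. Hence $\upo$-freeness is preserved at every stage by Fact~\ref{fact:dalgupofree}, and since any element of the union $K^{\nl}\coloneqq\bigcup_n K_n$ lies in some $K_n$ that is $\d$-algebraic over $K$, the union is itself a $\d$-algebraic $H$-field extension of $K$, so it too is $\upo$-free. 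A cofinality argument shows $K^{\nl}$ is real closed and has integrals and exponential integrals for all of its elements; that $K^{\nl}$ is \emph{newtonian} is the delicate point, and I would obtain it from the results of~\cite{ADH17} on newtonianity of directed unions of $\upo$-free $H$-fields. Newtonization is immediate and, as in the construction of the Liouville closure in~\cite{ADH17}, the Liouville steps add no new constants, so the constant field grows only at the real-closure steps; since in any $H$-field an element algebraic over the constants is itself a constant, $C_{K^{\nl}}$ is real closed and algebraic over $C$, i.e.\ a real closure of $C$.

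\emph{Universal property.} Let $L$ be a newtonian Liouville closed $H$-field extension of $K$. By Zorn's lemma choose a maximal $H$-field $E$ with $K\subseteq E\subseteq K^{\nl}$ admitting an $H$-field embedding $\imath\colon E\to L$ over $K$; I claim $E=K^{\nl}$. If not, pick $y\in K^{\nl}\setminus E$ so that the extension $E\langle y\rangle/E$ is as simple as possible; by the analysis of minimal $\d$-algebraic $H$-field extensions of $\upo$-free fields in~\cite{ADH17}, we may assume $E\langle y\rangle$ is algebraic over $E$, or is a Liouville extension ($y$ an integral or exponential integral of some $s\in E$), or is immediate with $y$ $\d$-transcendental over $E$, or arises from a newtonization step. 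In the first case $L$ is real closed, so $\imath$ extends. In the immediate case, Fact~\ref{fact:immextmap} (together with Fact~\ref{fact:asdmax}, which forces such a $y$ to be $\d$-transcendental) lets us send $y$ to any element of $L$ realizing the image cut, and the newtonization case is handled directly by the universal property in Fact~\ref{fact:ntexist}. In the Liouville case we must find $b\in L$ with $b'=\imath(s)$, resp.\ $h\in L^\times$ with $h^\dagger=\imath(s)$, realizing the image of the cut of $a$, resp.\ $g$, over $E$: $L$ being Liouville closed produces some such element, and---exactly as in Lemmas~\ref{lem:daggercut} and~\ref{lem:daggergap}---one adjusts it by a constant of $L$ and uses that $\upo$-freeness gives asymptotic integration to pin down the correct cut. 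Each case contradicts maximality of $E$, so $E=K^{\nl}$.

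\emph{Uniqueness and main obstacle.} If $N_1,N_2$ are Newton-Liouville closures of $K$, the universal property gives an $H$-field embedding $\imath\colon N_1\to N_2$ over $K$. Its image is a newtonian Liouville closed $H$-subfield of $N_2$ that is $\d$-algebraic over $K$, and $\imath(C_{N_1})$ is a real closure of $C$ inside the real closure $C_{N_2}$ of $C$, hence equals $C_{N_2}$; so $N_2$ is a $\d$-algebraic $H$-field extension of $\imath(N_1)$ with the same constant field, whence $N_2=\imath(N_1)$ by Fact~\ref{fact:dmax}, and $\imath$ is an isomorphism. I expect the main obstacle to be the universal property---concretely, controlling valuations and cuts when extending the partial embedding across the integral and exponential-integral steps---together with the preliminary fact, needed for existence, that newtonianity survives the passage to the limit; both are exactly where the deep machinery of~\cite{ADH17} is genuinely needed.
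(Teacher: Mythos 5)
This statement is a \emph{Fact} in the paper: it is imported from \cite[Corollaries~14.5.10 and~16.2.2]{ADH17} with no proof given, so there is no in-paper argument to compare against. Your sketch is a reasonable reconstruction of the construction in \cite{ADH17}, and it closely parallels how the paper proves the analogous statement for $\Hp$-fields (Proposition~\ref{prop:nlclosure}): build a tower of newtonization and Liouville steps, get the universal property by extending embeddings stage by stage, and deduce uniqueness from Fact~\ref{fact:dmax} exactly as in Lemma~\ref{lem:altnl}. Your uniqueness argument and your account of the constant field are correct.

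Two points deserve flagging. First, in the universal-property step your Zorn argument picks an arbitrary $y\in K^{\nl}\setminus E$ and asserts that $E\langle y\rangle$ may be assumed to be of one of four special types (algebraic, Liouville, immediate $\d$-transcendental, or a newtonization step). That classification is not justified for arbitrary $y$, and it is not needed: since $K^{\nl}$ was \emph{built} as a tower whose individual steps each carry a universal embedding property (real closure, the extensions $E(a)$, $a'=s$, and $E(g)$, $g^\dagger=s$, from \cite[Chapter~10]{ADH17}, and the newtonization of Fact~\ref{fact:ntexist}), the embedding into $L$ should be produced by transfinite induction along that same tower, as the paper does in Proposition~\ref{prop:nlclosure}. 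Second, your existence argument takes an $\omega$-chain with a newtonization at every stage and must then prove that newtonianity survives the passage to the union, which you correctly identify as delicate (it rests on preservation of Newton degree under $\upo$-free $\d$-algebraic extensions). The cleaner route, again the one used in Proposition~\ref{prop:nlclosure}, is to take a \emph{maximal} Newton--Liouville tower via the cardinality bound of Corollary~\ref{cor:cardbound2} and Zorn's lemma: maximality then forces the top field to be newtonian and Liouville closed outright, with no limit argument about newtonianity required. With those two adjustments your outline matches the intended proof.
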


Combining the previous two facts, we can characterize the Newton-Liouville closures of $K$ as follows:

\begin{lemma}\label{lem:altnl}
Let $K$ be $\upo$-free and let $L$ be a newtonian Liouville closed $H$-field extension of $K$. Then $L$ is a Newton-Liouville closure of $K$ if and only if $L$ is $\d$-algebraic over $K$ and $C_L$ is algebraic over $C$.
\end{lemma}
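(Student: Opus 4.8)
The plan is to prove both directions by combining Fact~\ref{fact:nlexist} (existence and characterization of $K^{\nl}$ as $\d$-algebraic over $K$ with constant field a real closure of $C$) with Fact~\ref{fact:dmax} ($\d$-algebraic maximality of $\upo$-free newtonian Liouville closed $H$-fields with fixed constant field). For the forward direction, suppose $L$ is a Newton-Liouville closure of $K$. By Fact~\ref{fact:nlexist}, there is a Newton-Liouville closure $K^{\nl}$ that is $\d$-algebraic over $K$ with $C_{K^{\nl}}$ a real closure of $C$; since any two Newton-Liouville closures of $K$ are isomorphic over $K$, the same properties hold for $L$: it is $\d$-algebraic over $K$ and $C_L$ is (a real closure of $C$, hence) algebraic over $C$.

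For the converse, suppose $L$ is newtonian, Liouville closed, $\d$-algebraic over $K$, and $C_L$ is algebraic over $C$. Take a Newton-Liouville closure $K^{\nl}$ of $K$ inside $L$ (it embeds over $K$ into $L$ by definition, since $L$ is a newtonian Liouville closed $H$-field extension of $K$); identify $K^{\nl}$ with its image, so $K \subseteq K^{\nl} \subseteq L$. Now $L$ is a $\d$-algebraic $H$-field extension of $K^{\nl}$ — $\d$-algebraicity is transitive and $L$ is $\d$-algebraic over $K \subseteq K^{\nl}$. The key point is that $C_L = C_{K^{\nl}}$: we know $C_{K^{\nl}}$ is a real closure of $C$ by Fact~\ref{fact:nlexist}, and $C_L$ is an ordered field algebraic over $C$ that extends $C_{K^{\nl}}$; since $L$ is Liouville closed it is real closed, so $C_L$ is real closed (the constant field of a real closed $H$-field is real closed), hence $C_L$ is a real closed field algebraic over the real closed field $C_{K^{\nl}}$, forcing $C_L = C_{K^{\nl}}$. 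Then Fact~\ref{fact:dmax} applied to the $\upo$-free (by Fact~\ref{fact:dalgupofree}, as $K^{\nl}$ is $\d$-algebraic over the $\upo$-free field $K$) newtonian Liouville closed $H$-field $K^{\nl}$ says it has no proper $\d$-algebraic $H$-field extension with the same constant field; since $L$ is such an extension, $L = K^{\nl}$, so $L$ is a Newton-Liouville closure of $K$.

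The main obstacle is the constant-field bookkeeping in the converse: one must argue carefully that $C_L$, a priori only known to be algebraic over $C$, actually coincides with $C_{K^{\nl}}$ rather than being a proper algebraic extension. This rests on the fact that both are real closed (using Liouville closedness $\Rightarrow$ real closedness of $L$, and thus of its constant field) together with the uniqueness of the real closure of an ordered field. Once that identification is in place, Fact~\ref{fact:dmax} closes the argument immediately.
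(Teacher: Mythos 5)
Your proof is correct and follows essentially the same route as the paper: the forward direction from Fact~\ref{fact:nlexist} plus uniqueness, and the converse by embedding $K^{\nl}$ into $L$, identifying $C_L$ with $C_{K^{\nl}}$ via real closedness, and invoking Fact~\ref{fact:dmax}. The only difference is that you spell out the constant-field identification (real closedness of $C_L$ and uniqueness of real closures) in more detail than the paper, which simply asserts it.
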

\begin{proof}
One direction follows immediately from Fact~\ref{fact:nlexist}. For the other, suppose that $L$ is $\d$-algebraic over $K$ and $C_L$ is algebraic over $C$, and let $K^{\nl}$ be a Newton-Liouville closure of $K$. We may identify $K^{\nl}$ with a subfield of $L$; then $L$ is a $\d$-algebraic extension of $K^{\nl}$ and $C_L$, being the real closure of $C$, coincides with the constant field of $K^{\nl}$. Fact~\ref{fact:dmax} gives $L = K^{\nl}$.
\end{proof}

This gives us some useful cardinality bounds:

\begin{corollary}\label{cor:cardbound1}
Let $K$ be $\upo$-free. Then any Newton-Liouville closure of $K$ has the same cardinality as $K$.
\end{corollary}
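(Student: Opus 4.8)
The plan is to reduce, via Fact~\ref{fact:nlexist}, to a standard cardinality estimate for $\d$-algebraic extensions. By that fact, $K$ has a Newton-Liouville closure $K^{\nl}$ that is $\d$-algebraic over $K$, with $C_{K^{\nl}}$ a real closure of $C$, and any two Newton-Liouville closures of $K$ are isomorphic over $K$; so it suffices to bound $|K^{\nl}|$ for this one. Since $K$ has characteristic $0$ it is infinite, so $|K|\leq |K^{\nl}|$, and it remains to prove $|K^{\nl}|\leq |K|$.

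For this I would prove the following general statement and apply it with $E=K$ and $L=K^{\nl}$, noting that $C_{K^{\nl}}$, being a real closure of $C\subseteq K$, is algebraic over $C$: if $E$ is a differential field of characteristic $0$ and $L$ is a $\d$-algebraic extension of $E$ whose constant field is algebraic over that of $E$, then $|L|\leq |E|+\aleph_0$. The first ingredient is that each $y\in L$, being $\d$-algebraic over $E$, has $E\langle y\rangle=E(y,y',y'',\ldots)$ generated over $E$ by countably many elements, so $|E\langle y\rangle|\leq |E|+\aleph_0$; hence it is enough to bound $\trdeg(L/E)$ by $|E|+\aleph_0$, since then $L$ is algebraic over a subfield of the form $E(B_0)$ with $|B_0|\leq |E|+\aleph_0$. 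Writing $\kappa\coloneqq |E|+\aleph_0$ and supposing $\trdeg(L/E)\geq \kappa^+$, one picks a transcendence basis $B$ of $L/E$ of size $\kappa^+$ and, for each $b\in B$, a nonzero differential polynomial $P_b$ over $E$ of minimal order vanishing at $b$; since there are only $\kappa$ differential polynomials over $E$ and $\kappa^+$ is regular, $\kappa^+$ many of the $b$'s share a single such $P$. The remaining step is to derive a contradiction from the existence of $\kappa^+$ many elements of a transcendence basis of $L/E$ all satisfying one fixed differential equation over $E$, using that $C_L$ is algebraic over $C_E$.

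That last step is the real obstacle, and it is exactly where the hypothesis on the constant field is needed: without it the statement is false (adjoin to $E$ a transcendence basis's worth of new transcendental constants), because the zero set in $L$ of a single differential equation over $E$ can be arbitrarily large---for instance it can contain a copy of the whole constant field. I would handle it either by quoting the cardinality bookkeeping in the construction of the Newton-Liouville closure in~\cite{ADH17} (which is built as a transfinite union of atomic steps---real closures of fields and adjunctions of single zeros of single equations, each preserving cardinality, with at most $|K|$ such steps), or by a short argument using the $\omega$-stability of the theory $\mathrm{DCF}_0$ and the structure of its finite-rank types. Once that estimate is in hand, the reduction above is routine, and the corollary follows.
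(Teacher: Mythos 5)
There is a genuine gap, and it sits exactly where you flagged it. The general counting lemma you propose --- that a $\d$-algebraic extension $L$ of a differential field $E$ with $C_L$ algebraic over $C_E$ satisfies $|L|\leq|E|+\aleph_0$ --- is false for general differential fields. In a large model of $\mathrm{DCF}_0$ there are strongly minimal sets defined over $\Q$ by a single order-one equation (e.g.\ $x'=x^3-x^2$) whose induced geometry is trivial and which are orthogonal to the constants; the differential field generated over $\Q$ by continuum many independent generic solutions of such an equation is $\d$-algebraic over $\Q$ (every element lies in a differential subfield of finite transcendence degree over $\Q$) and has constants algebraic over $\Q$, yet has cardinality $2^{\aleph_0}$. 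This also defeats your second patch: $\omega$-stability of $\mathrm{DCF}_0$ does not bound the number of independent realizations of a finite-rank type, so ``$\kappa^+$ many transcendence-basis elements satisfying one fixed equation'' is not by itself a contradiction. Your first patch (cardinality bookkeeping in the construction of $K^{\nl}$ in~\cite{ADH17}) is not routine either: the newtonization steps are immediate extensions obtained via Zorn's lemma, immediate extensions of valued fields need not preserve cardinality, and the assertion that these particular $\d$-algebraic immediate extensions do preserve cardinality is essentially the statement being proved. Any correct argument must exploit the $H$-field structure, not just differential algebra.

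The paper's proof avoids all counting. Apply the downward L\"owenheim--Skolem theorem inside $K^{\nl}$ to get an elementary substructure $L\preceq K^{\nl}$ with $K\subseteq L$ and $|L|=|K|$. Then $L$ is again an $\upo$-free newtonian Liouville closed $H$-field, $K^{\nl}$ is a $\d$-algebraic $H$-field extension of $L$, and $C_L=C_{K^{\nl}}$ (both are the real closure of $C$). Fact~\ref{fact:dmax} (no proper $\d$-algebraic $H$-field extensions with the same constant field) forces $L=K^{\nl}$, so $|K^{\nl}|=|K|$. Note also that the paper's logical order is the reverse of yours: the general bound for $\d$-algebraic $H$-field extensions with algebraic constant field extension is Corollary~\ref{cor:cardbound2}, which is \emph{derived from} Corollary~\ref{cor:cardbound1}, not used to prove it.
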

\begin{proof}
Let $K^{\nl}$ be a Newton-Liouville closure of $K$. The downward L\"owenheim--Skolem theorem gives an elementary $H$-subfield $L\subseteq K^{\nl}$ that contains $K$ with $|L| = |K|$. Then $K^{\nl}$ is a $\d$-algebraic $H$-field extension of $L$ with the same constant field as $L$, so $L = K^{\nl}$ by Fact~\ref{fact:dmax}.
\end{proof}

\begin{corollary}\label{cor:cardbound2}
Let $K$ be $\upo$-free and let $L$ be a $\d$-algebraic $H$-field extension of $K$. If $C_L$ is algebraic over $C$, then $|L| = |K|$.
\end{corollary}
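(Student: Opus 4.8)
The plan is to reduce to Corollary~\ref{cor:cardbound1} by sandwiching $L$ between $K$ and a Newton-Liouville closure of $K$. First I would observe that $L$ is itself $\upo$-free: since $K$ is $\upo$-free and $L$ is a $\d$-algebraic $H$-field extension of $K$, this is immediate from Fact~\ref{fact:dalgupofree}. Hence, by Fact~\ref{fact:nlexist}, $L$ has a Newton-Liouville closure $L^{\nl}$ that is $\d$-algebraic over $L$, and whose constant field $C_{L^{\nl}}$ is a real closure of $C_L$.

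Next I would check that $L^{\nl}$ qualifies as a Newton-Liouville closure of $K$ via the characterization in Lemma~\ref{lem:altnl}. It is a newtonian Liouville closed $H$-field extension of $K$. It is $\d$-algebraic over $K$, by transitivity of $\d$-algebraicity applied to the tower $K \subseteq L \subseteq L^{\nl}$. And its constant field is algebraic over $C$: indeed $C_{L^{\nl}}$ is algebraic over $C_L$ (it is a real closure of $C_L$) and $C_L$ is algebraic over $C$ by hypothesis. So Lemma~\ref{lem:altnl} applies and $L^{\nl}$ is a Newton-Liouville closure of $K$.

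Finally, Corollary~\ref{cor:cardbound1} gives $|L^{\nl}| = |K|$, and since $K \subseteq L \subseteq L^{\nl}$ we conclude $|K| \leq |L| \leq |L^{\nl}| = |K|$, hence $|L| = |K|$. There is no real obstacle here—the only point requiring care is verifying that $L$ is $\upo$-free so that forming $L^{\nl}$ is legitimate, and that is exactly what the inheritance statement in Fact~\ref{fact:dalgupofree} provides.

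\begin{proof}
By Fact~\ref{fact:dalgupofree}, $L$ is $\upo$-free, so by Fact~\ref{fact:nlexist} it has a Newton-Liouville closure $L^{\nl}$ that is $\d$-algebraic over $L$ and whose constant field is a real closure of $C_L$. Since $L$ is $\d$-algebraic over $K$, so is $L^{\nl}$; and since $C_L$ is algebraic over $C$, so is $C_{L^{\nl}}$. Hence $L^{\nl}$ is a newtonian Liouville closed $H$-field extension of $K$ that is $\d$-algebraic over $K$ with constant field algebraic over $C$, so $L^{\nl}$ is a Newton-Liouville closure of $K$ by Lemma~\ref{lem:altnl}. By Corollary~\ref{cor:cardbound1}, $|L^{\nl}| = |K|$, and since $K \subseteq L \subseteq L^{\nl}$ we get $|L| = |K|$.
\end{proof}
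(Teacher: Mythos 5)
Your proof is correct and follows exactly the paper's argument: pass to $L^{\nl}$ via Fact~\ref{fact:dalgupofree} and Fact~\ref{fact:nlexist}, recognize it as a Newton-Liouville closure of $K$ by Lemma~\ref{lem:altnl}, and squeeze $|L|$ between $|K|$ and $|L^{\nl}|=|K|$ using Corollary~\ref{cor:cardbound1}. No differences worth noting.
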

\begin{proof}
By Fact~\ref{fact:dalgupofree}, $L$ is $\upo$-free as well. Let $L^{\nl}$ be a Newton-Liouville closure of $L$. Then $L^{\nl}$ is $\d$-algebraic over $K$ and $C_{L^{\nl}}$ is algebraic over $C$, so Lemma~\ref{lem:altnl} gives that $L^{\nl}$ is a Newton-Liouville closure of $K$. Corollary~\ref{cor:cardbound1} gives $|K|\leq |L|\leq |L^{\nl}|= |K|$.
\end{proof}

%----------------------------------------------------------------------------------%
\section{Power extensions and the power closure}\label{sec:powexts}
%----------------------------------------------------------------------------------%
In this section, let $K$ be an $H$-field with real closed constant field $C$. Following~\cite{AD05}, we say that $K$ is \textbf{closed under powers} if for all $f \in K^\times$ and all $c \in C$, there is $g \in K^\times$ with $g^\dagger = cf^\dagger$. Equivalently, $K$ is closed under powers if the additive subgroup $(K^\times)^\dagger \subseteq K$ is a $C$-linear subspace of $K$.
% should I mention that $\Gamma$ is isomorphic to $(K^\times)^\dagger / (\cO^\times)^\dagger$?

\begin{fact}[{\cite[Lemma~7.4]{AD05}}]\label{fact:gammaCspace}
Suppose that $K$ is closed under powers. Then $\Gamma$ admits the structure of an ordered $C$-vector space. Explicitly, for $\gamma \in \Gamma$ and $c \in C$, take $f,g\in K^\times$ with $vf = \gamma$ and $g^\dagger = cf^\dagger$ and set $c\gamma\coloneqq vg$. This choice does not depend on our choice of $f$ or $g$.
\end{fact}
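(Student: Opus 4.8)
The plan is to verify that the proposed scalar multiplication $c\cdot\gamma \coloneqq vg$ is well-defined, and then that it satisfies the vector-space axioms and is compatible with the ordering. First I would address well-definedness, which splits into two independent parts. For the choice of $g$: if $g_1, g_2 \in K^\times$ both satisfy $g_i^\dagger = cf^\dagger$, then $(g_1/g_2)^\dagger = 0$, so $g_1/g_2 \in C^\times$ (an exponential integral of $0$ lies in $C^\times$), whence $vg_1 = vg_2$ since $C \subseteq \cO^\times$. For the choice of $f$: suppose $vf_1 = vf_2 = \gamma$, and pick $g_i$ with $g_i^\dagger = cf_i^\dagger$. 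Then $f_1/f_2 \asymp 1$, so $f_1/f_2 = u$ with $u \asymp 1$; writing $f_1^\dagger - f_2^\dagger = u^\dagger$ we get $v(f_1^\dagger - f_2^\dagger) \in (\Gamma^>)'$. Now $(g_1/g_2)^\dagger = c(f_1^\dagger - f_2^\dagger)$, and I want to conclude $g_1/g_2 \asymp 1$, i.e.\ $v(g_1/g_2) = 0$. If $g_1/g_2 \not\asymp 1$, then $v\big((g_1/g_2)^\dagger\big) = \psi\big(v(g_1/g_2)\big) \in \Psi$; but $v\big((g_1/g_2)^\dagger\big) = v\big(c(f_1^\dagger - f_2^\dagger)\big) \in (\Gamma^>)'$ (using $c \ne 0$; the case $c = 0$ is trivial since then $g_1, g_2 \in C^\times$), contradicting $\Psi < (\Gamma^>)'$. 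Hence $vg_1 = vg_2$, so $c\gamma$ depends only on $c$ and $\gamma$.

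Next I would check the algebraic identities. For $\gamma = vf$ and $c = 1$: $f^\dagger = 1\cdot f^\dagger$, so $1\cdot\gamma = vf = \gamma$. For additivity in the scalar, given $c, d \in C$, pick $g$ with $g^\dagger = cf^\dagger$ and $h$ with $h^\dagger = df^\dagger$; then $(gh)^\dagger = (c+d)f^\dagger$, so $(c+d)\gamma = v(gh) = vg + vh = c\gamma + d\gamma$. For $(cd)\gamma = c(d\gamma)$: pick $h$ with $h^\dagger = df^\dagger$, so $d\gamma = vh$; then pick $g$ with $g^\dagger = c h^\dagger = cd f^\dagger$, giving $c(d\gamma) = vg = (cd)\gamma$. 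For additivity in $\gamma$: given $\gamma_1 = vf_1$, $\gamma_2 = vf_2$, pick $g_i$ with $g_i^\dagger = cf_i^\dagger$; then $(g_1 g_2)^\dagger = c(f_1 f_2)^\dagger$ and $v(f_1 f_2) = \gamma_1 + \gamma_2$, so $c(\gamma_1 + \gamma_2) = v(g_1 g_2) = c\gamma_1 + c\gamma_2$. (That $K$ is closed under powers guarantees all the required $g$'s exist; that $C$ is real closed — in particular, a field — is what makes $\Gamma$ a vector space rather than merely a module.)

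Finally, order-compatibility: I must show that for $c \in C^>$ and $\gamma > 0$ we have $c\gamma > 0$. Take $f \in K^\times$ with $vf = \gamma > 0$, so $f \prec 1$ and $f^\dagger < 0$ by (H1) (the displayed consequence of (H1) in the text). Take $g$ with $g^\dagger = cf^\dagger$; since $c > 0$, we have $g^\dagger = cf^\dagger < 0$ as well (after possibly replacing $g$ by $-g$, which does not change $g^\dagger$ or $vg$, we may take $g > 0$). For $g > 0$, the contrapositive of the (H1)-consequence gives: $g^\dagger < 0$ forces $g \prec 1$, i.e.\ $vg > 0$, so $c\gamma = vg > 0$. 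Combined with the $C$-vector-space structure this shows $<$ is translation-invariant and respects positive scaling, so $\Gamma$ is an ordered $C$-vector space.

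I do not anticipate a serious obstacle here; the one step that requires genuine care is the independence from the choice of $f$, where the $H$-asymptotic inequality $\Psi < (\Gamma^>)'$ is exactly what rules out the bad case — this is the place where being an $H$-field (not just a differential field closed under powers) is used.
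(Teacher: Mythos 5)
The paper does not prove this statement: it is imported as a Fact from \cite[Lemma~7.4]{AD05}, so there is no in-paper argument to compare against. Judged on its own terms, your proposal is almost entirely correct. The well-definedness argument is the heart of the matter and you have it right; in particular, the independence from the choice of $f$ via the incompatibility of $\Psi$ with $(\Gamma^>)'$ is exactly the point where the $H$-field (rather than merely ``differential field closed under powers'') hypothesis is used, as you note. The vector-space identities are routine and correctly handled.

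There is, however, one genuine misstep, in the order-compatibility paragraph. You assert that for $g>0$, ``the contrapositive of the (H1)-consequence gives: $g^\dagger<0$ forces $g\prec 1$.'' The consequences of (H1) recorded in the paper are the one-way implications $g\succ 1\Rightarrow g^\dagger>0$ and $g\prec 1\Rightarrow g^\dagger<0$; the contrapositive of the first only yields $g^\dagger\leq 0\Rightarrow g\preceq 1$, i.e.\ $vg\geq 0$. An element $g\asymp 1$ can perfectly well have $g^\dagger<0$ (take $g=1+x^{-1}$ in a Hardy field), so this inference does not exclude $vg=0$, and as written you only obtain $c\gamma\geq 0$. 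The gap is easy to close with tools you already deployed: if $vg=0$ and $g\notin C^\times$, then $v(g^\dagger)\in(\Gamma^>)'$, whereas $v(g^\dagger)=v(cf^\dagger)=\psi(\gamma)\in\Psi<(\Gamma^>)'$, a contradiction. Alternatively and more slickly, since you have already established the $C$-vector-space structure, $c\neq 0$ and $\gamma\neq 0$ force $c\gamma\neq 0$ (apply the scalar $c^{-1}$), which together with $c\gamma\geq 0$ gives $c\gamma>0$. Either repair completes the proof.
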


For $g \in K^\times$, we have $g \asymp 1$ if and only if $v(g^\dagger) > \Psi$. This gives us the following:

\begin{lemma}\label{lem:poweroverPsi}
Suppose that $K$ is closed under powers, let $f_1,\ldots,f_n \in K^\times$, and let $c_1,\ldots,c_n \in C$. Then
\[
c_1vf_1+ \cdots+c_nvf_n = 0 \Longleftrightarrow v(c_1f_1^\dagger + \cdots + c_nf_n^\dagger)>\Psi.
\]
\end{lemma}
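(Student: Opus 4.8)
The plan is to reduce the claimed equivalence to the criterion recorded just above the lemma:\ for $g \in K^\times$ one has $g \asymp 1$ if and only if $v(g^\dagger) > \Psi$. So the whole argument is to manufacture a single element $g \in K^\times$ whose valuation is the left-hand sum and whose logarithmic derivative is the right-hand sum.

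First I would use the hypothesis that $K$ is closed under powers: for each $i$ pick $g_i \in K^\times$ with $g_i^\dagger = c_i f_i^\dagger$, and set $g \coloneqq g_1\cdots g_n \in K^\times$. Since $a^\dagger + b^\dagger = (ab)^\dagger$, we get $g^\dagger = g_1^\dagger + \cdots + g_n^\dagger = c_1 f_1^\dagger + \cdots + c_n f_n^\dagger$. Next, by the very definition of the ordered $C$-vector space structure on $\Gamma$ in Fact~\ref{fact:gammaCspace} (applied with the $f_i$ and $g_i$ just chosen), $vg_i = c_i\, vf_i$, hence $vg = vg_1 + \cdots + vg_n = c_1 vf_1 + \cdots + c_n vf_n$.

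Putting these together: $c_1 vf_1 + \cdots + c_n vf_n = 0$ is equivalent to $vg = 0$, i.e.\ to $g \asymp 1$, which by the criterion above is equivalent to $v(g^\dagger) > \Psi$, i.e.\ to $v(c_1 f_1^\dagger + \cdots + c_n f_n^\dagger) > \Psi$. The only case needing a word is when the sum of logarithmic derivatives is $0$ (so $g \in C^\times$), but then $v(0) = \infty > \Psi$, consistently.

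I do not expect any real obstacle here; the argument is a direct computation once the right $g$ is produced, and the only bookkeeping point is that $g$, being a product of elements of $K^\times$, is itself nonzero, so the logarithmic derivative and the valuation criterion both apply.
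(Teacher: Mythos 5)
Your proof is correct and is essentially identical to the paper's: both construct $g = g_1\cdots g_n$ with $g_i^\dagger = c_i f_i^\dagger$, identify $vg$ with $c_1vf_1+\cdots+c_nvf_n$ via Fact~\ref{fact:gammaCspace}, and apply the criterion that $g\asymp 1$ iff $v(g^\dagger)>\Psi$. Your extra remark about the degenerate case $g^\dagger=0$ is a harmless (and correct) addition.
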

\begin{proof}
Choose for each $i = 1,\ldots,n$ some $g_i \in K$ with $g_i^\dagger =c_if_i^\dagger$ and set $g \coloneqq g_1g_2\cdots g_n$. Then 
\[
v(c_1f_1^\dagger + \cdots + c_nf_n^\dagger)= v(g^\dagger) >\Psi \Longleftrightarrow c_1vf_1+\cdots +c_nvf_n = vg = 0.\qedhere
\]
\end{proof}

\begin{lemma}\label{lem:uniquesim1}
Suppose that $K$ is closed under powers, let $f \in 1+\smallo$, and let $c \in C$. Then there is a unique $g \in 1+\smallo$ with $g^\dagger = cf^\dagger$.
\end{lemma}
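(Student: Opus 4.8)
The plan is to produce the desired $g$ by first invoking closure under powers to get \emph{some} exponential integral of $cf^\dagger$, then rescaling it by a suitable nonzero constant so that it lands in $1+\smallo$; uniqueness will then follow from the fact that $1$ is the only constant in $1+\smallo$.

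In more detail: I would first dispose of the case $c=0$, where $cf^\dagger = 0$ forces $g\in C^\times$, and hence (by (H2), which gives $C\cap\smallo = \{0\}$) forces $g=1$, so both existence and uniqueness are immediate. For $c\neq 0$, the key observations are that $f\in 1+\smallo$ gives $f\asymp 1$ and hence $v(f^\dagger)>\Psi$, so also $v(cf^\dagger)>\Psi$. Closure under powers then supplies $g_0\in K^\times$ with $g_0^\dagger = cf^\dagger$, and $v(g_0^\dagger)>\Psi$ forces $g_0\asymp 1$. Using $\cO = C+\smallo$ I would write $g_0\sim c_0$ for some $c_0\in C^\times$ and set $g\coloneqq g_0/c_0\in 1+\smallo$; since $c_0$ is a nonzero constant, $g^\dagger = g_0^\dagger = cf^\dagger$, giving existence.

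For uniqueness I would take $g_1,g_2\in 1+\smallo$ with $g_1^\dagger = g_2^\dagger = cf^\dagger$, note that $(g_1/g_2)^\dagger = 0$ puts $g_1/g_2$ in $C^\times$, and then use that $1+\smallo$ is a multiplicative subgroup of $K^\times$ to conclude $g_1/g_2\in (1+\smallo)\cap C = \{1\}$, whence $g_1 = g_2$.

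There is no real obstacle here; the proof is essentially bookkeeping with the valuation. The points to be careful about are the equivalence $g\asymp 1\Longleftrightarrow v(g^\dagger)>\Psi$ for $g\in K^\times$ (recorded just before the statement) and the two uses of axiom (H2): $\cO = C+\smallo$ to normalize $g_0$, and $C\cap\smallo = \{0\}$ to pin down constants in the uniqueness step and in the case $c=0$.
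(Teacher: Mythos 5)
Your proposal is correct and follows essentially the same route as the paper: take an exponential integral of $cf^\dagger$ supplied by closure under powers, observe it is $\asymp 1$ via the equivalence $g\asymp 1\Longleftrightarrow v(g^\dagger)>\Psi$, normalize by a constant using (H2), and derive uniqueness from $C^\times\cap(1+\smallo)=\{1\}$. The separate treatment of $c=0$ is harmless but unnecessary, since the argument for general $c$ already covers it.
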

\begin{proof}
Take $a \in K^\times$ with $a^\dagger = cf^\dagger$. Then $a \asymp 1$, so we may take $d \in C^\times$ with $a \sim d$. Put $g \coloneqq a/d$. Then $g \sim 1$ and $g^\dagger = a^\dagger = cf^\dagger$. To see that $g$ is unique, let $h \in 1+\smallo$ with $h^\dagger =cf^\dagger = g^\dagger$. Then 
\[
h/g \in C^\times \cap (1+\smallo) = \{1\},
\]
so $h = g$. 
\end{proof}

A \textbf{power extension} of $K$ is an $H$-field extension $L$ of $K$ such that $C_L = C$ and each $a \in L$ is contained in an intermediate subfield $K(t_1,\ldots,t_n) \subseteq L$ where for each $i$, either $t_i$ is algebraic over $K(t_1,\ldots,t_{i-1})$ or $t_i^\dagger = cf^\dagger$ for some $c\in C$ and some nonzero $f \in K(t_1,\ldots,t_{i-1})$. A \textbf{power closure} of $K$ is a power extension of $K$ that is real closed and closed under powers. 
In~\cite[Section~8]{AD05}, Aschenbrenner and van den Dries studied power closures, establishing the following:
\begin{fact}[{\cite[Proposition~8.5]{AD05}}]
$K$ has either exactly one or exactly two power closures up to $K$-isomorphism.
\end{fact}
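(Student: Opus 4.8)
The plan is to understand where the dichotomy "exactly one or exactly two" comes from, which should trace back to two sources of non-uniqueness in building a power closure, only one of which can actually contribute a genuine second isomorphism class. The construction of a power closure proceeds by transfinite iteration of two moves: (a) taking a real closure, and (b) adjoining an exponential integral $g$ with $g^\dagger = cf^\dagger$ for $c\in C$, $f\in K^\times$, whenever such a $g$ does not already exist. Move (a) is essentially canonical for ordered fields (the real closure of an ordered field is unique up to unique isomorphism). So the only freedom is in move (b), and within move (b) the only freedom is in the choice of the element $g$ when its $\dagger$-value does not pin it down. By Lemma~\ref{lem:daggercut}, if $vg\notin\Q\Gamma$ then the extension $K(g)$ is determined up to $K$-isomorphism by the cut of $vg$ over $\Q\Gamma$; and by (H1)/the $H$-field axioms that cut is forced once $g^\dagger$ is fixed --- so no freedom there. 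Thus the only genuine ambiguity arises when $vg\in\Q\Gamma$ and in fact $vg=0$, i.e.\ $g\asymp 1$: then one must choose $d\in C^\times$ with $g\sim d$, and the sign of $g$ (equivalently the coset $g(1+\smallo)$ among the finitely many options, which here reduces to $\pm$) is not determined by $g^\dagger$.

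First I would recall, from~\cite[Section~8]{AD05}, the precise recipe for constructing a power closure $L$ of $K$ as a directed union of $H$-field extensions obtained by alternately real-closing and adjoining missing exponential integrals; this gives existence. Then, given two power closures $L_1$ and $L_2$, I would build a $K$-isomorphism $L_1\to L_2$ by the usual back-and-forth/Zorn's lemma argument on partial $H$-field embeddings: take a maximal partial $H$-field embedding $\imath\colon E\to L_2$ with $K\subseteq E\subseteq L_1$, and show $E=L_1$. If $E$ is not all of $L_1$, it fails to be real closed or fails to be closed under powers (relative to the ambient $L_1$). In the first case, uniqueness of real closures over an ordered field lets us extend $\imath$, contradicting maximality. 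In the second case there is $f\in E^\times$ and $c\in C$ with an exponential integral $g\in L_1$, $g^\dagger=cf^\dagger$, and $g\notin E$; since $L_2$ is also closed under powers there is $g^*\in L_2$ with $(g^*)^\dagger = c\,\imath(f)^\dagger$. Now I would split on $vg$: if $vg\notin\Q\Gamma_E$, then Lemma~\ref{lem:daggercut} (applied with $s = cf^\dagger$, noting $v g^* \notin \Q\Gamma_{\imath(E)}$ because the cut of $vg$ over $\Q\Gamma_E$ is forced by $g^\dagger$ and transported by $\imath$) gives a unique $H$-field embedding $E(g)\to L_2$ over $E$; if $vg\in\Q\Gamma_E$, I would first adjoin a suitable root to reduce to $vg=0$, i.e.\ $g\asymp 1$, and then by Lemma~\ref{lem:uniquesim1}-style reasoning the \emph{only} choice is the sign of $g$, so there are at most two ways to extend $\imath$ here --- and this is the single place where a second isomorphism class can appear.

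To convert "at most two extensions at each sign-choice step" into "at most two power closures globally", the key point is that all these sign choices are \emph{coherent}: once we decide, for a single generating element $g_0\asymp 1$ with $g_0^\dagger$ not already realized, whether the new exponential integral is positive or negative, every subsequent such choice is determined. Concretely, if $g\asymp 1$ has $g^\dagger=cf^\dagger$ with $f\in K^\times$, then $g$ and $g_0$ (for appropriate $g_0$) differ multiplicatively by an element already in the real closure of $K$, so the sign of $g$ is forced by the sign of $g_0$ together with data already in $K$. I would make this precise by showing that the set of all such "$\asymp 1$ exponential integral" adjunctions, modulo the real closure, is governed by a single $\Z/2\Z$-worth of choice --- essentially the quotient $C^\times/(C^\times)$... more carefully, by whether we send a fixed chosen $g_0$ to $g^*_0$ or to $-g^*_0$ in $L_2$. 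So at most two $K$-isomorphism classes, and I would exhibit $K$ and $K^{\rc}$-type examples (or cite~\cite{AD05}) where both occur to see that "exactly one" and "exactly two" are both realized depending on $K$ --- in particular when $C$ has elements without square roots matching up with value group behaviour, the two are non-isomorphic, whereas if enough compatibility is present the sign flip is realized by an automorphism and there is only one class.

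The main obstacle I anticipate is precisely the coherence step: proving that a single sign choice propagates consistently through the entire transfinite construction, i.e.\ that the "finitely many cosets of $1+\smallo$" ambiguity flagged in passing above genuinely collapses to a global $\Z/2\Z$ and does not multiply at each stage. This requires carefully tracking, as one adjoins exponential integrals, how the group $(L^\times)^\dagger$ grows and how each new $g\asymp 1$ relates multiplicatively to previously adjoined elements and to $K^{\rc}$; the $H$-field axioms (especially (H2): $\cO = C+\smallo$, which forces $g\sim d$ for a unique $d\in C^\times$) are what make this work, and the bookkeeping --- rather than any single hard idea --- is where the real effort lies. A secondary technical point is handling the reduction from $vg\in\Q\Gamma$, $vg\neq 0$ to the case $vg=0$: one adjoins a root $g^{1/n}$, which changes the value group but, by Fact~\ref{fact:gammaCspace}, in a controlled way, and one must check this does not introduce further ambiguity (it does not, again by Lemma~\ref{lem:daggercut}, since those roots have value outside the previous $\Q$-span or are pinned down by $\dagger$-data).
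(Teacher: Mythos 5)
The back-and-forth scaffolding (maximal partial embedding via Zorn, splitting on whether the next generator is algebraic or an exponential integral) is reasonable, but you have located the source of the dichotomy in the wrong place, and the case you dismiss as having ``no freedom there'' is exactly where the second isomorphism class comes from. The choice of $d\in C^\times$ with $g\sim d$, and the sign of $g$, contribute nothing: since $C^\times\subseteq K^\times$, the elements $g$ and $dg$ generate the \emph{same} extension $K(g)=K(dg)$ for every $d\in C^\times$, so there is not even a $\Z/2\Z$ of choices at that step (and recall the standing hypothesis that $C$ is real closed, so there are no missing roots of constants for your final paragraph to exploit). Your ``coherent global sign choice'' therefore collapses entirely, and as written your argument would prove that the power closure is always unique --- contradicting Fact~\ref{fact:closurechar}.

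The genuine branching, which your case split on $vg$ conceals, is between $vg\neq 0$ and $vg=0$ for an exponential integral $g$ of $s=c_1f_1^\dagger+\cdots+c_nf_n^\dagger$. Lemma~\ref{lem:daggercut} only compares two candidates \emph{both} of whose values lie outside $\Q\Gamma$; it says nothing when one candidate has $vg\neq 0$ (so that $vs=\psi(vg)$ becomes a new element of $\Psi$ in the extension) while another has $vg=0$ (so that, writing $g=d(1+\epsilon)$ with $\epsilon\prec 1$, one gets $vs=v(g^\dagger)=v(\epsilon')\in(\Gamma^>)'$ in the extension). Both options are realizable precisely when $\Psi<vs<(\Gamma^>)'$, and the resulting extensions are non-isomorphic over $K$ because $vs$ lands in $\Psi$ in one and in $(\Gamma^>)'$ in the other; this is the content of Fact~\ref{fact:closurechar} ([AD05, Proposition~8.10]) and is exactly parallel to the gap dichotomy for antiderivatives in Lemmas~\ref{lem:gapgoesup} and~\ref{lem:gapgoesdown}. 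A correct proof of the statement must show (i) that this is the \emph{only} branching point --- if $v(c_1f_1^\dagger+\cdots+c_nf_n^\dagger)$ never falls strictly between $\Psi$ and $(\Gamma^>)'$, the cut of $vg$ is forced and the back-and-forth succeeds --- and (ii) that once one such choice is made the region between $\Psi$ and $(\Gamma^>)'$ closes up, so the branching occurs at most once up to $K$-isomorphism, yielding ``exactly one or exactly two.'' Neither point is addressed by your bookkeeping of signs.
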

Moreover, they characterized when $K$ has two distinct power closures:

\begin{fact}[{\cite[Proposition~8.10]{AD05}}]\label{fact:closurechar}
$K$ has two power closures nonisomorphic over $K$ if and only if there are $c_1,\ldots,c_n \in C$ and $f_1,\ldots,f_n \in K^\times$ such that
\[
\Psi<v(c_1f_1^\dagger + \cdots+c_nf_n^\dagger)<(\Gamma^>)'.
\]
In particular, if $K$ is grounded or has asymptotic integration, then $K$ has a unique power closure.
\end{fact}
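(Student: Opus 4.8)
The plan is to prove the contrapositive of the main equivalence, using Fact~\ref{fact:gammaCspace} and the two uniqueness lemmas (Lemmas~\ref{lem:poweroverPsi} and~\ref{lem:uniquesim1}) to control how the value group and the residue field grow as one builds a power closure step by step. The key observation, packaged in Lemma~\ref{lem:poweroverPsi}, is that for $c_1,\dots,c_n \in C$ and $f_1,\dots,f_n \in K^\times$ the element $s \coloneqq c_1 f_1^\dagger + \cdots + c_n f_n^\dagger$ satisfies $vs > \Psi$ precisely when $\sum c_i v f_i = 0$ in $\Gamma$. So the failure of the displayed condition means: whenever $s$ is a $C$-linear combination of logarithmic derivatives of elements of $K$ and $vs$ lies strictly above $\Psi$, in fact $vs$ already belongs to $(\Gamma^>)'$, i.e.\ $s$ has an exponential integral $g \asymp 1$ in $K$ (using that $K$ is Liouville-adjacent enough — more precisely, $vs \in (\Gamma^>)'$ gives $u \in K^>$ with $u^\dagger = s$ and $u \asymp 1$, which one can then normalize by a constant via Lemma~\ref{lem:uniquesim1} to land in $1+\smallo$). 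The point of two power closures being isomorphic is that the only ``choice'' in the construction is whether some $t$ with $t^\dagger = s$ is taken $\asymp 1$ or $\succ 1$; when $vs \in (\Gamma^>)'$ the first option is forced (up to the harmless constant normalization), so no genuine branching occurs.

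Concretely, I would first reduce to building a power closure as a transfinite tower $K = K_0 \subseteq K_1 \subseteq \cdots$ where each successor step either adjoins an algebraic element (real closure steps, which are canonical up to $K$-isomorphism and do not affect the value group's $\Q$-span nor introduce choices relevant here) or adjoins $t$ with $t^\dagger = c f^\dagger$ for some $c \in C$, $f \in K_i^\times$. For the latter step there are two cases. If $c\,vf \notin \Q\Gamma_{K_i}$, then $t$ is transcendental, $vt = c\,vf$ is forced, and Lemma~\ref{lem:daggercut} shows the extension $K_i(t)$ is determined up to isomorphism over $K_i$ by the data $(c,f)$ — no choice. If $c\, vf \in \Q\Gamma_{K_i}$, pick $y \in K_i^\times$ and $m,n$ with $n\,c\,vf = m\,vy$, i.e.\ $v(t^n/y^m) = 0$; then $(t^n/y^m)^\dagger = n c f^\dagger - m y^\dagger$ is a $C$-linear combination of logarithmic derivatives from $K_i$ whose value is $> \Psi$ (by Lemma~\ref{lem:poweroverPsi}, since the corresponding value-group combination vanishes). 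Under our hypothesis this value lies in $(\Gamma_{K_i}^>)'$, so there is $u \in 1+\smallo_{K_i}$ (normalizing via Lemma~\ref{lem:uniquesim1}) with $u^\dagger = n c f^\dagger - m y^\dagger$; but then $t^n = y^m u d$ for some $d \in C^\times$, so $t$ is algebraic over $K_i$. Hence this ``power step'' is actually a real-closure-type step in disguise and again contributes no branching. Running this through the whole tower shows the power closure is unique up to $K$-isomorphism.

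For the forward direction (to get the full equivalence rather than just one implication), I would argue that if the displayed inequality does hold for some $c_i, f_i$, then taking $s = \sum c_i f_i^\dagger$ with $\Psi < vs < (\Gamma^>)'$, the element $t$ with $t^\dagger = s$ can be adjoined in two incompatible ways inside a power closure — either with $t \asymp 1$ (making $vs \in (\Gamma^>)'$ in the extension, forcing residue-field growth, which is impossible since $C_L = C$) or with $vt$ realizing a new cut making $vs \in \Psi_L$ — and these two choices lead to non-$K$-isomorphic power closures; this is exactly the content worked out in~\cite[Section~8]{AD05}, so I would cite it rather than redo it. The last sentence of the Fact is then immediate: if $K$ is grounded, $\Psi$ has a maximum and there is no room strictly between $\Psi$ and $(\Gamma^>)'$ other than a possible gap, which an element of the form $v(\sum c_i f_i^\dagger)$ cannot occupy for the grounded case; and if $K$ has asymptotic integration then $\Gamma = (\Gamma^<)' \cup (\Gamma^>)'$ leaves nothing strictly between $\Psi$ and $(\Gamma^>)'$ at all, so the displayed inequality is unsatisfiable and uniqueness follows.

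The main obstacle I anticipate is the bookkeeping in the tower argument: one must be careful that the properties ``real closed constant field $C$'', ``$C_{K_i} = C$'', and closure under powers are correctly maintained along limit stages, and that the normalization-by-a-constant (Lemma~\ref{lem:uniquesim1}) genuinely eliminates the apparent choice rather than merely hiding it — in particular that two power closures, built with possibly different choices of the auxiliary $y$, $m$, $n$ at each step, can be matched up by an isomorphism constructed by the same back-and-forth. This is where invoking the detailed development of~\cite[Section~8]{AD05} (especially Proposition~8.5) does the real work, and I would lean on it for the hardest coherence checks.
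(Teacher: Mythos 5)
The paper offers no proof of this statement: it is imported as a black box from \cite[Proposition~8.10]{AD05}, so there is no in-paper argument to measure yours against. Your sketch ultimately defers ``the real work'' to \cite[Section~8]{AD05} as well, which would be a legitimate move on its own; the problem is that the part you do spell out contains a step that fails.

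In the case $c\,vf \in \Q\Gamma_{K_i}$ you pass from ``$v(ncf^\dagger - my^\dagger) \in (\Gamma_{K_i}^>)'$'' to ``there is $u \in 1+\smallo_{K_i}$ with $u^\dagger = ncf^\dagger - my^\dagger$,'' and conclude that $t$ is algebraic over $K_i$, so that the step contributes no branching. That inference is wrong: $vs \in (\Gamma^>)'$ only says where the valuation of an exponential integral $\asymp 1$ of $s$ would have to sit; it does not produce such an integral inside $K_i$. (Compare Lemma~\ref{lem:newtonint}, where extracting $f$ with $f^\dagger = s$ from $vs \in (\Gamma^>)'$ requires newtonianity and goes through an immediate extension.) In general $t^n/y^m$ is a new element of $1+\smallo$ living in a proper immediate extension of $K_i$, so $t$ is transcendental over $K_i$, and the uniqueness of this extension is precisely the nontrivial content of \cite[Section~8]{AD05} that your tower argument was meant to re-derive. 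A second, related circularity: the case split on whether $c\,vf$ lies in $\Q\Gamma_{K_i}$ presupposes that $c\,vf$ is a well-defined element of some value group, i.e.\ that it has already been computed inside a chosen power closure (Fact~\ref{fact:gammaCspace} requires closure under powers); two candidate power closures could a priori disagree about exactly this, and that disagreement is the non-uniqueness being characterized. The intrinsic invariant to split on is the position of $v(ncf^\dagger - my^\dagger)$ relative to $\Psi_{K_i}$ and $(\Gamma_{K_i}^>)'$. Your forward direction (which you cite out) and your treatment of the ``in particular'' clause are fine---groundedness or asymptotic integration leaves no element of $\Gamma$ strictly between $\Psi$ and $(\Gamma^>)'$---but the uniqueness direction as written does not close without leaning entirely on \cite{AD05}, which is what the paper already does.
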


If $L$ is a power closure of $K$, then $\Gamma_L$ admits the structure of an ordered $C$-vector space, so it makes sense to speak of $C\Gamma$, the $C$-linear span of $\Gamma$, as a subspace of $\Gamma_L$. In the case that $K$ has a unique power closure, we have a good understanding of the value group and $\Psi$-set of this power closure.

\begin{fact}[{\cite[Corollary~8.11]{AD05}}]\label{fact:psicofinal}
Suppose that $K$ has a unique power closure $K\pow$. Then $\Gamma_{K\pow} = C\Gamma$.
If $K$ is grounded, then $\max \Psi_{K\pow} = \max \Psi$, and if $K$ is ungrounded, then $\Gamma^>$ is coinitial in $\Gamma_{K\pow}^>$. Consequently, $\Psi$ is cofinal in $\Psi_{K\pow}$ and a gap in $\Gamma$ remains a gap in $\Gamma_{K\pow}$.
\end{fact}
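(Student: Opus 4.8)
The plan is to fix the value group first, by a direct induction along a power tower, and then to deduce everything about $\Psi$ from the criterion in Fact~\ref{fact:closurechar} combined with the universal inequality $\Psi<(\Gamma^>)'$ valid in every $H$-asymptotic field.

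\emph{The value group.} The inclusion $C\Gamma\subseteq\Gamma_{K\pow}$ is immediate from Fact~\ref{fact:gammaCspace}, since $K\pow$ is closed under powers, so $\Gamma_{K\pow}$ is an ordered $C$-vector space containing $\Gamma$. For the reverse inclusion, take $a\in K\pow$ sitting in a tower $K=F_0\subseteq F_1\subseteq\cdots\subseteq F_m\ni a$ with $F_{i+1}=F_i(t_{i+1})$, where $t_{i+1}$ is algebraic over $F_i$ or satisfies $t_{i+1}^\dagger=cf^\dagger$ for some $c\in C$ and $f\in F_i^\times$. I would show $\Gamma_{F_i}\subseteq C\Gamma$ for all $i$ by induction: in the algebraic case $\Gamma_{F_{i+1}}\subseteq\Q\Gamma_{F_i}\subseteq C\Gamma$ (recall $\Q\subseteq C$); in the power case, choose $g\in K\pow^\times$ with $g^\dagger=cf^\dagger$ (possible since $K\pow$ is closed under powers), so $(t_{i+1}/g)^\dagger=0$, hence $t_{i+1}/g\in C^\times$ and $vt_{i+1}=vg=c\cdot vf\in C\cdot\Gamma_{F_i}\subseteq C\Gamma$ by Fact~\ref{fact:gammaCspace} and the inductive hypothesis. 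Then $va\in\Gamma_{F_m}\subseteq C\Gamma$, so $\Gamma_{K\pow}=C\Gamma$.

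\emph{$\Psi$ is cofinal in $\Psi_{K\pow}$.} Let $\delta\in\Psi_{K\pow}$, say $\delta=\psi_{K\pow}(\gamma)$ with $\gamma\in\Gamma_{K\pow}^{\neq}$. Using $\Gamma_{K\pow}=C\Gamma$, write $\gamma=c_1\gamma_1+\cdots+c_n\gamma_n$ with $c_j\in C^\times$ and $\gamma_j\in\Gamma^{\neq}$; fix $f_j\in K^\times$ with $vf_j=\gamma_j$ and $g\in K\pow^\times$ with $g^\dagger=\sum_jc_jf_j^\dagger$. Then $vg=\gamma$ by Fact~\ref{fact:gammaCspace}, so $\delta=v(g^\dagger)=v\big(\sum_jc_jf_j^\dagger\big)$, which lies in $\Gamma$ because $\sum_jc_jf_j^\dagger\in K$. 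Now the uniqueness hypothesis and Fact~\ref{fact:closurechar} say $\delta$ is not strictly between $\Psi$ and $(\Gamma^>)'$: so either $\delta\leq\psi$ for some $\psi\in\Psi$, or $\delta\geq\theta$ for some $\theta\in(\Gamma^>)'$. The second alternative is impossible, since $\theta\in(\Gamma^>)'\subseteq(\Gamma_{K\pow}^>)'$ whereas $\delta\in\Psi_{K\pow}$ and $\Psi_{K\pow}<(\Gamma_{K\pow}^>)'$. Hence $\delta\leq\psi$ for some $\psi\in\Psi$, which is the cofinality claim. In particular, if $K$ is grounded then $\Psi_{K\pow}\leq\max\Psi$, and since $\max\Psi\in\Psi\subseteq\Psi_{K\pow}$ we get $\max\Psi_{K\pow}=\max\Psi$.

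\emph{The ungrounded case and gaps.} Suppose $K$ is ungrounded. Given $\gamma\in\Gamma_{K\pow}^>$, use cofinality of $\Psi$ in $\Psi_{K\pow}$ together with the fact that $\Psi$ has no maximum to pick $\psi'\in\Psi$ with $\psi'>\psi_{K\pow}(\gamma)$, and then $\eta\in\Gamma^>$ with $\psi(\eta)=\psi'$. Since $\psi_{K\pow}(\eta)=\psi'>\psi_{K\pow}(\gamma)$ and $\psi_{K\pow}$ is weakly decreasing on $\Gamma_{K\pow}^>$ (an $H$-asymptotic property), we must have $\eta<\gamma$; thus $\Gamma^>$ is coinitial in $\Gamma_{K\pow}^>$. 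Finally, let $\beta$ be a gap in $\Gamma$, so $K$ is ungrounded. Then $\Psi_{K\pow}<\beta$ by cofinality of $\Psi$ in $\Psi_{K\pow}$ and $\Psi<\beta$; and for any $\gamma_0\in\Gamma_{K\pow}^>$, coinitiality gives $\gamma_1\in\Gamma^>$ with $\gamma_1\leq\gamma_0$, whence $\beta<\gamma_1'\leq\gamma_0'$ using $\beta<(\Gamma^>)'$ and monotonicity of $\gamma\mapsto\gamma'$. So $\Psi_{K\pow}<\beta<(\Gamma_{K\pow}^>)'$, i.e.\ $\beta$ remains a gap. The value-group induction and the coinitiality/gap bookkeeping are routine once set up; the main obstacle is the middle step, namely recognizing that an arbitrary element of $\Psi_{K\pow}$ is of the form $v\big(\sum_jc_jf_j^\dagger\big)$ with the $f_j$ already in $K$, so that Fact~\ref{fact:closurechar} applies, and then ruling out the unwanted alternative via $\Psi_{K\pow}<(\Gamma_{K\pow}^>)'$. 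Everything else cascades from this.
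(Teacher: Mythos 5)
The paper offers no proof of this statement---it is imported verbatim from \cite[Corollary~8.11]{AD05}---so I can only judge your argument on its own terms. The second half of what you wrote is correct and well organized: rewriting an arbitrary $\delta\in\Psi_{K\pow}$ as $v\big(\sum_jc_jf_j^\dagger\big)$ with the $f_j$ already in $K^\times$, invoking Fact~\ref{fact:closurechar} to exclude $\Psi<\delta<(\Gamma^>)'$, killing the alternative $\delta\geq\theta\in(\Gamma^>)'$ via $\Psi_{K\pow}<(\Gamma_{K\pow}^>)'$, and then the coinitiality and gap bookkeeping are all sound deductions.

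The problem is the first step, on which everything else depends. In the power case of your induction you establish only that the value of the \emph{generator} lies in $C\Gamma$, namely $vt_{i+1}=c\cdot vf$, and then silently conclude $\Gamma_{F_{i+1}}\subseteq C\Gamma$. That inference does not follow: $\Gamma_{F_{i+1}}=\Gamma_{F_i(t_{i+1})}$ contains the values of all rational functions of $t_{i+1}$ over $F_i$, and when $vt_{i+1}\in\Q\Gamma_{F_i}$ cancellation in expressions such as $t_{i+1}-a$ with $a\in F_i$, $a\sim t_{i+1}$, can a priori produce values outside $\Q(\Gamma_{F_i}+\Z vt_{i+1})$; one is then looking at $va+v\epsilon$ where $(1+\epsilon)^\dagger=t_{i+1}^\dagger-a^\dagger$, and bounding $v\epsilon$ requires information about $\Psi$ of the extension---precisely what you prove only \emph{afterwards}, using $\Gamma_{K\pow}=C\Gamma$. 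Contrast this with your algebraic case, where you correctly bound the \emph{whole} value group by $\Q\Gamma_{F_i}$; the power case needs the analogous global bound, and supplying it is the actual technical content of the extension lemmas for exponential integrals (the trichotomy: $vt_{i+1}\notin\Q\Gamma_{F_i}$ giving $\Gamma_{F_{i+1}}=\Gamma_{F_i}\oplus\Z vt_{i+1}$, versus an essentially immediate extension, versus the gap case), i.e., of \cite[Sections~7 and~8]{AD05} and the corresponding material in \cite[Sections~10.4--10.5]{ADH17}. Since your cofinality argument genuinely uses the hard inclusion $\Gamma_{K\pow}\subseteq C\Gamma$, this gap propagates to the rest of the proof.
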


\begin{lemma}\label{lem:intermediateclosure}
Suppose that $K$ has a unique power closure $K\pow$ and let $M \subseteq K\pow$ be an $H$-field extension of $K$. Then $K\pow$ is also the unique power closure of $M$.
\end{lemma}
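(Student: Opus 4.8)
The plan is to verify directly that $K\pow$ is a power closure of $M$ and then invoke the uniqueness part of Fact~\ref{fact:closurechar} (via the hypothesis that $K$ has a \emph{unique} power closure). First I would check that $K\pow$ is a power extension of $M$. Since $K\pow$ is a power extension of $K$, every $a \in K\pow$ lies in some intermediate $K(t_1,\ldots,t_n)$ witnessing the defining chain condition; but $M \subseteq K\pow$ contains $K$, so $M(t_1,\ldots,t_n) \supseteq K(t_1,\ldots,t_n)$ is an intermediate subfield of $K\pow$ containing $a$, and each step of the chain $M(t_1,\ldots,t_{i-1}) \subseteq M(t_1,\ldots,t_i)$ is still of the required form (algebraic, or $t_i^\dagger = c f^\dagger$ with $f \in K(t_1,\ldots,t_{i-1})^\times \subseteq M(t_1,\ldots,t_{i-1})^\times$ and $c \in C = C_M$). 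Here I use that $C_M = C$, which holds because $M$ sits between $K$ and the power extension $K\pow$, both of which have constant field $C$. Thus $K\pow$ is a power extension of $M$, and since it is real closed and closed under powers, it is a power closure of $M$.

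Next I would show $M$ has a \emph{unique} power closure, so that this one power closure must be (isomorphic to, hence equal to, as a subfield of $K\pow$ containing $M$) $K\pow$ itself. For this I apply the criterion in Fact~\ref{fact:closurechar}: $M$ has two nonisomorphic power closures iff there are $c_1,\ldots,c_n \in C$ and $f_1,\ldots,f_n \in M^\times$ with $\Psi_M < v(c_1 f_1^\dagger + \cdots + c_n f_n^\dagger) < (\Gamma_M^>)'$. Suppose such data existed. I want to derive from it a corresponding obstruction over $K$, contradicting uniqueness of the power closure of $K$. The key point is that the value $\gamma \coloneqq v(c_1 f_1^\dagger + \cdots + c_n f_n^\dagger)$ lies in $\Gamma_M \subseteq \Gamma_{K\pow} = C\Gamma$ by Fact~\ref{fact:psicofinal}; moreover by Lemma~\ref{lem:poweroverPsi} applied inside $K\pow$ (which is closed under powers) the condition $\gamma > \Psi_{K\pow}$ is equivalent to $c_1 vf_1 + \cdots + c_n vf_n = 0$ in $\Gamma_{K\pow}$, i.e.\ to a $C$-linear dependence of the $vf_i \in C\Gamma$; writing each $vf_i$ as a $C$-combination of elements of $\Gamma$ and using that $(K^\times)^\dagger$ is not yet a $C$-space but its $C$-span is realized in $K\pow$, one rewrites $\sum c_i f_i^\dagger$ (modulo something of value $> \Psi_{K\pow}$, coming from elements $\asymp 1$) as $\sum_j d_j g_j^\dagger$ with $d_j \in C$, $g_j \in K^\times$. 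Then $v(\sum_j d_j g_j^\dagger) = \gamma$ with the same cut relative to $\Psi$ and $(\Gamma^>)'$, because $\Psi$ is cofinal in $\Psi_{K\pow}$ and a gap (or the relevant cut) is preserved by Fact~\ref{fact:psicofinal}, and because $\Gamma_M \cap (\Gamma^>)'$-vs-$\Psi$ behavior transfers down. This would put $\Psi < v(\sum_j d_j g_j^\dagger) < (\Gamma^>)'$, contradicting the ``in particular'' direction of Fact~\ref{fact:closurechar} for $K$ — or rather contradicting the assumption that $K$ has a unique power closure.

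The main obstacle I anticipate is the bookkeeping in the previous paragraph: carefully translating an obstruction witnessed by elements of $M$ (with values in $C\Gamma$) back into one witnessed by elements of $K$ (with values in $\Gamma$), keeping track of the interaction between the $C$-linear span of logarithmic derivatives and the ``$\asymp 1$ contributes value $> \Psi$'' phenomenon encoded in Lemma~\ref{lem:poweroverPsi}. A cleaner route, which I would try first, is purely order-theoretic: by Fact~\ref{fact:psicofinal}, $\Psi$ is cofinal in $\Psi_{K\pow}$ and hence in $\Psi_M$, and $(\Gamma_M^>)' \subseteq (\Gamma_{K\pow}^>)'$; if $K$ has asymptotic integration or is grounded, Fact~\ref{fact:closurechar} already gives uniqueness for $K$, and one checks $M$ inherits asymptotic integration (since $\Gamma_M^<$ is cofinal in $\Gamma_{K\pow}^{<}$, as $\Gamma^<$ already is by Fact~\ref{fact:psicofinal}) or groundedness (same $\max\Psi$), so Fact~\ref{fact:closurechar} gives uniqueness for $M$ too; the only remaining case is $K$ ungrounded with asymptotic integration failing, i.e.\ $K$ has a gap, and then by Fact~\ref{fact:psicofinal} that gap persists in $\Gamma_{K\pow}$, forcing it to persist in $\Gamma_M$, so $M$ is not grounded and lacks asymptotic integration but still has a unique power closure provided no obstruction of the displayed form exists over $M$ — and any such obstruction, having value equal to the gap's position, would contradict that the gap of $K$ is genuinely a gap (no element of $(K\pow)^\times$ has logarithmic-derivative-value strictly between $\Psi_{K\pow}$ and $(\Gamma_{K\pow}^>)'$, since $K\pow$ is closed under powers and such a value would, by Fact~\ref{fact:closurechar}, produce a second power closure of $K$). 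I would write up whichever of these two arguments turns out to need fewer case distinctions.
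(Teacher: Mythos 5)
Your first step---that $K\pow$ is a power extension, hence a power closure, of $M$ (using $C_M=C$)---is correct and agrees with the paper. The problem is the uniqueness half: you offer two alternative routes, commit to neither, and the ``cleaner route'' you say you would try first contains a step that fails. There you assert that if $K$ has asymptotic integration then $M$ inherits it ``since $\Gamma_M^<$ is cofinal in $\Gamma_{K\pow}^<$.'' Cofinality does not yield asymptotic integration: that property says $\gamma\mapsto\gamma'$ maps $\Gamma_M^{\neq}$ \emph{onto} $\Gamma_M$, and the whole phenomenon of gap creation (see Lemma~\ref{lem:makegap}) is that this can fail in an extension even when nothing changes cofinally. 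So a priori $M$ could have a gap while $K$ has asymptotic integration; since your trichotomy is run on $K$ rather than on $M$, this case is simply not covered. (It can be patched---such an obstruction over $M$ would have value equal to a gap of $M$, hence of $K\pow$, and your final parenthetical then applies---but your stated reason for that parenthetical is also wrong: the fact that no $g\in K\pow^\times$ has $v(g^\dagger)$ strictly between $\Psi_{K\pow}$ and $(\Gamma_{K\pow}^>)'$ is not a consequence of uniqueness of the power closure of $K$; it holds in every $H$-field, because $v(g^\dagger)\in\Psi_{K\pow}$ when $g\not\asymp 1$ and $v(g^\dagger)\in(\Gamma_{K\pow}^>)'\cup\{\infty\}$ when $g\asymp 1$.)

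That last observation is in fact the whole proof, and your first route is this argument in disguise, minus an unnecessary descent to $K$. The paper argues: given $c_i\in C$ and $f_i\in M^\times$, closure of $K\pow$ under powers produces $g\in K\pow^\times$ with $g^\dagger=\sum_i c_i f_i^\dagger$, so $v\big(\sum_i c_i f_i^\dagger\big)$ lies in $\Psi_{K\pow}\cup(\Gamma_{K\pow}^>)'$; by Fact~\ref{fact:psicofinal}, $\Psi$ is cofinal in $\Psi_{K\pow}$ and, in the ungrounded case, $(\Gamma^>)'$ is coinitial in $(\Gamma_{K\pow}^>)'$, so this value can never lie strictly between $\Psi_M$ and $(\Gamma_M^>)'$, and Fact~\ref{fact:closurechar} gives uniqueness for $M$. (The grounded case is immediate: $\max\Psi_{K\pow}=\max\Psi\in\Psi_M$, so $M$ is grounded.) In particular there is no need to rewrite $\sum_i c_i f_i^\dagger$ as a $C$-combination of logarithmic derivatives of elements of $K$, which is exactly the bookkeeping where your first route bogs down.
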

\begin{proof}
Clearly, $K\pow$ is a power closure of $M$, as any $H$-field extension of $M$ that is a power extension of $K$ is also a power extension of $M$. Thus, we need only show that $M$ has a unique power closure. If $K$ is grounded, then this follows from Facts~\ref{fact:closurechar} and~\ref{fact:psicofinal}, since $\Psi \subseteq \Psi_M$ is cofinal in $\Psi_{K\pow}$. Suppose that $K$ is ungrounded, so $\Gamma^> \subseteq \Gamma_M^>$ is coinitial in $\Gamma_{K\pow}^>$. Let $f_1,\ldots,f_n \in M^>$ and $c_1,\ldots,c_n \in C$ and take $g \in K\pow$ with $g^\dagger = c_1f_1^\dagger + \cdots + c_nf_n^\dagger$. Either $vg^\dagger \in \Psi_{K\pow}$ or $vg = 0$ and $vg^\dagger = vg' \in (\Gamma_{K\pow}^>)'$ Since $\Psi_M$ is cofinal in $\Psi_{K\pow}$ and $(\Gamma_M^>)'$ is coinitial in $(\Gamma_{K\pow}^>)'$, we can conclude by Fact~\ref{fact:closurechar} that $M$ has a unique power closure.
\end{proof}

It may well happen that $K$ has a unique power closure up to isomorphism but not up to \emph{unique} isomorphism. However, in the case that $K\pow$ is an immediate extension of $K$ (or, more generally, of the real closure of $K$), then this isomorphism is indeed unique.

\begin{lemma}\label{lem:immediateunique}
Suppose that $K$ has a unique power closure $K\pow$ and suppose that $\Q\Gamma = \Gamma_{K\pow}$. Then $K\pow$ is unique up to unique $K$-isomorphism:\ if $M$ is an $H$-field extension of $K$ that is closed under powers, then there is a unique $H$-field embedding $K\pow\to M$ over $K$.
\end{lemma}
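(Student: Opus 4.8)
The plan is to establish existence of an $H$-field embedding $\imath\colon K\pow \to M$ over $K$ first, and then upgrade it to uniqueness using the hypothesis $\Q\Gamma = \Gamma_{K\pow}$. Since $M$ is closed under powers, it contains some power closure of $K$ (take the real closure of $K$ inside $M$, then iterate the relevant exponential-integral constructions, all of which can be carried out inside $M$ because $M$ is real closed and closed under powers); by the uniqueness of $K\pow$ up to $K$-isomorphism (Fact~\ref{fact:closurechar}, or rather the preceding fact asserting one or two power closures, specialized to the unique case), this gives at least one embedding $\imath\colon K\pow\to M$ over $K$. So the content is uniqueness.

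For uniqueness, suppose $\imath_1,\imath_2\colon K\pow\to M$ are two $H$-field embeddings over $K$; I want $\imath_1 = \imath_2$. The natural approach is to run through a power tower: every $a\in K\pow$ lies in some $K(t_1,\ldots,t_n)\subseteq K\pow$ with each $t_i$ either algebraic over $K_{i-1}\coloneqq K(t_1,\ldots,t_{i-1})$ or satisfying $t_i^\dagger = c_if^\dagger$ for some $c_i\in C$, $f\in K_{i-1}^\times$. I would argue by induction on $i$ that $\imath_1$ and $\imath_2$ agree on $K_i$. The algebraic case is handled by the usual fact that there is at most one embedding extending a given one to an algebraic extension once the order is fixed — more precisely, $\imath_1$ and $\imath_2$ agree on $K_{i-1}$ by induction, both extend to $K_i$, and an element algebraic over an ordered field is pinned down by its minimal polynomial together with which root it is, i.e. by the cut it realizes, which is preserved. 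The exponential-integral case is where the hypothesis $\Q\Gamma = \Gamma_{K\pow}$ enters: given $t$ with $t^\dagger = s \in K_{i-1}$ (with $\imath_1,\imath_2$ agreeing on $K_{i-1}$, hence on $s$), the two images $\imath_1(t),\imath_2(t)\in M$ are both exponential integrals of $\imath(s)$, so $\imath_1(t)/\imath_2(t)\in C_M^\times$. Now I use that $C_{K\pow}=C$ (power extensions preserve the constant field) together with... actually the key point is that $vt$ and any element we are comparing against lie in $\Q\Gamma$: since $\Gamma_{K\pow}=\Q\Gamma$, we have $v(t) = \tfrac{1}{m}\gamma$ for some $\gamma\in\Gamma$, $m\geq 1$, so $t^m \asymp y$ for some $y\in K^\times$, and then $t^m/y \asymp 1$; comparing with Lemma~\ref{lem:uniquesim1}, after normalizing we can pin down the unit ambiguity. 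Concretely: $\imath_1(t)^m/\imath_2(t)^m = (\imath_1(t)/\imath_2(t))^m \in C_M^\times$, but also both $\imath_1(t)^m$ and $\imath_2(t)^m$ are exponential integrals of $m\,\imath(s) = \imath(m s)$, and $m s = (\text{something in }K_{i-1})^\dagger$ up to the identification, forcing $\imath_1(t)^m$ and $\imath_2(t)^m$ to differ from a common element of $K_{i-1}$ (pulled into $M$) by a constant — and here, because $m v(t) \in \Gamma$, the relevant constant is pinned down by the ordering, i.e. by Lemma~\ref{lem:uniquesim1} applied in $M$ (or directly: an exponential integral with prescribed value in $\Gamma$ and prescribed sign is unique up to $1+\smallo$, and within $1+\smallo$ it is unique by that lemma). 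I expect the cleanest route is: reduce to showing $\imath_1(t) = \imath_2(t)$, observe $\imath_1(t)/\imath_2(t) = c\in C_M^\times = C$ (as $C_M \supseteq C$ and, by construction, $C$ is where these constants live — more carefully, $\imath_1(t)/\imath_2(t)$ is an element of $K\pow$ mapped identically by both, so it lies in $C_{K\pow}=C$), and then $c^{m}$ relates two exponential integrals whose values are in $\Gamma$, so $c = 1$.

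The main obstacle I anticipate is the bookkeeping in the exponential-integral step: pinning down the constant $c = \imath_1(t)/\imath_2(t)$ requires knowing it lies in $C_{K\pow} = C$ and then exploiting $\Q\Gamma = \Gamma_{K\pow}$ to reduce to the uniqueness statement of Lemma~\ref{lem:uniquesim1} (uniqueness of the $1+\smallo$-representative). One must be careful that the power tower $t_1,\ldots,t_n$ can be chosen so that $\imath_1,\imath_2$ genuinely restrict to embeddings of each $K_i$ (closure under the relevant operations inside $K\pow$), and that the value $v(t)$ in $\Gamma_{K\pow}$ being in $\Q\Gamma$ really does let us write $t^m = y\cdot u$ with $y\in K$ and $u\asymp 1$ — then $u$ itself is an exponential integral (of $t^\dagger$-type data) over $K_{i-1}$ whose $1+\smallo$ part is unique by Lemma~\ref{lem:uniquesim1}, killing the ambiguity. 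Everything else — the algebraic case, functoriality, the fact that the embedding exists — is routine given the facts already assembled in the excerpt.
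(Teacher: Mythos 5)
Your proof is correct, and the engine driving it is the same one the paper uses: the only ambiguity in an exponential integral is a positive constant, and a constant lying in $1+\smallo_M$ must equal $1$. Where you differ is in the scaffolding. The paper applies Zorn's lemma to get a maximal subfield $K_0\subseteq K\pow$ on which the two embeddings agree, observes that $K_0$ is real closed (so, by the hypothesis $\Q\Gamma=\Gamma_{K\pow}$, one has $\Gamma_{K_0}=\Gamma_{K\pow}$ and $K\pow$ is \emph{immediate} over $K_0$), and then for $f$ with $f^\dagger\in K_0$ writes $f=gu$ with $g\in K_0$ and $u\sim 1$ to conclude $\imath(f)=\jmath(f)$. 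You instead induct along a power tower $K(t_1,\dots,t_n)$ and, at each exponential-integral step, use $\Q\Gamma=\Gamma_{K\pow}$ in the form $mv(t)\in\Gamma$ to write $t^m=dyu$ with $y\in K^\times$, $d\in C^\times$, $u\sim 1$, whence $c^m=\imath_1(u)/\imath_2(u)=1$ and $c=\imath_1(t)/\imath_2(t)=1$ by positivity. The paper's route gets the algebraic steps for free (real closures extend uniquely) and needs no $m$-th-power manipulation, since immediacy over $K_0$ supplies the element $g$ with $f\sim g$ directly; your route is more hands-on but avoids Zorn and makes the role of the hypothesis $\Q\Gamma=\Gamma_{K\pow}$ very explicit. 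Two small points to tighten: in the algebraic step you should say explicitly that an ordered field embedding of $K_{i-1}(t)$ is determined on an algebraic $t$ by the ordering (distinct real roots of the minimal polynomial induce distinct orderings of $K_{i-1}[X]/(p)$, by Artin--Schreier/Sturm), and in the constant step you should note $c>0$ because both images of $t$ have the sign of $t$, so that $c^m=1$ really forces $c=1$. Like the paper, you take existence of at least one embedding as essentially given by the uniqueness of the power closure; that is consistent with what the paper does, which proves only the uniqueness clause.
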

\begin{proof}
Let $\imath,\jmath\colon K\pow\to M$ be two $H$-field embeddings over $K$. We need to show that $\imath = \jmath$. Using Zorn's lemma, we let $K_0 \subseteq K\pow$ be the maximal subset of $K\pow$ on which $\imath$ and $\jmath$ agree, so $K_0$ is an $H$-field extension of $K$. Then $K_0$ is real closed, as $\imath$ and $\jmath$ extend uniquely to the real closure of $K_0$. Thus, $\Gamma_{K_0} = \Gamma_{K\pow}$, so $K\pow$ is an immediate extension of $K_0$.

Let $f \in K\pow^\times$ with $f^\dagger \in K_0$. Then $\imath(f^\dagger) = \jmath(f^\dagger)$, so $\imath(f)/\jmath(f) \in C_M^\times$. Take $g \in K_0^\times$ with $f \sim g$, and put $u \coloneqq f/g \sim 1$. We have $\imath(g) = \jmath(g)$, so 
\[
\imath(f)/\jmath(f) = \imath(u)/\jmath(u)\in C_M^\times
\]
As $\imath(u)\sim\jmath(u) \sim 1$, we conclude that $\imath(u)/\jmath(u) = 1$, so $\imath(f)=\jmath(f)$ and $f \in K_0$. Thus, $K_0$ has no proper power extensions in $K\pow$, so $K_0 = K\pow$.
\end{proof}

%----------------------------------------%
\subsection{Linear independence in $H$-fields}\label{subsec:linind}

As evidenced by the criterion in Fact~\ref{fact:closurechar}, $C$-linear combinations of elements of $K$ play a crucial role in determining the structure of power extensions of $K$. In the proof of our main technical result, Proposition~\ref{prop:mainprop}, we will need a handful of lemmas about these $C$-linear combinations, which we include below.

\begin{lemma}\label{lem:minimalQindep}
Let $f_1,\ldots,f_n \in K^\times$ and $c_1,\ldots,c_n \in C$. If $c_n$ is in the $\Q$-linear span of $\{c_1,\ldots,c_{n-1}\}$, then we can find $g_1,\ldots,g_{n-1} \in K^\times$ with
\[
c_1f_1^\dagger + \cdots+c_nf_n^\dagger\asymp c_1g_1^\dagger + \cdots+c_{n-1}g_{n-1}^\dagger.
\]
\end{lemma}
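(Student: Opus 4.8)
The hypothesis says $c_n = q_1 c_1 + \cdots + q_{n-1} c_{n-1}$ for some $q_1, \ldots, q_{n-1} \in \Q$. The plan is to absorb the $c_n f_n^\dagger$ term into the other terms by rewriting $c_n f_n^\dagger$ as $\sum_{i<n} q_i c_i f_n^\dagger$ and then combining $c_i f_i^\dagger + q_i c_i f_n^\dagger = c_i(f_i^\dagger + q_i f_n^\dagger)$. The key observation is that $f_i^\dagger + q_i f_n^\dagger$ is (up to a harmless adjustment) the logarithmic derivative of a power product: if $q_i = k_i/\ell$ with $k_i \in \Z$ and a common denominator $\ell \in \N^{>0}$, then $\ell f_i^\dagger + k_i f_n^\dagger = (f_i^\ell f_n^{k_i})^\dagger$, which is of the form $g_i^\dagger$ for $g_i := f_i^\ell f_n^{k_i} \in K^\times$ (no closure under powers is needed here since the exponents are integers). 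So I would set $g_i := f_i^\ell f_n^{k_i}$, giving $c_i g_i^\dagger = \ell c_i f_i^\dagger + k_i c_i f_n^\dagger = \ell c_i f_i^\dagger + \ell q_i c_i f_n^\dagger$.

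**Carrying it out.** Summing over $i = 1, \ldots, n-1$:
\[
\sum_{i=1}^{n-1} c_i g_i^\dagger \;=\; \ell \sum_{i=1}^{n-1} c_i f_i^\dagger \;+\; \ell\Big(\sum_{i=1}^{n-1} q_i c_i\Big) f_n^\dagger \;=\; \ell\Big(\sum_{i=1}^{n-1} c_i f_i^\dagger + c_n f_n^\dagger\Big),
\]
using $\sum_{i<n} q_i c_i = c_n$. Since $\ell \in \N^{>0}$ is a nonzero rational and the valuation $v$ is trivial on $\Q^\times \subseteq C^\times$ (as $1 \preceq \ell \preceq 1$), multiplication by $\ell$ does not change the valuation, so $\sum_{i=1}^{n-1} c_i g_i^\dagger \asymp \sum_{i=1}^{n} c_i f_i^\dagger$, which is exactly what is required.

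**Main obstacle.** There is essentially no obstacle; the only point requiring a little care is the degenerate case where $c_n = 0$ (or more generally where some $q_i = 0$): if $c_n = 0$ one may simply take $g_i := f_i$ directly. Also, if the element $\sum_{i=1}^n c_i f_i^\dagger$ happens to be $0$, then the asymptotic relation $0 \asymp 0$ is to be read in the trivial sense and the claim holds vacuously with $g_i := f_i$. Apart from bookkeeping with denominators, the argument is a one-line manipulation of logarithmic derivatives, and the fact that $\d\log$ turns integer power products into $\Z$-linear combinations is the whole content.
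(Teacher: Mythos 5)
Your proof is correct and is essentially the same as the paper's: the paper clears denominators by taking $m_1,\ldots,m_n\in\Z$ with $m_n\neq 0$ and $m_nc_n=m_1c_1+\cdots+m_{n-1}c_{n-1}$, sets $g_j:=f_j^{m_n}f_n^{m_j}$, and observes that the resulting sum equals $m_n(c_1f_1^\dagger+\cdots+c_nf_n^\dagger)\asymp c_1f_1^\dagger+\cdots+c_nf_n^\dagger$ --- exactly your $\ell$ and $k_i$. No substantive difference.
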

\begin{proof}
Take $m_1,\ldots,m_n \in \Z$ with $m_n \neq 0$ and $m_nc_n = m_1c_1+\cdots+ m_{n-1}c_{n-1}$. For each $j = 1,\ldots,n-1$, set $g_j \coloneqq f_j^{m_n}f_n^{m_j}$, so we have
\begin{align*}
c_1g_1^\dagger+\cdots+ c_{n-1}g_{n-1}^\dagger &= c_1(m_nf_1^\dagger +m_1f_n^\dagger) + \cdots + c_{n-1}(m_nf_{n-1}^\dagger + m_{n-1}f_n^\dagger)\\
& =m_nc_1f_1^\dagger+\cdots +m_nc_{n-1}f_{n-1}^\dagger+ (m_1c_1+\cdots +m_{n-1}c_{n-1})f_n^\dagger \\
&= m_n(c_1f_1^\dagger + \cdots+c_nf_n^\dagger) \asymp c_1f_1^\dagger + \cdots+c_nf_n^\dagger.\qedhere
\end{align*}
\end{proof}

By the axiom (H2), the underlying valued $C$-vector space of $K$ is actually a Hahn space over $K$; see~\cite[Section 2.3]{ADH17}. The next Lemma follows from~\cite[Lemmas~2.3.1 and~4.6.16]{ADH17}, but we include a proof for completeness.%can remove proof.

\begin{lemma}\label{lem:hahndecomp}
Let $M$ be an $H$-field extension of $K$ and let $f \in K[C_M]\subseteq M$. Then $f = d_1g_1+\cdots +d_mg_m$ for some $d_1,\ldots,d_m \in C_M$ and $g_1,\ldots,g_m \in K^\times$ with $vg_1,\ldots,vg_m$ distinct.
\end{lemma}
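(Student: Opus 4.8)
\textbf{Proof plan for Lemma~\ref{lem:hahndecomp}.}

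The plan is to write $f$ as a $K$-linear combination of finitely many elements of $C_M$ and then regroup the terms by value so that the coefficients become elements of $C_M$ and the remaining factors lie in $K^\times$ with distinct valuations. First I would pick a $K$-basis of the $K$-subspace of $M$ spanned by $f$: since $f \in K[C_M]$, there are $c_1, \dots, c_k \in C_M$ and $a_1, \dots, a_k \in K$ with $f = a_1 c_1 + \cdots + a_k c_k$, and we may assume the $c_i$ are $\Q$-linearly independent (hence $K$-linearly independent, since $C_M$ is a field of constants and $K \cap C_M = C$ is relevant only in that linear independence over $\Q$ in $C_M$ passes to linear independence over $K$ — here I would invoke the Hahn-space structure from~\cite[Section~2.3]{ADH17}, specifically that by (H2) the valued $C$-vector space $K$ embeds into $M$ compatibly, and $K$-linear independence of constants reduces to $C$-linear, equivalently $\Q$-linear, independence).

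Next I would separate the $a_i$ by valuation. Group the indices $i$ according to the value $v(a_i) \in \Gamma$: say the distinct values occurring are $\gamma_1, \dots, \gamma_m$, and for each $j$ let $S_j = \{i : v(a_i) = \gamma_j\}$. Fix for each $j$ an index $i(j) \in S_j$ and set $g_j \coloneqq a_{i(j)} \in K^\times$, so that $v g_1, \dots, v g_m$ are the distinct values $\gamma_1, \dots, \gamma_m$. For $i \in S_j$ we have $a_i / a_{i(j)} \asymp 1$, so by (H2) there is a unique $d_i \in C$ with $a_i / a_{i(j)} - d_i \prec 1$; thus $a_i = d_i a_{i(j)} + \varepsilon_i a_{i(j)}$ with $\varepsilon_i \prec 1$, i.e. $\varepsilon_i \in \smallo$ and $\varepsilon_i a_{i(j)} \in K$ with $v(\varepsilon_i a_{i(j)}) > \gamma_j$. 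Substituting gives
\[
f = \sum_{j=1}^m g_j \Big( \sum_{i \in S_j} d_i c_i \Big) + \sum_{j=1}^m \Big( \sum_{i \in S_j} \varepsilon_i a_{i(j)} c_i \Big).
\]
The first sum is of the desired form with $C_M$-coefficients $\sum_{i \in S_j} d_i c_i$ (note $d_i \in C \subseteq C_M$ and $c_i \in C_M$), except that some coefficients may vanish, in which case we simply drop those terms. The second (error) sum is again an element of $K[C_M]$, but one whose "$K$-part" has strictly larger valuation in a suitable sense.

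The main obstacle — and the reason one cannot just read off the answer — is making the induction terminate: after one round of regrouping, the error term $\sum_j \sum_{i \in S_j} \varepsilon_i a_{i(j)} c_i$ is a new element of $K[C_M]$ to which we would reapply the procedure, and we need a well-founded measure that decreases. The right measure is the (finite) multiset of values $\{v(a_i) : i\}$ when $f$ is written with respect to the \emph{fixed} $\Q$-basis $c_1, \dots, c_k$: rewriting $f = \sum_i a_i' c_i$ after one step, each $v(a_i')$ is either unchanged or strictly larger than before, and crucially the minimum value strictly increases unless $f$ was already in the desired form with $g_j$'s of that minimal value. Here I would instead argue more cleanly by decreasing induction on $k$, the number of basis constants: once we have peeled off the leading terms $\sum_j g_j(\sum_{i\in S_j} d_i c_i)$, the error term $\sum_i \varepsilon_i a_{i(j(i))} c_i$ still involves (at most) the same $c_i$'s, so this does not immediately reduce $k$ — so the genuinely clean route is to cite~\cite[Lemmas~2.3.1 and~4.6.16]{ADH17}, which give exactly the Hahn-space decomposition statement, and present the above as the self-contained argument by transfinite/pseudo-Cauchy reasoning: take $f \in K[C_M]$, let $\gamma \coloneqq \min_i v(a_i)$ over all representations, realize it, extract the $C$-parts at level $\gamma$ to form the $g_j$ of value $\gamma$, and observe the remainder lies in $K[C_M]$ with all its $K$-coefficients of value $> \gamma$; iterating and using that $\Gamma$ is well-ordered on the relevant finite set of values (only finitely many $c_i$ ever appear, with finitely many values each time) forces termination after finitely many steps.
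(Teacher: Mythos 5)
Your overall strategy---fix a finite representation $f=\sum_i a_i c_i$, peel off the coefficients of minimal valuation using (H2), and iterate on the remainder---is workable and rests on the same basic fact as the paper's argument (for $a\asymp b$ in $K^\times$ there is $d\in C$ with $a-db\prec b$). But the step you yourself flag as the main obstacle, termination, is not resolved in your write-up. The justification ``$\Gamma$ is well-ordered on the relevant finite set of values'' does not work: the set of values in play changes at every round, and nothing you say rules out an infinite strictly increasing sequence of minima $\gamma^{(1)}<\gamma^{(2)}<\cdots$ in $\Gamma$. The missing observation is that termination is forced by a counting argument, not an order-theoretic one: when you write $a_i = d_i a_{i(j)} + \varepsilon_i a_{i(j)}$ for $i\in S_j$, the representative index $i=i(j)$ has $d_{i(j)}=1$ and $\varepsilon_{i(j)}=0$, so at least one nonzero coefficient is annihilated outright at each round; hence the number of nonzero $K$-coefficients relative to the fixed $c_1,\dots,c_k$ strictly decreases and the process stops after at most $k$ rounds. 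With that in hand you should also process only the \emph{minimal} valuation level at each round (as in your last paragraph), not all levels at once as in your displayed formula: otherwise a value $v(\varepsilon_i a_{i(j)})$ produced later can coincide with a $\gamma_{j'}$ already extracted, and the final $vg_1,\dots,vg_m$ need not be distinct without a further merging step. Two smaller points: $\Q$-linear independence of the $c_i$ does not imply $K$-linear independence (you would need $C$-linear independence together with linear disjointness of $K$ and $C_M$ over $C$, i.e.\ \cite[Lemma~4.6.16]{ADH17}), though in fact no independence assumption on the $c_i$ is needed here; and citing \cite[Lemmas~2.3.1 and~4.6.16]{ADH17} outright is legitimate---the paper itself notes the lemma follows from them---but then the ``self-contained argument'' should be dropped rather than left with a gap.

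For comparison, the paper runs the induction the other way: it inducts on the number of terms $n$ in $f=c_1f_1+\cdots+c_nf_n$, decomposes the first $n-1$ terms as $d_1g_1+\cdots+d_mg_m$ with distinct $vg_j$, and then adjusts $f_n$ by a $C$-linear combination of the $g_j$ so that its valuation leaves the set $\{vg_1,\dots,vg_m\}$, absorbing the adjustment into the $d_j$. There the ``raise the valuation until it escapes'' loop terminates precisely because the target set is finite and fixed---which is the order-theoretic argument you were reaching for, but which is only available in that formulation.
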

\begin{proof}
Write $f = c_1f_1+\cdots+ c_nf_n$, with $c_1,\ldots,c_n \in C_M$ and $f_1,\ldots,f_n \in K$. We will prove the lemma by induction on $n$, with the case $n =0$ holding trivially. Suppose $n>0$ and, applying the lemma to $c_1f_1+\cdots+ c_{n-1}f_{n-1}$, take $d_1,\ldots,d_m \in C_M$ and $g_1,\ldots,g_m \in K^\times$ with distinct valuations such that 
\[
c_1f_1+\cdots+ c_{n-1}f_{n-1} = d_1g_1+\cdots +d_mg_m.
\]
We claim that we can find $\tilde{c}_1,\ldots,\tilde{c}_m \in C$ with
\[
v(f_n - \tilde{c}_1g_1- \cdots - \tilde{c}_mg_m) \not\in \{vg_1,\ldots,vg_m\}.
\]
If $vf_n \not\in\{vg_1,\ldots,vg_m\}$, we may take all the $\tilde{c}_i$ to be zero; otherwise, use that if $f_n \asymp g_i$, then $f_n- \tilde{c}g_i \prec g_i$ for some $\tilde{c}\in C$ by (H2). Put $h \coloneqq f_n - \tilde{c}_1g_1- \cdots - \tilde{c}_mg_m$. Then $vh \not\in \{vg_1,\ldots,vg_m\}$ and 
\[
f = (d_1+c_n \tilde{c}_1)g_1+\cdots + (d_m+c_n \tilde{c}_m)g_m+ c_nh.\qedhere 
\]
\end{proof}

\begin{lemma}\label{lem:indepover}
Let $M$ be an $H$-field extension of $K$ and let $S\subseteq \Gamma_M$. 
Let $f_1,\ldots,f_n \in K$ and suppose that $f_1,\ldots,f_n$ are $C$-linearly independent over $\{g \in K:vg>S\}$. Then $f_1,\ldots,f_n$ are $C_M$-linearly independent over $\{h \in M:vh>S\}$.
\end{lemma}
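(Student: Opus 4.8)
The statement is: if $f_1,\dots,f_n \in K$ are $C$-linearly independent over $\{g \in K : vg > S\}$, then they are $C_M$-linearly independent over $\{h \in M : vh > S\}$. The contrapositive is the natural thing to prove: suppose there exist $d_1,\dots,d_n \in C_M$, not all zero, and $h \in M$ with $vh > S$, such that $d_1 f_1 + \cdots + d_n f_n = h$. I want to extract from this a genuine $C$-linear dependence of the $f_i$ over $\{g \in K : vg > S\}$. The element $h = d_1 f_1 + \cdots + d_n f_n$ lies in $K[C_M]$, so I can apply Lemma~\ref{lem:hahndecomp} to rewrite it as $h = e_1 g_1 + \cdots + e_m g_m$ with $e_j \in C_M$ and $g_j \in K^\times$ of pairwise distinct valuations.

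**Key steps.** First, I would run a descending induction / minimality argument on $n$, or equivalently pick a dependence with $n$ minimal, so that $d_1,\dots,d_n$ are all nonzero and (after dividing by $d_n$, say) I may assume the $d_i$ are $\Q$-linearly independent — using Lemma~\ref{lem:minimalQindep} to shrink the length whenever some $d_i$ falls in the $\Q$-span of the others, at the cost of replacing $f_i$'s by monomials in them, which does not affect the hypothesis since those monomials still lie in $K$ and $C$-linear independence over $\{g : vg > S\}$ is preserved under the relevant manipulations (one should check this reduction is compatible with the $\{g : vg > S\}$ clause — I expect to absorb any $K^\times$-multiple cleanly). Second, with the $d_i$ now $\Q$-linearly independent, I extend $\{d_1,\dots,d_n\}$ to a $\Q$-basis and regard $K[C_M]$ via its Hahn-space structure over $K$ (axiom (H2) makes $K$ a Hahn space over $C$): comparing the $d_i$-components on both sides of $d_1 f_1 + \cdots + d_n f_n = e_1 g_1 + \cdots + e_m g_m$, since $h$'s decomposition has coefficients $e_j$ that are $\Q$-combinations living in a controlled span, I can read off that each $f_i$ is a $C$-linear combination of the $g_j$ — more precisely that $f_i - (\text{$C$-combination of the }g_j\text{'s with }vg_j > S) $ must vanish, because the only way $d_1 f_1 + \cdots + d_n f_n$ can land in $\{h : vh > S\}$ is if the "low-valuation parts" of the $f_i$ cancel. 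Third, this cancellation of low-valuation parts is exactly a nontrivial $C$-linear relation among $f_1, \dots, f_n$ modulo $\{g \in K : vg > S\}$, contradicting the hypothesis.

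**Main obstacle.** The delicate point is making the second step precise: translating "$d_1 f_1 + \cdots + d_n f_n$ has valuation $> S$" into "a $C$-linear combination of the $f_i$ has valuation $> S$". The issue is that $vh > S$ constrains the whole sum, not individual terms, and $S$ is merely a subset of $\Gamma_M$, not a convex or well-structured one. The right tool is to decompose each $f_i$, within the Hahn space $K$ over $C$, into a part supported on values $\le$ some $\sigma \in S$ and a part with value $> S$; I'd want to argue that in the $C_M$-span the $d_i$'s act "independently" on the max-value terms (by $\Q$-linear independence of the $d_i$ and the Hahn-space structure), forcing the value-$\le\sigma$ parts of the $f_i$ to satisfy a $C$-linear relation over $K$ directly — i.e. reducing to the purely algebraic fact that a $C_M$-relation among elements of a $C$-vector space with $\Q$-independent (hence $C$-independent after base extension is harmless) coefficients descends. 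I expect Lemma~\ref{lem:hahndecomp} plus the observation that $C_M \cap K = C$ (from (H2), since $C_M \hookrightarrow \res(M)$ and $C \to \res(K)$ is onto) does most of this work; the bookkeeping of "over $S$" is the part requiring care but no deep idea.
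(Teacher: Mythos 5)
Your overall architecture (contrapositive, decompose the $C_M$-combination via Lemma~\ref{lem:hahndecomp}, descend to a $C$-relation, contradict the hypothesis) matches the paper's, but the descent step --- the one you yourself flag as delicate --- is not correctly justified, and this is a genuine gap. The paper gets from ``some nontrivial $C_M$-combination of $f_1,\ldots,f_n$ equals $d_1g_1+\cdots+d_mg_m$'' to ``some nontrivial $C$-combination of $f_1,\ldots,f_n$ lies in $\operatorname{span}_C(g_1,\ldots,g_m)$'' by invoking the \emph{linear disjointness} of $K$ and $C_M$ over $C$ (\cite[Lemma~4.6.16]{ADH17}, itself a consequence of the Hahn-space structure of $K$ over $C$ coming from (H2)). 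Your proposed substitutes do not deliver this: $\Q$-linear independence of the coefficients $d_i$ does not imply $C$-linear independence (take $C=\R$, $d_1=1$, $d_2=\sqrt 2$), and the observation $C_M\cap K=C$ is strictly weaker than linear disjointness and does not let you ``equate components.'' Without linear disjointness (or an equivalent valuation-basis argument inside $\operatorname{span}_C(f_1,\ldots,f_n)$), the relation over $C_M$ simply does not descend.

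Two smaller problems. First, your reduction to $\Q$-independent coefficients cites Lemma~\ref{lem:minimalQindep}, but that lemma concerns logarithmic derivatives and replaces the $f_j$ by \emph{monomials} $f_j^{m_n}f_n^{m_j}$; for an additive combination $\sum d_i f_i$ the correct (and elementary) substitution is $f_i\mapsto f_i+q_if_n$, so the cited tool is misapplied --- and in any case $\Q$-independence of the $d_i$ is neither needed nor sufficient, as above. Second, your treatment of the ``$vh>S$'' constraint via splitting each $f_i$ into parts above and below $S$ is murkier than necessary: the paper's Lemma~\ref{lem:hahndecomp} arranges $g_1\succ\cdots\succ g_m$, and since $I=\{g:vg>S\}$ is closed under passing to elements of larger valuation, ``some $g_j\notin I$'' immediately forces $g_1\notin I$, whence $vf=vg_1\in S^\downarrow$. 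That one-line observation is all the bookkeeping the set $S$ requires.
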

\begin{proof}
Let $I = \{g \in K:vg>S\}$, so $I$ is a $C$-linear subspace of $K$ and $f_1,\ldots,f_n$ are $C$-linearly independent over $I$. Let $c_1,\ldots,c_n \in C_M$, not all zero, and let $f = c_1f_1+\cdots + c_nf_n$. We need to show that $vf \in S^{\downarrow}$. Since $f \in K[C_M]$, we may use Lemma~\ref{lem:hahndecomp} to take $d_1,\ldots,d_m \in C_M^\times$ and $g_1\succ g_2 \succ\cdots \succ g_m\in K^\times$ with $f =d_1g_1 + \cdots + d_m g_m$. By~\cite[Lemma~4.6.16]{ADH17}, $K$ and $C_M$ are linearly disjoint over $C$, so we conclude that some nontrivial $C$-linear combination of $f_1,\ldots,f_n$ are contained in the $C$-linear span of $g_1,\ldots,g_m$. As $f_1,\ldots,f_n$ are $C$-linearly independent over $I$, we conclude that some $g_i \not\in I$. As $I$ is $\preceq$-downward closed, we get that $g_1 \not\in I$, so $vf = vg_1 \in S^{\downarrow}$.
\end{proof}

%----------------------------------------------------------------------------------%
\section{$H$-fields with constant power maps}\label{sec:HP}
%----------------------------------------------------------------------------------%
In this section, we introduce the main object of study, $\Hp$-fields, and we prove our main technical result that allows us to ``upgrade'' various $H$-field extensions to $\Hp$-field extensions. 

Let $K$ be an $H$-field with constant field $C$. 
A \textbf{constant power map} on $K$ is a binary function 
\[
(f,c) \mapsto f^c\colon K^>\times C \to K^>
\]
that satisfies the following axioms (where $f,g$ range over $K^>$ and $c,d$ range over $C$):
\begin{enumerate}
\item[(C1)] $f^cf^d = f^{c+d}$, $(f^c)^d = f^{cd}$, and $f^1 = f$.
\item[(C2)] $(fg)^c = f^cg^c$.
\item[(C3)] If $f \sim 1$, then $f^c \sim 1$.
\item[(C4)] $(f^c)^\dagger = cf^\dagger$.
\item[(C5)] The map $c\mapsto 2^c$ is an ordered group isomorphism $C\to C^>$.
\end{enumerate}
We refer to the map $c\mapsto 2^c\colon C\to C^>$ in (C5) as the \textbf{induced exponential on $C$}, and we consider $C$ as an ordered exponential field with this induced exponential.

An \textbf{$\Hp$-field} is a real closed $H$-field $K$ equipped with a constant power map. 
%For the remainder of this section, let $K$ be an $\Hp$-field. 
If $K$ is an $\Hp$-field, then $K$ is closed under powers, so $\Gamma$ admits the structure of a $C$-vector space by Fact~\ref{fact:gammaCspace}. %In particular, $\Gamma$ is divisible. Since $\res(K)\simeq C$ is real closed and $K$ is henselian, $K$ itself is real closed; see~\cite[Corollary 3.5.19]{ADH17}. 
If $L$ is also an $\Hp$-field, then a map $\imath\colon K\to L$ is said to be an \textbf{$\Hp$-field embedding} if $\imath$ is an $H$-field embedding and $\imath(f^c) = \imath(f)^{\imath(c)}$ for all $f \in K^>$ and all $c \in C$. The notions of an $\Hp$-field extension and an $\Hp$-subfield are defined analogously.

\begin{lemma}\label{lem:Hpsubfield}
Let $K$ be an $\Hp$-field and let $E\supseteq C$ be a real closed $H$-subfield of $K$. If $E$ is closed under powers, then $E$ is an $\Hp$-subfield of $K$.
\end{lemma}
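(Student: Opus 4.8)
The plan is to restrict the constant power map of $K$ to $E^{>}\times C$ and check that it lands in $E^{>}$ and satisfies the five axioms (C1)--(C5). Since $E$ is already assumed to be a real closed $H$-subfield of $K$ that contains $C$ and is closed under powers, the only real content is to show that $E^{>}$ is closed under the operation $(f,c)\mapsto f^{c}$ inherited from $K$; once that is done, axioms (C1)--(C3) are inherited verbatim (they are universal statements about elements of $E^{>}$ and $C$), axiom (C4) is inherited because the derivation on $E$ is the restriction of that on $K$, and axiom (C5) holds because $C$ is the common constant field and the induced exponential $c\mapsto 2^{c}$ on $C$ is literally the same map.

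So the heart of the matter is: given $f\in E^{>}$ and $c\in C$, show $f^{c}\in E$. First I would reduce to the case $f\asymp 1$: write $f = u\cdot d$ where $d\in C^{>}$ is the unique constant with $f\sim d$ (available since $E\supseteq C$ and $E$ satisfies (H2)), so $u\in 1+\smallo_{E}$, and then $f^{c} = u^{c}d^{c}$ by (C2); here $d^{c}\in C^{>}\subseteq E$ because $C$ is closed under the power map — indeed $d^{c} = 2^{c\log_{2}d}$ lies in $C$ as $C$ is an ordered exponential field under $2^{(-)}$ — so it suffices to handle $u\in 1+\smallo$. For such $u$, apply Lemma~\ref{lem:uniquesim1} inside $K$: there is a unique $g\in 1+\smallo_{K}$ with $g^{\dagger} = cu^{\dagger}$, and by (C3) applied in $K$ this $g$ is exactly $u^{c}$. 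But $E$ is closed under powers, so there is some $h\in E^{\times}$ with $h^{\dagger} = cu^{\dagger}$; since $h\asymp 1$ (as $v(h^{\dagger}) = v(cu^{\dagger})>\Psi_{E}$, using $c\neq 0$ — the case $c=0$ being trivial since $f^{0}=1$), we may take $h_{0}\in C^{>}$ with $h\sim h_{0}$ and set $g' \coloneqq h/h_{0}\in E\cap(1+\smallo)$ with $(g')^{\dagger} = h^{\dagger} = cu^{\dagger}$. By the uniqueness clause of Lemma~\ref{lem:uniquesim1} (applied in $K$), $g' = g = u^{c}$, so $u^{c}\in E$, hence $f^{c}\in E$.

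Having established closure, I would then spell out (in one or two sentences) that the restricted map $E^{>}\times C\to E^{>}$ satisfies (C1)--(C5) as indicated above, and conclude that $E$ with this map is an $\Hp$-field, and that the inclusion $E\hookrightarrow K$ is by construction an $\Hp$-field embedding, so $E$ is an $\Hp$-subfield of $K$.

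I expect the main (and really only) obstacle to be the closure argument, specifically the need to verify that the witness $h$ provided by ``closed under powers'' is $\asymp 1$ and then to match it with $u^{c}$ via the uniqueness in Lemma~\ref{lem:uniquesim1}; the bookkeeping with the constant factors $d$ and $h_{0}$ is routine but must be done carefully, and one should not forget the trivial cases $c=0$ (where $f^{c}=1\in E$ directly) to avoid dividing by $c$ or invoking $v(cu^{\dagger})>\Psi$ improperly.
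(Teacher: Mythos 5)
Your overall framing is right: the only substantive point is that $E^>$ is closed under $(f,c)\mapsto f^c$, after which (C1)--(C5) restrict verbatim. But your closure argument has a genuine gap at the very first step. You claim to ``reduce to the case $f\asymp 1$'' by writing $f = u\cdot d$ with $d\in C^>$ the unique constant with $f\sim d$. No such constant exists unless $f\asymp 1$ to begin with: axiom (H2) gives $\cO = C+\smallo$, so only elements of $\cO\setminus\smallo$ are asymptotic to a constant. For $f$ infinite or infinitesimal (e.g.\ $f = x$ in a Hardy field or in $\T$), the decomposition fails and your argument says nothing --- and this is exactly the case that matters, since for $f\asymp 1$ one could already reduce to $u\sim 1$ and your Lemma~\ref{lem:uniquesim1} argument. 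So as written the proof only covers $f\asymp 1$.

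The repair is short, and is what the paper does: apply the hypothesis that $E$ is closed under powers directly to $f$ itself. Take $g\in E^\times$ with $g^\dagger = cf^\dagger$. By (C4), $(f^c)^\dagger = cf^\dagger = g^\dagger$, so $g$ and $f^c$ are two exponential integrals of the same element and hence $g = d\,f^c$ for some $d\in C^\times$. Since $C\subseteq E$ and $g\in E$, this gives $f^c = g/d\in E$. This handles all $f\in E^>$ uniformly, with no case split on $v f$, no appeal to Lemma~\ref{lem:uniquesim1}, and no need to treat $c=0$ separately. Your analysis of the unit part (matching the witness $h$ against $u^c$ via the uniqueness clause of Lemma~\ref{lem:uniquesim1}) is correct but is rendered unnecessary by this more direct route.
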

\begin{proof}
Let $f \in E^>$ and $c \in C$. As $E$ is closed under powers, we can find $g \in E^\times$ with $g^\dagger = cf^\dagger = (f^c)^\dagger$, so $g = df^c$ for some $d \in C^\times$. Since $C \subseteq E$, we get $f^c \in E$.
\end{proof}

\begin{lemma}\label{lem:induceddetermines}
Let $K$ be an $\Hp$-field and let $\exp$ be an exponential on $C$ with functional inverse $\log$. The following are equivalent:
\begin{enumerate}
\item $\exp$ coincides with the induced exponential on $C$.
\item $\exp(1) = 2$ and for all $c,d \in C$ with $d>0$, we have $d^c = \exp(c\log d)$. 
\end{enumerate}
\end{lemma}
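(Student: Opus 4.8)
The plan is to prove the two implications separately; both are short, with the substance in $(1)\Rightarrow(2)$. Recall that the induced exponential $c\mapsto 2^c$ is indeed an exponential on $C$ by (C5), so statement (1) makes sense, and that $\log$ denotes the functional inverse of the given $\exp$.

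For $(1)\Rightarrow(2)$ I would assume $\exp(c)=2^c$ for all $c\in C$. Then $\exp(1)=2^1=2$ by (C1). For the second clause, fix $c,d\in C$ with $d>0$. First I would record the well-typedness facts that make the computation legal: $\log d\in C$ because $\exp$ is an exponential on $C$, and $d^c\in C^>$ because $(d^c)^\dagger = c\,d^\dagger = 0$ by (C4) (using $d^\dagger = 0$, as $d\in C=\ker\der$), so $d^c\in C$, while $d^c>0$ by the codomain of the power map. Then, since $\log$ inverts $\exp$, we have $2^{\log d}=\exp(\log d)=d$; applying $(\cdot)^c$ and invoking the exponent law $(f^a)^b=f^{ab}$ of (C1) gives $d^c=(2^{\log d})^c=2^{(\log d)c}=2^{c\log d}=\exp(c\log d)$, which is exactly (2).

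For $(2)\Rightarrow(1)$ I would again use that $\log$ inverts $\exp$: from $\exp(1)=2$ we get $\log 2=1$, and then taking $d=2$ in the second clause of (2) yields $2^c=\exp(c\log 2)=\exp(c)$ for every $c\in C$, i.e., $\exp$ coincides with the induced exponential. The only point requiring any care is the typing bookkeeping in $(1)\Rightarrow(2)$ (that $d^c$ and $\log d$ land in $C$) and the correct invocation of the exponent laws from (C1); there is no genuine obstacle here.
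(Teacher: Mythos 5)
Your proof is correct and follows essentially the same route as the paper's: the same chain of identities $d^c = (2^{\log d})^c = 2^{c\log d} = \exp(c\log d)$ via (C1) for one direction, and the specialization $d=2$ together with $\log 2 = 1$ for the converse. The extra typing check that $d^c \in C^>$ (via (C4) and $d^\dagger = 0$) is left implicit in the paper but is a sensible addition.
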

\begin{proof}
If $\exp$ coincides with the induced exponential on $C$, then (C1) gives
\[
\exp(1) = 2^1 = 2,\qquad \exp(c\log d) = 2^{c\log d} = (2^{\log d})^c = d^c\quad \text{for $c,d \in C$ with $d>0$}.
\]
Conversely, if the conditions in (2) hold, then for $c \in C$, we have $2^c = \exp(c\log 2) = \exp(c)$, so $\exp$ coincides with the induced exponential in $C$.
\end{proof}

In many natural examples of $\Hp$-fields, such as $\T\pow$ and maximal Hardy fields, the constant power map arises from an exponential function. Instead of verifying the constant power map axioms (C1)--(C5) hold directly, we show here that any exponential function on an $H$-field that is compatible with the derivation gives rise to a constant power map. 

\begin{proposition}\label{prop:exptopow}
Let $K$ be an $H$-field, let $\exp\colon K\to K^>$ be an exponential on $K$, and let $\log$ be the compositional inverse of $\exp$. Suppose that $\exp$ satisfies the identity $(\exp a)^\dagger = a'$ for $a \in K$. Then the map $(f,c) \mapsto \exp(c\log f)\colon K^>\times C \to K^>$ is a constant power map on $K$. %Thus, if $K$ is real closed and $(C,\exp|_C) \models \Th(\R_{\exp})$, then $K\models T\pow$.
\end{proposition}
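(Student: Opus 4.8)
The plan is to put $f^c \coloneqq \exp(c\log f)$, where $\log$ is the compositional inverse of $\exp$, and to check the axioms (C1)--(C5) in turn; all but (C3) are formal consequences of the facts that $\exp$ and $\log$ are mutually inverse ordered group isomorphisms between $(K,+)$ and $(K^>,\cdot)$ and that $c'=0$ for $c\in C$. Two preliminary observations carry the weight. First, $\exp$ maps $C$ bijectively (and order-preservingly) onto $C^>$: if $a\in C$ then $(\exp a)^\dagger = a' = 0$, so $\exp a\in C^>$, and if $c\in C^>$ then the unique $a$ with $\exp a = c$ satisfies $a' = (\exp a)^\dagger = c^\dagger = 0$, so $a\in C$; thus $\exp|_C\colon(C,+)\to(C^>,\cdot)$ is an ordered group isomorphism, and $\exp(C^>) = \{y\in C:y>1\}$. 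Second, $\exp(\smallo)\subseteq 1+\smallo$: for $\delta\in\smallo$ and $\epsilon\in C^>$, set $c\coloneqq\log(1+\epsilon)\in C^>$; then $-c<\delta<c$, so $1/(1+\epsilon) = \exp(-c)<\exp\delta<\exp c = 1+\epsilon$, whence $|\exp\delta-1|<\epsilon$; as $\epsilon$ was arbitrary and $\smallo = \{x\in K:|x|<\epsilon\text{ for all }\epsilon\in C^>\}$, we get $\exp\delta\in 1+\smallo$.

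The crux is the claim that $f\sim 1$ forces $\log f\in\smallo$. First, $f\asymp 1$ forces $\log f\in\cO$: if $\log f\notin\cO$, i.e.\ $\log f\succ 1$, then $\log f$ lies above every element of $C$ or below every element of $C$; applying the order-preserving bijection $\exp\colon C\to C^>$, the element $f = \exp(\log f)$ would lie above every element of $C^>$ (contradicting $f\in\cO$) or below every element of $C^>$ (contradicting $f\asymp 1$, since $f>0$). Now if $f\sim 1$, then $f\asymp 1$, so writing $\log f = c_0+\delta$ with $c_0\in C$ and $\delta\in\smallo$ (using (H2)), the second preliminary observation gives $f = \exp(c_0)\exp\delta\sim\exp(c_0)$, hence $\exp(c_0)\sim 1$; as $\exp(c_0)\in C$ and $C\cap(1+\smallo) = \{1\}$, we get $\exp(c_0) = 1$, so $c_0 = 0$ and $\log f = \delta\in\smallo$.

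The axioms then follow. (C1) and (C2) are immediate from the isomorphism properties: $\log(fg) = \log f+\log g$ yields $(fg)^c = \exp(c\log f+c\log g) = f^cg^c$, $\log(f^c) = c\log f$ yields $(f^c)^d = f^{cd}$, and $f^1 = \exp(\log f) = f$. For (C3): $f\sim 1$ gives $\log f\in\smallo$, hence $c\log f\in\smallo$ (a $C$-submodule of $K$), hence $f^c = \exp(c\log f)\in 1+\smallo$. For (C4): $(f^c)^\dagger = (\exp(c\log f))^\dagger = (c\log f)' = c(\log f)' = cf^\dagger$, the second equality being the standing hypothesis and the last using $(\log f)' = (\exp\log f)^\dagger = f^\dagger$. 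For (C5): $\log 2\in C$ by the first observation, and $\log 2>\log 1 = 0$, so $c\mapsto 2^c = \exp(c\log 2)$ is multiplication by $\log 2$ (an ordered automorphism of $(C,+)$) followed by $\exp|_C\colon(C,+)\to(C^>,\cdot)$, hence an ordered group isomorphism $C\to C^>$. I expect the main obstacle to be the crux claim together with the second preliminary observation: these are the only places where genuine valuation-theoretic content about $\exp$ must be extracted from the sole identity $(\exp a)^\dagger = a'$, which is done by comparing with the constant field; the remaining verifications are mechanical.
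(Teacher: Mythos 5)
Your proposal is correct, and it takes essentially the same approach as the paper: the only axiom with real content is (C3), and both arguments verify it by exploiting that $\exp$ restricts to an ordered group isomorphism $C\to C^>$ and sandwiching against constants. The paper bounds $f^c$ directly between $\exp(c\log d_1)=1-\epsilon$ and $\exp(c\log d_2)=1+\epsilon$ for $d_i=\exp(c^{-1}\log(1\pm\epsilon))$, whereas you first establish $\log(1+\smallo)\subseteq\smallo$ and $\exp(\smallo)\subseteq 1+\smallo$; this is a minor repackaging of the same idea.
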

\begin{proof}
A routine computation shows that $(f,c) \mapsto \exp(c\log f)$ satisfies (C1) and (C2). The identity $\exp(a)^\dagger = a'$ also ensures that this map satisfies (C4), and it tells us that $\exp$ restricts to an ordered group isomorphism $C\to C^>$. Using also that multiplication by $\log 2$ is an ordered group isomorphism $C\to C$, we get that $c \mapsto \exp(c\log 2)\colon C\to C^>$ is an ordered group isomorphism, so (C5) holds. For (C3), let $f \in K$ with $f \sim 1$ and let $c \in C$. If $c = 0$, then $\exp(c\log f) = 1$. Suppose $c>0$, and let $\epsilon \in C$ with $0<\epsilon<1$. We need to show that $1-\epsilon<\exp(c\log f)<1+\epsilon$. Put 
\[
d_1\coloneqq \exp(c\inv\log(1-\epsilon))\qquad d_2 \coloneqq\exp(c\inv\log(1+\epsilon))
\]
so $d_1,d_2 \in C$ and $d_1<1<d_2$. As $f \sim 1$, we have $d_1<f<d_2$, so 
\[
1-\epsilon = \exp(c\log d_1)<\exp(c\log f) < \exp(c\log d_2) = 1+\epsilon.
\]
The case that $c<0$ is similar. %For the last part of the proposition, note that if $(C,\exp|_C) \models \Th(\R_{\exp})$, then $C$ with the induced exponential $c\mapsto 2^c = \exp(c\log 2)$ is elementarily equivalent to $(\R,x\mapsto 2^x)$.
\end{proof}

%----------------------------------------%
\subsection{$\Hp$-field extensions from $H$-field extensions}
In this subsection, let $K$ be an $\Hp$-field. 
In axiomatizing the model companion of the theory of $H$-fields, Aschenbrenner, van den Dries, and van der Hoeven establish a great number of extension and embedding results for $H$-fields. To avoid reproving these results from scratch for $\Hp$-fields, we need a general criterion that tells us when we can ``upgrade'' these extensions and embeddings to also be compatible with a constant power map. We give this criterion in the following technical proposition, and we list various simplifications for the remainder of this subsection.

\begin{proposition}\label{prop:mainprop}
Let $M$ be an $H$-field extension of $K$ with real closed constant field $C_M$, and let $\exp_M$ be an exponential on $C_M$ extending the induced exponential on $C$. Let $(a_i)_{i \in I}$ be a tuple of elements in $M^>$ and suppose that
\begin{enumerate}[(i)]
\item The tuple $(va_i)_{i \in I}$ spans $\Q\Gamma_M$ as a $\Q$-vector space over $\Gamma$. 
\item The tuple $(a_i^\dagger)_{i \in I}$ is $C_M$-linearly independent over the $C_M$-subspace of $M$ generated by $(K^\times)^\dagger$ and $\{h \in M: vh>\Psi_M\}$. 
\end{enumerate}
Then we have the following:
\begin{enumerate}[(1)]
\item $M$ has a unique power closure $M\pow$ up to $M$-isomorphism;
\item There is a constant power map on $M\pow$ with induced exponential $\exp_M$ that makes $M\pow$ into an $\Hp$-field extension of $K$;
\item This constant power map is unique up to unique $M$-isomorphism:\ if $L$ is an $\Hp$-field extension of $K$ and $\imath\colon M\to L$ is an $H$-field embedding over $K$ that restricts to an ordered exponential field embedding $C_M\to C_L$, then $\imath$ extends to a unique $\Hp$-field embedding $M\pow\to L$. 
\end{enumerate}
\end{proposition}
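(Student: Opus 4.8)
The plan is to prove the three conclusions in order, deducing (1) from Fact~\ref{fact:closurechar}, then constructing the constant power map in (2) by transporting the exponential $\exp_M$ along a convenient factorization of $M\pow$ over $M$, and finally establishing uniqueness in (3) by reducing to Lemma~\ref{lem:immediateunique}. For (1), I would first observe that hypothesis (ii) is exactly what is needed to rule out the ``bad'' $C$-linear combinations appearing in Fact~\ref{fact:closurechar}. Concretely, given $c_1,\dots,c_n\in C$ and $f_1,\dots,f_n\in M^\times$, Lemma~\ref{lem:minimalQindep} lets us reduce to the case where $c_1,\dots,c_n$ are $\Q$-linearly independent; then, since the $(a_i^\dagger)$ are $C_M$-linearly independent over the span of $(M^\times)^\dagger$ together with $\{h\in M:vh>\Psi_M\}$ — and $(K^\times)^\dagger\subseteq(M^\times)^\dagger$ — any $\sum c_if_i^\dagger$ with $v(\sum c_if_i^\dagger)>\Psi_M$ must already satisfy $v(\sum c_if_i^\dagger)\in(\Gamma_M^>)'$ (using that such an element lies in $(M^\times)^\dagger$ via an exponential integral and Lemma~\ref{lem:poweroverPsi}), so it cannot sit strictly between $\Psi_M$ and $(\Gamma_M^>)'$. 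This gives the criterion of Fact~\ref{fact:closurechar}, hence a unique power closure $M\pow$; write $\Gamma_{M\pow}=C\Gamma_M$ by Fact~\ref{fact:psicofinal}.

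For (2), the key structural point is that by hypothesis (i) the value group of $M\pow$ over $M$ is built entirely by scaling the $va_i$: indeed $\Gamma_{M\pow}=C\Gamma_M = \sum_i C\cdot va_i + \Q\Gamma$-type relations collapse because $(va_i)_i$ already spans $\Q\Gamma_M/\Gamma$. I would pick, inside $M\pow$, for each $i\in I$ and each $c\in C$ an element $a_i^c$ with $(a_i^c)^\dagger = c\,a_i^\dagger$, and show that $M\pow$ is generated over $M$ (up to real closure, which is harmless) by these together with the scalings needed to make $\Gamma_{M\pow}$ a $C$-space; condition (ii) guarantees there are no unexpected multiplicative relations among the $a_i^\dagger$, so the assignment $c\mapsto a_i^c$ is consistent. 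Now I invoke Proposition~\ref{prop:exptopow} in reverse: rather than verifying (C1)--(C5) by hand, I extend $\exp_M$ on $C_M$ to a partial exponential on enough of $M\pow$ — namely, define $\exp$ on the $\Q$-span of $1$ and the logarithmic derivatives involved — or, more cleanly, I directly define $f^c$ for $f\in M\pow^>$ and $c\in C$ by decomposing $f = d\cdot g\cdot u$ with $d\in C$, $g$ a monomial (a product of powers $a_i^{e_i}$ and elements of $M^\times$), and $u\sim 1$, setting $f^c = d^c g^c u^c$, where $d^c$ uses $\exp_M$, $g^c$ uses the chosen $a_i^c$ and the constant power map already on $K$, and $u^c$ is the unique $\sim 1$ exponential integral of $c u^\dagger$ from Lemma~\ref{lem:uniquesim1}. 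Checking (C1)--(C5) is then routine: (C5) holds because $2\in C$ and $2^c=\exp_M(c\log 2)$ by construction, and the restriction to $f,c\in C$ recovers the induced exponential by Lemma~\ref{lem:induceddetermines}; (C3) is Lemma~\ref{lem:uniquesim1}; (C4) is immediate from the factorization; (C1), (C2) follow from the corresponding properties of $\exp_M$, of the map on $K$, and of the $a_i^c$. The fact that this restricts to the original constant power map on $K$ follows because $K$ is already closed under powers, so $K^>$-elements have their $K$-internal factorization.

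For (3), let $L$ be an $\Hp$-field extension of $K$ and $\imath\colon M\to L$ an $H$-field embedding over $K$ restricting to an ordered exponential embedding $C_M\to C_L$. Since $L$ is an $\Hp$-field it is closed under powers, and $\imath(M)\subseteq L$; by Lemma~\ref{lem:intermediateclosure} the power closure is absolute, so it suffices to extend $\imath$ to $M\pow$ and check compatibility with the power maps. Here is the main obstacle, and where hypotheses (i) and (ii) do real work: to extend $\imath$ I must send each generator $a_i^c$ to an element of $L$ with the right logarithmic derivative $\imath(c)\imath(a_i^\dagger)$, and there could a priori be a constant-field ambiguity. But condition (i) forces $va_i^c\notin\Q\Gamma_{\imath(M)}$ for $c\notin\Q$ (the $va_i$ are $\Q$-linearly independent over $\Gamma$, and scaling by an irrational $c$ leaves the $\Q$-span), so Lemma~\ref{lem:daggercut} pins down the image of $a_i^c$ uniquely over $\imath(M)$; for $c\in\Q$ the element $a_i^c$ is a root of one in $M$ already. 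Assembling these, $\imath$ extends to an $H$-field embedding $M\pow\to L$, and since $\Gamma_{M\pow}=\Q\Gamma_{M\pow}$ fails in general I instead appeal to Lemma~\ref{lem:immediateunique} applied over the real closure of $\imath(M)$ inside $L$ — or, more robustly, argue directly as in the proof of Lemma~\ref{lem:immediateunique}: any two extensions agree on $M$ and on the $a_i^\dagger$, hence their ratio on each $a_i^c$ is a constant $\sim$-close to $1$ by (C3), hence equal to $1$. This gives uniqueness of the $\Hp$-embedding, and compatibility of the power maps is then forced because both sides satisfy (C4) and the power maps are determined by logarithmic derivatives up to $\sim 1$ factors killed by Lemma~\ref{lem:uniquesim1}. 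The one genuinely delicate point throughout is verifying (ii)-type independence survives base change to $C_M$ and then to $C_L$ — this is precisely what Lemma~\ref{lem:indepover} is for, and I expect the bookkeeping there, rather than any conceptual difficulty, to be the bulk of the work.
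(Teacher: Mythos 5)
Your overall strategy matches the paper's (Fact~\ref{fact:closurechar} for (1), a multiplicative decomposition for (2), Lemma~\ref{lem:daggercut} plus Lemma~\ref{lem:immediateunique} for (3)), but the central step of (1) is justified circularly. You claim that any $\sum_i c_if_i^\dagger$ with $v\big(\sum_i c_if_i^\dagger\big)>\Psi_M$ must have value in $(\Gamma_M^>)'$ ``using that such an element lies in $(M^\times)^\dagger$ via an exponential integral.'' But an exponential integral of $\sum_i c_if_i^\dagger$ need not exist in $M$: $M$ is not assumed closed under powers --- that is exactly what the power closure is for, and if such integrals always existed in $M$ there would be nothing to prove. (Relatedly, you state hypothesis (ii) with $(M^\times)^\dagger$ in place of $(K^\times)^\dagger$; read literally that version is vacuously false, since each $a_i^\dagger$ itself lies in $(M^\times)^\dagger$.) What the paper actually does, after using (i) and (ii) to strip out the $a_i^\dagger$-contributions, is: reduce to $f_j\in K^\times$, apply Lemma~\ref{lem:indepover} with $S=\Psi_M$ to descend from $C_M$-coefficients to $C$-coefficients $\tilde c_1,\dots,\tilde c_n$ not all zero with $v\big(\sum_j\tilde c_jf_j^\dagger\big)>\Psi$, use that $K$ (not $M$) is closed under powers to produce $h\in K^\times$ with $h^\dagger=\sum_j\tilde c_jf_j^\dagger$ and hence $v(h^\dagger)\in(\Gamma^>)'$, and then substitute $h^\dagger$ for $f_n^\dagger$ to contradict the minimality of $n$. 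This descent-plus-induction is the real content of part (1) and is missing from your argument.

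Two smaller but genuine issues. In (2), choosing $a_i^c$ separately for each $c\in C_M$ with $(a_i^c)^\dagger=ca_i^\dagger$ does not give (C1): each choice is only determined up to a factor in $C_M^\times$, so $a_i^{c+d}$ and $a_i^ca_i^d$ may differ by a nontrivial constant. The paper avoids this by fixing a $\Q$-linear basis $(\hat c_\eta)$ of $C_M$, choosing representatives only for basis elements, and extending multiplicatively over $\Q$. In (3), hypothesis (i) says the $va_i$ \emph{span} $\Q\Gamma_M$ over $\Gamma$, not that they are $\Q$-linearly independent over it, so your justification that $v(a_i^c)\notin\Q\Gamma_{\imath(M)}$ does not follow as stated (the needed $\Q$-linear independence of the new values over $\Q\Gamma_M$ is instead derived from (ii) via Lemma~\ref{lem:indepover}); and Lemma~\ref{lem:immediateunique} must be applied over the intermediate field $M_0$ generated by the chosen power representatives, for which $\Q\Gamma_{M_0}=\Gamma_{M\pow}$ holds, not over the real closure of $\imath(M)$, for which the hypothesis $\Q\Gamma=\Gamma_{K\pow}$ fails whenever $C_M\Gamma_M\neq\Q\Gamma_M$.
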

\begin{proof}
First, we show that $M$ has a unique power closure up to $M$-isomorphism, using Fact~\ref{fact:closurechar}. Let $c_1,\ldots,c_n \in C_M$ and $f_1,\ldots,f_n \in M^\times$, and suppose towards contradiction that 
\[
\Psi_M < v(c_1f_1^\dagger + \cdots+c_nf_n^\dagger)< (\Gamma_M^>)'.
\]
We assume $n$ is minimal, so $c_1,\ldots,c_n$ are $\Q$-linearly independent by Lemma~\ref{lem:minimalQindep}. By our assumption (i), we can find for each $j = 1,\ldots,n$ a finitely supported tuple of rational numbers $(q_{i,j})_{i \in I}$ and an element $g_j \in K$ such that 
\[
f_j \asymp g_j\prod_{i\in I}a_i^{q_{i,j}}, 
\]
where the product on the right belongs to $M^{\rc}$, the real closure of $M$, so that the rational powers of the $a_i$ make sense. Take also for each $j$ an element $u_j \in M^{\rc}$ with $u_j \asymp 1$ such that
\[
f_j = g_ju_j\prod_{i\in I}a_i^{q_{i,j}}.
\]
Then $u_j^\dagger$ belongs to $M$, as it can be written as the sum of elements in $M$. For each $i \in I$, let $d_i \coloneqq c_1q_{i,1}+\cdots +c_nq_{i,n}$. Then the tuple $(d_i)_{i \in I}$ is also finitely supported, and we have
\[
c_1f_1^\dagger + \cdots+c_nf_n^\dagger = (c_1g_1^\dagger+\cdots +c_ng_n^\dagger) + (c_1u_1^\dagger +\cdots +c_nu_n^\dagger) + \sum_{i \in I}d_ia_i^\dagger.
\]
Since $g_1^\dagger,\ldots,g_n^\dagger \in (K^\times)^\dagger$ and since $u_1^\dagger,\ldots,u_n^\dagger$ and $c_1f_1^\dagger + \cdots+c_nf_n^\dagger$ belong to the set $\{h \in M: vh>\Psi_M\}$, we may use our assumption (ii) to deduce that $d_i = 0$ for all $i$. As $c_1,\ldots,c_n$ are $\Q$-linearly independent, we must have that $q_{i,j} = 0$ for all $i \in I$ and all $j = 1,\ldots,n$. Therefore, $f_j = g_ju_j$ for all $j$. Since $v(u_j^\dagger) \in (\Gamma_M^>)'$ and 
\[
v(c_1f_1^\dagger + \cdots+c_nf_n^\dagger) = v(c_1g_1^\dagger+\cdots +c_ng_n^\dagger + c_1u_1^\dagger +\cdots +c_nu_n^\dagger) < (\Gamma_M^>)',
\]
we have $c_1f_1^\dagger + \cdots+c_nf_n^\dagger \asymp c_1g_1^\dagger + \cdots+c_ng_n^\dagger$. By replacing $f_j$ by $g_j$ for each $j$, we may arrange that each $f_j$ is in $K^\times$. Now, since $v(c_1f_1^\dagger + \cdots+c_nf_n^\dagger)>\Psi_M$, we apply Lemma~\ref{lem:indepover} with $f_i^\dagger$ in place of $f_i$ and $\Psi_M$ in place of $S$. This gives $\tilde{c}_1,\ldots,\tilde{c}_n \in C$, not all zero, with $v(\tilde{c}_1f_1^\dagger + \cdots+\tilde{c}_nf_n^\dagger)>\Psi$. We may as well assume that $\tilde{c}_n \neq 0$ and, dividing through by $\tilde{c}_n$, we may even assume that $\tilde{c}_n = 1$. Since $K$ is an $\Hp$-field, we find $h \in K$ with $h^\dagger = \tilde{c}_1f_1^\dagger + \cdots+\tilde{c}_{n-1}f_{n-1}^\dagger+f_n^\dagger$. Then $v(h^\dagger) \in (\Gamma^>)'$, since $v(h^\dagger)>\Psi$. We have
\[
c_1f_1^\dagger + \cdots+c_nf_n^\dagger = (c_1-c_n\tilde{c}_1)f_1^\dagger + \cdots + (c_{n-1}- c_n\tilde{c}_{n-1})f_{n-1}^\dagger + c_nh^\dagger
\]
Since $v(h^\dagger) \in (\Gamma^>)'$ and $v(c_1f_1^\dagger + \cdots+c_nf_n^\dagger)< (\Gamma_M^>)'$, we have
\[
c_1f_1^\dagger + \cdots+c_nf_n^\dagger \asymp (c_1-c_n\tilde{c}_1)f_1^\dagger + \cdots + (c_{n-1}- c_n\tilde{c}_{n-1})f_{n-1}^\dagger.
\]
 This contradicts the minimality of $n$.

We have now established that $M$ has a unique power closure $M\pow$ up to $M$-isomorphism. Now, we turn to constructing a constant power map on $M\pow$. For each $i\in I$, we put $\alpha_i \coloneqq va_i$. We also fix a $C$-linear basis $(\beta_j)_{j \in J}$ for $\Gamma$, and we choose for each $j$ an element $b_j \in K^>$ with $vb_j = \beta_j$. We claim that $(\alpha_i)_{i \in I}$ and $(\beta_j)_{j \in J}$ together form a $C_M$-linear basis for $\Gamma_{M\pow} = C_M\Gamma_M$. Our assumption that $(\alpha_i)_{i \in I}$ spans $\Q\Gamma_M$ as a $\Q$-vector space over $\Gamma$ ensures that $(\alpha_i)_{i \in I}$ and $(\beta_j)_{j \in J}$ span $\Gamma_{M\pow}$ as a $C_M$-vector space. For $C_M$-linear independence, let $(c_i)_{i \in I}$ and $(d_j)_{j \in J}$ be finitely supported tuples from $C_M$ and suppose that 
\[
\sum_{i \in I}c_i\alpha_i+ \sum_{j\in J}d_j\beta_j = 0.
\]
By Lemma~\ref{lem:poweroverPsi} (with $M\pow$ in place of $K$), we get that $v\big( \sum_{i \in I}c_ia_i^\dagger + \sum_{j \in J} d_jb_j^\dagger\big)> \Psi_{M\pow}$. 
As $\Psi_M$ is cofinal in $\Psi_{M\pow}$, we may use our assumption (ii) to see that each $c_i = 0$, so $v\big(\sum_{j \in J} d_jb_j^\dagger\big)> \Psi_{M\pow}$. Applying Lemma~\ref{lem:poweroverPsi}, this time to $K$ and the tuple $(b_j)_{j \in J}$, we see that $(b_j^\dagger)_{j \in J}$ is $C$-linearly independent over $\{g \in K:vg>\Psi\}$. Applying Lemma~\ref{lem:indepover} with the $b_j^\dagger$ in place of the $f_i$ and with $\Psi_{M\pow}$ in place of $S$, we conclude that each $d_j = 0$ as well.

With this claim established, we will define our constant power map, starting with the powers of the $a_i$ and $b_j$. Fix a $\Q$-linear basis $(\hat{c}_\eta)_{\eta<\mu}$ for $C$ with $\hat{c}_0 = 1$, and extend this to a $\Q$-linear basis $(\hat{c}_\eta)_{\eta<\nu}$ for $C_M$ (so $\nu\geq \mu$, with equality if and only if $C_M = C$). For each $i \in I$ and each $0<\eta<\nu$, we fix an element $a_{i,\eta} \in M\pow^>$ with $a_{i,\eta}^\dagger = \hat{c}_\eta a_i^\dagger$. We extend this to all $\eta<\nu$ by putting $a_{i,0} \coloneqq a_i$. For each $j \in J$ and each $\mu\leq \eta < \nu$, we fix an element $b_{j,\eta} \in M\pow^>$ with $b_{j,\eta}^\dagger = \hat{c}_\eta b_j^\dagger$. We extend this to all $\eta<\nu$ by putting $b_{j,\eta} \coloneqq b_j^{\hat{c}_\eta}$ for $\eta<\mu$. Now, let $c \in C_M$ and write $c = \sum_{\eta<\nu}q_\eta \hat{c}_\eta$ where $(q_\eta)_{\eta<\nu}$ is a finitely supported tuple from $\Q$. For $i \in I$, we put $a_i^c\coloneqq \prod_{\eta<\nu}a_{i,\eta}^{q_\eta}$. Likewise, for $j \in J$, we put $b_j^c\coloneqq \prod_{\eta<\nu}b_{j,\eta}^{q_\eta}$. Clearly, for each $a_i$ we have
\[
a_i^{\hat{c}_\eta} = a_{i,\eta}\text{ for all }\eta,\qquad (a_i^c)^\dagger = ca_i^\dagger,\qquad v (a_i^c) = c\alpha_i,
\]
and likewise for the $b_j$. Now, we extend this constant power map to all of $M\pow$. Let $f \in M\pow^>$ and take finitely supported tuples $(c_i)_{i \in I}$ and $(d_j)_{j \in J}$ from $C_M$ with $vf = \sum_{i \in I}c_i\alpha_i+ \sum_{j\in J}d_j\beta_j$. Then
\[
f = d(1+\epsilon)\prod_{i \in I}a_i^{c_i}\prod_{j \in J}b_j^{d_j}
\]
for some unique $d \in C_M^>$ and $\epsilon \in M\pow$ with $\epsilon \prec 1$. We put $d^c \coloneqq \exp_M(c\log_Md)$ and, using Lemma~\ref{lem:uniquesim1}, we let $(1+\epsilon)^c$ be the unique element of $1+ \smallo_{M\pow}$ with $((1+\epsilon)^c)^\dagger = c(1+\epsilon)^\dagger$. We put
\[
f^c \coloneqq d^c(1+\epsilon)^c\prod_{i \in I}a_i^{c_ic}\prod_{j \in J}b_j^{d_jc}.
\]
It is routine to check that this is indeed a constant power map. By Lemma~\ref{lem:induceddetermines}, the induced exponential on $C_M$ coincides with $\exp_M$ (the identity $\exp_M(1) = 2$ follows from our assumption that $\exp_M$ extends the induced exponential on $C$).

It remains to show that the claimed universal property holds. Let $L$ be an $\Hp$-field extension of $K$, let $\imath\colon M\to L$ be an $H$-field embedding over $K$, and suppose that $\imath|_{C_M}\colon C_M\to C_L$ is an ordered exponential field embedding. We note that $(va_{i,\eta})_{i \in I,0<\eta<\nu}$ and $(vb_{j,\eta})_{j \in J,\mu\leq \eta<\nu}$ form a $\Q$-linear basis for $\Gamma_{M\pow}$ over $\Q\Gamma_M$. Let $M_0$ be the field extension of $M$ generated by $(a_{i,\eta})_{i \in I,0<\eta<\nu}$ and $(b_{j,\eta})_{j \in J,\mu\leq \eta<\nu}$. As $a_{i,\eta}^\dagger = \hat{c}_\eta a_i^\dagger \in M$ and $b_{j,\eta}^\dagger = \hat{c}_\eta b_j^\dagger\in M$ for all $i \in I$, $j \in J$, and $\eta< \nu$, we may iteratively apply Lemma~\ref{lem:daggercut} to find an $H$-field embedding $M_0\to L$ that sends $a_{i,\eta}$ to $\imath(a_i)^{\imath(\hat{c}_\eta)}$ and $b_{j,\eta}$ to $\imath(b_j)^{\imath(\hat{c}_\eta)}$ for each $i,j,\eta$. By Lemma~\ref{lem:intermediateclosure}, $M\pow$ is the unique power closure of $M_0$. As $\Q\Gamma_{M_0} = \Gamma_{M\pow}$, our embedding $M_0\to L$ extends uniquely to an $H$-field embedding $\jmath\colon M\pow\to L$ by Lemma~\ref{lem:immediateunique}. We claim that $\jmath$ is an $\Hp$-field embedding. Let $c= \sum_{\eta<\nu}q_\eta\hat{c}_\eta \in C_M$. For $i \in I$, we have
\[
\jmath(a_i^c)= \prod_{\eta<\nu}\jmath(a_{i,\eta})^{q_\eta} = \prod_{\eta<\nu}\imath(a_i)^{q_\eta\imath(\hat{c}_\eta)} = \imath(a_i)^{\imath(c)} = \jmath(a_i)^{\jmath(c)}.
\]
Likewise, $\jmath(b_j^c) = \jmath(b_j)^{\jmath(c)}$. Let $\epsilon \in M\pow$ with $\epsilon \prec 1$. 
We have
\[
\jmath\big((1+\epsilon)^c\big)^\dagger= \jmath\big(c(1+\epsilon)^\dagger\big) = \jmath(c)\jmath(1+\epsilon)^\dagger,
\]
so Lemma~\ref{lem:uniquesim1} gives that $\jmath\big((1+\epsilon)^c\big) = \jmath(1+\epsilon)^{\jmath(c)}$. Using Lemma~\ref{lem:induceddetermines} and our assumption that $\imath$ commutes with the induced exponentials on $C_M$ and $C_L$, we get that $\jmath(d^c) = \jmath(d)^{\jmath(c)}$ for all $d \in C_M^>$. As shown above, we may decompose any $f \in M\pow^>$ as a product
\[
f = d(1+\epsilon)\prod_{i \in I}a_i^{c_i}\prod_{j \in J}b_j^{d_j}
\]
where $d \in C_M^>$, $\epsilon \prec 1$, and $(c_i)_{i \in I}$ and $(d_j)_{j \in J}$ are from $C_M$. From this it follows $\jmath(f^c)= \jmath(f)^{\jmath(c)}$, so $\jmath$ is indeed an $\Hp$-field embedding. As for the uniqueness of $\jmath$, we note that \emph{any} $\Hp$-field embedding $M\pow\to L$ extending $\imath$ must send $a_{i,\eta} = a_i^{\hat{c}_\eta}$ to $\imath(a_i)^{\imath(\hat{c}_\eta)}$ for each $i \in I$ and $\eta<\nu$; likewise with the $b_{j,\eta}$. The restriction of $\jmath$ to $M_0$ is uniquely determined by this requirement, so $\jmath$ is unique as well by Lemma~\ref{lem:immediateunique}.
\end{proof}

In many applications of the above proposition, the extension $M$ has the same constant field as $K$. In this case, we can use that $(K^\times)^\dagger$ and $\{h \in M: vh>\Psi_M\}$ are both $C$-subspaces of $M$ and that $\exp_M$ is necessarily the induced exponential on $C$ to simplify the statement of the proposition:

\begin{corollary}\label{cor:maincor}
Let $M$ be an $H$-field extension of $K$ with $C_M = C$, let $(a_i)_{i \in I}$ be a tuple of elements in $M^>$, and suppose that
\begin{enumerate}[(i)]
\item The tuple $(va_i)_{i \in I}$ spans $\Q\Gamma_M$ as a $\Q$-vector space over $\Gamma$. 
\item The tuple $(a_i^\dagger)_{i \in I}$ is $C$-linearly independent over $(K^\times)^\dagger+\{h \in M: vh>\Psi_M\}\subseteq M$. 
\end{enumerate}
Then $M$ has a unique power closure $M\pow$, and there is a unique constant power map on $M\pow$ that makes it an $\Hp$-field extension of $K$. If $L$ is an $\Hp$-field extension of $K$, then any $H$-field embedding $M\to L$ over $K$ extends uniquely to an $\Hp$-field embedding $M\pow\to L$. 
\end{corollary}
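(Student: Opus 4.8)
The plan is to obtain the corollary as the special case $C_M = C$ of Proposition~\ref{prop:mainprop}, after checking that its hypotheses (i) and (ii) then reduce verbatim to those of the proposition and that the extra condition on embeddings appearing in part~(3) of the proposition becomes automatic.

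First I would record the simplifications coming from $C_M = C$. Since $K$ is an $\Hp$-field it is real closed, so $C_M = C$ is real closed; moreover the induced exponential on $C$ is then the only exponential on $C_M$ extending the induced exponential on $C$, so we take $\exp_M$ to be it. Next, because $K$ is an $\Hp$-field it is closed under powers, so $(K^\times)^\dagger$ is not merely a subgroup of $M$ but a $C$-linear subspace: for $a\in K^\times$ and $c\in C$ there is $g\in K^\times$ with $g^\dagger = ca^\dagger$, whence $ca^\dagger\in(K^\times)^\dagger$. The set $\{h\in M:vh>\Psi_M\}$ is also a $C$-subspace of $M$, being closed under addition and under multiplication by elements of $C^\times$ (which have valuation $0$). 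Consequently the $C_M$-subspace of $M$ generated by $(K^\times)^\dagger$ and $\{h\in M:vh>\Psi_M\}$ is precisely the sum $(K^\times)^\dagger+\{h\in M:vh>\Psi_M\}$, and so conditions (i) and (ii) in the corollary are exactly conditions (i) and (ii) in Proposition~\ref{prop:mainprop}.

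Invoking the proposition then gives directly the unique power closure $M\pow$ and a constant power map on it (with induced exponential the induced exponential on $C$) making $M\pow$ an $\Hp$-field extension of $K$. For the last sentence I would apply part~(3) of the proposition: given an $\Hp$-field extension $L$ of $K$ and an $H$-field embedding $\imath\colon M\to L$ over $K$, the map $\imath$ fixes $C=C_M$ pointwise, and $\id_C\colon C_M\to C_L$ is an ordered exponential field embedding, since the inclusion $K\hookrightarrow L$ is an $\Hp$-field embedding and hence $2^c$ computed in $L$ agrees with $2^c$ computed in $K$ for every $c\in C$, i.e.\ the induced exponentials on $C$ coincide. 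Thus part~(3) applies and yields the unique $\Hp$-field embedding $M\pow\to L$ extending $\imath$; specializing to $L=M\pow$ equipped with a second such constant power map and $\imath=\id_M$ delivers the uniqueness of the constant power map on $M\pow$ (up to this unique $M$-isomorphism, as in Proposition~\ref{prop:mainprop}).

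I do not expect a genuine obstacle here: the one point requiring care is that the $C_M$-span of $(K^\times)^\dagger$ and $\{h\in M:vh>\Psi_M\}$ collapses to an ordinary sum, which is exactly where the hypothesis that $K$ is an $\Hp$-field—equivalently, that $K$ is closed under powers—is used. Verifying that an $H$-field embedding over $K$ automatically respects the induced exponential on $C$ is the other, entirely routine, point.
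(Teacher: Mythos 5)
Your proposal is correct and matches the paper's intent exactly: the corollary is stated without proof, preceded only by the remark that when $C_M=C$ one uses that $(K^\times)^\dagger$ and $\{h\in M: vh>\Psi_M\}$ are $C$-subspaces (so the generated $C_M$-subspace collapses to the sum) and that $\exp_M$ is forced to be the induced exponential on $C$, which is precisely the reduction you carry out. Your reading of the final uniqueness claim as ``unique up to unique $M$-isomorphism,'' in the sense of part~(3) of Proposition~\ref{prop:mainprop}, is the right interpretation of the paper's slightly informal phrasing.
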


The statement of the proposition simplifies even further when the tuple $(va_i^\dagger)_{i \in I}$ is distinct. This situation occurs a few times in the remainder of the paper, so we record it here:

\begin{corollary}\label{cor:maincor2}
Let $M$ be an $H$-field extension of $K$ with $C_M = C$. Let $(\alpha_i)_{i \in I}$ be a tuple of nonzero elements in $\Gamma_M$ and suppose that $(\alpha_i)_{i \in I}$ spans $\Q\Gamma_M$ as a $\Q$-vector space over $\Gamma$ and that $(\alpha_i^\dagger)_{i \in I}$ is a tuple of distinct elements of $\Gamma_M \setminus \Psi$.
Then $M$ has a unique power closure $M\pow$, and there is a unique constant power map on $M\pow$ that makes it an $\Hp$-field extension of $K$. If $L$ is an $\Hp$-field extension of $K$, then any $H$-field embedding $M\to L$ over $K$ extends uniquely to an $\Hp$-field embedding $M\pow\to L$. 
\end{corollary}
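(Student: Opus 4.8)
The plan is to realize this corollary as an instance of Corollary~\ref{cor:maincor}. First I would choose, for each $i \in I$, an element $a_i \in M^>$ with $va_i = \alpha_i$; this is possible since the valuation on $M$ is surjective onto $\Gamma_M$ and, $M$ being an ordered field, we may take the positive representative. With this choice, hypothesis~(i) of Corollary~\ref{cor:maincor} is exactly the given assumption that $(\alpha_i)_{i \in I}$ spans $\Q\Gamma_M$ over $\Gamma$, and since $\alpha_i \neq 0$ we have $v(a_i^\dagger) = \psi_M(\alpha_i) = \alpha_i^\dagger$. So everything reduces to verifying hypothesis~(ii): that $(a_i^\dagger)_{i \in I}$ is $C$-linearly independent over $V \coloneqq (K^\times)^\dagger + \{h \in M : vh > \Psi_M\}$ (a $C$-subspace, since $K$ is closed under powers). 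Once this is done, Corollary~\ref{cor:maincor} delivers all three conclusions directly.

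For~(ii), suppose towards a contradiction that $\sum_i c_i a_i^\dagger = g^\dagger + h$ with $(c_i)_{i\in I}$ a finitely supported tuple from $C$, not all zero, $g \in K^\times$, and $h \in M$ with $vh > \Psi_M$. Since $C^\times \subseteq \cO_M^\times$ and the valuations $v(a_i^\dagger) = \alpha_i^\dagger$ are pairwise distinct, the left-hand side has valuation $\alpha_{i_0}^\dagger$, where $i_0 \in I$ minimizes $\alpha_i^\dagger$ over $\{i : c_i \neq 0\}$. As $\alpha_{i_0}^\dagger = \psi_M(\alpha_{i_0}) \in \Psi_M$ while $vh > \Psi_M$, we get $vh > \alpha_{i_0}^\dagger$, hence $v(g^\dagger) = \alpha_{i_0}^\dagger$; in particular $g^\dagger \neq 0$, so $g \in K^\times \setminus C$. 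Now I would split on $vg$: if $vg \neq 0$ then $v(g^\dagger) = \psi(vg) \in \Psi$, contradicting $\alpha_{i_0}^\dagger \in \Gamma_M \setminus \Psi$; and if $vg = 0$ (i.e.\ $g \asymp 1$) then, by the standard observation that $v(u^\dagger) \in (\Gamma^>)'$ for $u \asymp 1$ in an $H$-field, we have $\alpha_{i_0}^\dagger \in (\Gamma^>)' \subseteq (\Gamma_M^>)'$ (the inclusion holding because $\Gamma \leq \Gamma_M$ with agreeing valuations), which is impossible since $\alpha_{i_0}^\dagger \in \Psi_M$ and $\Psi_M < (\Gamma_M^>)'$.

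The reduction itself is routine bookkeeping; the only point that needs a little care is this last step, namely ruling out that a $C$-linear combination of the $a_i^\dagger$ — whose valuation necessarily lands in $\Psi_M$ and, by hypothesis, outside $\Psi = \Psi_K$ — could nonetheless be matched in valuation by the logarithmic derivative $g^\dagger$ of an element $g \asymp 1$ of $K$. This is exactly where the inclusion $(\Gamma^>)' \subseteq (\Gamma_M^>)'$ together with $\Psi_M < (\Gamma_M^>)'$ does the work, and where the distinctness of the $\alpha_i^\dagger$ (used to pin down the valuation of the combination) and the exclusion $\alpha_i^\dagger \notin \Psi$ (used in the case $vg \neq 0$) both enter.
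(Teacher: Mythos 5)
Your proposal is correct and follows essentially the same route as the paper: choose positive representatives $a_i$ of the $\alpha_i$ and reduce to Corollary~\ref{cor:maincor}, with the only real content being the verification of hypothesis~(ii) via the distinctness of the $\alpha_i^\dagger$ and the fact that $vg^\dagger$ lands in $\Psi$ (if $vg\neq 0$) or in $(\Gamma^>)'>\Psi_M$ (if $g\asymp 1$). The paper compresses this into the single valuation identity $v\big(g^\dagger+\sum_i c_ia_i^\dagger\big)=\min\big(\{\alpha_i^\dagger:c_i\neq 0\}\cup\{vg^\dagger\}\big)\in\Psi_M$; your case split on $vg$ just makes explicit the same facts that identity relies on.
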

\begin{proof}
Take for each $i \in I$ an element $a_i \in M^>$ with $va_i = \alpha_i$. By Corollary~\ref{cor:maincor}, we need only show that $(a_i^\dagger)_{i \in I}$ is $C$-linearly independent over 
\[
(K^\times)^\dagger + \{h \in M :vh >\Psi_M\}.
\]
To see this, let $(c_i)_{i \in I}$ be a finitely supported tuple from $C$, not all zero, and let $g \in K^\times$. As the $\alpha_i^\dagger$ are distinct from one another and from $vg^\dagger$, we have
\[
v\big(g^\dagger + \sum_{i \in I}c_ia_i^\dagger\big) = \min\big(\{\alpha_i^\dagger:c_i \neq 0\}\cup\{vg^\dagger\}\big) \in \Psi_M.\qedhere
\]

\end{proof}

Another frequently occurring situation is the case that $M$ is an immediate extension of $K$. 
\begin{corollary}\label{cor:extensioncor1}
Let $M$ be an immediate $H$-field extension of $K$. Then $M$ has a unique power closure $M\pow$. This power closure is an immediate extension of $K$, and there is a unique constant power map on $M\pow$ that makes it an $\Hp$-field extension of $K$. If $L$ is an $\Hp$-field extension of $K$, then any $H$-field embedding $M\to L$ over $K$ extends uniquely to an $\Hp$-field embedding $M\pow\to L$. 
\end{corollary}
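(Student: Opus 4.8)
The plan is to obtain almost the entire statement in one shot from Corollary~\ref{cor:maincor} applied to the empty tuple, and then to check separately that the resulting power closure is immediate over $K$. Since $M$ is an immediate $H$-field extension of $K$, we have $\Gamma_M=\Gamma$ and $C_M=C$, and $M$ has the same asymptotic couple as $K$ (so $\Psi_M=\Psi$). Take $I=\emptyset$ in Corollary~\ref{cor:maincor}: hypothesis (i) holds because $\Q\Gamma_M=\Q\Gamma$ is already spanned over $\Gamma$ by the empty set, and hypothesis (ii) is vacuous. The corollary then delivers that $M$ has a unique power closure $M\pow$, that there is a unique constant power map on $M\pow$ making it an $\Hp$-field extension of $K$, and that every $H$-field embedding $M\to L$ over $K$ into an $\Hp$-field $L$ extends uniquely to an $\Hp$-field embedding $M\pow\to L$.

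It remains to verify that $M\pow$ is an immediate extension of $K$, i.e.\ that $\Gamma_{M\pow}=\Gamma$ and $C_{M\pow}=C$. The second equality is automatic, since $M\pow$ is a power extension of $M$ and hence $C_{M\pow}=C_M=C$. For the first, I would apply Fact~\ref{fact:psicofinal} to $M$ (legitimate, as $M$ has a unique power closure and $C_M=C$) to get $\Gamma_{M\pow}=C\Gamma_M=C\Gamma$, the $C$-linear span of $\Gamma$ inside the $C$-vector space $\Gamma_{M\pow}$. But $\Gamma$ is already a $C$-subspace of $\Gamma_{M\pow}$: given $\gamma\in\Gamma$ and $c\in C$, pick $f\in K^\times$ with $vf=\gamma$ and, using that $K$ is closed under powers, $g\in K^\times$ with $g^\dagger=cf^\dagger$; then $c\gamma=vg\in\Gamma$, and this agrees with the scalar multiple computed in $\Gamma_{M\pow}$ by Fact~\ref{fact:gammaCspace}. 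Hence $C\Gamma=\Gamma$, so $\Gamma_{M\pow}=\Gamma$ and $M\pow$ is immediate over $K$ (and a fortiori over $M$).

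In truth there is no serious obstacle here: the whole weight of the statement is carried by Corollary~\ref{cor:maincor} (hence ultimately by Proposition~\ref{prop:mainprop}) together with Fact~\ref{fact:psicofinal}. The only point that warrants an argument at all is the last one---that passing to the power closure cannot enlarge a value group that is already a $C$-vector space---which is exactly what the second paragraph records.
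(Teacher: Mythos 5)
Your proposal is correct and follows exactly the paper's own route: apply Corollary~\ref{cor:maincor} with the empty tuple, then use Fact~\ref{fact:psicofinal} to get $\Gamma_{M\pow}=C\Gamma_M=C\Gamma=\Gamma$. The only difference is that you spell out why $C\Gamma=\Gamma$ (namely that $\Gamma$ is already a $C$-subspace because the $\Hp$-field $K$ is closed under powers), a step the paper leaves implicit.
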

\begin{proof}
Apply Corollary~\ref{cor:maincor}, taking $(a_i)_{i \in I}$ to be the empty tuple. Fact~\ref{fact:psicofinal} gives $\Gamma_{M\pow} = C\Gamma_M = C\Gamma = \Gamma$ so $M\pow$ is an immediate extension of $K$.
\end{proof}

The final situation we record here is the case where the value group of $M$ is generated by one element over the value group of $K$.
\begin{corollary}\label{cor:extensioncor2}
Let $M$ be an $H$-field extension of $K$ with $C_M = C$ and $\Gamma_M = \Gamma \oplus \Z\alpha$ for some $\alpha \in \Gamma_M^>$. Then $M$ has a unique power closure $M\pow$, and there is a unique constant power map on $M\pow$ that makes it an $\Hp$-field extension of $K$. If $L$ is an $\Hp$-field extension of $K$, then any $H$-field embedding $M\to L$ over $K$ extends uniquely to an $\Hp$-field embedding $M\pow\to L$. 
\end{corollary}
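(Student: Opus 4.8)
The plan is to deduce this directly from Corollary~\ref{cor:maincor}, using a one-element tuple. First I would fix $a \in M^>$ with $va = \alpha$ and take $I$ to be a singleton with $a_0 = a$. Since $\Gamma_M = \Gamma \oplus \Z\alpha$, we have $\Q\Gamma_M = \Q\Gamma + \Q\alpha$, so $(va)$ spans $\Q\Gamma_M$ as a $\Q$-vector space over $\Gamma$; this is exactly hypothesis (i) of Corollary~\ref{cor:maincor}.

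The only point requiring an argument is hypothesis (ii). Because $C_M = C$, both $(K^\times)^\dagger$ and $\{h \in M : vh > \Psi_M\}$ are $C$-linear subspaces of $M$, and $C$-linear independence of the single element $a^\dagger$ over their sum is precisely the assertion that $a^\dagger \notin (K^\times)^\dagger + \{h \in M : vh > \Psi_M\}$. I would argue this by contradiction: if $a^\dagger = g^\dagger + h$ with $g \in K^\times$ and $h \in M$, $vh > \Psi_M$, then $(a/g)^\dagger = a^\dagger - g^\dagger = h$, so $v\big((a/g)^\dagger\big) > \Psi_M$; since $M$ is $H$-asymptotic, this forces $a/g \asymp 1$ by the characterization of elements $\asymp 1$ recorded just before Lemma~\ref{lem:poweroverPsi} (applied in $M$). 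Hence $va = vg \in \Gamma$, contradicting $\alpha \notin \Gamma$ — which holds because the sum $\Gamma \oplus \Z\alpha$ is direct and $\alpha \neq 0$. This establishes (ii).

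With (i) and (ii) in hand, Corollary~\ref{cor:maincor} immediately yields that $M$ has a unique power closure $M\pow$, that there is a unique constant power map making $M\pow$ an $\Hp$-field extension of $K$, and that any $H$-field embedding $M \to L$ over an $\Hp$-field extension $L$ of $K$ extends uniquely to an $\Hp$-field embedding $M\pow \to L$. Since the whole argument is a direct reduction to an already-proved corollary, there is no real obstacle; the only subtlety is the verification of (ii), and even that collapses to the asymptotic fact that an element of an $H$-asymptotic field is $\asymp 1$ exactly when its logarithmic derivative has valuation above $\Psi$.
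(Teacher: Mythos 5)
Your proposal is correct and follows essentially the same route as the paper: reduce to Corollary~\ref{cor:maincor} with the singleton tuple $(a)$, note that (i) is immediate from $\Gamma_M=\Gamma\oplus\Z\alpha$, and verify (ii) by showing $a^\dagger\notin(K^\times)^\dagger+\{h\in M:vh>\Psi_M\}$ via the fact that $v(u^\dagger)>\Psi_M$ forces $u\asymp 1$, contradicting $\alpha\notin\Gamma$. The paper phrases this with $ag\asymp 1$ rather than $a/g\asymp 1$, which is an immaterial difference.
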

\begin{proof}
Fix $a \in M^>$ with $va = \alpha$. By Corollary~\ref{cor:maincor}, it is enough to show that $a^\dagger \not\in (K^\times)^\dagger+\{h \in M: vh>\Psi_M\}$. Let $g \in (K^\times)^\dagger$. If $v(a^\dagger + g^\dagger) = v(ag)^\dagger > \Psi_M$, then $ag \asymp 1$, so $a\asymp g\inv$, contradicting that $va\not\in \Gamma$.
\end{proof}

%----------------------------------------------------------------------------------%
\section{Extensions and embeddings of $\Hp$-fields}\label{sec:HPext}
%----------------------------------------------------------------------------------%
In this section, we collect the embedding results we need to prove our main theorem. We do this by combining various $H$-field embedding results from~\cite{ADH17} with the tools established in the previous section on extending $H$-field embeddings to $\Hp$-field embeddings. For the remainder of this section, $K$ is an $\Hp$-field.
%----------------------------------------%
\subsection{Differentially algebraic $\Hp$-field extensions}
In this subsection, we handle the various differentially algebraic extensions of our $\Hp$-field $K$. We begin with the newtonization and the Newton-Liouville closure.

\begin{proposition}\label{prop:newtonization}
Suppose that $K$ is $\upo$-free. Then there is a unique constant power map on $K^{\nt}$, the newtonization of $K$, that makes it an immediate $\Hp$-field extension of $K$. If $L$ is a newtonian $\Hp$-field extension of $K$, then there is an $\Hp$-field embedding $K^{\nt}\to L$ over $K$.
\end{proposition}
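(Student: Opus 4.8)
The plan is to reduce everything to the corresponding statement for $H$-fields (Facts~\ref{fact:ntexist} and~\ref{fact:asdmax}) together with the ``upgrading'' machinery of Corollary~\ref{cor:extensioncor1}. First, since $K$ is $\upo$-free, Fact~\ref{fact:ntexist} gives a newtonization $K^{\nt}$ of $K$ that is $\d$-algebraic over $K$; moreover $K^{\nt}$ is an \emph{immediate} $H$-field extension of $K$ by definition of newtonization. (Here one should note that $K^{\nt}$, being real closed since newtonian $H$-fields are Liouville closed---or rather, since we may take its real closure inside, and a real closure of an immediate extension is still immediate---is an $H$-field to which Corollary~\ref{cor:extensioncor1} applies. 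If the chosen $K^{\nt}$ is not yet real closed, note a newtonian $H$-field is real closed by definition in this paper, so this is automatic.) Since $K^{\nt}$ is an immediate $H$-field extension of $K$, Corollary~\ref{cor:extensioncor1} produces a unique power closure $(K^{\nt})\pow$, which is again immediate over $K$ (hence over $K^{\nt}$), and a unique constant power map making $(K^{\nt})\pow$ an $\Hp$-field extension of $K$.

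The key point is then that $(K^{\nt})\pow = K^{\nt}$. Indeed, $(K^{\nt})\pow$ is a power extension of $K^{\nt}$, so in particular it is generated over $K^{\nt}$ by a tower of algebraic extensions and elements $t$ with $t^\dagger = c f^\dagger$ for $c$ a constant and $f$ in the field generated so far; such an extension is $\d$-algebraic, and $(K^{\nt})\pow$ is immediate over $K^{\nt}$. But $K^{\nt}$ is $\upo$-free (Fact~\ref{fact:dalgupofree}, as a $\d$-algebraic $H$-field extension of the $\upo$-free $K$) and newtonian, hence asymptotically $\d$-algebraically maximal by Fact~\ref{fact:asdmax}, so it has no proper immediate $\d$-algebraic $H$-field extensions. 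Therefore $(K^{\nt})\pow = K^{\nt}$, i.e.\ $K^{\nt}$ is already closed under powers, and it carries a unique constant power map extending that of $K$. Uniqueness of this map among constant power maps making $K^{\nt}$ an immediate $\Hp$-field extension of $K$ is exactly the uniqueness clause of Corollary~\ref{cor:extensioncor1}.

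For the universal property, let $L$ be a newtonian $\Hp$-field extension of $K$. Forgetting the constant power map, $L$ is a newtonian $H$-field extension of $K$, so by the defining property of the newtonization $K^{\nt}$ (Fact~\ref{fact:ntexist}) there is an $H$-field embedding $\imath\colon K^{\nt}\to L$ over $K$. Since $C_{K^{\nt}} = C$ (the extension is immediate) and $\imath$ is over $K$, the restriction $\imath|_C$ is the identity, hence trivially an ordered exponential field embedding into $C_L$ with respect to the induced exponentials. The uniqueness-and-universality clause of Corollary~\ref{cor:extensioncor1} (applied with $M = K^{\nt}$, so $M\pow = K^{\nt}$) then tells us that this $H$-field embedding $K^{\nt}\to L$ is automatically an $\Hp$-field embedding. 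This completes the proof.

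The only genuine obstacle is the verification that $(K^{\nt})\pow$ is again $\d$-algebraic and immediate over $K^{\nt}$, so that Fact~\ref{fact:asdmax} forces the power closure to collapse; once that identification $(K^{\nt})\pow = K^{\nt}$ is in hand, everything else is bookkeeping with the already-proven corollaries. One should also take a moment to confirm that a newtonian $H$-field is real closed under the conventions of this paper (so that Corollary~\ref{cor:extensioncor1} applies directly to $K^{\nt}$ rather than to its real closure), but this is immediate from the definitions recalled in the preliminaries.
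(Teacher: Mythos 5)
Your proof is correct and follows essentially the same route as the paper: apply Corollary~\ref{cor:extensioncor1} to the immediate extension $K^{\nt}$ of $K$, collapse $(K^{\nt})\pow = K^{\nt}$ via asymptotic $\d$-algebraic maximality (Fact~\ref{fact:asdmax}, using Fact~\ref{fact:dalgupofree} to see $K^{\nt}$ is $\upo$-free), and combine the universal property of the newtonization with the embedding clause of Corollary~\ref{cor:extensioncor1}. The only quibble is your aside that a newtonian $H$-field is real closed ``by definition in this paper''---the paper never defines newtonianity---but this is harmless, since real closedness of $K^{\nt}$ falls out of the identification with its (real closed) power closure anyway.
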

\begin{proof}
As $K^{\nt}$ is an immediate $H$-field extension of $K$, we may use Corollary~\ref{cor:extensioncor1} along with the universal property of the newtonization to see that the lemma holds for $(K^{\nt})\pow$. Since $K^{\nt}$ is asymptotically $\d$-algebraically maximal by Fact~\ref{fact:dmax} and $(K^{\nt})\pow$ is an immediate $\d$-algebraic extension of $K^{\nt}$ by Corollary~\ref{cor:extensioncor1}, we have $K^{\nt} = (K^{\nt})\pow$.
\end{proof}

\begin{lemma}\label{lem:expint}
Let $s \in K$ with $s < 0$, and suppose that $v(s-a^\dagger)\in \Psi^\downarrow$ for all $a\in K^\times$. Then $K$ has an $\Hp$-field extension $K(f)\pow$ with $f>0$, $f^\dagger = s$, and $C_{K(f)\pow} = C$. This extension is unique up to $K$-isomorphism:\ given an $\Hp$-field extension $L$ of $K$ and $g \in L^>$ with $g^\dagger = s$, there is a unique $\Hp$-field embedding $K(f)\pow\to L$ that sends $f$ to $g$.
\end{lemma}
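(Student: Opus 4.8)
The plan is to build the underlying $H$-field extension $K(f)$ from the established theory of exponential integration over $H$-fields, then apply Corollary~\ref{cor:extensioncor2} to pass to its power closure, picking up both the constant power map and the desired universal property along the way. The hypothesis $v(s-a^\dagger)\in\Psi^\downarrow$ enters only to force that an exponential integral of $s$ has fresh value: since $K$ is real closed, $\Q\Gamma=\Gamma$, and if $L$ is any $H$-field extension of $K$ with $g\in L^>$, $g^\dagger=s$, and $vg\in\Gamma$, then choosing $a\in K^\times$ with $va=vg$ gives $g/a\asymp 1$, so that $v(s-a^\dagger)=v\big((g/a)^\dagger\big)\in(\Gamma_L^>)'$ lies above $\Psi_L\supseteq\Psi$ --- against the hypothesis. (Applying the hypothesis with $a\in C^\times$ shows $vs\in\Psi^\downarrow$, so $\Psi\neq\emptyset$; together with $s<0$ this also rules out $g\asymp 1$, so any such $g$ has $vg\in\Gamma_L^>\setminus\Gamma$.) Granting the standard construction of a transcendental exponential integral over an $H$-field, I obtain from \cite[Section~10.5]{ADH17} an $H$-field extension $K(f)=K\langle f\rangle$ of $K$ with $f>0$, $f^\dagger=s$, $C_{K(f)}=C$, and $\Gamma_{K(f)}=\Gamma\oplus\Z\alpha$ where $\alpha=vf\in\Gamma_{K(f)}^>$.

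Corollary~\ref{cor:extensioncor2}, applied to $M=K(f)$, now gives that $K(f)$ has a unique power closure $K(f)\pow$, that there is a unique constant power map making $K(f)\pow$ an $\Hp$-field extension of $K$, and that any $H$-field embedding $K(f)\to L$ over $K$ into an $\Hp$-field extension $L$ of $K$ extends uniquely to an $\Hp$-field embedding $K(f)\pow\to L$. Since a power extension preserves the constant field, $C_{K(f)\pow}=C$; this settles existence. For the uniqueness clause, let $L\supseteq K$ be an $\Hp$-field extension and $g\in L^>$ with $g^\dagger=s$. The first paragraph shows $vg\notin\Q\Gamma$, so Lemma~\ref{lem:daggercut} provides a unique $H$-field embedding $K(f)\to L$ over $K$ sending $f$ to $g$, which then extends uniquely to an $\Hp$-field embedding $K(f)\pow\to L$. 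Any $\Hp$-field embedding $K(f)\pow\to L$ sending $f$ to $g$ restricts to that same $H$-field embedding of $K(f)$, hence equals this extension by the uniqueness part of Corollary~\ref{cor:extensioncor2}.

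I expect the only real obstacle to be the first step: exhibiting the transcendental exponential integral as an $H$-field extension with exactly the stated value group and unchanged constant field, and confirming that $v(s-a^\dagger)\in\Psi^\downarrow$ is precisely the condition that produces an exponential integral of this value-group-enlarging type rather than an immediate or constant-field-enlarging one. Everything after that is routine bookkeeping with the machinery of Section~\ref{sec:HP}.
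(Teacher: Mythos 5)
Your proof is correct and follows essentially the same route as the paper: invoke the construction of the exponential integral extension $K(f)$ from \cite[Section~10.5]{ADH17} (the paper cites Lemma~10.5.20 there specifically, which also supplies the universal property directly) and then apply Corollary~\ref{cor:extensioncor2}. Your extra work — deriving the universal property from Lemma~\ref{lem:daggercut} after checking $vg\notin\Q\Gamma=\Gamma$, and explaining how the hypothesis $v(s-a^\dagger)\in\Psi^\downarrow$ forces the value group to grow — is sound and merely makes explicit what the paper delegates to the cited lemma.
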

\begin{proof}
Lemma~10.5.20 in~\cite{ADH17} gives an $H$-field extension $K(f)$ of $K$ with 
\[
f>0,\qquad f^\dagger = s,\qquad \Gamma_{K(f)} = \Gamma \oplus \Z vf,\qquad C_{K(f)} = C,
\]
and the desired universal property. The lemma follows from Corollary~\ref{cor:extensioncor2}.
\end{proof}

\begin{proposition}\label{prop:nlclosure}
Suppose that $K$ is $\upo$-free. Then $K$ has a newtonian Liouville closed $\Hp$-field extension $K^{\nl}$ that embeds over $K$ into any newtonian Liouville closed $\Hp$-field extension of $K$. This extension $K^{\nl}$ is $\d$-algebraic over $K$, with $C_{K^{\nl}} = C$.
\end{proposition}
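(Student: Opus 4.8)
The plan is to build $K^{\nl}$ in stages, first passing to a Liouville closure compatible with the constant power map and then iterating with newtonizations, using at each step the tools from Section~\ref{sec:HP}. The key observation throughout is that every extension we form is $\d$-algebraic with constant field $C$, so by Fact~\ref{fact:dalgupofree} the property of $\upo$-freeness is preserved, keeping us in a position to apply Facts~\ref{fact:ntexist} and~\ref{fact:nlexist}.

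First I would handle Liouville closedness. Recall $K$ is already real closed (it is an $\Hp$-field). For each $s \in K$ we must adjoin an integral and, when no exponential integral exists yet, an exponential integral. Adjoining an integral $a$ with $a' = s$ is covered by the $H$-field results in~\cite{ADH17} (e.g.\ the constructions behind Lemma~\ref{lem:newtonint} and its antecedents); the resulting extension is either immediate or has value group $\Gamma \oplus \Z\alpha$ for a single new $\alpha$, so Corollaries~\ref{cor:extensioncor1} and~\ref{cor:extensioncor2} upgrade it to an $\Hp$-field extension with $C$ unchanged, with the appropriate universal property. Adjoining an exponential integral $f$ with $f^\dagger = s$: if some $a \in K^\times$ has $v(s - a^\dagger) \notin \Psi^\downarrow$, then after subtracting $a^\dagger$ we are in the situation where $f$ can be found in an \emph{immediate} extension (handled by Corollary~\ref{cor:extensioncor1}); otherwise $v(s-a^\dagger) \in \Psi^\downarrow$ for all $a$, and Lemma~\ref{lem:expint} applies directly. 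Iterating these steps transfinitely and taking unions (unions of chains of $\Hp$-fields are $\Hp$-fields, as all the axioms (C1)--(C5) are universal or quantifier-free in the relevant sense) yields a Liouville closed $\Hp$-field extension $K^{\operatorname{Li}}$ that is $\d$-algebraic over $K$ with constant field $C$, and that embeds over $K$ (as an $\Hp$-field) into any Liouville closed $\Hp$-field extension of $K$; here the $\Hp$-embedding extension properties in the cited corollaries do the bookkeeping.

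Next I would alternate with newtonization. Given a $\upo$-free $\Hp$-field, Proposition~\ref{prop:newtonization} produces its newtonization as an immediate $\Hp$-field extension, again $\d$-algebraic and with the same constants, and with the universal property against newtonian $\Hp$-field extensions. Newtonizing can destroy Liouville closedness, and Liouville closing can destroy newtonianity, so we iterate: build a chain $K = K_0 \subseteq K_1 \subseteq \cdots$ where $K_{2n+1}$ is a Liouville closure of $K_{2n}$ and $K_{2n+2}$ is a newtonization of $K_{2n+1}$, all as $\Hp$-fields, each step $\d$-algebraic over the previous with constant field $C$. Let $K^{\nl} \coloneqq \bigcup_n K_n$. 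This is a newtonian Liouville closed $H$-field (both properties being preserved under unions of such chains), it carries a constant power map making it an $\Hp$-field extension of $K$, it is $\d$-algebraic over $K$ with $C_{K^{\nl}} = C$ (and real closed, since Liouville closed $H$-fields are real closed). The universal property follows by a standard back-and-forth-free argument: given any newtonian Liouville closed $\Hp$-field extension $L$ of $K$, inductively extend the $\Hp$-embedding of $K_n$ into $L$ to $K_{n+1}$ using either the Liouville-closure universal property (from the construction in the previous paragraph) or the newtonization universal property (Proposition~\ref{prop:newtonization}), and take the union.

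The main obstacle is making sure that the $\Hp$-structure genuinely propagates through all of these constructions and that the universal properties hold at the level of $\Hp$-field embeddings, not merely $H$-field embeddings --- but this is exactly what Proposition~\ref{prop:mainprop} and its corollaries were designed to deliver, so each individual step is routine. A secondary point to be careful about is that $\upo$-freeness must be verified before each newtonization step; since every extension in the chain is $\d$-algebraic over $K$ and $K$ is assumed $\upo$-free, Fact~\ref{fact:dalgupofree} guarantees this. (One could alternatively phrase the whole construction more efficiently by first taking an $H$-field Newton-Liouville closure $K^{\nl}$ via Fact~\ref{fact:nlexist}, then observing that $K^{\nl}$ is $\d$-algebraic over $K$ with $C_{K^{\nl}}$ a real closure of $C$, hence equal to $C$, and finally noting that $K^{\nl}$, being Liouville closed, is closed under powers and real closed, so by Lemma~\ref{lem:Hpsubfield}-type reasoning it inherits a constant power map from any $\Hp$-field extension --- but to get the \emph{intrinsic} existence and universal property without reference to an ambient $\Hp$-field, the staged construction above is cleaner.)
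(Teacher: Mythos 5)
Your overall architecture (interleave newtonizations with exponential-integral adjunctions, take a union, chase universal properties through each step) is the same as the paper's, but there is a genuine gap in your first stage. You propose to build a standalone Liouville closure $K^{\operatorname{Li}}$ of a not-necessarily-newtonian $\upo$-free $\Hp$-field by adjoining integrals and exponential integrals one at a time, and you assert that each such step comes ``with the appropriate universal property'' via Corollaries~\ref{cor:extensioncor1} and~\ref{cor:extensioncor2}. But those corollaries only \emph{upgrade} an already-given $H$-field embedding $M\to L$ to an $\Hp$-field embedding $M\pow\to L$; they do not produce the $H$-field embedding. For the steps you route through Corollary~\ref{cor:extensioncor1} --- adjoining an integral $y$ of $s$ in an immediate extension, or adjoining an exponential integral of $s$ when $v(s-a^\dagger)>\Psi$ for some $a$ --- you must first find, inside an arbitrary Liouville closed $\Hp$-field extension $L$, an element realizing the \emph{same cut} over $K$ as the generator of your immediate extension. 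The only tool for this is Fact~\ref{fact:immextmap}, which requires the base to be $\upo$-free \emph{and newtonian}; you are applying these steps to a non-newtonian base, where the cut of the new element is not pinned down and the embedding property is not justified. (The abstract uniqueness of the Liouville closure as an $H$-field does not rescue this, since you need the embedding to extend, step by step, the $\Hp$-field embedding already constructed on the earlier stages.)

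The paper's proof is organized precisely to avoid this. In its Newton--Liouville tower, integrals and ``old-value'' exponential integrals (those $s$ with $v(s-a^\dagger)>\Psi$ for some $a$) are never adjoined by hand: once a stage is newtonian, Lemma~\ref{lem:newtonint} shows they are already present. The only two construction steps are (a) newtonization, whose universal property against newtonian extensions is built into its definition and is transported to $\Hp$-fields by Proposition~\ref{prop:newtonization}, and (b) the exponential-integral adjunction of Lemma~\ref{lem:expint}, performed only on newtonian stages and only for $s$ with $v(s-a^\dagger)\in\Psi^\downarrow$ for \emph{all} $a$, where the new element has a value outside $\Q\Gamma$ and the universal property is unconditional. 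Maximality of the tower (via the cardinality bound of Corollary~\ref{cor:cardbound2} and Zorn) then yields newtonianity and Liouville closedness simultaneously. If you reorganize your argument so that every exponential-integral adjunction happens only on a newtonian base and only in the Lemma~\ref{lem:expint} case, and you drop the intermediate ``Liouville closure'' object entirely, your proof becomes the paper's.
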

\begin{proof}
Define a \emph{Newton-Liouville tower on $K$} to be a tower $(K_\mu)_{\mu\leq\nu}$ of $\d$-algebraic $\Hp$-field extensions of $K$, where
\begin{enumerate}[(i)]
\item $K_0 = K$ and $K_\lambda = \bigcup_{\mu<\lambda}K_\mu$ whenever $0<\lambda\leq \nu$ is a limit ordinal;
\item if $\mu<\nu$ and $K_\mu$ is not newtonian, then $K_{\mu+1} = K_\mu^{\nt}$ with the constant power map given in Proposition~\ref{prop:newtonization};
\item if $\mu<\nu$, $K_\mu$ is newtonian, and there is $s \in K_\mu^<$ with $v(s-a^\dagger)\in \Psi^\downarrow$ for all $a \in K_\mu^\times$, then $K_{\mu+1} = K_\mu(f)\pow$ as described in Lemma~\ref{lem:expint}, so $f>0$ and $f^\dagger = s$.
\end{enumerate}
If $(K_\mu)_{\mu\leq\nu}$ is a Newton-Liouville tower on $K$, then $K_\nu$ is a $\d$-algebraic extension of $K$ with the same field of constants as $K$, so maximal Newton-Liouville towers on $K$ exist by Corollary~\ref{cor:cardbound2} and Zorn's lemma. 

Let $(K_\mu)_{\mu\leq\nu}$ be a maximal Newton-Liouville tower on $K$ and let $K^{\nl} \coloneqq K_{\nu}$. By maximality, $K^{\nl}$ is newtonian, and we claim that it is Liouville closed as well. Lemma~\ref{lem:newtonint} tells us that $K^{\nl}$ is closed under integration. To see that $K^{\nl}$ is closed under exponential integration, let $s \in K^{\nl}$ with $s<0$. By maximality, there must be some nonzero $a \in K^{\nl}$ with If $v(s- a^\dagger)> \Psi$. Then Lemma~\ref{lem:newtonint} gives $f \in K^{\nl}$ with $f^\dagger = s-a^\dagger$, so $(fa)^\dagger = s$.

Let $L$ be a newtonian Liouville closed $\Hp$-field extension of $K$. The embedding properties in Proposition~\ref{prop:newtonization} and Lemma~\ref{lem:expint} give for each $\mu\leq \nu$ an $\Hp$-field embedding $\imath_\mu\colon K_\mu\to L$, where $\imath_0$ is the inclusion $K \subseteq L$ and where $\imath_\mu$ extends $\imath_\eta$ whenever $\eta< \mu\leq \nu$.
\end{proof}

If $K$ is an $\upo$-free $\Hp$-field, then by Lemma~\ref{lem:altnl}, the underlying $H$-field of the $\Hp$-field extension $K^{\nl}$ constructed above is a Newton-Liouville closure of the $H$-field $K$, as defined in~\cite[Section~14.5]{ADH17}. That is, $K^{\nl}$ embeds over $K$ into any newtonian Liouville closed $H$-field extension of $K$ (not just the $\Hp$-field extensions). As any two Newton-Liouville closures of $K$ are isomorphic over $K$ by Fact~\ref{fact:nlexist}, we see that \emph{any} Newton-Liouville closure $L$ of $K$ admits a constant power map that makes $L$ an $\Hp$-field extension of $K$. 

Now, we turn to constant field extensions. Given an ordered field extension $C^*$ of $C$, Propositions~10.5.15 and~10.5.16 in~\cite{ADH17} give a unique ordering, valuation, and derivation on the field $K(C^*)$ such that $K(C^*)$ is an $H$-field extension of $K$ with ordered constant field $C^*$, and we always consider $K(C^*)$ as an $H$-field in this way. With this valuation, $K(C^*)$ has the same value group as $K$. 

\begin{proposition}\label{prop:addconstants}
Let $C^*$ be a real closed ordered field extension of $C$ and let $\exp^*$ be an exponential on $C^*$ extending the induced exponential on $C$. Then the $H$-field $K(C^*)$ has a unique power closure $K(C^*)\pow$, and there is a unique constant power map on $K(C^*)\pow$ extending the constant power map on $K$ such that the induced exponential on $C^*$ coincides with $\exp^*$. With this constant power map, $K(C^*)\pow$ has the following universal property:\ if $L$ is an $\Hp$-field extension of $K$ and $\imath\colon C^* \to C_L$ is an ordered exponential field embedding over $C$, then there is a unique $\Hp$-field embedding $K(C^*)\pow\to L$ over $K$ extending $\imath$.
\end{proposition}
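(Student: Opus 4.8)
The plan is to reduce the statement to Proposition~\ref{prop:mainprop} applied to the $H$-field extension $M = K(C^*)$ of $K$. Recall that $K(C^*)$ has the same value group as $K$, hence $\Q\Gamma_M = \Q\Gamma$, so the empty tuple $(a_i)_{i\in I}$ trivially satisfies hypothesis~(i) of that proposition: there is nothing to span over $\Gamma$. Hypothesis~(ii) is also vacuous for the empty tuple. Thus Proposition~\ref{prop:mainprop} immediately yields that $K(C^*)$ has a unique power closure $K(C^*)\pow$ up to $K(C^*)$-isomorphism, that there is a constant power map on $K(C^*)\pow$ with induced exponential $\exp^*$ making it an $\Hp$-field extension of $K$, and that this is unique up to unique $K(C^*)$-isomorphism in the sense of part~(3) there.

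The remaining work is bookkeeping to recover the exact form of the statement. First I would note that the constant power map on $K(C^*)\pow$ extends the one on $K$: this is part of what it means to be an $\Hp$-field extension of the $\Hp$-field $K$ (and it is visible from the construction in Proposition~\ref{prop:mainprop}, which sets $b_j^{\hat c_\eta} = b_j^{\hat c_\eta}$ for $\eta<\mu$, i.e.\ uses the given powers from $K$ for exponents in $C$). Next, for the uniqueness clause as stated --- that the constant power map on $K(C^*)\pow$ extending the one on $K$ and inducing $\exp^*$ on $C^*$ is unique --- I would observe that any such map makes $K(C^*)\pow$ into an $\Hp$-field extension of $K$ with $C_{K(C^*)\pow} = C^*$ real closed, so applying part~(3) of Proposition~\ref{prop:mainprop} with $L = K(C^*)\pow$, $M = K(C^*)$, and $\imath$ the identity (which restricts to the ordered exponential field identity on $C^*$ since both maps induce $\exp^*$) forces the identity to be an $\Hp$-field isomorphism, i.e.\ the two constant power maps agree.

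Finally, for the universal property: given an $\Hp$-field extension $L$ of $K$ and an ordered exponential field embedding $\imath\colon C^*\to C_L$ over $C$, Propositions~10.5.15 and~10.5.16 of~\cite{ADH17} provide a unique $H$-field embedding $K(C^*)\to L$ over $K$ that restricts to $\imath$ on $C^*$ (this is the universal property of the constant field extension in the category of $H$-fields). That embedding restricts to the ordered exponential field embedding $\imath$ on $C_M = C^*$, so part~(3) of Proposition~\ref{prop:mainprop} extends it to a unique $\Hp$-field embedding $K(C^*)\pow\to L$, and any $\Hp$-field embedding $K(C^*)\pow\to L$ over $K$ extending $\imath$ restricts to an $H$-field embedding $K(C^*)\to L$ extending $\imath$, hence to the one supplied by~\cite{ADH17}, so uniqueness is inherited. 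I do not anticipate a genuine obstacle here; the one point requiring a little care is matching the two notions of uniqueness (``up to unique $K(C^*)$-isomorphism'' from Proposition~\ref{prop:mainprop} versus ``unique constant power map'' in the statement), which the argument in the second paragraph handles, and making sure the cited constant-field-extension results from~\cite{ADH17} indeed give the $H$-field universal property over $K$ in the form needed.
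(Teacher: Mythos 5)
Your proposal is correct and follows the same route as the paper: apply Proposition~\ref{prop:mainprop} to $M = K(C^*)$ with the empty tuple (both hypotheses being vacuous since $\Gamma_{K(C^*)} = \Gamma$), then obtain the universal property by composing the $H$-field universal property of $K(C^*)$ from~\cite[Proposition~10.5.16]{ADH17} with part~(3) of Proposition~\ref{prop:mainprop}. The extra bookkeeping you supply on matching the two notions of uniqueness is a reasonable elaboration of what the paper leaves implicit.
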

\begin{proof}
Apply Proposition~\ref{prop:mainprop} with $M = K(C^*)$, taking $(a_i)_{i \in I}$ to be the empty tuple. For the universal property, we first use the universal property of $K(C^*)$ given in~\cite[Proposition~10.5.16]{ADH17} to get an $H$-field embedding $K(C^*)\to L$ extending $\imath$, and then we use the universal property in Proposition~\ref{prop:mainprop} to extend this to an $\Hp$-field embedding $K(C^*)\pow\to L$.
\end{proof}

%----------------------------------------%
\subsection{Constructing an $\upo$-free $\Hp$-field extension}
In this subsection, we will show that any $\Hp$-field can be extended to an $\upo$-free $\Hp$-field.

\begin{lemma}\label{lem:makegap}
Suppose that $K$ has asymptotic integration. Then $K$ has an $\Hp$-field extension with a gap and with constant field $C$.
\end{lemma}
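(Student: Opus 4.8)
The goal is to produce an $\Hp$-field extension of $K$ that has a gap, keeping the constant field equal to $C$. The natural strategy is: first build an $H$-field extension $N$ of $K$ with a gap and with $C_N = C$, using the machinery of~\cite{ADH17}; then apply one of the corollaries of Proposition~\ref{prop:mainprop} to upgrade $N$ (or its power closure) to an $\Hp$-field. The key point is that adjoining a gap is a ``small'' operation on the value group, so one of Corollaries~\ref{cor:extensioncor2}, \ref{cor:maincor}, or \ref{cor:maincor2} should apply directly.

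First I would recall how gaps are created. Since $K$ has asymptotic integration, $\Psi$ has no largest element and $\Gamma = (\Gamma^<)' \cup (\Gamma^>)'$. Pick $s \in K$ with $vs > \Psi$ but $vs \notin (\Gamma^>)'$ — more precisely, one wants $s$ such that $s$ has no antiderivative creating an element above $\Psi$ in $K$ itself; the standard construction (see the relevant lemmas in~\cite[Chapter 10 or 11]{ADH17}, e.g.\ around asymptotic integration and the creation of gaps) produces an $H$-field extension $K(f)$ with $f^\dagger = s$ or $f' = s$ appropriately chosen so that $vf$ is a gap in $K(f)$, with $\Gamma_{K(f)} = \Gamma \oplus \Z vf$ and $C_{K(f)} = C$. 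Concretely, one takes $s \in K^{<}$ realizing the cut with $vs$ sitting just above $\Psi$ and adjoins an exponential integral $f > 0$ with $f^\dagger = s$; then $vf$ is positive, lies above $\Psi_{K(f)} = \Psi$, and lies below $(\Gamma^{>})'$, hence is a gap. The relevant citable statement is~\cite[Lemma 10.5.20]{ADH17} together with the discussion of gaps there (this is the same lemma invoked in Lemma~\ref{lem:expint} above).

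Next I would upgrade. Once I have $N = K(f)$ with $\Gamma_N = \Gamma \oplus \Z vf$ and $C_N = C$, Corollary~\ref{cor:extensioncor2} applies verbatim: it gives a unique power closure $N\pow$ with a unique constant power map making it an $\Hp$-field extension of $K$. It remains only to check that $N\pow$ still has a gap and still has constant field $C$. The latter is immediate since a power closure has the same constant field by definition. For the former, Fact~\ref{fact:psicofinal} tells us precisely what happens to the value group and $\Psi$-set under taking the (unique) power closure: here $N$ is ungrounded (it has a gap, so $\Psi_N$ has no maximum), hence $\Gamma_N^{>}$ is coinitial in $\Gamma_{N\pow}^{>}$, $\Psi_N$ is cofinal in $\Psi_{N\pow}$, and — as stated explicitly in Fact~\ref{fact:psicofinal} — a gap in $\Gamma_N$ remains a gap in $\Gamma_{N\pow}$. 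So $N\pow$ is the desired $\Hp$-field extension.

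**Main obstacle.** The genuinely delicate part is the first step — correctly invoking the construction of a gap-creating $H$-field extension of a field with asymptotic integration, with control of both the value group (so that Corollary~\ref{cor:extensioncor2} is applicable, i.e.\ the value group grows by exactly one $\Z$-summand) and the constant field (so that $C_N = C$). One must verify that the element $s$ used to create the gap satisfies the hypothesis $v(s - a^\dagger) \in \Psi^\downarrow$ for all $a \in K^\times$ that feeds into~\cite[Lemma 10.5.20]{ADH17}; this is exactly where ``$K$ has asymptotic integration'' (rather than, say, ``$K$ has a gap already'' or ``$K$ is grounded'') is used, since it guarantees the existence of such an $s$ sitting immediately above $\Psi$ with no exponential integral over $\Psi$ in $K$. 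Once that lemma is correctly applied, the rest is a direct citation of Corollary~\ref{cor:extensioncor2} and Fact~\ref{fact:psicofinal}.
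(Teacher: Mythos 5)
There is a genuine gap at the very first step: you assume that a gap-creating element $s$ can be found \emph{in $K$ itself}, and this is false in general. Your concrete recipe --- ``pick $s \in K$ with $vs > \Psi$ but $vs \notin (\Gamma^>)'$'' --- describes an element that cannot exist: under asymptotic integration $\Gamma = (\Gamma^<)' \cup (\Gamma^>)'$, and $(\Gamma^<)' \subseteq \Psi^{\downarrow}$ (for $\gamma<0$ one has $\gamma' = \gamma + \psi(\gamma) < \psi(\gamma)$), so any $s$ with $vs>\Psi$ automatically has $vs \in (\Gamma^>)'$. The correct condition on $s$ is not about $vs$ at all; it is that $v(s-a^\dagger)\in\Psi^{\downarrow}$ for \emph{all} $a\in K^\times$, i.e.\ $-s$ is a pseudolimit of the $\upl$-sequence of $K$ (this is the content of~\cite[Lemmas~11.5.9, 11.5.10, 11.5.14]{ADH17}, not of~\cite[Lemma~10.5.20]{ADH17}, which only adjoins an exponential integral and says nothing about gaps). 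Such a pseudolimit need not lie in $K$: indeed, if $K$ is $\upo$-free --- as $\T\pow$ is --- then the $\upl$-sequence diverges and \emph{no} element of $K$ creates a gap over $K$, so your construction cannot even get started for the most important models.

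The paper's proof fixes exactly this by first passing to a spherically complete immediate $H$-field extension $L$ of $K$ (via~\cite[Corollary~11.4.10]{ADH17}); spherical completeness guarantees the $\upl$-sequence pseudoconverges in $L$, so~\cite[Lemma~11.5.14]{ADH17} yields a gap-creating $s \in L$. One must also check that $L$ is still an $\Hp$-field over $K$ with the same constants, which the paper does via Corollary~\ref{cor:extensioncor1} together with the observation that $L = L\pow$ by spherical completeness. After that, the remainder of your argument is essentially right and matches the paper: the hypothesis $v(s-b^\dagger)\in\Psi_L^{\downarrow}$ is supplied by~\cite[Lemma~11.5.10]{ADH17}, Lemma~\ref{lem:expint} (equivalently Corollary~\ref{cor:extensioncor2}) produces the $\Hp$-field $L(f)\pow$ with $f^\dagger = s$ and unchanged constants, and Fact~\ref{fact:closurechar}/Fact~\ref{fact:psicofinal} ensures the gap $vf$ survives passage to the power closure. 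So the upgrade step is fine; the missing idea is the reduction to a spherically complete immediate extension, without which the gap-creating element need not exist.
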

\begin{proof}
By~\cite[Corollary~11.4.10]{ADH17}, $K$ has a spherically complete immediate $H$-field extension $L$. Corollary~\ref{cor:extensioncor1} tells us that $L\pow$ is an immediate $\Hp$-field extension of $K$, so $L = L\pow$ by spherical completeness. Using~\cite[Lemma~11.5.14]{ADH17} and the remarks following that lemma, we find $s \in L$ such that $vf$ is a gap in $L(f)$ for any $f$ in an $H$-field extension of $L$ with $f^\dagger = s$ (in the terminology of~\cite{ADH17}, the element $s$ \emph{creates a gap} over $L$). 

By~\cite[Lemma~11.5.10]{ADH17} we have $v(s-b^\dagger) \in \Psi_L^\downarrow$ for each $b \in L^\times$, so we may apply Lemma~\ref{lem:expint} to get an $\Hp$-field $L(f)\pow$ extending $L$ with $f^\dagger = s$. Then $vf$ is a gap in $L(f)\pow$ by the remarks above and Fact~\ref{fact:closurechar}.
\end{proof}

\begin{lemma}\label{lem:gapgoesup}
Let $s \in K$ and suppose that $vs$ is a gap in $K$. Then $K$ has a grounded $\Hp$-field extension $K(a)\pow$ with $a \prec 1$, $a' = s$, and $C_{K(a)\pow} = C$. This extension is unique up to $K$-isomorphism:\ given an $\Hp$-field extension $L$ of $K$ and $f \in \smallo_L$ with $f' = s$, there is a unique $\Hp$-field embedding $K(a)\pow\to L$ that sends $a$ to $f$.
\end{lemma}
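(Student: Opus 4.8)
The plan is to reduce the statement to an application of Corollary~\ref{cor:extensioncor2}, just as Lemma~\ref{lem:expint} reduced to that corollary via an $H$-field extension result from~\cite{ADH17}. First I would invoke the appropriate lemma from~\cite[Section~10.5]{ADH17}---the counterpart of Lemma~10.5.20 for the gap situation, namely the construction of an $H$-field extension $K(a)$ with $a\prec 1$, $a'=s$, value group $\Gamma_{K(a)}=\Gamma\oplus\Z va$, constant field $C_{K(a)}=C$, and the expected universal property that any $H$-field extension $L$ of $K$ together with $f\in\smallo_L$ satisfying $f'=s$ receives a unique $H$-field embedding $K(a)\to L$ over $K$ sending $a$ to $f$. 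That $va$ is indeed a new element of the value group (so that $\Gamma_{K(a)}/\Gamma$ is infinite cyclic) is precisely the content of ``$vs$ being a gap forces $s$ to have no integral $\asymp$-close to an element of $K$''; this is why the hypothesis that $vs$ is a gap is used.

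Next I would check that the hypotheses of Corollary~\ref{cor:extensioncor2} are met with $M=K(a)$: we have $C_M=C$ and $\Gamma_M=\Gamma\oplus\Z\alpha$ with $\alpha=va\in\Gamma_M^>$ (note $a\prec 1$ gives $va>0$). The corollary then furnishes the unique power closure $M\pow=K(a)\pow$, a unique constant power map making it an $\Hp$-field extension of $K$, and the statement that any $H$-field embedding $M\to L$ over $K$ into an $\Hp$-field $L$ extends uniquely to an $\Hp$-field embedding $M\pow\to L$. Composing this with the $H$-field universal property of $K(a)$ gives exactly the claimed $\Hp$-field universal property: given an $\Hp$-field extension $L$ of $K$ and $f\in\smallo_L$ with $f'=s$, first get the unique $H$-field embedding $K(a)\to L$ over $K$ sending $a$ to $f$, then extend it uniquely to $K(a)\pow\to L$.

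The remaining point is groundedness: I must verify that $K(a)\pow$ is grounded, i.e.\ that $\Psi_{K(a)\pow}$ has a largest element. Since $vs$ is a gap in $K$, we have $\Psi<vs<(\Gamma^>)'$, and integrating $s$ to get $a$ with $va$ satisfying $(va)^\dagger=vs$ converts this gap into a maximal element of $\Psi_{K(a)}$: concretely, $va'=vs$ and $vs$ being the old gap means $vs=\max\Psi_{K(a)}$. Then Fact~\ref{fact:psicofinal}, applied to the grounded $H$-field $K(a)$ (using Fact~\ref{fact:closurechar}, which guarantees the power closure is unique in the grounded case), gives $\max\Psi_{K(a)\pow}=\max\Psi_{K(a)}$, so $K(a)\pow$ is grounded.

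I do not expect any serious obstacle here; the proof is a straightforward assembly of~\cite[Section~10.5]{ADH17} with Corollary~\ref{cor:extensioncor2} and Fact~\ref{fact:psicofinal}, entirely parallel to the proof of Lemma~\ref{lem:expint}. The one place demanding a little care is pinning down the exact citation from~\cite{ADH17} that produces $K(a)$ with the stated value-group behaviour and universal property for a gap-creating $s$ (as opposed to the exponential-integral situation of Lemma~10.5.20); but this is a known companion result, and once it is in hand the rest is immediate.
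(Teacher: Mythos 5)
Your proposal is correct and follows essentially the same route as the paper, which cites \cite[Lemma~10.2.1 and Corollary~10.5.10]{ADH17} for the $H$-field extension $K(a)$ with the stated value group, constant field, and universal property (groundedness of $K(a)$ coming from the remark following Lemma~10.2.1), and then concludes via Corollary~\ref{cor:extensioncor2}. One small imprecision in your groundedness check: the new maximum of $\Psi_{K(a)}$ is $v(a^\dagger)=vs-va$, not $vs$ itself (since $(va)'=va+(va)^\dagger=vs$), but this does not affect the conclusion.
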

\begin{proof}
Using~\cite[Lemma~10.2.1 and Corollary~10.5.10]{ADH17}, we find an $H$-field extension $K(a)$ of $K$ with 
\[
a \prec 1,\qquad a' = s,\qquad \Gamma_{K(a)} = \Gamma \oplus \Z va,\qquad C_{K(a)} = C,
\]
and the desired universal property. The remark following~\cite[Lemma~10.2.1]{ADH17} tells us that $K(a)$ is grounded. The lemma follows from Corollary~\ref{cor:extensioncor2}.
\end{proof}

Using~\cite[Lemma~10.2.2 and Corollary~10.5.11]{ADH17} in place of Lemma~10.2.1 and Corollary~10.5.10 in the above lemma gives the following variant:

\begin{lemma}\label{lem:gapgoesdown}
Let $s \in K$ and suppose that $vs$ is a gap in $K$. Then $K$ has a grounded $\Hp$-field extension $K(b)\pow$ with $b \succ 1$, $b' = s$, and $C_{K(b)\pow} = C$. This extension is unique up to $K$-isomorphism:\ Given an $\Hp$-field extension $L$ of $K$ and $g \in L$ with $g \succ 1$ and $g' = s$, there is a unique $\Hp$-field embedding $K(b)\pow\to L$ that sends $b$ to $g$.
\end{lemma}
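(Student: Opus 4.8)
The plan is to mirror the proof of Lemma~\ref{lem:gapgoesup} verbatim, substituting the two cited results as the parenthetical remark suggests. Concretely, since $vs$ is a gap in $K$, I would first invoke~\cite[Lemma~10.2.2 and Corollary~10.5.11]{ADH17} to obtain an $H$-field extension $K(b)$ of $K$ with $b \succ 1$, $b' = s$, $\Gamma_{K(b)} = \Gamma \oplus \Z vb$, $C_{K(b)} = C$, together with the universal property: any $H$-field extension $L$ of $K$ containing some $g \succ 1$ with $g' = s$ admits a unique $H$-field embedding $K(b) \to L$ over $K$ sending $b$ to $g$. (Here one should double-check that the $\succ 1$ case is indeed covered by 10.2.2/10.5.11 rather than 10.2.1/10.5.10 — this is the one place where the adaptation is not purely mechanical, but the excerpt's remark explicitly asserts it.) The fact that $K(b)$ is grounded should again follow from the remark accompanying~\cite[Lemma~10.2.2]{ADH17}, analogously to the $a \prec 1$ case.

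Next, I would apply Corollary~\ref{cor:extensioncor2}, whose hypotheses are met: $C_{K(b)} = C$ and $\Gamma_{K(b)} = \Gamma \oplus \Z\alpha$ with $\alpha = vb \in \Gamma_{K(b)}^>$. This yields a unique power closure $K(b)\pow$ carrying a unique constant power map making it an $\Hp$-field extension of $K$, and it supplies the $\Hp$-field universal property relative to the $H$-field universal property of $K(b)$. Combining the two universal properties gives exactly the claimed statement: given an $\Hp$-field extension $L$ of $K$ and $g \in L$ with $g \succ 1$ and $g' = s$, the $H$-field universal property of $K(b)$ produces a unique $H$-field embedding $K(b) \to L$ over $K$ sending $b$ to $g$, and Corollary~\ref{cor:extensioncor2} extends this uniquely to an $\Hp$-field embedding $K(b)\pow \to L$; uniqueness of the composite follows since an $\Hp$-field embedding is in particular an $H$-field embedding.

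I do not anticipate a genuine obstacle here — the lemma is a direct "variant" of Lemma~\ref{lem:gapgoesup}, and the only substantive content is confirming that the $b \succ 1$ analogues of the cited ADH results really do provide a grounded extension with value group $\Gamma \oplus \Z vb$ and the stated universal property. Once that is granted, the rest is a formal invocation of Corollary~\ref{cor:extensioncor2}. Accordingly the proof can be stated in one or two sentences, exactly as the excerpt already does.

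\begin{proof}
Using~\cite[Lemma~10.2.2 and Corollary~10.5.11]{ADH17}, we find an $H$-field extension $K(b)$ of $K$ with
\[
b \succ 1,\qquad b' = s,\qquad \Gamma_{K(b)} = \Gamma \oplus \Z vb,\qquad C_{K(b)} = C,
\]
and the desired universal property. The remark following~\cite[Lemma~10.2.2]{ADH17} tells us that $K(b)$ is grounded. The lemma follows from Corollary~\ref{cor:extensioncor2}.
\end{proof}
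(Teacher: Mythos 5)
Your proposal is correct and is exactly the paper's proof: the paper gives no separate argument for this lemma, stating only that one substitutes \cite[Lemma~10.2.2 and Corollary~10.5.11]{ADH17} into the proof of Lemma~\ref{lem:gapgoesup}, which is what you do. One small slip: since $b \succ 1$ we have $vb < 0$, so to meet the hypothesis of Corollary~\ref{cor:extensioncor2} you should take $\alpha = -vb = v(b^{-1}) \in \Gamma_{K(b)}^>$ (which generates the same $\Z$-summand), not $\alpha = vb$.
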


\begin{lemma}\label{lem:groundedtoomegafree}
Suppose that $K$ is grounded. Then $K$ has an $\upo$-free $\Hp$-field extension $K\wpow$ with constant field $C$ that embeds over $K$ into any Liouville closed $\Hp$-field extension of $K$.
\end{lemma}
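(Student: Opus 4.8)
The plan is to run the classical construction from \cite{ADH17} of an $\upo$-free $H$-field extension of a grounded $H$-field and then to power-close once, at the very end, using Corollary~\ref{cor:maincor2}; no new $\Hp$-field-specific argument is needed beyond the results of Section~\ref{sec:HP}.

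First I would forget the constant power map and regard $K$ merely as a grounded $H$-field. By the logarithmic-tower construction of \cite{ADH17}, $K$ has a $\d$-algebraic $H$-field extension $K_\upo$, with $C_{K_\upo} = C$, that is $\upo$-free: one builds a tower $K = K_0 \subseteq K_1 \subseteq \cdots$ in which each $K_{n+1}$ is obtained from the grounded field $K_n$ by adjoining an integral $\ell_n$ of some $s_n \in K_n$ with $v s_n = \max \Psi_{K_n}$, so that $K_{n+1}$ is again grounded, $\Gamma_{K_{n+1}} = \Gamma_{K_n} \oplus \Z v\ell_n$, $C_{K_{n+1}} = C$, and $\max\Psi_{K_{n+1}} > \max\Psi_{K_n}$; then $K_\upo \coloneqq \bigcup_n K_n$. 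Along the way I would check that the $\Psi_{K_n}$ are cofinal in $\Psi_{K_\upo}$ with no largest element, so $K_\upo$ is ungrounded, and that no element of $\Gamma_{K_\upo}$ is a gap, so $K_\upo$ has asymptotic integration by Fact~\ref{fact:trich}; the $\upo$-freeness of $K_\upo$ is exactly what \cite{ADH17} establishes for this tower. I would also record that $\Gamma_{K_\upo} = \Gamma_K \oplus \bigoplus_{n\geq 1}\Z v\ell_n$ and that each $v(\ell_n^\dagger)$, for $n \geq 1$, lies in $\Gamma_{K_\upo}\setminus\Gamma_K$ strictly above $\max\Psi_K$; since $K$ is grounded, $(\Gamma_K^>)' = \{\gamma \in \Gamma_K : \gamma > \max\Psi_K\} \subseteq \Gamma_K$, so the $v(\ell_n^\dagger)$ are distinct elements of $\Gamma_{K_\upo}\setminus\Psi_K$.

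Next I would apply Corollary~\ref{cor:maincor2} with $M = K_\upo$ and $(\alpha_n)_{n \geq 1} = (v\ell_n)_{n \geq 1}$: this tuple spans $\Q\Gamma_{K_\upo}$ over $\Gamma_K$, and $(\alpha_n^\dagger)_{n\geq 1}$ consists of distinct elements of $\Gamma_{K_\upo}\setminus\Psi_K$, so the corollary produces the unique power closure $K\wpow \coloneqq (K_\upo)\pow$ together with a unique constant power map making it an $\Hp$-field extension of $K$, with the feature that every $H$-field embedding $K_\upo \to L$ over $K$ into an $\Hp$-field extension $L$ of $K$ extends uniquely to an $\Hp$-field embedding $K\wpow \to L$. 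Since $K\wpow$ is a power extension --- hence a $\d$-algebraic extension --- of the $\upo$-free field $K_\upo$, Fact~\ref{fact:dalgupofree} gives that $K\wpow$ is $\upo$-free; being a power closure it is real closed, and its constant field is still $C$. Thus $K\wpow$ is an $\upo$-free $\Hp$-field extension of $K$ with constant field $C$.

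For the embedding property, let $L$ be a Liouville closed $\Hp$-field extension of $K$. As an $H$-field, $L$ is closed under integration and therefore has asymptotic integration, so the universal property of each logarithmic adjunction $K_n \subseteq K_{n+1}$ from \cite{ADH17} lets me build, by induction on $n$, a compatible chain of $H$-field embeddings $K_n \to L$ over $K$ --- at stage $n$ sending $\ell_n$ to a suitable integral in $L$ of the image of $s_n$, which exists with the required genericity because $s_n$ has no integral in $K_n$ and $L$ has asymptotic integration. Taking the union gives an $H$-field embedding $K_\upo \to L$ over $K$, which extends to an $\Hp$-field embedding $K\wpow \to L$ over $K$ by the universal property from Corollary~\ref{cor:maincor2}. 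The main obstacle is not the power-map bookkeeping, which reduces cleanly to Corollary~\ref{cor:maincor2} and Fact~\ref{fact:dalgupofree}, but importing the $H$-field input correctly: that the logarithmic tower over a grounded $H$-field is $\upo$-free and has the asserted value-group structure and universal properties (all of which is \cite{ADH17}'s), and that a Liouville closed extension is a legitimate target for those universal properties.
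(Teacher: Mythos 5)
Your proposal is correct and follows essentially the same route as the paper: the paper simply cites \cite[Lemma~11.7.17]{ADH17} for the $\upo$-free $H$-field extension $K_\upo$ (with $C_{K_\upo}=C$, the value-group decomposition $\Gamma\oplus\bigoplus_n\Z\beta_n$ with $\Psi<\beta_1^\dagger<\beta_2^\dagger<\cdots$, and the embedding property into Liouville closed $H$-field extensions) and then concludes by Corollary~\ref{cor:maincor2}, exactly as you do. The only difference is that you unwind the logarithmic-tower construction and re-derive the embedding property by hand rather than quoting them, which is harmless.
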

\begin{proof}
Lemma~11.7.17 in~\cite{ADH17} gives an $\upo$-free $H$-field extension $K_{\upo}$ of $K$ with $C_{K_{\upo}} = C$ that embeds over $K$ into any Liouville closed $H$-field extension of $K$. From the construction of this extension, we have
\[
\Gamma_{K_{\upo}} = \Gamma\oplus \bigoplus_{n = 1}^\infty\Z\beta_n
\]
for some elements $\beta_1,\beta_2,\ldots \in \Gamma_{K_{\upo}}$ with $\Psi< \beta_1^\dagger<\beta_2^\dagger < \cdots$. We conclude by applying Corollary~\ref{cor:maincor2} with $M = K_{\upo}$.
\end{proof}

The embedding property in the previous lemma can be strengthened:\ let $L$ be an $\Hp$-field extension of $K$ and suppose that for each $f \in L^\times$, there is $y \in L$ with $y' = f^\dagger$. Then $K\wpow$ embeds over $K$ into $L$ (this uses that $K_{\upo}$ enjoys this stronger embedding property). However, we will only ever use the lemma in the case that $L$ is Liouville closed.

Combining the previous lemmas, we obtain:
\begin{corollary}\label{cor:upofreeext}
$K$ has an $\upo$-free $\Hp$-field extension with constant field $C$.
\end{corollary}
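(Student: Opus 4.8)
The plan is to reduce to the grounded case via the trichotomy theorem and then chain together Lemmas~\ref{lem:makegap}, \ref{lem:gapgoesup}, and \ref{lem:groundedtoomegafree}, each of which produces an $\Hp$-field extension with the same constant field $C$.

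First I would invoke Fact~\ref{fact:trich} to split into three cases according to whether $K$ has asymptotic integration, has a gap, or is grounded. If $K$ has asymptotic integration, then Lemma~\ref{lem:makegap} produces an $\Hp$-field extension of $K$ with a gap and constant field $C$; passing to this extension, I may assume that $K$ has a gap or is grounded. If $K$ has a gap, then since $\Gamma = v(K^\times)$ I can pick $s \in K^\times$ whose value is the gap and apply Lemma~\ref{lem:gapgoesup} (or, equally well, Lemma~\ref{lem:gapgoesdown}) to obtain a grounded $\Hp$-field extension with constant field $C$; passing to this extension, I may assume that $K$ is grounded. Finally, Lemma~\ref{lem:groundedtoomegafree} yields an $\upo$-free $\Hp$-field extension $K\wpow$ with constant field $C$. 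Composing these three extension steps gives the desired $\upo$-free $\Hp$-field extension of the original $K$, with constant field $C$.

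Since all the substantive content has already been established in the three cited lemmas (and ultimately in the $H$-field results of~\cite{ADH17} together with the ``upgrading'' machinery of Section~\ref{sec:HP}), there is essentially no obstacle here; the only points requiring care are bookkeeping ones: that each case of the trichotomy is addressed, that at each step the hypotheses of the next lemma are in force (in particular that a gap in $\Gamma$ is realized by some $s \in K^\times$ so that Lemma~\ref{lem:gapgoesup} applies, and that after Lemma~\ref{lem:makegap} the resulting field genuinely has a gap), and that the constant field remains equal to $C$ at every stage so that the final $\upo$-free extension still has constant field $C$.
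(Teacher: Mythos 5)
Your proof is correct and is exactly the argument the paper intends: the corollary is stated immediately after Lemmas~\ref{lem:makegap}, \ref{lem:gapgoesup}/\ref{lem:gapgoesdown}, and \ref{lem:groundedtoomegafree} with the remark ``Combining the previous lemmas, we obtain,'' and your use of the trichotomy (Fact~\ref{fact:trich}) to chain asymptotic integration $\to$ gap $\to$ grounded $\to$ $\upo$-free, checking that the constant field stays $C$ at each stage, is precisely that combination.
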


%----------------------------------------%
\subsection{Differentially transcendental $\Hp$-field extensions}
In this section, we handle various $\Hp$-field extensions of $K$ determined by cuts in $K$. We begin with adding a new ``small infinite'' element. 

\begin{lemma}\label{lem:endelem}
Let $K$ be an $\upo$-free $\Hp$-field. Then $K$ has an $\Hp$-field extension $K\langle y \rangle\pow$ where $y>0$ and $\Gamma^<<vy<0$. This extension is unique up to $K$-isomorphism:\ given an $\Hp$-field extension $L$ of $K$ and $y^* \in L^>$ with $\Gamma^<<vy^*<0$, there is a unique $\Hp$-field embedding $K\langle y \rangle\pow\to L$ that sends $y$ to $y^*$. Additionally, the extension $K\langle y \rangle$ is grounded with $vy^\dagger = \max \Psi_{K\langle y \rangle\pow}$.
\end{lemma}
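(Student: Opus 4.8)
The plan is to first build the desired $H$-field extension $K\langle y\rangle$ using the results of \cite{ADH17}, then apply one of the corollaries from Section~\ref{sec:HP} to upgrade it to an $\Hp$-field extension. Since $K$ is $\upo$-free, it has asymptotic integration, and a positive element $y$ with $\Gamma^< < vy < 0$ is precisely an element realizing a particular cut in $\Gamma$ that is not filled by $K$ itself (indeed $vy \notin \Gamma$, since $0$ is the only candidate and $vy < 0$); in fact $y$ is $\d$-transcendental over $K$ and $K\langle y\rangle$ is completely determined by this cut. First I would invoke the relevant construction in \cite{ADH17} (adding an element realizing a cut in the value group, in the spirit of the material in \cite[Section~10]{ADH17} and the discussion of "small infinite" elements) to obtain an $H$-field extension $K\langle y\rangle$ of $K$ with $y > 0$, $\Gamma^< < vy < 0$, with constant field $C_{K\langle y\rangle} = C$, with value group $\Gamma_{K\langle y\rangle} = \Gamma \oplus \Z\alpha$ where $\alpha = vy < 0$, and with the universal property that any $H$-field extension $L$ of $K$ containing an element $y^*$ realizing the same cut admits a unique $H$-field embedding $K\langle y\rangle \to L$ over $K$ sending $y$ to $y^*$ (this is exactly the kind of statement provided by \cite[Section~11.4]{ADH17} combined with Fact~\ref{fact:immextmap}-style arguments, since $K$ is $\upo$-free and newtonianity is not needed here---only that $y$ is $\d$-transcendental and the cut determines the extension).

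Next I would identify the asymptotic structure of $K\langle y\rangle$. Since $vy < 0$ but $vy > \Gamma^<$, one checks that $\Psi_{K\langle y\rangle} = \Psi \cup \{vy^\dagger\}$ and that $vy^\dagger = \psi(vy)$ is the largest element of $\Psi_{K\langle y\rangle}$: indeed, for $\gamma \in \Gamma^<$ we have $\gamma < vy$, hence $\psi(\gamma) < \psi(vy)$ (using that $\psi$ is strictly increasing on $\Gamma^<$ in an $H$-asymptotic field, wait---$\psi$ is decreasing as $|\gamma|$ increases, so for $\gamma \in \Gamma^<$ with $\gamma < vy < 0$ we get $|\gamma| > |vy|$ hence $\psi(\gamma) < \psi(vy)$), so $\psi(vy)$ dominates all of $\Psi$. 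Thus $K\langle y\rangle$ is grounded with $\max\Psi_{K\langle y\rangle} = vy^\dagger$. Because the value group of $K\langle y\rangle$ is generated by the single element $\alpha = vy$ over $\Gamma$ and the constant field is unchanged, the situation falls squarely under Corollary~\ref{cor:extensioncor2}: $K\langle y\rangle$ has a unique power closure $K\langle y\rangle\pow$, there is a unique constant power map on it making it an $\Hp$-field extension of $K$, and any $H$-field embedding $K\langle y\rangle \to L$ over $K$ into an $\Hp$-field $L$ extends uniquely to an $\Hp$-field embedding. Composing this with the universal property of $K\langle y\rangle$ from the previous paragraph yields the claimed universal property of $K\langle y\rangle\pow$: given $\Hp$-field $L \supseteq K$ and $y^* \in L^>$ with $\Gamma^< < vy^* < 0$, first get the unique $H$-field embedding $K\langle y\rangle \to L$ sending $y \mapsto y^*$, then extend uniquely to $K\langle y\rangle\pow \to L$.

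It remains to confirm the final sentence: $K\langle y\rangle$ is grounded with $vy^\dagger = \max\Psi_{K\langle y\rangle\pow}$. Groundedness of $K\langle y\rangle$ is the computation above. For the $\Psi$-set of the power closure, I would apply Fact~\ref{fact:psicofinal}: since $K\langle y\rangle$ is grounded and $K\langle y\rangle\pow$ is its unique power closure, $\max\Psi_{K\langle y\rangle\pow} = \max\Psi_{K\langle y\rangle} = vy^\dagger$. (Note $vy^\dagger$ still refers to the value in $\Gamma_{K\langle y\rangle\pow}$ of $y^\dagger$, which equals its value in $\Gamma_{K\langle y\rangle}$ since the latter embeds into the former.) The main obstacle I anticipate is pinning down the precise reference in \cite{ADH17} for constructing $K\langle y\rangle$ with all four listed properties simultaneously (positivity, the value-group cut, unchanged constants, one-dimensional value group extension) together with the sharp universal property---the existence of such an element realizing an arbitrary cut in $\Gamma^{<}_{\text{above }\Gamma^<}$ requires $K$ to have asymptotic integration (so that no gap obstruction arises), which is exactly where $\upo$-freeness of $K$ enters; once that extension is in hand, everything else is bookkeeping via the Section~\ref{sec:HP} corollaries.
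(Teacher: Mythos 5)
There is a genuine gap, and it lies in your description of the value group of $K\langle y\rangle$. You claim $\Gamma_{K\langle y\rangle} = \Gamma \oplus \Z vy$ and then invoke Corollary~\ref{cor:extensioncor2}, whose hypothesis is exactly that the value group is generated by a single element over $\Gamma$. But $K\langle y\rangle$ is the \emph{differential} field generated by $y$, so it contains $y^\dagger = y'/y$, and $vy^\dagger = \max\Psi_{K\langle y\rangle}$ is a \emph{new} element of the value group: since $K$ is $\upo$-free, $\Psi$ has no maximum and $K$ has no gap, so the element $vy^\dagger$ sitting above $\Psi$ and below $(\Gamma^>)'$ cannot lie in $\Gamma$ (nor in $\Gamma\oplus\Z vy$). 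The correct statement, from \cite[Lemma~13.4.4 and Proposition~13.6.7]{ADH17}, is $\Gamma_{K\langle y\rangle} = \Gamma \oplus \Z vy \oplus \Z vy^\dagger$ --- a rank-two extension. Consequently Corollary~\ref{cor:extensioncor2} does not apply, and the step where you upgrade to an $\Hp$-field extension is unjustified.

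Repairing this is not mere bookkeeping: one must instead apply Corollary~\ref{cor:maincor} with the two-element tuple whose values are $vy$ and $vy^\dagger$, and the substantive verification is that $y^\dagger$ and $y^{\dagger\dagger}$ are $C$-linearly independent over $(K^\times)^\dagger + \{g \in K\langle y\rangle : vg > \Psi_{K\langle y\rangle}\}$. This requires an actual argument (the paper uses $\max\Psi_{K\langle y\rangle} = vy^\dagger$ together with \cite[Corollary~9.8.6]{ADH17} to show $\psi(vy^\dagger + \gamma)\in\Psi$ for all $\gamma\in\Gamma$, which forces any relation $v(cy^\dagger + dy^{\dagger\dagger} + f^\dagger) > \Psi_{K\langle y\rangle}$ to have $d = 0$ and then $c = 0$). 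Your remaining points --- existence of $K\langle y\rangle$ via saturation/cut arguments, groundedness with $\max\Psi_{K\langle y\rangle} = vy^\dagger$, and transferring the maximum to the power closure via Fact~\ref{fact:psicofinal} --- are in line with the paper, but the central upgrading step is where the proof actually lives, and as written it fails.
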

\begin{proof}
Saturation gives us an $H$-field extension $K\langle y \rangle$ of $K$ where $y>0$ and $\Gamma^<<vy<0$. By~\cite[Corollary~13.6.8]{ADH17}, this extension is unique up to $K$-isomorphism:\ given an $H$-field extension $L$ of $K$ and $y^* \in L^>$ with $\Gamma^<<vy^*<0$, there is a unique $H$-field embedding $K\langle y \rangle\to L$ that sends $y$ to $y^*$. Moreover, by \cite[Lemma~13.4.4 and Proposition~13.6.7]{ADH17}, we have
\[
C_{K\langle y \rangle} = C,\qquad \Gamma_{K\langle y \rangle} = \Gamma \oplus \Z vy \oplus \Z vy^\dagger,\qquad \max \Psi_{K\langle y \rangle} = vy^\dagger.
\]
In order to apply Corollary~\ref{cor:maincor}, we need only show that $y^\dagger$ and $y^{\dagger\dagger}$ are $C$-linearly independent over 
\[
(K^\times)^\dagger + \{g \in K\langle y\rangle:vg >\Psi_{K\langle y \rangle}\}.
\]
Let $c,d \in C$ and $f \in K^\times$ with 
\[
v(cy^\dagger + dy^{\dagger\dagger}+ f^\dagger )> \Psi_{K\langle y \rangle}.
\]
Since $vy^\dagger = \max \Psi_{K\langle y \rangle}$, this gives 
\[
v(cy^\dagger + dy^{\dagger\dagger}+ f^\dagger)>vy^\dagger.
\]
If $d \neq 0$, then we may arrange that $d = 1$. Applying~\cite[Corollary~9.8.6]{ADH17} with $\beta = vy^\dagger$, we get that $\psi(vy^\dagger+\gamma) \in \Psi$ for all $\gamma \in \Gamma$. Thus, we have
\[
v(y^{\dagger\dagger} + f^\dagger) = v(y^\dagger f)^\dagger = \psi(vy^\dagger+vf) \in \Psi <vy^\dagger. 
\]
It follows that $v(cy^\dagger + dy^{\dagger\dagger}+ f^\dagger) < vy^\dagger$, a contradiction. Thus, $d = 0$, and $v(cy^\dagger +f^\dagger)> vy^\dagger$. If $c \neq 0$, then $v(cy^\dagger) = vy^\dagger$, so $v(f^\dagger) = vy^\dagger$ as well, contradicting that $vy^\dagger \not\in \Gamma$. Thus, $c$ must also equal $0$.
\end{proof}

Now we establish an analog of the results in~\cite[Section 16.1]{ADH17} for $\Hp$-fields.

\begin{lemma}\label{lem:dtranselem}
Let $K$ be an $\upo$-free newtonian Liouville closed $\Hp$-field, let $K\langle f \rangle$ be an $H$-field extension of $K$, and suppose that $C_{K\langle f \rangle} = C$ and that $K\langle y \rangle$ is not an immediate extension of $K$ for all $y \in K\langle f\rangle$. Then $K\langle f \rangle$ has a unique power closure $K\langle f \rangle\pow$, and there is a constant power map on $K\langle f \rangle\pow$ that makes it an $\Hp$-field extension of $K$. If $L$ is an $\Hp$-field extension of $K$ and $g \in L$ realizes the same cut as $f$ over $K$, then there is a unique $\Hp$-field embedding $K\langle f \rangle\pow\to L$ that sends $f$ to $g$.
\end{lemma}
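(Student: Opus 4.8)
The plan is to reduce the lemma to the corresponding $H$-field statements in~\cite[Section~16.1]{ADH17} and then feed the outcome into Corollary~\ref{cor:maincor}. We may assume $f \notin K$, since otherwise $K\langle f\rangle = K$ and there is nothing to prove. Then $f$ is $\d$-transcendental over $K$:\ if it were $\d$-algebraic, then $K\langle f\rangle$ would be a proper $\d$-algebraic $H$-field extension of $K$ with constant field $C$, contradicting Fact~\ref{fact:dmax}. From~\cite[Section~16.1]{ADH17} I would extract two facts about the $H$-field $K\langle f\rangle$:\ (a) it is governed by the cut of $f$ over $K$, in the sense that for any $H$-field extension $L$ of $K$ and any $g \in L$ realizing the same cut as $f$ over $K$ there is a unique $H$-field embedding $K\langle f\rangle \to L$ over $K$ sending $f$ to $g$; and (b) a sufficiently concrete description of $\Gamma_{K\langle f\rangle}$ and of $\Psi_{K\langle f\rangle}$ (in particular, that the value group grows over $\Gamma$ only by $\Q$-combinations of ``one-step'' cut values, as in the extensions handled by Lemma~\ref{lem:endelem}).

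The heart of the proof is to exhibit a family $(a_i)_{i\in I}$ in $K\langle f\rangle^>$ to which Corollary~\ref{cor:maincor} applies. I would choose the $a_i$ so that $(va_i)_{i\in I}$ represents a $\Q$-basis of $\Q\Gamma_{K\langle f\rangle}$ over $\Q\Gamma$; this gives hypothesis~(i) of that corollary at once. Hypothesis~(ii)---that $(a_i^\dagger)_{i\in I}$ is $C$-linearly independent over $(K^\times)^\dagger + \{h \in K\langle f\rangle : vh > \Psi_{K\langle f\rangle}\}$---is the real work. Here I would fix a power closure $P$ of $K\langle f\rangle$; by~\cite[Section~8]{AD05} we have $C_P = C$, the value group $\Gamma_P = C\Gamma_{K\langle f\rangle}$ is an ordered $C$-vector space obtained from $\Gamma_{K\langle f\rangle}$ by extension of scalars (so that a family which is $\Q$-linearly independent over $\Gamma$ in $\Gamma_{K\langle f\rangle}$ stays $C$-linearly independent over $\Gamma$ in $\Gamma_P$), and $\Psi_{K\langle f\rangle}$ is cofinal in $\Psi_P$. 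Now suppose some nontrivial relation $\sum_i c_i a_i^\dagger = g^\dagger + \delta$ holds with the $c_i \in C$ not all zero, $g \in K^\times$, and $v\delta > \Psi_{K\langle f\rangle}$. Since $P$ is closed under powers there is $w \in P^\times$ with $w^\dagger = \sum_i c_i a_i^\dagger - g^\dagger = \delta$, and $v\delta > \Psi_P$ forces $w \asymp 1$; computing $vw$ in the $C$-vector space $\Gamma_P$ then gives $\sum_i c_i\, va_i = vg \in \Gamma$. This contradicts the $C$-linear independence of $(va_i)_{i\in I}$ over $\Gamma$ inside $\Gamma_P$, so hypothesis~(ii) holds. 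Corollary~\ref{cor:maincor} now yields that $K\langle f\rangle$ has a unique power closure $K\langle f\rangle\pow$, a constant power map making it an $\Hp$-field extension of $K$ (necessarily with constant field $C$), and the universal property that every $H$-field embedding $K\langle f\rangle \to L$ over $K$ into an $\Hp$-field extension $L$ of $K$ extends uniquely to an $\Hp$-field embedding $K\langle f\rangle\pow \to L$.

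Finally, for the embedding statement of the lemma, let $L$ be an $\Hp$-field extension of $K$ and let $g \in L$ realize the same cut as $f$ over $K$. Applying~(a) to the underlying $H$-field of $L$ produces the unique $H$-field embedding $\imath\colon K\langle f\rangle \to L$ over $K$ with $\imath(f) = g$, and the universal property just obtained extends $\imath$ uniquely to an $\Hp$-field embedding $\jmath\colon K\langle f\rangle\pow \to L$, with $\jmath(f) = g$. Any $\Hp$-field embedding $K\langle f\rangle\pow \to L$ sending $f$ to $g$ restricts to an $H$-field embedding $K\langle f\rangle \to L$ over $K$ sending $f$ to $g$, which is $\imath$ by~(a), and therefore coincides with $\jmath$ by the uniqueness clause of Corollary~\ref{cor:maincor}; this gives the asserted uniqueness. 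The step I expect to be genuinely delicate is the verification of hypothesis~(ii) of Corollary~\ref{cor:maincor}:\ it needs a precise enough hold on $\Psi_{K\langle f\rangle}$ (and on how $C$-linear combinations of logarithmic derivatives in $K\langle f\rangle$ sit relative to it), which is exactly where the structural input from~\cite[Section~16.1]{ADH17} and the behaviour of $\Psi$-sets and value groups under power extensions enter; Lemmas~\ref{lem:minimalQindep},~\ref{lem:hahndecomp}, and~\ref{lem:indepover} are the technical tools on hand for the bookkeeping.
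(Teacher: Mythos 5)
Your overall architecture (reduce to \cite[Section~16.1]{ADH17}, then apply Corollary~\ref{cor:maincor}) matches the paper's, and your handling of the embedding/uniqueness clause at the end is fine. But the step you yourself flag as delicate --- the verification of hypothesis~(ii) of Corollary~\ref{cor:maincor} --- is circular as written. You pick $(a_i)_{i\in I}$ with $(va_i)_{i\in I}$ a $\Q$-basis of $\Q\Gamma_{K\langle f\rangle}$ over $\Q\Gamma$, pass to a power closure $P$, and derive a contradiction from ``the $C$-linear independence of $(va_i)_{i\in I}$ over $\Gamma$ inside $\Gamma_P$,'' justifying that independence by saying $\Gamma_P=C\Gamma_{K\langle f\rangle}$ is ``obtained by extension of scalars.'' That is not what $C\Gamma_{K\langle f\rangle}$ means: it is merely the $C$-linear span inside $\Gamma_P$, i.e.\ a quotient of $C\otimes_{\Q}\Q\Gamma_{K\langle f\rangle}$, and there is no a priori reason a $\Q$-linearly independent family over $\Gamma$ stays $C$-linearly independent over $\Gamma$ there. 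Indeed, in the proof of Proposition~\ref{prop:mainprop} this independence is exactly the ``claim'' that is \emph{deduced from} hypothesis~(ii) (via Lemma~\ref{lem:poweroverPsi} and the cofinality of $\Psi_M$ in $\Psi_{M\pow}$); you are using the conclusion to prove the hypothesis. A secondary facet of the same circle: you invoke $\Gamma_P=C\Gamma_{K\langle f\rangle}$ and the cofinality of $\Psi_{K\langle f\rangle}$ in $\Psi_P$, which the paper records (Fact~\ref{fact:psicofinal}) only for a \emph{unique} power closure --- and uniqueness is part of what the lemma asserts.

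The non-circular input you are missing is the precise content of \cite[Lemma~16.1.2]{ADH17}: there are $\beta_n\in\Gamma_{K\langle f\rangle}$ with $\Gamma_{K\langle f\rangle}=\Gamma\oplus\bigoplus_n\Z\beta_n$ and with $\beta_0^\dagger<\beta_1^\dagger<\cdots$ a strictly increasing sequence of elements of $\Gamma_{K\langle f\rangle}\setminus\Gamma$. The distinctness of the $\beta_n^\dagger$ and the fact that they avoid $\Psi\subseteq\Gamma$ let one compute $v\bigl(g^\dagger+\sum_n c_n a_n^\dagger\bigr)=\min\bigl(\{\beta_n^\dagger:c_n\neq 0\}\cup\{vg^\dagger\}\bigr)\in\Psi_{K\langle f\rangle}$ by the ultrametric inequality, which is hypothesis~(ii) with no reference to any power closure; this is exactly Corollary~\ref{cor:maincor2}, which the paper applies directly. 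If you replace your ``extension of scalars'' argument with an appeal to Corollary~\ref{cor:maincor2} via this structure of $\Gamma_{K\langle f\rangle}$, the rest of your proof goes through.
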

\begin{proof}
Lemma~16.1.2 in~\cite{ADH17} gives elements $(\beta_n)_{n \in \N}$ from $\Gamma_{K\langle f \rangle}$ such that 
\[
\Gamma_{K\langle f\rangle} = \Gamma \oplus \bigoplus_n \Z\beta_n,\qquad \beta_0^\dagger <\beta_1^\dagger<\cdots \text{ is an increasing tuple of elements in $\Gamma_{K\langle f\rangle}\setminus \Gamma$}.
\]
We conclude by Corollary~\ref{cor:maincor2} with $M = K\langle f\rangle$. The universal property for $K\langle f \rangle\pow$ follows from the embedding property in that corollary, together with the universal property for $K\langle f \rangle$ given in~\cite[Proposition~16.1.5]{ADH17}.
\end{proof}

For our proof of local o-minimality, we need the following variant of the above lemma, where we drop some assumptions on $K$ but now assume that our new element is infinite relative to $K$:

\begin{lemma}\label{lem:infiniteelement}
Let $K$ be an $\upo$-free Liouville closed $\Hp$-field. Then $K$ has an $\upo$-free $\Hp$-field extension $K\langle a \rangle\pow$ with $a>K$. This extension is unique up to $K$-isomorphism:\ for any element $b>K$ in an $\Hp$-field extension $L$ of $K$, there is a unique $\Hp$-field embedding $K\langle a \rangle\pow\to L$ that sends $a$ to $b$.
\end{lemma}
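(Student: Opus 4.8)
The plan is to mirror the construction of differentially transcendental $\Hp$-field extensions via cuts, but replacing the tools used for ``small'' or ``generic'' transcendental elements by those appropriate for an element infinite relative to $K$. First, I would invoke the $H$-field analog: since $K$ is $\upo$-free and Liouville closed, the relevant result in~\cite[Section 16.1 and Section 13.6]{ADH17}---specifically a suitable analog of~\cite[Lemma~16.1.2, Proposition~16.1.5]{ADH17} for $a > K$, or the material on adding an element greater than $K$ to a Liouville closed $\upo$-free $H$-field---produces an $H$-field extension $K\langle a\rangle$ with $a > K$, having constant field $C$, value group of the form $\Gamma \oplus \bigoplus_n \Z\beta_n$ for suitable $\beta_n$, and the universal property: any $b > K$ in an $H$-field extension $L$ of $K$ induces a unique $H$-field embedding $K\langle a\rangle \to L$ over $K$ sending $a \mapsto b$. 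One should also check that $K\langle a\rangle$ is $\upo$-free; this follows from Fact~\ref{fact:dalgupofree} if $a$ is $\d$-algebraic over $K$, or directly from the cofinality of $\Gamma^<$ in $\Gamma_{K\langle a\rangle}^<$ together with Fact~\ref{fact:dalgupofree}, since $\Gamma_{K\langle a\rangle}$ is built by adjoining elements whose $\psi$-values eventually exceed $\Psi$ in a controlled (increasing) way.

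Next I would upgrade this $H$-field extension to an $\Hp$-field extension using the machinery of Section~\ref{sec:HP}. The key is to arrange assumption (ii) of Corollary~\ref{cor:maincor}: one needs that the tuple $(\beta_n^\dagger)_n$ (or the corresponding logarithmic derivatives $a_n^\dagger$ for $a_n \in K\langle a\rangle^>$ with $va_n = \beta_n$) is $C$-linearly independent over $(K^\times)^\dagger + \{h \in K\langle a\rangle : vh > \Psi_{K\langle a\rangle}\}$. Just as in Lemmas~\ref{lem:groundedtoomegafree} and~\ref{lem:dtranselem}, the natural thing is to choose the $\beta_n$ so that the $\beta_n^\dagger$ are \emph{distinct} elements of $\Gamma_{K\langle a\rangle} \setminus \Psi$ (indeed forming an increasing sequence above $\Psi$, which is what gives $\upo$-freeness of the extension); then Corollary~\ref{cor:maincor2} applies directly and yields both the unique power closure $K\langle a\rangle\pow$ with its unique constant power map making it an $\Hp$-field extension of $K$, and the extension property for $H$-field embeddings $K\langle a\rangle \to L$ over $K$. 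Since $\Gamma_{K\langle a\rangle}^<$ is cofinal in $\Gamma_{K\langle a\rangle\pow}^<$ (by Fact~\ref{fact:psicofinal}, as $K\langle a\rangle$ is ungrounded here), $\upo$-freeness of $K\langle a\rangle$ passes to $K\langle a\rangle\pow$ by Fact~\ref{fact:dalgupofree}, giving the claimed $\upo$-freeness of $K\langle a\rangle\pow$.

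Finally, I would assemble the universal property. Given $b > K$ in an $\Hp$-field extension $L$ of $K$, the $H$-field universal property of $K\langle a\rangle$ gives a unique $H$-field embedding $K\langle a\rangle \to L$ over $K$ sending $a \mapsto b$; then the embedding clause of Corollary~\ref{cor:maincor2} extends this uniquely to an $\Hp$-field embedding $K\langle a\rangle\pow \to L$. Uniqueness of the whole $\Hp$-field embedding follows because any such embedding restricts to an $H$-field embedding $K\langle a\rangle \to L$ over $K$ sending $a \mapsto b$ (forced to be \emph{the} one above), and then its extension to $K\langle a\rangle\pow$ is forced by Corollary~\ref{cor:maincor2} (equivalently, by Lemma~\ref{lem:immediateunique}, since $\Q\Gamma_{K\langle a\rangle} \supseteq$ enough of $\Gamma_{K\langle a\rangle\pow}$ once we have adjoined the rational powers).

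\textbf{Main obstacle.} The substantive point is pinning down the correct $H$-field input for an element $a > K$ over an $\upo$-free Liouville closed $K$ and verifying that the generators $\beta_n$ of $\Gamma_{K\langle a\rangle}$ over $\Gamma$ can be taken with $\beta_n^\dagger$ distinct and cofinally above $\Psi$; this simultaneously secures hypothesis (ii) of Corollary~\ref{cor:maincor2} and the $\upo$-freeness of the extension. Once the right reference from~\cite{ADH17} is isolated (the analog of~\cite[Lemma~16.1.2]{ADH17} in the ``$a > K$'' case, using Liouville closedness to iterate exponential integration of $a^\dagger$), the rest is a routine application of the Section~\ref{sec:HP} machinery exactly as in Lemma~\ref{lem:dtranselem}.
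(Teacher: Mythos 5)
Your overall strategy is exactly the paper's: obtain the $H$-field extension $K\langle a\rangle$ from the relevant results in~\cite{ADH17} (the paper cites Lemmas~16.6.9 and~16.6.10 there, not an analog of 16.1.2), and then upgrade it via Corollary~\ref{cor:maincor2}. However, your picture of the value group is wrong in a way that undermines your justification of $\upo$-freeness. For $a>K$ one has $\beta_0=va$ and $\beta_{n+1}=\beta_n^\dagger$ with $\beta_0<\beta_1<\cdots<\Gamma$: the new generators and their $\psi$-values all lie \emph{below} $\Gamma$, hence below $\Psi$, not ``cofinally above $\Psi$'' as in Lemmas~\ref{lem:groundedtoomegafree} and~\ref{lem:dtranselem}. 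This does not hurt the application of Corollary~\ref{cor:maincor2}, which only needs the $\beta_n^\dagger$ to be distinct elements of $\Gamma_{K\langle a\rangle}\setminus\Psi$ (they are not even in $\Gamma$), but it means your proposed reason for $\upo$-freeness of $K\langle a\rangle$ evaporates.

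Relatedly, both of your appeals to Fact~\ref{fact:dalgupofree} go in the wrong direction. First, $a$ is not $\d$-algebraic over $K$ (the value group grows by an infinite-rank free summand, so $K\langle a\rangle$ has infinite transcendence degree over $K$), so the $\d$-algebraic clause does not apply to $K\langle a\rangle$ over $K$; and the cofinality clause of Fact~\ref{fact:dalgupofree} concerns \emph{subfields} of an $\upo$-free field, not extensions, so ``$\Gamma^<$ cofinal in $\Gamma_{K\langle a\rangle}^<$'' does not by itself yield $\upo$-freeness of the extension---that must come from the cited lemmas of~\cite{ADH17} themselves. Second, for passing $\upo$-freeness from $K\langle a\rangle$ to $K\langle a\rangle\pow$ you again invoke the cofinality clause, whereas the correct route is the $\d$-algebraic clause: a power closure is by construction a $\d$-algebraic extension. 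With these repairs your argument closes up and coincides with the paper's proof; the assembly of the universal property in your final paragraph is fine.
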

\begin{proof}
Compactness gives $a$ in an $H$-field extension of $K$ with $a>K$. Put $\beta_0 = va$ and for $n \in \N$, put $\beta_{n+1}\coloneqq \beta_n^\dagger$. Lemmas~16.6.9 and~16.6.10 in~\cite{ADH17} tells us that the ordered differential field $K\langle a \rangle$ is an $H$-field extension of $K$ with
\[
C_{K\langle a \rangle} = C,\qquad \Gamma_{K\langle a \rangle} = \Gamma \oplus \bigoplus_{n\in \N}\Z \beta_n,\qquad \beta_0<\beta_1< \beta_2<\cdots < \Gamma,
\]
and the proofs of those lemmas give the desired embedding property for $K\langle a \rangle$. 
The lemma follows by applying Corollary~\ref{cor:maincor2} with $M = K\langle a\rangle$.
\end{proof}

%----------------------------------------------------------------------------------%
\section{Model completeness for $\T\pow$}\label{sec:Tpow}
%----------------------------------------------------------------------------------%
In this section, we put together the previously established embedding lemmas to prove our main results.

\begin{proposition}\label{prop:bigembedding}
Let $K$ and $L$ be $\upo$-free newtonian Liouville closed $\Hp$-fields and assume that the underlying ordered set of $L$ is $|K|^+$-saturated and the cofinality of $\Gamma_L^<$ is greater than $|\Gamma|$. Let $E$ be an $\upo$-free $\Hp$-subfield of $K$ with $C_E = C$ and let $\imath\colon E \to L$ be an $\Hp$-field embedding. Then $\imath$ extends to an $\Hp$-field embedding $K\to L$.
\end{proposition}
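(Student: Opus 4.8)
The plan is to run a standard back-and-forth/Zorn's lemma argument on $\Hp$-field embeddings $\jmath\colon F\to L$ with $E\subseteq F\subseteq K$, exactly as one proves model completeness for $T^{\nl}$ in \cite{ADH17}, but at each step using the ``upgraded'' embedding lemmas from Sections~\ref{sec:HP} and~\ref{sec:HPext} in place of the bare $H$-field versions. First I would take, via Zorn's lemma, a maximal $\Hp$-subfield $F$ of $K$ containing $E$ together with an $\Hp$-field embedding $\jmath\colon F\to L$ extending $\imath$; the goal is to show $F=K$. I would first arrange that $F$ is reasonably closed: using Corollary~\ref{cor:upofreeext} I may assume $F$ is $\upo$-free (note $C_F=C$ since $E$ has constant field $C$ and power extensions preserve constants), and using Proposition~\ref{prop:nlclosure} I may pass to $F^{\nl}$ inside $L$ — this is legitimate because $F^{\nl}$ is $\d$-algebraic over $F$ with the same constants, hence has cardinality $|F|\le|K|$, and $L$ is newtonian Liouville closed, so the universal property of $F^{\nl}$ gives an $\Hp$-field embedding $F^{\nl}\to L$ over $F$. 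Thus we may assume $F$ itself is $\upo$-free newtonian Liouville closed.

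Now suppose $F\ne K$ and pick $a\in K\setminus F$; I want to embed an $\Hp$-field extension $F\langle a\rangle$-type structure into $L$ over $F$, contradicting maximality. The dichotomy is whether some $y\in F\langle a\rangle$ generates an immediate $H$-field extension of $F$. If so, then since $F$ is asymptotically $\d$-algebraically maximal (Fact~\ref{fact:asdmax}), such a $y$ is $\d$-transcendental over $F$, the extension $F\langle y\rangle$ is controlled by the cut of $y$ over $F$ (Fact~\ref{fact:immextmap}), and — crucially — by Corollary~\ref{cor:extensioncor1} the power closure $F\langle y\rangle\pow$ is still immediate over $F$, hence an $\Hp$-field extension; saturation of $L$ over $|F|\le|K|$ realizes the cut of $y$ over $\jmath(F)$, and Fact~\ref{fact:immextmap} together with the universal property in Corollary~\ref{cor:extensioncor1} extends $\jmath$ over $F\langle y\rangle\pow$, contradicting maximality. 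If no such $y$ exists, then the hypotheses of Lemma~\ref{lem:dtranselem} are met (with $K$ there being our $F$), so $F\langle a\rangle$ has a power closure $F\langle a\rangle\pow$ which is an $\Hp$-field extension of $F$ and embeds over $F$ into any $\Hp$-field extension of $F$ in which the cut of $a$ is realized; $|K|^+$-saturation of $L$ realizes this cut, giving the contradiction. One must also separately dispose of the case $a>F$ (where the cut is not ``internal''): here Lemma~\ref{lem:infiniteelement} produces the $\upo$-free $\Hp$-field extension $F\langle a\rangle\pow$ with its universal property, and the hypothesis that the cofinality of $\Gamma_L^<$ exceeds $|\Gamma_F|\le|\Gamma|$ guarantees $L$ contains an element $>\jmath(F)$, again contradicting maximality.

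The main obstacle is bookkeeping rather than a single hard step: one must verify that the various closure operations (taking $\upo$-free hull, Newton--Liouville closure, power closures of $\d$-transcendental one-element extensions) can be iterated while staying inside $K$ on the source side and inside $L$ on the target side, with all cardinality bounds under control — this is where Corollaries~\ref{cor:cardbound1} and~\ref{cor:cardbound2} and the saturation/cofinality hypotheses on $L$ are used, precisely mirroring the proof of \cite[Theorem~16.0.1]{ADH17}. The genuinely new content — that each $H$-field embedding step lifts to an $\Hp$-field embedding step — has already been extracted into Proposition~\ref{prop:mainprop} and its corollaries and into the lemmas of Section~\ref{sec:HPext}, so the proof here is essentially the $H$-field argument with each invocation of an \cite{ADH17} embedding lemma replaced by its $\Hp$-field counterpart. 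The one point requiring a little care is ensuring that the intermediate $\Hp$-fields we build really do have constant field $C$ (not a proper extension), so that Corollary~\ref{cor:maincor} and its consequences apply; this follows because we only ever adjoin elements via extensions that provably preserve the constant field.
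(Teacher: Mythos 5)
Your overall skeleton (maximal extended pair via Zorn, then one-step extension by Newton--Liouville closure, immediate extensions, and non-immediate $\d$-transcendental extensions via Lemma~\ref{lem:dtranselem}) matches the paper's proof, and the immediate and non-immediate cases are handled correctly. But there is a genuine gap in how you keep the intermediate field $F$ $\upo$-free. You cannot "assume $F$ is $\upo$-free using Corollary~\ref{cor:upofreeext}": that corollary produces an \emph{abstract} $\upo$-free extension of $F$ (built from a spherically complete immediate extension and a gap-creating element) which comes with no embedding into $K$ over $F$ and no embedding into $L$ over $\jmath(F)$, so it cannot be inserted into your chain. Since Proposition~\ref{prop:nlclosure} and Lemma~\ref{lem:dtranselem} both require $\upo$-freeness of the base, this is not a bookkeeping issue. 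The paper's resolution is a case you omit entirely: if $\Gamma_F^<$ is cofinal in $\Gamma_K^<$, then $F$ is automatically $\upo$-free by Fact~\ref{fact:dalgupofree}; if not, one picks $y\in K^>$ with $\Gamma_F^<<vy<0$, uses Lemma~\ref{lem:endelem} to extend the embedding to the grounded field $F\langle y\rangle\pow$, and then uses Lemma~\ref{lem:groundedtoomegafree} to pass to an $\upo$-free hull inside $K$ that also embeds into $L$ (since $L$ is Liouville closed). Neither of these lemmas appears in your argument.

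Relatedly, you misattribute the cofinality hypothesis on $\Gamma_L^<$. It is not needed for the case $a>F$: the type $\{x>\jmath(f):f\in F\}$ has at most $|K|$ formulas and is realized by $|K|^+$-saturation of the ordered set of $L$ (and that case is in any event subsumed by the immediate/non-immediate dichotomy, so Lemma~\ref{lem:infiniteelement} is not needed here). Where the hypothesis \emph{is} needed is precisely in the step you skipped: to find $y^*\in L^>$ with $\Gamma_{\jmath(F)}^<<vy^*<0$, one must realize the cut $C_L<y^*<\{\text{positive infinite elements of }\jmath(F)\}$, whose lower part involves all of $C_L$ and may have cardinality exceeding $|K|$; saturation over $|K|$ does not suffice, but the assumption that $\operatorname{cf}(\Gamma_L^<)>|\Gamma|\geq|\Gamma_F|$ guarantees $\Gamma_{\jmath(F)}^<$ is not cofinal in $\Gamma_L^<$ and hence that such a $y^*$ exists. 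With this case added (and Corollary~\ref{cor:upofreeext} removed), your argument aligns with the paper's.
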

\begin{proof}
We can assume $E\neq K$, in which case it suffices to show that $\imath$ can be extended to an embedding of an $\upo$-free $\Hp$-subfield of $K$ that properly contains $E$. 
If $E$ is not also newtonian and Liouville closed, then we use Proposition~\ref{prop:nlclosure} to identify $E^{\nl}$ with an $\Hp$-subfield of $K$ and to extend $\imath$ to an $\Hp$-field embedding $E^{\nl} \to L$. We assume for the remainder of the proof that $E$ is an $\upo$-free newtonian Liouville closed $\Hp$-subfield of $K$.

Suppose $\Gamma_E^<$ is not cofinal in $\Gamma^<$, so there is $y \in K^>$ with $\Gamma_E^<<vy<0$. By our cofinality assumption on $\Gamma_L^<$ there is $y^* \in L^>$ with $\Gamma_{\imath(E)}^<<vy^*<0$. Using Lemma~\ref{lem:endelem}, we extend $\imath$ to an $\Hp$-field embedding $E\langle y \rangle\pow \to L$. As $E\langle y \rangle\pow$ is grounded, we can use Lemma~\ref{lem:groundedtoomegafree} to further extend $\imath$ to an embedding of an $\upo$-free $\Hp$-subfield of $K$ containing $y$. We assume for the remainder of the proof that $\Gamma_E^<$ is cofinal in $\Gamma^<$. Then every $H$-subfield of $K$ containing $E$ is $\upo$-free by Fact~\ref{fact:dalgupofree}.

Next, suppose $E\langle z\rangle$ is an immediate extension of $E$ for some $z \in K\setminus E$. The saturation assumption on $L$ gives $z^* \in L$ that realizes the $\imath$-image of the cut over $\imath(E)$ that $z$ realizes over $E$. By Fact~\ref{fact:immextmap} we may extend $\imath$ to an $H$-field embedding of $E\langle z\rangle$ into $L$ that sends $z$ to $z^*$. By Corollary~\ref{cor:extensioncor1}, this extends to an $\Hp$-field embedding $E\langle z \rangle\pow \to L$.

Finally, assume $E$ has no proper immediate extensions in $K$. Take $f \in K\setminus E$ and, using the saturation assumption on $L$, take $g\in L$ that realizes the $\imath$-image of the cut over $\imath(E)$ that $f$ realizes over $E$. By Lemma~\ref{lem:dtranselem} we may extend $\imath$ to an $\Hp$-field embedding $K\langle f\rangle\pow\to L$.
\end{proof}

Here is a useful consequence of Proposition~\ref{prop:bigembedding}.

\begin{corollary}\label{cor:bigembedding}
Let $K$ and $L$ be as in the statement of Proposition~\ref{prop:bigembedding}. If both $K$ and $L$ have small derivation, then any ordered exponential field embedding $\imath\colon C\to C_L$ extends to an $\Hp$-field embedding $K\to L$. The same is true if both $K$ and $L$ have large derivation.
\end{corollary}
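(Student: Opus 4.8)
The plan is to reduce to Proposition~\ref{prop:bigembedding}. Since the saturation and cofinality hypotheses on $L$ are already built into that proposition, it suffices to produce an $\upo$-free $\Hp$-subfield $E$ of $K$ with $C_E = C$ together with an $\Hp$-field embedding $E\to L$ extending $\imath$, and then invoke Proposition~\ref{prop:bigembedding} for $E\to L$. So the task is to construct, inside both $K$ and $L$, a ``common'' $\upo$-free $\Hp$-field over $C$ on which $\imath$ is already defined.

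I treat the small derivation case; the large derivation case is parallel and is indicated at the end. View $C$, with trivial derivation and the constant power map restricted from $K$ (which restricts to a constant power map on $C$, since $(f^c)^\dagger = cf^\dagger = 0$ whenever $f,c$ are constants, by (C4)), as a trivially valued $\Hp$-subfield of $K$; note $C$ is real closed since $K$ is, and $0$ is a gap in $C$. First I claim $K$ contains $x_K>0$ with $x_K\succ 1$ and $x_K' = 1$: as $K$ is Liouville closed, there is $x_K$ with $x_K' = 1$; if $x_K\preceq 1$ then, subtracting a constant in case $x_K\asymp 1$, we obtain a nonzero infinitesimal with derivative $1$, contradicting small derivation, so $x_K\succ 1$; and $x_K<0$ would give $-x_K>\cO$ and hence $(-x_K)'>0$ by (H1), which is absurd. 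Applying the universal property of Lemma~\ref{lem:gapgoesdown} to the trivially valued $\Hp$-field $C$ with $s = 1$ and $g = x_K$, we identify the grounded $\Hp$-field $C(x_K)\pow$ with an $\Hp$-subfield $E_0\subseteq K$ having $C_{E_0} = C$. Since $E_0$ is grounded, Lemma~\ref{lem:groundedtoomegafree} gives an $\upo$-free $\Hp$-field extension of $E_0$ with constant field $C$ that embeds over $E_0$ into any Liouville closed $\Hp$-field extension of $E_0$, in particular over $E_0$ into $K$; we let $E\subseteq K$ be its image, an $\upo$-free $\Hp$-subfield of $K$ with $C_E = C$.

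It remains to embed $E$ into $L$ over $\imath$. Since $\imath$ is an ordered exponential field embedding for the induced exponentials on $C$ and $C_L$, Lemma~\ref{lem:induceddetermines} shows that $\imath$ is an $\Hp$-field embedding of the trivially valued $\Hp$-field $C$ into $L$. As $L$ is also Liouville closed with small derivation, the argument above produces $x_L\in L$ with $x_L>0$, $x_L\succ 1$, and $x_L' = 1$; the universal property of Lemma~\ref{lem:gapgoesdown} extends $\imath$ to an $\Hp$-field embedding $E_0\to L$ sending $x_K\mapsto x_L$, and the embedding property of Lemma~\ref{lem:groundedtoomegafree} extends this to an $\Hp$-field embedding $E\to L$. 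Proposition~\ref{prop:bigembedding} then extends $E\to L$ to an $\Hp$-field embedding $K\to L$, which extends $\imath$.

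For the large derivation case one runs the same argument with Lemma~\ref{lem:gapgoesup} in place of Lemma~\ref{lem:gapgoesdown}: the point is that in a Liouville closed $H$-field with large derivation one has $0\in(\Gamma^>)'$, so $\Psi<(\Gamma^>)'$ forces $\Psi<0$, whence an integral of $1$ (which exists by Liouville closedness) cannot be $\succ 1$, as that would give $\psi(va) = -va>0$; subtracting a constant if needed yields $a\in\smallo$ with $a' = 1$, in $K$ and likewise in $L$. Lemma~\ref{lem:gapgoesup} then furnishes the grounded $\Hp$-subfield $E_0 = C(a)\pow$ of $K$ and extends $\imath$ to $E_0\to L$, and one concludes as before. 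I expect the one genuinely delicate point to be exactly this appeal to the small/large derivation dichotomy, used to guarantee the existence in both $K$ and $L$ of an element with derivative $1$ of the correct archimedean size; granting that, the rest is an assembly of the universal properties of Lemmas~\ref{lem:gapgoesdown}, \ref{lem:gapgoesup}, and~\ref{lem:groundedtoomegafree} together with Proposition~\ref{prop:bigembedding}.
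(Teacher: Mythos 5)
Your proposal is correct and follows essentially the same route as the paper: use Liouville closedness plus the small/large derivation hypothesis to find an integral of a unit with the right valuation in both $K$ and $L$, apply Lemma~\ref{lem:gapgoesdown} (resp.\ Lemma~\ref{lem:gapgoesup}) over the trivially valued constant field, pass to an $\upo$-free extension via Lemma~\ref{lem:groundedtoomegafree}, and finish with Proposition~\ref{prop:bigembedding}. The only cosmetic difference is that in the large-derivation case the paper integrates $-1$ while you integrate $1$; both land in $\smallo$ after subtracting a constant, so the argument is unaffected.
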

\begin{proof}
We first consider the case that $K$ and $L$ have small derivation. As $K$ is Liouville closed, there is $f \in K$ with $f' = 1$. Additionally, $f \succ 1$ since $K$ has small derivation. Since $L$ is also Liouville closed with small derivation, there is $g \in L$ with $g\succ 1$ and $g' =1$. Applying Lemma~\ref{lem:gapgoesdown} with $s = 1$, we find a grounded $\Hp$-field $E = C\langle b \rangle\pow$ with $b' = 1$ that embeds into both $K$ and $L$. By Lemma~\ref{lem:groundedtoomegafree}, these embeddings extend to embeddings of an $\upo$-free $\Hp$-field $E\wpow$ into $K$ and $L$. Identifying the images of $E\wpow$ in $K$ and $L$, we arrange that $K$ and $L$ share a common $\upo$-free $\Hp$-subfield with the same field of constants as $K$. Then Proposition~\ref{prop:bigembedding} gives an $\Hp$-field embedding $K\to L$.

Now suppose $K$ and $L$ have large derivation. Again, since $K$ is Liouville closed, there is $y \in K$ with $y' = -1$. Then $y\preceq 1$ since $K$ has large derivation, so by subtracting a constant from $y$, we may assume that $y\prec 1$. The same holds for $L$, so we may apply the same argument above, this time using Lemma~\ref{lem:gapgoesup} in place of Lemma~\ref{lem:gapgoesdown}.
\end{proof}

%----------------------------------------%
\subsection{Model completeness and completeness}

To get a model completeness result, we need to remove the assumption that $C_E = C$ in Proposition~\ref{prop:bigembedding}. To do this, we impose an additional requirement on the constant fields of $K$, $L$, and $E$. An $\Hp$-field $K$ is said to have \textbf{real exponential constant field} if its constant field $C$ equipped with the induced exponential $c \mapsto 2^c$ is a model of $\Th(\R, x\mapsto 2^x)$.

\begin{corollary}\label{cor:bigembedding2}
Let $E$, $K$, $L$ and $\imath$ be as in the statement of Proposition~\ref{prop:bigembedding}, except we drop the assumption that $C_E = C$. Assume, in addition, that the underlying ordered set of $C_L$ is $|C|^+$-saturated and that $E$, $K$, and $L$ all have real exponential constant fields. Then $\imath$ extends to an $\Hp$-field embedding $K\to L$.
\end{corollary}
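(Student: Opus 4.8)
The plan is to reduce Corollary~\ref{cor:bigembedding2} to Proposition~\ref{prop:bigembedding} by first adjusting the constant fields so that they agree, and then appealing to the proposition. The key point is that $E$, $K$, and $L$ all have real exponential constant fields, so $C_E$, $C$, and $C_L$ are all models of $\Th(\R, x\mapsto 2^x)$, and $\imath$ restricts to an ordered exponential field embedding $C_E\to C_L$ by the definition of an $\Hp$-field embedding together with Lemma~\ref{lem:induceddetermines}. Since $\Th(\R_{\exp})$ is model complete (Wilkie's theorem), and since multiplying by $\log 2$ is definable, $\Th(\R, x\mapsto 2^x)$ is model complete as well, so $\imath|_{C_E}\colon C_E\to C_L$ is an \emph{elementary} embedding of models of this theory.

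First I would enlarge $E$ inside $K$ to incorporate all of $C$: apply Proposition~\ref{prop:addconstants} with the real closed ordered field extension $C\supseteq C_E$ and the induced exponential on $C$ as $\exp^*$, obtaining an $\Hp$-field $E(C)\pow$. Since $C$ is already a subfield of $K$ that is closed under powers (as $K$ is an $\Hp$-field), the natural map identifies $E(C)\pow$ with the $\Hp$-subfield of $K$ generated by $E$ and $C$; call it $E'$. Note $C_{E'} = C$ and $E'$ is still $\upo$-free, as it is generated over the $\upo$-free field $E$ by a real-closed constant field extension—and one checks $\Gamma_{E'} = \Gamma_E$, so $\Gamma_{E'}^<$ is cofinal in $\Gamma_E^<$ and Fact~\ref{fact:dalgupofree} applies. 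To extend $\imath$ to $E'$, I use the universal property in Proposition~\ref{prop:addconstants}: this requires an ordered exponential field embedding $C\to C_L$ over $C_E$. Here is where the saturation hypothesis on $C_L$ enters. Since $\imath|_{C_E}$ is elementary (both sides being models of the model-complete theory $\Th(\R, x\mapsto 2^x)$), and $C_L$ is $|C|^+$-saturated, a standard back-and-forth/saturation argument produces an elementary—hence exponential—embedding $C\to C_L$ extending $\imath|_{C_E}$ over $C_E$. Applying the universal property of $E(C)\pow$ gives an $\Hp$-field embedding $\imath'\colon E'\to L$ extending $\imath$.

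Now $E'$ is an $\upo$-free $\Hp$-subfield of $K$ with $C_{E'} = C$, and $\imath'\colon E'\to L$ is an $\Hp$-field embedding. The saturation and cofinality hypotheses on $L$ (and on $\Gamma_L^<$) are unchanged, and $K$, $L$ are still $\upo$-free newtonian Liouville closed. So Proposition~\ref{prop:bigembedding} applies directly to $E'\subseteq K$ and $\imath'$, yielding an $\Hp$-field embedding $K\to L$ extending $\imath'$, hence extending $\imath$.

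The main obstacle—really the only nonroutine point—is the constant-field step: producing the elementary embedding $C\to C_L$ over $C_E$ extending $\imath|_{C_E}$. This is where one genuinely needs (a) that $\imath|_{C_E}$ is elementary, which rests on model completeness of $\Th(\R, x\mapsto 2^x)$ (equivalently, of $\R_{\exp}$, by Wilkie), and (b) the $|C|^+$-saturation of $C_L$ to run the usual "extend an elementary embedding of a model into a sufficiently saturated elementary extension of its target" argument. Everything else is bookkeeping: identifying $E(C)\pow$ with a subfield of $K$, checking $\upo$-freeness is preserved, and verifying that the hypotheses of Proposition~\ref{prop:bigembedding} survive the enlargement of $E$ to $E'$.
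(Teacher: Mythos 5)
Your proof follows the paper's argument exactly: extend $\imath|_{C_E}$ to an ordered exponential field embedding $C\to C_L$ using model completeness of $\Th(\R,x\mapsto 2^x)$ together with the saturation of $C_L$ (o-minimality upgrades order-saturation of the underlying ordered set to saturation as an exponential field), pass to $E(C)\pow$ via Proposition~\ref{prop:addconstants}, and then apply Proposition~\ref{prop:bigembedding}. One local error in an otherwise correct step: your justification that $E'=E(C)\pow$ is $\upo$-free does not work. The claim $\Gamma_{E'}=\Gamma_E$ fails in general, since by Fact~\ref{fact:psicofinal} the value group of $E(C)\pow$ is the $C$-linear span $C\Gamma_{E}$, which can be strictly larger than $\Gamma_E$ (a $C_E$-vector space need not be a $C$-vector space); moreover, the cofinality clause of Fact~\ref{fact:dalgupofree} applies to $H$-\emph{subfields} of an $\upo$-free field, whereas $E'$ is an \emph{extension} of $E$. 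The correct and immediate justification --- the one the paper uses --- is the first clause of that Fact: $E(C)\pow$ is $\d$-algebraic over the $\upo$-free field $E$, hence $\upo$-free. With that repair the argument is exactly the paper's.
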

\begin{proof}
The theory of $(\R, x\mapsto 2^x)$ is model complete and o-minimal by Wilkie's theorem~\cite{Wi96}. The saturation assumption on the underlying ordered set of $C_L$ gives us that $C_L$ is saturated as an ordered exponential field by o-minimality. By model completeness, the ordered exponential field embedding $\imath|_{C_E}\colon C_E \to C_L$ extends to an ordered exponential field embedding $\jmath\colon C\to C_L$. By Proposition~\ref{prop:addconstants} there is a unique $\Hp$-field embedding $E(C)\pow\to L$ that extends both $\imath$ and $\jmath$. Since $E(C)\pow$ is $\d$-algebraic over $E$, it is $\upo$-free by Fact~\ref{fact:dalgupofree}. Now apply Proposition~\ref{prop:bigembedding} with $E(C)\pow$ in place of $E$.
\end{proof}

Let $\cL\pow\coloneqq \{+,\times,0,1,\leq,\preceq,\der,\rho\}$, where $\leq$ and $\preceq$ are binary relation symbols, $\der$ is a unary function symbol, and $\rho$ is a binary function symbol. We view each $\Hp$-field $K$ as an $\cL\pow$-structure in the obvious way, where $\rho$ is defined to be identically zero off of $K^>\times C$ and $\rho(f,c) = f^c$ for $(f,c) \in K^>\times C$. Consider the following $\cL\pow$-theories:
\begin{enumerate}
\item $T\pow$, whose models are $\Hp$-fields with real exponential constant field.
\item $T^{\upo}\pow$, whose models are $\upo$-free $\Hp$-fields with real exponential constant field.
\item $T^{\nl}\pow$, whose models are $\upo$-free newtonian Liouville closed $\Hp$-fields with real exponential constant field.
\end{enumerate} %The theory $T^{\nl}\pow$ is consistent since it has a model; see Corollary~\ref{cor:maincor}.

\begin{theorem}\label{thm:mainthm}
The $\cL\pow$-theory $T^{\nl}\pow$ is model complete. It is the model companion of $T\pow$ and the model completion of $T^{\upo}\pow$.
\end{theorem}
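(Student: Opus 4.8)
The plan is to prove model completeness of $T^{\nl}\pow$ via the usual embedding criterion (Robinson's test) and then to deduce the companion/completion statements essentially formally from the embedding results of Sections~\ref{sec:HP}--\ref{sec:HPext}. A preliminary observation: $\upo$-freeness and newtonianity are first-order over the $H$-field language (by~\cite{ADH17}), real closedness, Liouville closedness, and the axioms (C1)--(C5) are first-order, and ``real exponential constant field'' is first-order because $\Th(\R,x\mapsto 2^x)$ is a complete theory; hence $T\pow$, $T^{\upo}\pow$, and $T^{\nl}\pow$ are genuine $\cL\pow$-theories and the class of models of $T^{\nl}\pow$ is closed under elementary equivalence.

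For model completeness it suffices to show that whenever $K\subseteq L$ are models of $T^{\nl}\pow$, the smaller is existentially closed in the larger. I would fix an elementary extension $K^{*}$ of $K$ that is $|L|^{+}$-saturated; then $K^{*}$ is again a model of $T^{\nl}\pow$, its underlying ordered set is $|L|^{+}$-saturated, the cofinality of $\Gamma_{K^{*}}^{<}$ exceeds $|\Gamma_L|$, and the ordered set $C_{K^{*}}$ is $|C_K|^{+}$-saturated. Now apply Corollary~\ref{cor:bigembedding2} with $E:=K$ (the embedding $E\to K^{*}$ being the inclusion coming from elementarity), with $L$ in the role of ``$K$'', and with $K^{*}$ in the role of ``$L$''; all of its hypotheses hold since $K$, $L$, $K^{*}$ are models of $T^{\nl}\pow$ and so have real exponential constant fields. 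The corollary then supplies an $\Hp$-field embedding $L\to K^{*}$ fixing $K$ pointwise, so every existential $\cL\pow$-formula over $K$ that holds in $L$ also holds in $K^{*}$, hence in $K$ since $K$ is an elementary submodel of $K^{*}$. (It is essential to invoke Corollary~\ref{cor:bigembedding2} rather than Proposition~\ref{prop:bigembedding} here, since $C_K$ may be strictly smaller than $C_L$.)

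For the companion and completion statements, note first that every model of $T^{\nl}\pow$ is a model of $T^{\upo}\pow$ and of $T\pow$. Conversely, given $K\models T\pow$ --- an $\Hp$-field with real exponential constant field $C$ --- one chains Corollary~\ref{cor:upofreeext} (to pass to an $\upo$-free $\Hp$-field extension with constant field $C$), Proposition~\ref{prop:nlclosure} (to pass further to a newtonian Liouville closed $\Hp$-field extension that is $\d$-algebraic over it, still with constant field $C$), and Fact~\ref{fact:dalgupofree} (to see the last extension is again $\upo$-free), arriving inside a model of $T^{\nl}\pow$; omitting the first step does the same job for $T^{\upo}\pow$. Together with model completeness this already yields that $T^{\nl}\pow$ is the model companion of $T\pow$ and of $T^{\upo}\pow$. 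To upgrade to the model completion of $T^{\upo}\pow$, I would check that $T^{\nl}\pow\cup\mathrm{Diag}(E)$ is complete for every $E\models T^{\upo}\pow$: given $K,L\models T^{\nl}\pow$ each containing $E$, pass to a $|K|^{+}$-saturated elementary extension $L^{*}$ of $L$ and apply Corollary~\ref{cor:bigembedding2} with this $E$ (now genuinely using that it allows $C_E\subsetneq C_K$, via model completeness and o-minimality of $\Th(\R,x\mapsto 2^x)$) to obtain an $\Hp$-field embedding $K\to L^{*}$ over $E$; model completeness makes it elementary, whence $\Th(K/E)=\Th(L^{*}/E)=\Th(L/E)$.

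The main obstacle is mostly organizational rather than conceptual: the substantive embedding work has already been packaged into Corollary~\ref{cor:bigembedding2}, and what remains is to match its saturation hypotheses to each application and to keep straight which field plays the role of $E$, of ``$K$'', and of ``$L$''. The one genuinely delicate point to flag in the write-up is why $T\pow$ receives only a model companion and not a model completion: amalgamation of $\Hp$-fields over a common \emph{non-$\upo$-free} subfield can fail --- over a grounded $\Hp$-field one may create a gap ``going up'' or ``going down'' (Lemmas~\ref{lem:gapgoesup} and~\ref{lem:gapgoesdown}) --- so one should not expect $T^{\nl}\pow\cup\mathrm{Diag}(A)$ to be complete for an arbitrary $A\models T\pow$.
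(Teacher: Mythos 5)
Your proof is correct and follows essentially the same route as the paper, which likewise derives everything from Proposition~\ref{prop:nlclosure}, Corollaries~\ref{cor:upofreeext} and~\ref{cor:bigembedding2}, and a standard model-completion criterion; you have merely unpacked that criterion into Robinson's test plus the completeness of $T^{\nl}\pow\cup\operatorname{Diag}(E)$ for $E\models T^{\upo}\pow$. (Only your closing aside is slightly off: the ``up/down'' dichotomy of Lemmas~\ref{lem:gapgoesup} and~\ref{lem:gapgoesdown} occurs over an $\Hp$-field \emph{with a gap} rather than over a grounded one, but this does not affect the argument.)
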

\begin{proof}
The fact that $T^{\nl}\pow$ is the model completion of $T^{\upo}\pow$ follows immediately from Proposition~\ref{prop:nlclosure}, Corollary~\ref{cor:bigembedding2}, and a standard model completion criterion; see~\cite[page viii]{Ka21}. Corollary~\ref{cor:upofreeext} gives that $T^{\nl}\pow$ is the model companion of $T\pow$.
\end{proof}

We can use Theorem~\ref{thm:mainthm} and Corollary~\ref{cor:bigembedding} to characterize the completions of $T^{\nl}\pow$. Let $T^{\nl}\dsmall$ be the $\cL\pow$-theory extending $T^{\nl}\pow$ whose models have small derivation and let $T^{\nl}\dlarge$ be the $\cL\pow$-theory extending $T^{\nl}\pow$ whose models have large derivation. %Let $K \models T^{\nl}\pow$. Then $K^{\phi} \models T^{\nl}\dsmall$ for any $\phi \in K^>$ with $v\phi \in (\Gamma^<)'$ and $K^{\psi} \models T^{\nl}\dlarge$ for any $\psi \in K^>$ with $v\psi \in (\Gamma^>)'$. Thus, $T^{\nl}\dsmall$ and $T^{\nl}\dlarge$ are both consistent since $T^{\nl}\pow$ is consistent.

\begin{theorem}\label{thm:completions}
$T^{\nl}\dsmall$ and $T^{\nl}\dlarge$ are the two completions of $T^{\nl}\pow$.
\end{theorem}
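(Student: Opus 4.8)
The plan is to derive both completeness statements from the model completeness of $T^{\nl}\pow$ (Theorem~\ref{thm:mainthm}) together with the embedding result of Corollary~\ref{cor:bigembedding}. First I would record that every $\Hp$-field has either small or large derivation, that these two conditions are mutually exclusive, and that ``small derivation'' is expressed by the $\cL\pow$-sentence $\forall a\,(a \prec 1 \to \der a \prec 1)$. Hence every model of $T^{\nl}\pow$ satisfies exactly one of $T^{\nl}\dsmall$, $T^{\nl}\dlarge$, so it suffices to show that each of these two theories is consistent and complete (they are then automatically distinct). Consistency is easy to arrange directly: starting from $\R$ with trivial derivation (a model of $T\pow$) and applying Lemma~\ref{lem:gapgoesdown} with $s = 1$, then Lemma~\ref{lem:groundedtoomegafree} and Proposition~\ref{prop:nlclosure}, produces a model of $T^{\nl}\pow$ in which the distinguished element $b \succ 1$ with $b' = 1$ witnesses small derivation; using Lemma~\ref{lem:gapgoesup} with $s = -1$ instead gives a distinguished element $a \prec 1$ with $a' = -1$, hence large derivation. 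In both cases the derivation type is pinned down by a single element of the value group, so it survives the $\upo$-free, Newton--Liouville, and later embedding steps.

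For the completeness half I would use that every extension of a model complete theory is model complete, so $T^{\nl}\dsmall$ and $T^{\nl}\dlarge$ are both model complete, and that a model complete theory is complete as soon as it has the joint embedding property: if $T$ is model complete and models $M_1, M_2$ of $T$ both embed into a common model $N$ of $T$, then both embeddings are elementary, whence $M_1 \equiv M_2$, and completeness of $T$ follows (given consistency). So it remains to verify the joint embedding property for $T^{\nl}\dsmall$ and, symmetrically, for $T^{\nl}\dlarge$.

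To do this, let $K, L \models T^{\nl}\dsmall$ and fix a $\kappa$-saturated $L^* \models T^{\nl}\dsmall$ that is an elementary extension of $L$, with $\kappa = |K|^+$; then $L$ embeds into $L^*$, so it is enough to embed $K$ into $L^*$, which I would do via Corollary~\ref{cor:bigembedding}. The hypotheses on $L^*$ all hold: its underlying ordered set is $|K|^+$-saturated, and since the valuation ring is $\emptyset$-definable in $\cL\pow$, the value group $\Gamma_{L^*}$ is $\kappa$-saturated, so the cofinality of $\Gamma_{L^*}^<$ is at least $\kappa > |\Gamma_K|$; also $K$ and $L^*$ both have small derivation. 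Corollary~\ref{cor:bigembedding} then extends any ordered exponential field embedding $C_K \to C_{L^*}$ to an $\Hp$-field embedding $K \to L^*$. Such an embedding of constant fields exists: $C_K$ and $C_{L^*}$ are both models of $\Th(\R, x\mapsto 2^x)$ (this is precisely where the real exponential constant field hypothesis enters), hence elementarily equivalent as ordered exponential fields; and $C_{L^*}$, being $\emptyset$-definable in the $\kappa$-saturated structure $L^*$, is itself $\kappa$-saturated and therefore $|C_K|^+$-saturated, so elementary equivalence plus saturation of the target yields the required elementary embedding $C_K \to C_{L^*}$. (By o-minimality of $\Th(\R, x\mapsto 2^x)$~\cite{Wi96} one may, if desired, reduce saturation of $C_{L^*}$ as an ordered exponential field to saturation of its underlying order, but this is not strictly needed.) The same argument using the large-derivation clause of Corollary~\ref{cor:bigembedding} gives the joint embedding property for $T^{\nl}\dlarge$, finishing the proof.

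The only point demanding a little care --- rather than a genuine obstacle --- is the bookkeeping of the saturation cardinal: one must choose $\kappa$ so that $L^*$ simultaneously has a $|K|^+$-saturated underlying order, a value group whose negative part has cofinality exceeding $|\Gamma_K|$, and a constant field saturated enough to receive $C_K$; taking $\kappa = |K|^+$ works because $|\Gamma_K|, |C_K| \le |K|$. Everything else is a direct assembly of Theorem~\ref{thm:mainthm}, Corollary~\ref{cor:bigembedding}, and the completeness and o-minimality of $\Th(\R, x\mapsto 2^x)$.
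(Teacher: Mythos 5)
Your proposal is correct and follows essentially the same route as the paper: the paper's proof also takes $K,L\models T^{\nl}\dsmall$ with $L$ (there playing the role of your $L^*$) assumed $|K|^+$-saturated, obtains an ordered exponential field embedding $C\to C_L$ from elementary equivalence of the real exponential constant fields plus saturation, extends it to an $\Hp$-field embedding via Corollary~\ref{cor:bigembedding}, and concludes by model completeness of $T^{\nl}\pow$. Your additional remarks (explicit consistency constructions, the JEP framing, and the saturation bookkeeping) are correct but are elaborations of the same argument rather than a different one.
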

\begin{proof}
Let $K,L \models T^{\nl}\dsmall$ and assume $L$ is $|K|^+$-saturated. Then $C_L$ is $|C|^+$-saturated, so there is an ordered exponential field embedding $\imath\colon C\to C_L$ since $C$ and $C_L$ are elementarily equivalent; see~\cite[Corollary~B.9.5]{ADH17}. This, in turn, extends to an embedding $\jmath\colon K\to L$ by Corollary~\ref{cor:bigembedding}. Then $\jmath$ is elementary since $T^{\nl}\pow$ is model complete, so $K$ and $L$ are elementarily equivalent. This shows that $T^{\nl}\dsmall$ is complete, and the same proof shows that $T^{\nl}\dlarge$ is complete.
\end{proof}

Our axiomatization of $T^{\nl}\pow$ is effective, with the possible exception of the ``real exponential constant field'' axiom. Thus, we have the following:
\begin{corollary}\label{cor:decidable}
The $T^{\nl}\pow$ and its two completions $T^{\nl}\dsmall$ and $T^{\nl}\dlarge$ are decidable relative to the theory of $\Th(\R,x\mapsto 2^x)$.
\end{corollary}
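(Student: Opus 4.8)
The plan is to show that, given a decision oracle for $\Th(\R, x\mapsto 2^x)$, the theories $T^{\nl}\pow$, $T^{\nl}\dsmall$ and $T^{\nl}\dlarge$ all become recursively axiomatized, and then to combine this with the completeness result of Theorem~\ref{thm:completions}. First I would recall that every $\cL$-ingredient of $T^{\nl}\pow$ other than the ``real exponential constant field'' requirement forms a recursive set of axioms: being an $H$-field, being real closed, $\upo$-freeness, newtonianity, and Liouville closedness are each recursively axiomatizable, by~\cite{ADH17} (and~\cite{PC19} for the removal of the divisibility hypotheses), and the constant power map axioms (C1)--(C5) constitute a single finite block of $\cL\pow$-sentences. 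Likewise, ``small derivation'' and ``large derivation'' are each expressible by a single $\cL\pow$-sentence.

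The only potentially non-effective ingredient is the scheme asserting that the constant field, with its induced exponential, is a model of $\Th(\R, x\mapsto 2^x)$. Since $C$ is $\0$-definable in any $\Hp$-field by $\{c : \der c = 0\}$ and the induced exponential is the $\0$-definable map $c\mapsto \rho(1+1,c)$, this scheme is precisely $\{\sigma^C : (\R, x\mapsto 2^x)\models \sigma\}$, where $\sigma$ ranges over sentences of the language $\{0,1,+,-,\cdot,<,\exp\}$ of ordered exponential fields and $\sigma^C$ is the $\cL\pow$-sentence obtained by relativizing every quantifier of $\sigma$ to $C$ and interpreting $\exp$ by $c\mapsto \rho(1+1,c)$. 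The map $\sigma\mapsto\sigma^C$ is recursive, so with the oracle we can decide membership in this scheme; hence $T^{\nl}\pow$, and therefore also $T^{\nl}\dsmall$ and $T^{\nl}\dlarge$, are recursively axiomatized relative to $\Th(\R, x\mapsto 2^x)$.

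By Theorem~\ref{thm:completions}, $T^{\nl}\dsmall$ and $T^{\nl}\dlarge$ are complete (and consistent, since both have models---for instance those produced by the extension results of Section~\ref{sec:HPext}). A consistent complete theory with a recursively enumerable axiomatization is decidable: to decide a sentence $\varphi$, enumerate the consequences of the theory until $\varphi$ or $\neg\varphi$ appears. Thus both completions are decidable relative to $\Th(\R, x\mapsto 2^x)$. Finally, for $T^{\nl}\pow$ itself, note that every model has either small or large derivation, since an $\upo$-free $H$-field has asymptotic integration, hence nontrivial valuation and nontrivial derivation, so the dichotomy is exhaustive. Consequently $T^{\nl}\pow\vdash\varphi$ if and only if both $T^{\nl}\dsmall\vdash\varphi$ and $T^{\nl}\dlarge\vdash\varphi$, and running the two decision procedures settles this. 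The only point to get right is that ``real exponential constant field'' is genuinely the sole source of non-effectivity, which is exactly the content of the remark preceding the corollary and rests on the effective axiomatization of $T^{\nl}$ in~\cite{ADH17}; everything else is bookkeeping.
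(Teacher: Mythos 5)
Your proposal is correct and follows exactly the route the paper intends: the paper's "proof" is just the remark preceding the corollary that the axiomatization is effective except for the real-exponential-constant-field scheme, and you have filled in the standard bookkeeping (relativizing that scheme to the $\emptyset$-definable constant field via $c\mapsto\rho(1+1,c)$, invoking completeness of the two completions from Theorem~\ref{thm:completions}, and reducing $T^{\nl}\pow$ to the conjunction of the two). The only superfluous step is your justification that the small/large derivation dichotomy is exhaustive — it is exhaustive by definition for any $H$-field, no appeal to asymptotic integration needed.
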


Decidability for the theory of $(\R,x\mapsto 2^x)$ (which is, of course, equivalent to decidability for the theory of $\R_{\exp}$) is closely connected to transcendental number theory and is implied by Schanuel's conjecture; see~\cite{MW96}.

Recall that $\T\pow$ is the expansion of the ordered valued differential field of transseries with the constant power map $(f,r) \mapsto f^r = \exp(r\log f)$. The exponential on $\T$ satisfies the identity $(\exp f)^\dagger = f'$, so $\T\pow$ is an $\Hp$-field by Proposition~\ref{prop:exptopow}. The restriction of the exponential on $\T$ to its constant field $\R$ is the real exponential, so the induced exponential on $\R$ coincides with $x\mapsto 2^x$. Thus, we have

\begin{corollary}\label{cor:TTnlsmall}
The $\cL\pow$-theory of $\T\pow$ is model complete, and it is completely axiomatized by $T^{\nl}\dsmall$.
\end{corollary}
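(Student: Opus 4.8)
The plan is to recognize $\T\pow$ as a distinguished model of $T^{\nl}\dsmall$ and then let Theorems~\ref{thm:mainthm} and~\ref{thm:completions} do the work. The discussion preceding the corollary already records two of the three points needed: by Proposition~\ref{prop:exptopow} and the identity $(\exp f)^\dagger = f'$ on $\T$, the structure $\T\pow$ is an $\Hp$-field, and since the exponential on $\T$ restricts to the real exponential on $\R$, the induced exponential on the constant field is $x\mapsto 2^x$, which of course makes the constant field a model of $\Th(\R,x\mapsto 2^x)$; thus $\T\pow$ has real exponential constant field. The remaining point I would cite from~\cite{ADH17} is that the ordered valued differential field $\T$ is $\upo$-free, newtonian, and Liouville closed, and has small derivation. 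Together these give $\T\pow\models T^{\nl}\dsmall$.

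From here the argument is formal. By Theorem~\ref{thm:completions}, $T^{\nl}\dsmall$ is one of the two completions of $T^{\nl}\pow$, hence a complete $\cL\pow$-theory; since $\T\pow$ is a model, $T^{\nl}\dsmall$ axiomatizes $\Th(\T\pow)$. And $T^{\nl}\pow$ is model complete by Theorem~\ref{thm:mainthm}; model completeness passes to any extension in the same language---an embedding between models of $T^{\nl}\dsmall$ is in particular one between models of $T^{\nl}\pow$, hence elementary---so $T^{\nl}\dsmall$, and therefore $\Th(\T\pow)$, is model complete.

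There is no real obstacle at this stage: all of the heavy lifting sits in the earlier embedding lemmas that go into Theorems~\ref{thm:mainthm} and~\ref{thm:completions}, and in the Aschenbrenner--van den Dries--van der Hoeven result that $\T$ is $\upo$-free, newtonian, and Liouville closed. The corollary is simply the specialization of those general results to $\T\pow$, once one has checked---as was done before the statement---that the constant power map on $\T$ coming from its exponential satisfies the $\Hp$-field axioms with the correct induced exponential on $\R$.
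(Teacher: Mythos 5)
Your proposal is correct and follows exactly the route the paper takes: the paragraph before the corollary verifies that $\T\pow$ is an $\Hp$-field with real exponential constant field via Proposition~\ref{prop:exptopow}, the $\upo$-freeness, newtonianity, Liouville closedness, and small derivation of $\T$ are quoted from~\cite{ADH17}, and then completeness and model completeness are read off from Theorems~\ref{thm:completions} and~\ref{thm:mainthm}. Your added remark that model completeness of $T^{\nl}\pow$ transfers to the extension $T^{\nl}\dsmall$ in the same language is a small but valid point the paper leaves implicit.
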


By Lemma~\ref{lem:Hpsubfield}, any real closed $H$-subfield of $\T$ that contains $\R$ and is closed under powers is an $\Hp$-subfield of $\T\pow$. Thus, any Liouville closed $\upo$-free newtonian subfield of $\T$ containing $\R$ is an elementary substructure of $\T\pow$. These substructures include the subfield $\T^{\operatorname{da}}$ of transseries that are $\d$-algebraic over $\Q$, as well as the subfield $\T_{\operatorname{g}}$ of grid-based transseries; see~\cite{vdH06} for more information about $\T_{\operatorname{g}}$.

%----------------------------------------%
\subsection{Automorphisms and non-definability of the exponential}
In terms of definability, $\T\pow$ expands the ordered valued differential field $\T$, and it is a reduct of the ordered valued differential \emph{exponential} field $\T_{\exp}$. Clearly, $\T\pow$ properly expands $\T$, as the constant field in $\T$ is stably embedded as a real closed ordered field~\cite[Proposition 16.6.7]{ADH17} and $\T\pow$ defines the function $c\mapsto 2^c$ on the constants. Below, we extend this somewhat by showing that there are automorphisms of $\T$ that fix the constants and don't preserve the constant powers map. Additionally, we demonstrate that $\T\pow$ is a strict reduct of $\T_{\exp}$ by identifying an automorphism of $\T\pow$ that does not preserve the exponential. To do this, we need the following (unpublished) result of Aschenbrenner, van den Dries, and van der Hoeven on strongly $\R$-linear automorphisms of $\T$ (where an automorphism $\sigma \colon \T\to \T$ is said to be \emph{strongly $\R$-linear} if it fixes $\R$ and commutes with infinite sums).

\begin{fact}\label{fact:ADHaut}
Let $\alpha\colon \T\to \R$ be an additive map that vanishes on the convex hull of $\R$. Then there is a unique strongly $\R$-linear $H$-field automorphism $\sigma_\alpha$ of $\T$ such that
\[
\sigma_\alpha(x) = x,\qquad \sigma_\alpha(\exp f) = \exp(\sigma_\alpha(f) + \alpha(f))\text{ for all }f \in \T.
\]
\end{fact}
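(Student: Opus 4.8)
The plan is to build $\sigma_\alpha$ on the monomial group $\mathfrak{M}$ of $\T$ by a recursion mirroring the layered construction of $\T$, and to extend strongly $\R$-linearly; since a strongly $\R$-linear map is determined by its restriction to $\mathfrak{M}$, the same recursion will also deliver uniqueness. Two structural features of $\T$ drive everything. First, the convex hull of $\R$ in $\T$ is $\R+\smallo$, so $\alpha$ vanishes on it; writing $f=f_{\succ}+f_{\asymp}+f_{\prec}$ for the decomposition of $f\in\T$ into its purely large part, its constant term in $\R$, and its infinitesimal part, we have $\alpha(f)=\alpha(f_{\succ})$. Second, $\exp$ on $\T$ factors as $\exp f=(\exp f_{\succ})\cdot e^{f_{\asymp}}\cdot\exp(f_{\prec})$, where $\exp f_{\succ}$ is a monomial, $e^{f_{\asymp}}\in\R^{>}$, and $\exp(f_{\prec})=\sum_k f_{\prec}^k/k!\sim 1$; in particular every monomial of $\T$ is $\exp\phi$ for a unique purely large $\phi$. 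From these, for a strongly $\R$-linear ordered ring embedding preserving $\smallo$ --- which then automatically commutes with the strongly convergent series $\sum_k f_{\prec}^k/k!$ --- the condition $\sigma_\alpha(\exp f)=\exp(\sigma_\alpha f+\alpha f)$ for all $f$ is equivalent to the single family of identities
\[
\sigma_\alpha(\exp\phi)=e^{\alpha(\phi)}\exp(\sigma_\alpha\phi)\qquad(\phi\in\T\text{ purely large}),
\]
so the construction must satisfy exactly these.

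\textbf{Construction.} Recall that $\T$ is the increasing union $\T=\bigcup_m\T_m$, where $\T_m$ is the field of purely exponential transseries in the variable $\ell_m:=\log_m x$ and $\T_m\subseteq\T_{m+1}$ via $\ell_m=\exp(\ell_{m+1})$, and each $\T_m=\bigcup_n\T_{m,n}$ is filtered by exponential depth, with $\T_{m,0}$ a Hahn field over $\R$ with monomial group $\ell_m^{\R}$. First fix the backbone: put $\sigma_\alpha(\ell_0)=x$, and recursively $\sigma_\alpha(\ell_{m+1}):=\log(\sigma_\alpha\ell_m)-\alpha(\ell_{m+1})$ --- the unique value making the displayed identity hold for $\exp\ell_{m+1}=\ell_m$, by injectivity of $\exp$; one checks inductively $\sigma_\alpha\ell_m\sim\ell_m$ (so the logarithm is legitimate) with $\sigma_\alpha\ell_m-\ell_m+\alpha(\ell_m)\in\smallo\cap\T_m$. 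Then within each $\T_m$ recurse on exponential depth: on $\T_{m,0}$ set $\sigma_\alpha(\ell_m^{r}):=(\sigma_\alpha\ell_m)^{r}$ and extend strongly $\R$-linearly; to pass from depth $n$ to $n+1$, a new monomial is $\exp\phi$ with $\phi\in\T_{m,n}$ purely large and already in the domain, so set $\sigma_\alpha(\exp\phi):=e^{\alpha(\phi)}\exp(\sigma_\alpha\phi)$, extend multiplicatively to monomials, then strongly $\R$-linearly. At each step one verifies: the map on the enlarged monomial group is $\prec$-preserving and sends monomials $\succ 1$ to elements $\succ 1$ (so images of reverse-well-ordered supports stay reverse-well-ordered and the strongly $\R$-linear extension is well defined); it is a ring homomorphism and an ordered embedding; it restricts to the map built at the previous stage (the backbone choice forces $\sigma_\alpha(\exp\ell_{m+1})=e^{\alpha(\ell_{m+1})}\exp(\sigma_\alpha\ell_{m+1})=\exp\ell_{m+1}=\ell_m$, matching the $\T_m$-value, and $\ell_m$ generates $\T_m$); and it commutes with $\der$, which drops out of $\der(\exp g)=(\der g)\exp g$ together with $\alpha(g)\in C=\ker\der$. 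The union over $m$ is thus a strongly $\R$-linear ordered differential field embedding $\sigma_\alpha\colon\T\to\T$ with $\sigma_\alpha(x)=x$ satisfying the displayed identities, hence the full functional equation. Finally $\sigma_\alpha$ is onto: running the same construction with a modified parameter $\alpha'$ produces a map $\sigma_{\alpha'}$ that is a two-sided inverse, the relevant identity $\sigma_\alpha\circ\sigma_{\alpha'}=\mathrm{id}$ reducing --- via the composition law $\sigma_\alpha\circ\sigma_{\alpha'}=\sigma_{\alpha'+\alpha\circ\sigma_{\alpha'}}$ (a direct computation, plus the uniqueness below) --- to the fixed-point equation $\alpha'=-\alpha\circ\sigma_{\alpha'}$, solved by successive approximation $\alpha'_{k+1}:=-\alpha\circ\sigma_{\alpha'_k}$, which stabilizes on each monomial because the correction depends on $\alpha'$ only at strictly earlier stages of the construction. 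Hence $\sigma_\alpha$ is an $H$-field automorphism.

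\textbf{Uniqueness.} If $\tau$ is any strongly $\R$-linear $H$-field automorphism with $\tau(x)=x$ and $\tau(\exp f)=\exp(\tau f+\alpha f)$, then the recursion above, run with $\tau$ in place of $\sigma_\alpha$, forces $\tau$ on $\mathfrak{M}$: the backbone values $\tau(\ell_m)$ by injectivity of $\exp$, the depth-$0$ values from the ring-homomorphism property together with the functional equation applied to real powers, and the higher-depth values $\tau(\exp\phi)=e^{\alpha(\phi)}\exp(\tau\phi)$ from the functional equation directly. Since every monomial lies in some $\T_{m,n}$, $\tau$ and $\sigma_\alpha$ agree on $\mathfrak{M}$, hence everywhere by strong $\R$-linearity.

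\textbf{Main obstacle.} The delicate part is the support bookkeeping behind the well-definedness claims in the construction and behind surjectivity: one must control precisely how $\sigma_\alpha$ distorts the support of a transseries --- the distortion is ``triangular'' along the logarithmic- and exponential-depth filtrations, but, unlike the exponential's own action, need not fix dominant monomials even up to $\asymp$ --- so that the strongly $\R$-linear extensions genuinely exist at each stage and the successive approximations for the inverse stabilize. Once that is in place, the ring-, order-, and $\der$-compatibility and the reduction of the full functional equation to its monomial form are essentially formal. (Alternatively, one can package the construction as $\sigma_\alpha=\exp(D)$ for the strongly $\R$-linear ``$\exp$-twisted derivation'' $D$ determined by $D(x)=0$, the Leibniz rule, and $D(\exp f)=(\exp f)(\alpha(f)+Df)$; this trades the recursion above for the analogous summability analysis of $\exp(D)$.)
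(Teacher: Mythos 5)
The paper gives no proof of this statement to compare against: it is quoted verbatim as an \emph{unpublished} result of Aschenbrenner, van den Dries, and van der Hoeven, and the present paper simply cites it as a Fact. Judged on its own, your architecture is the natural one and almost certainly the intended one: reduce the functional equation to the single family of identities $\sigma_\alpha(\exp\phi)=e^{\alpha(\phi)}\exp(\sigma_\alpha\phi)$ for purely large $\phi$ (using $\alpha(f)=\alpha(f_{\succ})$ and strong linearity on the $\exp(f_{\prec})$ factor), define $\sigma_\alpha$ on transmonomials by recursion along the filtration $\T=\bigcup_m\T_m=\bigcup_{m,n}\T_{m,n}$, extend strongly $\R$-linearly, and read off uniqueness from the same recursion. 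Your consistency checks are correct: the backbone values $\sigma_\alpha(\ell_{m+1})=\log(\sigma_\alpha\ell_m)-\alpha(\ell_{m+1})$ are forced, they are compatible with $\ell_m^r=\exp(r\ell_{m+1})$, and the composition law $\sigma_\alpha\circ\sigma_{\alpha'}=\sigma_{\alpha'+\alpha\circ\sigma_{\alpha'}}$ checks out (it uses only additivity of $\sigma_{\alpha'}$, $\sigma_\alpha|_{\R}=\id$, and $\alpha|_{\R}=0$). The uniqueness paragraph is complete modulo existence.

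The genuine gap is the one you flag yourself and then set aside: the summability analysis, which is not a finishing touch but the load-bearing step. Concretely, (a) at each stage you must show that if $S$ is a well-based set of monomials of $\T_{m,n+1}$ then the family $\big(\sigma_\alpha\mathfrak{m}\big)_{\mathfrak{m}\in S}$ is summable; each image $\sigma_\alpha(\exp\phi)=e^{\alpha(\phi)}\exp(\sigma_\alpha\phi)$ is a monomial times a positive real times a unit expanded as an infinite series, and while order preservation makes the dominant monomials of the images reverse well-ordered, the union of the infinite tails must still be shown well-based with finite fibers --- without this the ``strongly $\R$-linear extension'' in your construction is not known to exist, and neither is the asserted commutation with infinite sums. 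And (b) your surjectivity argument inherits the same issue in a sharper form: the iteration $\alpha'_{k+1}=-\alpha\circ\sigma_{\alpha'_k}$ must stabilize on every purely large $\phi$ (every such $\phi$ is the exponent of the monomial $\exp\phi$, and these generally have infinite support), and since $\alpha$ is only finitely additive this requires first knowing that $\sigma_{\alpha'_k}(\phi)$ stabilizes as an element of $\T$ --- again a support-control statement, not a formality. So as written this is a correct and detailed blueprint rather than a proof; to close it one would need the well-based/Noetherian-operator machinery (or an explicit induction bounding $\supp\sigma_\alpha(\mathfrak{m})$ in terms of earlier stages). A minor slip besides: $\sigma_\alpha\ell_0-\ell_0+\alpha(\ell_0)=\alpha(x)$ is a possibly nonzero real, so your claimed membership in $\smallo\cap\T_m$ fails at $m=0$; only $\sigma_\alpha\ell_m\sim\ell_m$ is actually needed there.
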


\begin{lemma}\label{lem:poweraut}
Let $\alpha$ be as above. Then for $f \in \T^>$ and $r \in \R$, we have $\sigma_\alpha(f^r) = \sigma_\alpha(f)^r$ if and only if $\alpha(r\log f) = r\alpha(\log f)$. In particular, $\sigma_\alpha$ is an $\Hp$-field automorphism of $\T\pow$ if and only if $\alpha$ is $\R$-linear.
\end{lemma}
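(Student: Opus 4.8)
The plan is to unwind both sides of the claimed identity using the defining formula for $\sigma_\alpha$ in Fact~\ref{fact:ADHaut} together with the definition $f^r = \exp(r\log f)$ of the constant power map on $\T$. As preliminary observations I would note that $\sigma_\alpha$, being an $H$-field automorphism fixing $\R$ pointwise, commutes with multiplication by real scalars, and that $\log\colon \T^>\to\T$ is a bijection with inverse $\exp$. Applying $\sigma_\alpha$ to the identity $\exp(\log f) = f$ and invoking Fact~\ref{fact:ADHaut} with $g = \log f$ yields $\sigma_\alpha(f) = \exp\big(\sigma_\alpha(\log f) + \alpha(\log f)\big)$, hence
\[
\log \sigma_\alpha(f) = \sigma_\alpha(\log f) + \alpha(\log f).
\]

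Next I would compute the two sides of the claimed equivalence. Using Fact~\ref{fact:ADHaut} with $g = r\log f$ and $\sigma_\alpha(r\log f) = r\,\sigma_\alpha(\log f)$,
\[
\sigma_\alpha(f^r) = \sigma_\alpha\big(\exp(r\log f)\big) = \exp\big(r\,\sigma_\alpha(\log f) + \alpha(r\log f)\big),
\]
while the formula for $\log\sigma_\alpha(f)$ above gives
\[
\sigma_\alpha(f)^r = \exp\big(r \log\sigma_\alpha(f)\big) = \exp\big(r\,\sigma_\alpha(\log f) + r\,\alpha(\log f)\big).
\]
Since $\exp$ is injective on $\T$, these two transseries coincide exactly when $\alpha(r\log f) = r\,\alpha(\log f)$, which is the first assertion.

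For the final clause I would argue as follows. Since $\sigma_\alpha$ fixes $\R$ pointwise and is an $H$-field automorphism, it is an $\Hp$-field automorphism of $\T\pow$ if and only if $\sigma_\alpha(f^r) = \sigma_\alpha(f)^{\sigma_\alpha(r)} = \sigma_\alpha(f)^r$ for all $f\in\T^>$ and $r\in\R$; by the first part this holds if and only if $\alpha(r\log f) = r\,\alpha(\log f)$ for all such $f$ and $r$. As $\log$ maps $\T^>$ onto $\T$, this condition is equivalent to $\alpha(rg) = r\,\alpha(g)$ for all $g\in\T$ and $r\in\R$, which together with the additivity of $\alpha$ is precisely $\R$-linearity of $\alpha$; the converse implication (an $\R$-linear $\alpha$ satisfies the condition) is immediate. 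I do not anticipate any genuine obstacle: the only points needing care are the bijectivity of $\log$ on $\T$ and the fact that $\sigma_\alpha$ commutes with real scalar multiplication, both of which are built into the setup.
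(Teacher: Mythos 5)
Your proposal is correct and follows essentially the same route as the paper: both compute $\sigma_\alpha(f^r)$ and $\sigma_\alpha(f)^r$ by applying the defining identity of Fact~\ref{fact:ADHaut} to $\exp(r\log f)$ and $\exp(\log f)$, use the $\R$-linearity of $\sigma_\alpha$ and injectivity of $\exp$ to reduce the comparison to $\alpha(r\log f)=r\alpha(\log f)$, and deduce the final clause from surjectivity of $\log\colon\T^>\to\T$. The only difference is that you spell out the ``in particular'' step, which the paper leaves implicit.
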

\begin{proof}
Let $f \in \T^>$, and let $r \in \R$. Writing $f = \exp(\log f)$, we get $\sigma_\alpha(f) = \exp(\sigma_\alpha(\log f) + \alpha(\log f))$. Thus, we have
\begin{align*}
\sigma_\alpha(f^r) &= \sigma_\alpha(\exp(r\log f)) = \exp(\sigma_\alpha(r\log f) + \alpha(r\log f)),\\
\sigma_\alpha(f)^r &= \exp(r\log \sigma_\alpha(f)) = \exp(r\sigma_\alpha(\log f)+ r\alpha(\log f)).
\end{align*}
Since $\sigma_\alpha$ is $\R$-linear, we have $\sigma_\alpha(f^r) = \sigma_\alpha(f)^r$ if and only if $\alpha(r\log f) = r\alpha(\log f)$.
\end{proof}

\begin{corollary}\label{cor:nondef}\
\begin{enumerate}
\item Let $f \in \T^>$ be either infinite or infinitesimal and let $r \in \R$ be irrational. Then there is a strongly $\R$-linear $H$-field automorphism $\sigma\colon \T\to\T$ such that $\sigma(f^r) \neq \sigma(f)^r$. 
\item Let $g \in \T$ be infinite. Then there is a strongly $\R$-linear $\Hp$-field automorphism $\tau\colon \T\pow\to\T\pow$ such that $\tau(\exp g) \neq \exp \tau(g)$.
\end{enumerate}
\end{corollary}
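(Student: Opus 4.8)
The plan is to deduce both parts directly from Fact~\ref{fact:ADHaut} and Lemma~\ref{lem:poweraut}; the only real work is to construct a suitable additive map $\alpha\colon\T\to\R$ vanishing on the convex hull of $\R$, after which one sets $\sigma\coloneqq\sigma_\alpha$ in part~(1) and $\tau\coloneqq\sigma_\alpha$ in part~(2). Two preliminary observations will be used. First, the convex hull of $\R$ in $\T$ is its valuation ring $\cO$, and by (H2) this is an $\R$-linear subspace of $\T$; hence $\T/\cO$ is an $\R$-vector space (in particular a $\Q$-vector space), and $c\cO=\cO$ for every $c\in\R^\times$. Second, for $h\in\T^>$ one has $h\asymp1$ if and only if $\log h\in\cO$ (a standard property of $\T$, following from (H2) and $\exp(\smallo)\subseteq1+\smallo$); so whenever $h\in\T^>$ is infinite or infinitesimal we have $\log h\notin\cO$.

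For part~(1): since $f$ is infinite or infinitesimal, $\log f\notin\cO$, and since $r$ is irrational we have $(q+q'r)\log f\notin\cO$ for all $q,q'\in\Q$ not both zero (indeed $q+q'r\neq0$, so this element is $c\log f$ for some $c\in\R^\times$, and $c\cO=\cO$). Hence $\overline{\log f}$ and $\overline{r\log f}$ are $\Q$-linearly independent in $\T/\cO$. First I would extend $\{\overline{\log f},\overline{r\log f}\}$ to a $\Q$-basis of $\T/\cO$ and let $\bar\alpha\colon\T/\cO\to\R$ be the $\Q$-linear functional sending $\overline{\log f}\mapsto0$ and $\overline{r\log f}\mapsto1$ and every other basis vector to $0$; then $\alpha$, the composite of the quotient map $\T\to\T/\cO$ with $\bar\alpha$, is additive and vanishes on $\cO$. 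So Fact~\ref{fact:ADHaut} gives a strongly $\R$-linear $H$-field automorphism $\sigma\coloneqq\sigma_\alpha$ of $\T$, and since $\alpha(r\log f)=1\neq0=r\,\alpha(\log f)$, Lemma~\ref{lem:poweraut} yields $\sigma(f^r)\neq\sigma(f)^r$.

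For part~(2): I would use $\T/\cO$ as an $\R$-vector space. Since $g$ is infinite, $g\notin\cO$, so $\overline g\neq0$; take any $\R$-linear functional $\bar\alpha\colon\T/\cO\to\R$ with $\bar\alpha(\overline g)=1$ (extend $\overline g\mapsto1$ to an $\R$-basis) and let $\alpha$ be its composite with $\T\to\T/\cO$. Then $\alpha$ is $\R$-linear and vanishes on $\cO$, so by the ``in particular'' clause of Lemma~\ref{lem:poweraut}, $\tau\coloneqq\sigma_\alpha$ is a strongly $\R$-linear $\Hp$-field automorphism of $\T\pow$, while by the defining identity of $\sigma_\alpha$ in Fact~\ref{fact:ADHaut},
\[
\tau(\exp g)=\exp\bigl(\tau(g)+\alpha(g)\bigr)=\exp(1)\cdot\exp\bigl(\tau(g)\bigr)\neq\exp\bigl(\tau(g)\bigr),
\]
since $\alpha(g)=1$ and $\exp(1)\neq1$ ($\exp$ being injective with $\exp(0)=1$). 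I do not expect a serious obstacle here: once ``$h\asymp1\Leftrightarrow\log h\in\cO$'' is in hand, each part is a one-line combination of the two cited results with an elementary linear-algebra construction of $\alpha$. The only point needing any care is the $\Q$-linear independence of $\overline{\log f}$ and $\overline{r\log f}$ in part~(1), which is exactly where the irrationality of $r$ is used.
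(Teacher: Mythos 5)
Your proposal is correct and follows essentially the same route as the paper: both parts reduce to Fact~\ref{fact:ADHaut} and Lemma~\ref{lem:poweraut} via an elementary linear-algebra construction of $\alpha$ vanishing on the convex hull of $\R$, with the irrationality of $r$ used exactly as you use it to get the $\Q$-linear independence of $\log f$ and $r\log f$ modulo $\cO$ (the paper phrases this as a direct-sum decomposition $\T=\Q\log(f)\oplus V$ and sends $\log f\mapsto 1$, $r\log f\mapsto 0$ rather than the reverse, which is immaterial). Part (2) is verbatim the paper's argument.
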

\begin{proof}
For (1), note that $\log f$ is infinite, so we may take a $\Q$-vector space decomposition $\T = \Q\log(f) \oplus V$ where $V$ contains the convex hull of $\R$ in $\T$, as well as the element $r \log f$. Let $\alpha \colon \T \to \Q$ be the projection map given by this decomposition, so $\alpha(q\log f+v) = q$ for $q \in \Q$ and $v \in V$. Then $r \alpha(\log f) = r$, whereas $\alpha(r\log f) = 0$, so Lemma~\ref{lem:poweraut} gives $\sigma_\alpha(f^r) \neq \sigma_\alpha(f)^r$.

Now for (2), take an $\R$-vector space decomposition $\T = \R g\oplus W$, where $W$ contains the convex hull of $\R$ in $\T$. Let $\beta\colon \T\to \R$ be the projection map given by this decomposition, so $\beta(rg+w) = r$ for $r \in \R$ and $w \in W$. As $\beta$ is $\R$-linear, Lemma~\ref{lem:poweraut} gives that $\sigma_\beta$ is an $\Hp$-field automorphism. It remains to observe that 
\[
\sigma_\beta(\exp g) = \exp(\sigma_\beta(g) +1)\neq \exp(\sigma_\beta(g)).\qedhere
\]
\end{proof}

Corollary~\ref{cor:nondef} shows that no restriction of the constant power map to the positive infinite or infinitesimal elements of $\T$ is definable in $\T$, nor is any restriction of the exponential to the infinite elements of $\T$ definable in $\T\pow$. Of course, the restriction of $\exp$ to $\R$ is definable in $\T\pow$. Moreover, the restriction of $\exp$ to the maximal ideal of $\T$ is even definable in $\T$, as for $\epsilon \in \T$ with $\epsilon \prec 1$, the element $\exp(\epsilon)$ is the unique solution to the differential equation $y' = \epsilon' y$ with $y\sim 1$; see~\cite[Lemma 8.8]{Ka21}. Thus, the restriction of $\exp$ to the convex hull of $\R$ is definable in $\T\pow$.

%----------------------------------------%
\subsection{Local o-minimality and induced structure on the constants}
In this subsection, let $K \models T^{\nl}\pow$, and fix a $|K|^+$-saturated elementary extension $K^*$ of $K$. Here, we describe the structure of unary definable subsets of $K$, as well as the induced structure on the constant field. 

\begin{corollary}
Let $X \subseteq K$ be $\cL\pow(K)$-definable. 
\begin{enumerate}
\item There is $f \in K$ such that $(f,+\infty)$ is either contained in or disjoint from $X$.
\item There is $g \in K$ with $g>C$ such that $\{y \in K:C<y<g\}$ is either contained in or disjoint from $X$.
\item For each $b \in K$, there is $h \in K$ with $h>b$ such that $(b,h)$ is either contained in or disjoint from $X$.
\end{enumerate}
\end{corollary}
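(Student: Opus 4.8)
The plan is to prove (1)--(3) by a single cut-realization argument, deducing (3) from (1) at the end. Fix an $\cL\pow(K)$-formula $\varphi(y)$ defining $X$. In each case I would negate the conclusion, extract a partial type over $K$ that is finitely satisfiable in $K$ (hence realized in $K^*$) and whose ``$\varphi$'' and ``$\lnot\varphi$'' variants are realized by elements $a,a'\in K^*$ realizing one and the same cut over $K$, and then get a contradiction from the claim that \emph{any two elements of $K^*$ realizing that cut over $K$ have the same $\cL\pow$-type over $K$}. To prove such a claim I would build a single model $N\models T^{\nl}\pow$ containing $K$ together with a distinguished element $t$ realizing the cut: start from the $\Hp$-field $K\langle t\rangle\pow$ produced by the relevant embedding lemma of Section~\ref{sec:HPext}; if that field is only grounded, enlarge it to an $\upo$-free $\Hp$-field with the same constant field $C$ via Lemma~\ref{lem:groundedtoomegafree}; then take a Newton--Liouville closure $N$ as in Proposition~\ref{prop:nlclosure}. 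Since $N$ is $\d$-algebraic over an $\upo$-free field it is $\upo$-free by Fact~\ref{fact:dalgupofree}, it is newtonian and Liouville closed by construction, and $C_N=C$, so $N\models T^{\nl}\pow$. Now $K^*$ is a newtonian Liouville closed $\Hp$-field extension of $K$ in which both $a$ and $a'$ realize the cut, so the universal properties of the lemmas used to build $N$ supply $\Hp$-field embeddings $N\to K^*$ over $K$, one sending $t$ to $a$ and one sending $t$ to $a'$; by model completeness of $T^{\nl}\pow$ (Theorem~\ref{thm:mainthm}) both are elementary, whence $\tp_{K^*}(a/K)=\tp_N(t/K)=\tp_{K^*}(a'/K)$.

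For (1), if every $(f,+\infty)$ with $f\in K$ met both $X$ and $K\setminus X$, then $\{y>f:f\in K\}\cup\{\varphi(y)\}$ would be finitely satisfiable in $K$ (apply the hypothesis at $\max_i f_i$), so by $|K|^+$-saturation there would be $a\in K^*$ with $a>K$ and $\varphi(a)$, and likewise $a'\in K^*$ with $a'>K$ and $\lnot\varphi(a')$. The cut ``$>K$'' is handled by Lemma~\ref{lem:infiniteelement}, whose output $K\langle t\rangle\pow$ is already $\upo$-free, so the claim above applies and contradicts $\varphi(a)\wedge\lnot\varphi(a')$. For (2), if every $\{y:C<y<g\}$ with $g\in K^{>C}$ met both $X$ and $K\setminus X$, then $\{y>c:c\in C\}\cup\{y<g:g\in K^{>C}\}\cup\{\varphi(y)\}$ would be finitely satisfiable in $K$ and produce $a\in K^*$ with $C<a<K^{>C}$ and $\varphi(a)$, and similarly $a'$; the cut ``immediately above $C$'' is handled by Lemma~\ref{lem:endelem}, whose output $K\langle t\rangle\pow$ is only grounded, so here one inserts the Lemma~\ref{lem:groundedtoomegafree} step before taking the Newton--Liouville closure, and the claim again yields the contradiction.

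Part (3) I would reduce to (1): for fixed $b\in K$ the map $y\mapsto(y-b)^{-1}$ is an $\cL\pow(K)$-definable bijection of $K^{>b}$ onto $K^{>0}$ carrying each interval $(b,h)$ with $h>b$ onto $((h-b)^{-1},+\infty)$. Applying (1) to the $\cL\pow(K)$-definable set $\{(y-b)^{-1}:y\in X\cap K^{>b}\}$ gives $f\in K$, which we may take positive, such that $(f,+\infty)$ is contained in or disjoint from this set; then $h\coloneqq b+f^{-1}>b$ has $(b,h)$ contained in, respectively disjoint from, $X$. (One could instead run the direct argument: realize the cut immediately above $b$ in $K^*$ by some $a$, note $(a-b)^{-1}>K$ and $K\langle a\rangle\pow=K\langle(a-b)^{-1}\rangle\pow$, and invoke Lemma~\ref{lem:infiniteelement} as in (1).)

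I expect the finite-satisfiability checks and the type-chasing through the embeddings to be routine. The part needing attention is the bookkeeping around the cut ``immediately above $C$'' in (2), where the extension $K\langle t\rangle\pow$ is merely grounded and one must pass through Lemma~\ref{lem:groundedtoomegafree} before forming the Newton--Liouville closure, together with the verification that each Newton--Liouville closure arising in the argument is genuinely a model of $T^{\nl}\pow$ --- which comes down to the constant field remaining $C$ (so that the ``real exponential constant field'' axiom is inherited) and to $\upo$-freeness being preserved under the $\d$-algebraic Newton--Liouville closure.
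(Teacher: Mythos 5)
Your proposal is correct and follows essentially the same route as the paper: realize the relevant cut twice in $K^*$, use Lemma~\ref{lem:infiniteelement} for (1) and Lemma~\ref{lem:endelem} followed by Lemma~\ref{lem:groundedtoomegafree} for (2) to see that the two realizations generate isomorphic $\upo$-free $\Hp$-subfields over $K$, conclude equality of types via Theorem~\ref{thm:mainthm}, and deduce (3) from (1) by a fractional linear transformation. The only (cosmetic) difference is that the paper invokes the model-completion property of $T^{\nl}\pow$ over $T^{\upo}\pow$ directly to say the isomorphism is a partial elementary map, whereas you unwind that same fact by amalgamating over a Newton--Liouville closure and appealing to model completeness.
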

\begin{proof}
Fix elements $a_1,a_2 \in K^*$. For (1), suppose that $a_1,a_2>K$. By a standard model-theoretic argument, it suffices to show that $a_1$ and $a_2$ have the same $\cL\pow$-type over $K$. Lemma~\ref{lem:infiniteelement} tells us that $K\langle a_i \rangle\pow\models T^{\upo}\pow$ for $i = 1,2$ and that there is an $\Hp$-field isomorphism $K\langle a_1 \rangle\pow\simeq K\langle a_2\rangle\pow$ over $K$ that sends $a_1$ to $a_2$. As $T^{\nl}\pow$ is the model completion of $T^{\upo}\pow$ by Theorem~\ref{thm:mainthm}, this isomorphism is a partial $\cL\pow(K)$-elementary map $K^*\to K^*$. Note that (3) follows from (1), using fractional linear transformations. 

For (2), we now assume that $C<a_1,a_2<\{y \in K:y>C\}$. We again need to show that $a_1$ and $a_2$ have the same $\cL\pow$-type over $K$. This time, we use Lemma~\ref{lem:endelem} to get an $\Hp$-field isomorphism $K\langle a_1 \rangle\pow\simeq K\langle a_2\rangle\pow$ over $K$ that sends $a_1$ to $a_2$. Using Lemma~\ref{lem:groundedtoomegafree}, this extends to an $\Hp$-field isomorphism $K_1\simeq K_2$ over $K$, where $K\langle a_i\rangle\pow \subseteq K_i \models T^{\upo}\pow$ for $i = 1,2$. Again, this isomorphism is a partial $\cL\pow(K)$-elementary map $K^*\to K^*$ by Theorem~\ref{thm:mainthm}.
\end{proof}

Both properties above were established in much the same way for $T^{\nl}$ in~\cite[Section 16.6]{ADH17}. 

\begin{corollary}\label{cor:stablyembedded}
The constant field $C$ of $K$ is stably embedded as a model of $\Th(\R,x\mapsto 2^x)$. That is, any $\cL\pow(K)$-definable subset $X \subseteq C^n$ is definable in the exponential field $(C,x\mapsto 2^x)$. 
\end{corollary}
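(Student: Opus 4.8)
The plan is to follow the standard route to stable embeddedness of the constant field, exactly as in the proof that the constants of a model of $T^{\nl}$ are stably embedded as a pure real closed field~\cite[Proposition~16.6.7]{ADH17}, but keeping track of the induced exponential. Write $C^*\coloneqq C_{K^*}$, which is a $|C|^+$-saturated model of $\Th(\R,x\mapsto 2^x)$. The heart of the matter is the following claim: \emph{for tuples $\bar a$ and $\bar b$ from $C^*$ of the same length, if $\bar a$ and $\bar b$ have the same type over $C$ in the reduct $(C^*,x\mapsto 2^x)$, then they have the same $\cL\pow(K)$-type in $K^*$.} Granting the claim, one concludes by a routine argument: after replacing $K^*$ by a sufficiently homogeneous elementary extension, every automorphism of $(C^*,x\mapsto 2^x)$ over $C$ is, by the claim, a partial $\cL\pow(K)$-elementary map and hence extends to an automorphism of $K^*$ over $K$; so the trace on $(C^*)^n$ of the $\cL\pow(K)$-definable set extending $X$ is invariant under $\Aut((C^*,x\mapsto 2^x)/C)$, whence it and its complement are unions of types over $C$ in $(C^*,x\mapsto 2^x)$, and a compactness argument shows it is definable over $C$ there; intersecting with $C^n$ gives that $X$ is definable in $(C,x\mapsto 2^x)$.

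To prove the claim I would argue as follows. Since $\Th(\R,x\mapsto 2^x)$ is o-minimal and model complete (Wilkie~\cite{Wi96}), the definable closures $C_a\coloneqq\dcl(C\bar a)$ and $C_b\coloneqq\dcl(C\bar b)$, formed in $(C^*,x\mapsto 2^x)$, are elementary substructures; in particular each is a real closed ordered field equipped with an exponential extending the induced exponential $c\mapsto 2^c$ on $C$, and each is a model of $\Th(\R,x\mapsto 2^x)$. Because $\bar a$ and $\bar b$ have the same type over $C$, there is an isomorphism of ordered exponential fields $\phi_0\colon C_a\to C_b$ over $C$ with $\phi_0(\bar a)=\bar b$. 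I would then invoke Proposition~\ref{prop:addconstants} twice, with $C^*$ in that proposition taken to be $C_a$ and $C_b$, to obtain $\Hp$-field extensions $K(C_a)\pow$ and $K(C_b)\pow$ of $K$; using the universal property there with target $K^*$ and the inclusions $C_a,C_b\hookrightarrow C^*$, I realize both of these as $\Hp$-subfields of $K^*$, with $\bar a$ in $K(C_a)\pow$ and $\bar b$ in $K(C_b)\pow$, and applying the same universal property to $\phi_0$ and to $\phi_0^{-1}$ produces an $\Hp$-field isomorphism $\phi\colon K(C_a)\pow\to K(C_b)\pow$ over $K$ with $\phi(\bar a)=\bar b$. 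Finally, each of $K(C_a)\pow$ and $K(C_b)\pow$ is generated over $K$ by a set of constants (which are $\d$-algebraic over $K$) together with a power extension (which is also $\d$-algebraic), hence is $\d$-algebraic over $K$ and so $\upo$-free by Fact~\ref{fact:dalgupofree}; since its constant field models $\Th(\R,x\mapsto 2^x)$, it is a model of $T^{\upo}\pow$. As $K^*\models T^{\nl}\pow$ and $T^{\nl}\pow$ is the model completion of $T^{\upo}\pow$ by Theorem~\ref{thm:mainthm}, the $\cL\pow$-isomorphism $\phi$ between these two $\Hp$-subfields of $K^*$, which fixes $K$ pointwise, is a partial $\cL\pow(K)$-elementary map from $K^*$ to itself; this gives the claim.

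I expect the only real friction to be bookkeeping: checking that $C_a$, $C_b$ and their power closures can be realized simultaneously inside the fixed $K^*$, and—more importantly—verifying that $K(C_a)\pow$ is genuinely a model of $T^{\upo}\pow$ so that Theorem~\ref{thm:mainthm} applies. That last point is exactly where one needs the observation that adjoining constants and then taking a power closure leaves us $\d$-algebraic over $K$, so that $\upo$-freeness is inherited via Fact~\ref{fact:dalgupofree}; everything else (the o-minimality input for $\phi_0$, Proposition~\ref{prop:addconstants} for $\phi$, and the model-completion wrap-up) is either cited or routine.
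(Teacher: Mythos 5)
Your proposal is correct and follows essentially the same route as the paper: both reduce to showing that tuples of constants with the same exponential-field type over $C$ have the same $\cL\pow(K)$-type, both pass to the $\cL_{\exp}$-definable closures $C_a,C_b$ in $C_{K^*}$, lift the resulting isomorphism via Proposition~\ref{prop:addconstants} to an $\Hp$-field isomorphism $K(C_a)\pow\to K(C_b)\pow$ over $K$, and conclude elementarity from $\upo$-freeness (Fact~\ref{fact:dalgupofree}) together with Theorem~\ref{thm:mainthm}. The extra bookkeeping you flag (realizing both power closures inside $K^*$ and checking $\d$-algebraicity over $K$) is handled correctly and matches what the paper leaves implicit.
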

%\begin{proof}By a standard model-theoretic argument, it suffices to show that for any elementary extensions $L, M$ of $K$ and any tuples $c \in C_L^n$ and $d \in C_M^n$, if $c$ and $d$ have the same $\cL_{\exp}$-type over $C$, then they have the same $\cL\pow$-type over $K$. We may assume that $M$ is $|L|^+$-saturated. Since $c$ and $d$ have the same type over $C$, we get an elementary embedding $C_L \to C_M$ of ordered exponential fields that sends $c$ to $d$. This extends by Proposition~\ref{prop:addconstants} to a unique $\Hp$-field embedding $K(C_L)\pow\to M$. Since $K(C_L)\pow$ is $\d$-algebraic over $K$, it is $\upo$-free by Fact~\ref{fact:dalgupofree}. We conclude by Proposition~\ref{prop:bigembedding} with $K(C_L)\pow$ in place of $E$.\end{proof}
\begin{proof}
Let $\cL_{\exp}$ be the language of ordered exponential fields, and fix tuples $c_1,c_2 \in C_{K^*}^n$ with the same $\cL_{\exp}$-type over $C$. By a standard model-theoretic argument, it suffices to show that $c_1$ and $c_2$ have the same $\cL\pow$-type over $K$. For each $i$, let $C_i$ be the $\cL_{\exp}$-definable closure of $C(c_i)$ in $C_{K^*}$, so $C_i$ is an $\cL_{\exp}$-elementary extension of $C$ and there is an ordered exponential field isomorphism $C_1\to C_2$ over $C$ that sends $c_1$ to $c_2$. This extends by Proposition~\ref{prop:addconstants} to an $\Hp$-field isomorphism $K(C_1)\pow\to K(C_2)\pow$ over $K$. This isomorphism is a partial $\cL\pow(K)$-elementary map $K^*\to K^*$ by Theorem~\ref{thm:mainthm}, since each $K(C_i)\pow$ is $\upo$-free by Fact~\ref{fact:dalgupofree}. 
\end{proof}

In $T^{\nl}$, the constant field is stably embedded as a real closed ordered field~\cite[Proposition 16.6.7]{ADH17}. That result was established using quantifier elimination, as opposed to our method above using constant field extensions.

%----------------------------------------%
\subsection{Non-uniform constant powers}\label{subsec:nonuni}
Let $K \models T^{\nl}\pow$, and let $\cL\Cpow$ extend the language of ordered rings by constant symbols for each element of the constant field $C$ along with unary function symbols for raising to each of these powers. That is,
\[
\cL\Cpow \coloneqq\{+,\times,0,1,\leq,(c)_{c\in C},(x\mapsto x^c)_{c\in C}\}.
\]
 Let $\cL\Cpow^*\coloneqq\cL\Cpow\cup\{\preceq,\der\}$. We construe $K$ as an $\cL\Cpow^*$-structure in the natural way. In doing this, we essentially forget the uniformity in our constant power map. Note that $C$ is an $\cL\Cpow$-substructure of $K$. 
 
Let $T$ denote the complete $\cL\Cpow$-theory of $C$. As the exponential field $C$ is a model of the o-minimal theory $\Th(\R,x\mapsto 2^x)$, the theory of the reduct $T$ is also o-minimal.

\begin{proposition}
The $\cL\Cpow^*$-structure $K$ is an $H_T$-field in the sense of~\cite{Ka23}. That is, the reduct of $K$ to the language $\cL\Cpow$ is an elementary extension of $C$, and the derivation on $K$ is a $T$-derivation in the sense of~\cite{FK21}.
\end{proposition}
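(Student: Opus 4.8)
The plan is to verify the two defining conditions of an $H_T$-field separately, drawing on the machinery already assembled in the paper. Recall that $K \models T^{\nl}\pow$ means $K$ is an $\upo$-free newtonian Liouville closed $\Hp$-field whose constant field $C$, with its induced exponential $c \mapsto 2^c$, is a model of $\Th(\R, x\mapsto 2^x)$.

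First I would show that the reduct $K \upharpoonright \cL\Cpow$ is an elementary extension of $C$. By Corollary~\ref{cor:stablyembedded}, $C$ is stably embedded in $K$ as a model of $\Th(\R,x\mapsto 2^x)$; in particular the $\cL\Cpow$-structure on $C$ induced from $K$ agrees with its native $\cL\Cpow$-structure, and any $\cL\Cpow(C)$-definable relation is $\cL_{\exp}(C)$-definable. Since each power function $x\mapsto x^c$ and each constant $c$ is $\cL_{\exp}(C)$-definable over $C$ by $x^c = \exp(c\log x)$ (for $x>0$), and since $C \preccurlyeq K$ in the language of ordered exponential fields would follow if we knew the exponential were available---but it is \emph{not} in $\cL\pow$. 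Here is where care is needed: we do not have $\exp$ as a symbol in $\cL\pow$, so I would instead argue directly. Given an $\cL\Cpow$-formula $\varphi(\x)$ and a tuple $\cv$ from $C$, the set $\varphi(C^n)$ and the set $\varphi(K^n)\cap C^n$ must coincide: the formula $\varphi$ is in particular an $\cL\pow(C)$-formula, so $\varphi(K^n)\cap C^n$ is $\cL\pow(C)$-definable and hence, by Corollary~\ref{cor:stablyembedded}, $\cL_{\exp}(C)$-definable; by model completeness and completeness of $\Th(\R,x\mapsto 2^x)$ and the fact that $C \preccurlyeq C$ trivially, this set is determined by the $\cL_{\exp}$-theory of $C$ with parameters, so it equals $\varphi(C^n)$ by the corresponding statement inside $C$. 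Thus $C \preccurlyeq K \upharpoonright \cL\Cpow$.

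Next I would verify that the derivation $\der$ on $K$ is a $T$-derivation in the sense of~\cite{FK21}. The content of this is a compatibility condition: for every $\cL\Cpow$-definable (equivalently, $\cL_{\exp}$-definable over $C$) function $F$ of several variables, the derivation commutes with $F$ via the formal chain rule, i.e.\ $\der(F(\bar a)) = \sum_i (\partial_i F)(\bar a)\cdot \der a_i$ where $\partial_i F$ is the partial derivative, itself $\cL_{\exp}$-definable by o-minimality. Since $T$ is o-minimal, the $\cL\Cpow$-definable functions are generated (on the relevant domains) by the field operations and the power functions $x\mapsto x^c$, so it suffices to check the chain rule for these generators. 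For the field operations this is the Leibniz rule and additivity, built into the definition of a derivation. For a power function we must check $\der(a^c) = c\,a^{c-1}\der a$ for $a \in K^>$ and $c \in C$; this is immediate from axiom (C4), since $(a^c)^\dagger = c\,a^\dagger$ gives $\der(a^c) = c\,a^\dagger a^c = c\,a^{-1}a^c\,\der a = c\,a^{c-1}\der a$ using (C1). One then invokes the standard fact (see~\cite{FK21}) that a derivation satisfying the chain rule on a generating set of $\cL$-definable functions, together with $C \preccurlyeq K$, is a $T$-derivation; alternatively one reduces to checking the condition on $\cL\Cpow$-terms and argues by induction on term complexity.

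The main obstacle is the first part, and specifically the subtlety that $\cL\pow$ does \emph{not} literally contain the exponential, only the binary power map $\rho$; so elementarity of $C$ in $K$ as an $\cL\Cpow$-structure cannot simply be quoted from elementarity as an exponential field. The resolution is to route everything through stable embeddedness (Corollary~\ref{cor:stablyembedded}), which precisely says the induced structure on $C$ from the full $\cL\pow(K)$ is no richer than its $\cL_{\exp}$-structure; combined with completeness of $T^{\nl}\pow$'s relevant completion (Theorem~\ref{thm:completions}) to pin down parameter-definable sets, this gives the elementarity. Everything else is a routine unwinding of definitions: the $T$-derivation condition is essentially the single computation $\der(a^c) = c a^{c-1}\der a$ coming from (C4), propagated through o-minimal cell decomposition.
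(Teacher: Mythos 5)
There is a genuine gap in your first step. Stable embeddedness (Corollary~\ref{cor:stablyembedded}) tells you that for an $\cL\Cpow$-formula $\varphi(\x)$, the trace $\varphi(K^n)\cap C^n$ is \emph{some} set definable in $(C,x\mapsto 2^x)$; it does not tell you \emph{which} one, and in particular it does not tell you that this trace equals $\varphi(C^n)$ as computed inside $C$ (where the quantifiers range only over $C$). Your sentence ``this set is determined by the $\cL_{\exp}$-theory of $C$ with parameters, so it equals $\varphi(C^n)$'' is a non sequitur: elementarity of the substructure $C$ is strictly stronger than stable embeddedness, and no Tarski--Vaught-style argument is supplied to pull witnesses for existential quantifiers down into $C$. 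The paper takes an entirely different route: it first proves the proposition for the concrete model $\T\pow$, where $C=\R$ and $T$ is Miller's theory of the real field with real power functions, by observing that $T$ is a reduct of $T_{\exp}$, that the exponential field underlying $\T_{\exp}$ is a model of $T_{\exp}$ by~\cite[Corollary~2.8]{DMM97}, and that the derivation on $\T_{\exp}$ is a $T_{\exp}$-derivation by~\cite[Example~2.16]{FK21}. It then notes that both assertions of the proposition can be written as a schema of $\cL\pow$-sentences (universally quantified over constant parameters), so they transfer to every $K\models T^{\nl}\dsmall$ by completeness of that theory (Theorem~\ref{thm:completions}), and finally handles $K\models T^{\nl}\dlarge$ by compositional conjugation, since being a $T$-derivation is invariant under it.

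Your second step also leans on an unverified claim. Reducing the $T$-derivation condition to the single identity $\der(a^c)=c\,a^{c-1}\der a$ requires that every $\cL\Cpow$-definable $C^1$-function be (piecewise) generated by the field operations and the power functions; o-minimality of $T$ does not give this (definable functions in o-minimal structures are not in general piecewise terms), and the reduction lemma in~\cite{FK21} needs a quantifier elimination plus universal axiomatization setup for $T$ in the chosen language, which you have not established for Miller's theory. The computation $\der(a^c)=c\,a^{c-1}\der a$ from (C4) is correct and is morally why the result is true, but on its own it does not discharge the $T$-derivation condition. Both difficulties are exactly what the paper's detour through $\T_{\exp}$, the reduct argument, and the completeness transfer are designed to avoid.
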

\begin{proof}
Let us first prove this proposition when $K = \T\pow$. In this case, $C = \R$, and the theory $T$ is just the theory of the real field with real powers, as studied by Miller~\cite{Mi94B}. Let $T_{\exp}$ be the elementary theory of $\R_{\exp}$ in the language of ordered exponential fields with a constant symbol for each real number. The underlying exponential field of $\T_{\exp}$ is a model of $T_{\exp}$ by~\cite[Corollary 2.8]{DMM97}, and the derivation on $\T_{\exp}$ is a $T_{\exp}$-derivation; see \cite[Example 2.16]{FK21}. As $T$ is a reduct of $T_{\exp}$, this shows that the $\cL\Rpow$-reduct of $\T\pow$ is a model of $T$ and that the derivation on $\T\pow$ is a $\cL\Rpow$-derivation.

We now show that the proposition holds whenever $K \models T^{\nl}\dsmall$. In this case, $K$ is elementarily equivalent to $\T\pow$. The statements that $K$ is an $\cL\Cpow$-elementary extension of $C$ and that the derivation on $K$ is a $T$-derivation can be written as a collection of $\cL\pow$-forumlas (first, rewrite them as $\cL\pow(C)$-formulas in the obvious way, then assert that they hold for all tuples of constants). Thus, the proposition holds for $K$ since it holds for $\T\pow$.

Finally, if $K\models T^{\nl}\dlarge$, then by replacing the derivation $\der$ by a compositional conjugate $\phi\inv\der$ for some suitably chosen $\phi \in K^>$, we obtain a model $K^\phi \models T^{\nl}\dsmall$. Then the proposition holds for $K^\phi$, so it holds for $K$ as well, as being a $T$-derivation is invariant under compositional conjugation; see~\cite[Section 2.2]{Ka22}.
\end{proof}

%----------------------------------------------------------------------------------%
\section{Hardy fields and surreal numbers}\label{sec:Hardy}
%----------------------------------------------------------------------------------%
In this section, we show that the results established on Hardy fields and surreal numbers in~\cite{ADH19, ADH24} continue to hold when these structures are expanded by their natural constant power maps. 

\subsection{Hardy fields and transfer theorems}

Recall that any Hardy field containing $\R$ is an $H$-field with small derivation and constant field $\R$. 
%The study of Hardy fields and their extensions has been an active area of research dating back to Boshernitzan and Rosenlicht, recently culminating in the work of Aschenbrenner, van den Dries, and van der Hoeven, who show that all maximal Hardy fields are elementarily equivalent to $\T$ as ordered, valued differential fields.

\begin{fact}[{\cite[Theorems~5.1 and~5.3]{Bo76}}]\label{fact:LiouHardy}
Any maximal Hardy field is Liouville closed:\ it is real closed, contains $\R$, and is closed under taking integrals, exponentials, and logarithms of positive elements. 
\end{fact}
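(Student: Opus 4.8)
The plan is to derive each clause of the statement from the \emph{maximality} of $M$ together with a corresponding ``closure lemma'': the lemma asserts that a Hardy field $H$ admits a Hardy field extension containing a prescribed new element, and maximality then forces that element to lie in $M$ already. Concretely, I would need the following four facts. (a)~For $r\in\R$ the field $H(r)$, obtained by adjoining the germ of a constant, is a Hardy field. (b)~The real closure of $H$, taken inside the ring of germs, is a Hardy field. (c)~For $f\in H$, adjoining an integral of $f$ yields a Hardy field. (d)~For $f\in H$, adjoining $\exp(f)$ yields a Hardy field. Granting these, closure of $M$ under logarithms of positive elements is free: if $f\in M^>$ then $f^\dagger=f'/f\in M$, so $\log f$ is an integral of an element of $M$ and lies in $M$ by (c) (up to an additive real constant, which is in $M$ by (a)); and closure under exponential integration, i.e.\ solving $f^\dagger=s$ for $s\in M$, follows by composing (c) and (d): take $a$ an integral of $s$ in $M$ and $f=\exp(a)\in M$, so $f^\dagger=a'=s$.

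Ingredients (a) and (b) are comparatively soft. For (a), the germ of a constant differentiates to $0$, so $H(r)$ is a differential subfield of the germ ring; that it is again totally ordered rests on the fact that germs in a Hardy field are eventually monotone (since their derivatives have eventual constant sign) and hence have limits in $\R\cup\{\pm\infty\}$, which lets one verify that every nonzero element of $H(r)$ is eventually of constant sign. Maximality then gives $\R\subseteq M$. For (b), given $\alpha$ algebraic over $H$ in a real closure, I would represent $\alpha$ by a germ solving the relevant polynomial equation via the implicit function theorem on an interval where the pertinent nonzero derivative germ from $H$ does not vanish, note $\alpha'\in H(\alpha)$ by implicit differentiation, and observe that $H(\alpha)$ inherits a total order as a subfield of the real closure of $H$; maximality then makes $M$ real closed. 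For (c), adjoin the germ $F$ of $x\mapsto\int_a^x f$; then $F'=f\in H$, and to see $H(F)$ is a Hardy field one shows every nonzero element $P(F)/Q(F)$ with coefficients in $H$ has eventual constant sign, by induction on the degree in $F$, differentiating to lower the degree and invoking constant sign of elements of $H$ (the algebraic case $F\in H^{\mathrm{rc}}$ reduces to (b)). Maximality then yields closure under integration.

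The substantive step is (d), and I expect it to be the main obstacle. Given $f\in H$, set $E\coloneqq\exp(f)>0$, so $E^\dagger=f'\in H$. The crux is \emph{comparability}: for $g\in H^\times$ one has $(E/g)^\dagger=f'-g^\dagger\in H$, hence this logarithmic derivative has eventual constant sign, hence $E/g$---being positive with $(E/g)'=(E/g)\cdot(E/g)^\dagger$---is eventually monotone and tends to a limit in $[0,+\infty]$. This pins down the position of $E$ relative to $H$, and, pushed through rational functions in $E$ by an induction on degree as in (c), shows that $H(E)$ is totally ordered and closed under differentiation, i.e.\ a Hardy field. This exponential-extension theorem is exactly the classical content extracted by Boshernitzan (building on Hardy's analysis of logarithmico-exponential germs): it is the point at which genuine real-analytic input---eventual monotonicity, existence of limits, absence of oscillation---must be used, while the maximality argument around it is purely formal. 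Assembling (a)--(d) together with the two reductions above shows that $M$ is real closed, contains $\R$, and is closed under integrals, exponentials, and logarithms of positive elements, hence is Liouville closed.
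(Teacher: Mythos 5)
This statement is imported by the paper as a black-box \emph{Fact}, cited to Boshernitzan; the paper contains no proof of it, so there is nothing internal to compare your argument against. Judged on its own, your outline is the standard classical proof: maximality plus the four closure lemmas (adjoining real constants, real closure via the implicit function theorem \`a la Robinson, adjoining antiderivatives, adjoining exponentials), with logarithms and exponential integrals obtained as corollaries exactly as you describe. Your reductions are correct, and in particular you rightly observe that closure under integrals and exponentials yields Liouville closedness in the paper's sense (solvability of $a'=f^\dagger=s$).

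One step of your sketch is too glib, though you correctly flag it as the substantive one. In (d), the phrase ``pushed through rational functions in $E$ by an induction on degree as in (c), differentiating to lower the degree'' does not transfer from the antiderivative case: for $E=\exp(f)$ one has $E'=f'E$, so differentiating a polynomial $P(E)=\sum_i a_iE^i$ with $a_i\in H$ gives $\sum_i(a_i'+ia_if')E^i$, which has the \emph{same} degree in $E$; the degree-lowering induction of (c) simply does not apply. The classical repair is to use precisely the comparability you establish --- each ratio $(a_iE^i)/(a_jE^j)$ has logarithmic derivative in $H$, hence is eventually monotone with a limit in $[0,+\infty]$ --- to single out a dominant monomial $a_kE^k$ and show $P(E)\sim a_kE^k$, which has eventual constant sign; one must also separately rule out $E$ being algebraic over $H^{\mathrm{rc}}$ or handle that case via (b). With that substitution your argument goes through and matches Boshernitzan's original treatment.
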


\begin{fact}[\cite{ADH24}]\label{fact:uponewtHardy}
Any maximal Hardy field is $\upo$-free and newtonian.
\end{fact}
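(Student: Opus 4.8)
The plan is to reduce this to two ``realization'' results about Hardy fields and then invoke maximality. Fix a maximal Hardy field $M$. Since $M \supseteq \R$, it is an $H$-field with small derivation and constant field $\R$, and by Fact~\ref{fact:LiouHardy} it is Liouville closed (real closed and closed under integration, exponentiation, and logarithms of positive elements). The two results to establish are: \textbf{(a)} every Hardy field has an $\upo$-free Hardy field extension; and \textbf{(b)} every $\upo$-free Hardy field has a newtonian Hardy field extension. Granting these, (a) applied to $M$ produces an $\upo$-free Hardy field extension of $M$, which equals $M$ since $M$ is maximal; so $M$ is $\upo$-free, and then (b) applied to $M$ shows in the same way that $M$ is newtonian.

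For (a), I would mimic the purely algebraic construction of an $\upo$-free $H$-field extension $K_\upo$ from~\cite{ADH17} — the grounded case is the one used in Lemma~\ref{lem:groundedtoomegafree}, with analogous constructions in the remaining cases — but carry it out with germs of functions. In every case $K_\upo$ arises as the union of a countable tower obtained by successively adjoining exponential integrals and integrals of elements of the current field (in particular of $x$ and its iterated logarithms). Each such element is the germ of a genuine real-valued function: an exponential integral of $s$ is $\exp\!\big(\int s\big)$, which does not oscillate because $s$ lies in a Hardy field and is eventually of constant sign, and an integral $\int s$ is likewise non-oscillating; so adjoining it again produces a Hardy field. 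The union of the tower is an $\upo$-free Hardy field extension.

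For (b), the technical heart, recall that a Hardy field $H$ is newtonian precisely when every quasilinear (Newton degree $1$) differential polynomial $P \in H\{Y\}$ has a zero in the valuation ring $\cO_H$. It therefore suffices to show that any such $P$ over an $\upo$-free Hardy field $H$, with no zero in $H$, has one in a Hardy field extension of $H$; transfinitely iterating such adjunctions gives a newtonian Hardy field extension. I would produce the zero by a Newton-diagram analysis adapted to Hardy fields: transform $P$ into Newton position to read off the dominant asymptotics of a hypothetical solution, reduce to a linear differential equation over a Hardy field extension, solve that linear equation inside a Hardy field using the theory of linear ODEs with Hardy-field coefficients (finite-dimensional solution spaces, control of non-oscillation), and close the argument with a successive-approximation scheme controlled by uniform asymptotic estimates, together with a differential intermediate value property for Hardy fields to locate the zero.

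The main obstacle is exactly the analytic core of (b): manufacturing an honest non-oscillating germ at $+\infty$ that solves a prescribed quasilinear algebraic differential equation over a Hardy field. This is where the formal Newton-polygon bookkeeping of~\cite{ADH17} must be fused with hard real analysis — uniform estimates on iterated integrals, the fine structure of solution spaces of linear ODEs over Hardy fields, exclusion of oscillation, and the intermediate value property for Hardy fields — and it constitutes the bulk of~\cite{ADH24}. A transfer argument from $\T$, where $\upo$-freeness and newtonianity are already available, does not obviously apply, since it is not known that every Hardy field embeds into $\T$; the argument must be intrinsic. Step (a), by contrast, is comparatively soft, since only first-order equations arise there.
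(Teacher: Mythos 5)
First, a point of comparison: the paper does not prove this statement at all --- it is imported as a black box from \cite{ADH24}, so there is no internal argument to measure you against. Your reduction via maximality (extend to an $\upo$-free Hardy field, conclude $M$ is $\upo$-free; extend to a newtonian Hardy field, conclude $M$ is newtonian) is sound, and your outline of step (b) is architecturally faithful to what \cite{ADH24} actually does. But as a proof the proposal has two genuine gaps.

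The first is that step (a), as you describe it, is vacuous precisely for the field you need to apply it to. A maximal Hardy field $M$ is Liouville closed (Fact~\ref{fact:LiouHardy}), so it already contains all integrals, exponential integrals, and iterated logarithms of its elements; the tower of adjunctions you propose never leaves $M$, yet Liouville closedness does not by itself imply $\upo$-freeness --- ruling out a Liouville closed but non-$\upo$-free maximal Hardy field is exactly the content of the theorem. The construction of $K_{\upo}$ invoked in Lemma~\ref{lem:groundedtoomegafree} applies to \emph{grounded} fields, and a maximal Hardy field is never grounded (it has asymptotic integration by Fact~\ref{fact:trich} and Liouville closedness). To reach $\upo$-freeness from a field with asymptotic integration along the algebraic route one must first pass through an immediate extension with a gap (as in Lemma~\ref{lem:makegap}), and realizing immediate extensions of Hardy fields by honest germs is itself part of the hard analytic content of \cite{ADH24}; the alternative route there goes through oscillation theory for second-order linear equations and Riccati equations over Hardy fields. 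Either way, ``adjoin $\exp(\int s)$ and $\int s$'' does not suffice, and (a) is not as soft as claimed. The second gap is the one you acknowledge: step (b) --- producing a non-oscillating germ solving a prescribed quasilinear equation --- is the bulk of \cite{ADH24} (normalization to split-normal form, solution of linear equations over Hardy fields, fixed-point arguments in weighted spaces of germs), and you only name these ingredients. One small correction there: the zero is located by a contraction/fixed-point argument, not by a differential intermediate value property, which in \cite{ADH24} is a consequence of the main theorem rather than an input to it. In short, the proposal is a largely accurate roadmap of \cite{ADH24}, but not a proof; if citing \cite{ADH24} is permitted, the statement follows with no argument at all, as the paper itself does.
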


By Fact~\ref{fact:LiouHardy}, any maximal Hardy field $M$ is an $H$-field with an exponential $\exp$ that sends the germ $f$ to the germ $\exp f$. This exponential extends the real exponential on $\R$, and it satisfies the identity $(\exp f)^\dagger = f'$. Combining this with Proposition~\ref{prop:exptopow}, Fact~\ref{fact:uponewtHardy}, and Lemma~\ref{lem:Hpsubfield}, we obtain the following.

\begin{corollary}\label{cor:HardyTnlsmall}
Any maximal Hardy field, equipped with the constant power map $(f,r) \mapsto f^r = \exp(r\log f)$, is a model of $T^{\nl}\dsmall$. Let $H\supseteq \R$ be a real closed Hardy field and suppose that $H$ is closed under powers (as an $H$-field). Then $H$ is closed under this constant power map, and with this map, $H$ is a model of $T\pow$.
\end{corollary}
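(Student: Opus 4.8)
The plan is to deduce Corollary~\ref{cor:HardyTnlsmall} by assembling the pieces already collected in the excerpt. First I would recall that by Fact~\ref{fact:LiouHardy}, a maximal Hardy field $M$ is real closed, contains $\R$, and admits a well-defined exponential $\exp\colon M\to M^>$ given by $f\mapsto \exp f$ on germs; this is an exponential in the sense of ordered exponential fields because $M$ is Liouville closed (every element has an exponential, every positive element a logarithm, and these are inverse order isomorphisms between the additive and multiplicative-positive groups). Crucially, the chain rule for germs gives the identity $(\exp f)^\dagger = (\exp f)'/\exp f = f'$ for all $f \in M$. Then Proposition~\ref{prop:exptopow} applies directly: the map $(f,r)\mapsto \exp(r\log f)\colon M^>\times\R\to M^>$ is a constant power map on $M$, making $M\pow$ an $\Hp$-field. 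Since $M$ is $\upo$-free and newtonian by Fact~\ref{fact:uponewtHardy}, and Liouville closed by Fact~\ref{fact:LiouHardy}, $M\pow$ is an $\upo$-free newtonian Liouville closed $\Hp$-field. Finally, the constant field of $M$ is $\R$, and the induced exponential on $\R$ (the map $c\mapsto 2^c$, which by Lemma~\ref{lem:induceddetermines} must agree with $\exp(c\log 2)$) is the restriction of the real exponential, hence $\R$ equipped with it is a model of $\Th(\R,x\mapsto 2^x)$; that is, $M$ has real exponential constant field. Therefore $M\pow\models T^{\nl}\dsmall$ — small derivation holds because any Hardy field containing $\R$ has small derivation, as noted just before Fact~\ref{fact:LiouHardy}.

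For the second assertion, let $H\supseteq\R$ be a real closed Hardy field that is closed under powers as an $H$-field. The point is to realize $H$ as an $\Hp$-subfield of some maximal Hardy field $M\pow$. By Zorn's lemma $H$ extends to a maximal Hardy field $M$; by the first part, $M\pow\models T^{\nl}\dsmall$, so in particular $M\pow$ is an $\Hp$-field. Now $H$ is a real closed $H$-subfield of $M$ containing $C_M = \R$ and closed under powers, so Lemma~\ref{lem:Hpsubfield} gives that $H$ is an $\Hp$-subfield of $M\pow$. In particular $H$ is closed under the constant power map of $M\pow$, and since for $f \in H^>$ the exponential of $\log f \in H$ computed in $M$ agrees with the Hardy-field exponential, the induced constant power map on $H$ is exactly $(f,r)\mapsto f^r = \exp(r\log f)$ as claimed. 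An $\Hp$-subfield of an $\Hp$-field is again an $\Hp$-field (it is by definition a real closed $H$-field equipped with a constant power map, the restricted one still satisfying (C1)--(C5)), so $H\pow$ is an $\Hp$-field; its constant field is $\R$ with the real exponential as induced exponential, so $H\pow$ has real exponential constant field. Hence $H\pow\models T\pow$.

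I do not expect a serious obstacle here: the corollary is explicitly flagged in the text as "Combining this with Proposition~\ref{prop:exptopow}, Fact~\ref{fact:uponewtHardy}, and Lemma~\ref{lem:Hpsubfield}", so the proof is essentially bookkeeping. The one point that needs a sentence of care is the verification that the exponential coming from Fact~\ref{fact:LiouHardy} genuinely satisfies the hypotheses of Proposition~\ref{prop:exptopow} — namely that it is an ordered group isomorphism $M\to M^>$ (Liouville closedness plus the usual functional equation $\exp(f+g)=\exp f\cdot\exp g$ for germs) and that it obeys $(\exp f)^\dagger = f'$ (chain rule for germs at $+\infty$). A second point worth stating explicitly is that for the subfield $H$ one must check that the constant power map it inherits from $M\pow$ coincides with $(f,r)\mapsto \exp(r\log f)$ using the Hardy-field exponential of $H$ itself; this follows because $\log f$ for $f \in H^>$ lies in $H$ (as $H$ is real closed and closed under powers, it contains logarithms of positive elements — more directly, $\log f$ is the exponential integral witness whose existence is part of being closed under powers with $c=1$ is trivial, but one genuinely uses that $H$ contains an exponential integral of $f^\dagger$, which holds as $H \subseteq M$ and the relevant germ is in $H$), and the $M$-exponential restricted to $H$ agrees with the $H$-exponential since both are characterized by $(\exp g)^\dagger = g'$ together with $\exp g \sim 1$ normalization up to a constant in $\R^>$, pinned down by $\exp(0)=1$.
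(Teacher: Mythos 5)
Your proposal is correct and follows essentially the same route as the paper: the identity $(\exp f)^\dagger = f'$ on germs feeds Proposition~\ref{prop:exptopow} to produce the constant power map, Facts~\ref{fact:LiouHardy} and~\ref{fact:uponewtHardy} plus small derivation and the real exponential constant field give $T^{\nl}\dsmall$ for a maximal Hardy field, and Lemma~\ref{lem:Hpsubfield} handles the subfield $H$. The only stylistic difference is that you spell out some routine verifications (e.g.\ via Lemma~\ref{lem:induceddetermines}) that the paper leaves implicit; your closing worry about whether $\log f$ lies in $H$ is unnecessary, since the constant power map on $H$ is by definition the restriction of the ambient one on $M$, which is exactly what Lemma~\ref{lem:Hpsubfield} provides.
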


\begin{theorem}\label{thm:transfer}
Let $H\supseteq \R$ be a real closed Hardy field that is closed under powers, and let $\imath\colon H\to \T\pow$ be an $H\pow$-field embedding. Let $\sigma$ be an $\cL\pow(H)$-sentence, and let $\imath(\sigma)$ be the $\cL\pow(\T)$ sentence obtained by replacing all parameters in $\sigma$ with their image under $\imath$. The following are equivalent:
\begin{enumerate}
\item $M \models \sigma$ for some maximal Hardy field $M \supseteq H$;
\item $M\models \sigma$ for every maximal Hardy field $M \supseteq H$;
\item $\T\pow\models \imath(\sigma)$.
\end{enumerate}
\end{theorem}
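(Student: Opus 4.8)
The plan is to deduce this from the model completeness of $T^{\nl}\pow$ (Corollary~\ref{cor:TTnlsmall}) together with the embedding machinery of Proposition~\ref{prop:bigembedding}, closely following the structure of the transfer theorem~\cite[Corollary~6]{ADH24}. First I would observe that the equivalence $(1)\Leftrightarrow(2)$ is immediate once we know that any two maximal Hardy field extensions $M_1, M_2 \supseteq H$ satisfy the same $\cL\pow(H)$-sentences. By Corollary~\ref{cor:HardyTnlsmall}, each such $M_i$ is a model of $T^{\nl}\dsmall$, and each contains $H$ as an $\Hp$-subfield (using that $H$ is real closed, closed under powers, and contains $\R$, so Lemma~\ref{lem:Hpsubfield} applies and the inclusion $H\hookrightarrow M_i$ is an $\Hp$-field embedding). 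Since $H$ need not be $\upo$-free, I would first pass to an $\upo$-free $\Hp$-subfield: by Corollary~\ref{cor:upofreeext}, $H$ has an $\upo$-free $\Hp$-field extension with constant field $\R$, but in fact I want this extension to sit inside $M_i$. For this I would invoke the embedding properties in Lemmas~\ref{lem:endelem}, \ref{lem:groundedtoomegafree} (and the observation that each $M_i$ is Liouville closed), exactly as in the first two paragraphs of the proof of Proposition~\ref{prop:bigembedding}, to replace $H$ by an $\upo$-free newtonian Liouville closed $\Hp$-subfield $H^* \subseteq M_i$ — but one has to take $H^*$ once, abstractly, and then embed it into both $M_1$ and $M_2$ over $H$. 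Concretely: let $H^*$ be a Newton-Liouville closure of the $\upo$-free extension from Corollary~\ref{cor:upofreeext}, with constant field $\R$, equipped with its $\Hp$-structure from Proposition~\ref{prop:nlclosure}; then $H^* \models T^{\upo}\pow$ (indeed $T^{\nl}\pow$), and by the universal properties in Proposition~\ref{prop:nlclosure} together with the embedding lemmas, $H^*$ embeds over $H$ into every maximal Hardy field extension of $H$ and into $\T\pow$.

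Next, for $(2)\Leftrightarrow(3)$, I would fix a maximal Hardy field $M \supseteq H$. We have $H^* \hookrightarrow M$ and $H^* \hookrightarrow \T\pow$ over $H$ (extending $\imath$), both as $\Hp$-field embeddings, with $M, \T\pow \models T^{\nl}\dsmall$. Now pass to a common $|M|^+ + |\T|^+$-saturated elementary extension $N \models T^{\nl}\dsmall$ whose constant field and underlying ordered set are sufficiently saturated (take $N$ a monster model). Both $M$ and $\T\pow$ elementarily embed into $N$; composing, we get two copies of $H^*$ inside $N$. To compare $M$ and $\T\pow$ over $H^*$, I would apply Proposition~\ref{prop:bigembedding} (with $E = H^*$, $K = M$ or $K = \T\pow$, and $L = N$, noting $C_{H^*} = \R = C_M = C_{\T}$, so the hypothesis $C_E = C$ is satisfied and we don't even need Corollary~\ref{cor:bigembedding2}): the given $\Hp$-field embedding $H^* \hookrightarrow N$ extends to $\Hp$-field embeddings $M \hookrightarrow N$ and $\T\pow \hookrightarrow N$, each agreeing with $\imath$ on $H$. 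Since $T^{\nl}\pow$ is model complete (Corollary~\ref{cor:TTnlsmall}), these embeddings are $\cL\pow$-elementary, so $M$ and $\T\pow$ each elementarily embed into $N$ over $H$ (identifying $H$ with $\imath(H)$). Therefore $M \models \sigma \iff N \models \iota(\sigma) \iff \T\pow \models \imath(\sigma)$, where in the middle term $H$ is identified with its common image. The same argument applied with two maximal Hardy fields $M_1, M_2$ in place of $M$ and $\T\pow$ gives $(1)\Leftrightarrow(2)$.

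The main obstacle I anticipate is the passage from $H$ to the $\upo$-free newtonian Liouville closed $\Hp$-subfield $H^*$ in a way that embeds uniformly over $H$ into \emph{every} maximal Hardy field extension and into $\T\pow$. The subtlety is that $H$ itself may have a gap (it need not be $\upo$-free), and the extension constructed in Corollary~\ref{cor:upofreeext} is built abstractly; one must check, using the specific universal/embedding properties recorded in Lemmas~\ref{lem:endelem}, \ref{lem:groundedtoomegafree}, \ref{lem:gapgoesup}, \ref{lem:gapgoesdown} and Propositions~\ref{prop:nlclosure}, \ref{prop:newtonization}, that each stage of the tower embeds over $H$ into any $\Hp$-field extension of $H$ that is Liouville closed (in particular into any maximal Hardy field, by Fact~\ref{fact:LiouHardy} and Corollary~\ref{cor:HardyTnlsmall}, and into $\T\pow$). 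This is essentially bookkeeping already carried out inside the proof of Proposition~\ref{prop:bigembedding}, so I would either cite that proof or isolate the needed statement as: \emph{every $\upo$-free $\Hp$-field $E$ has a Newton-Liouville closure $E^{\nl}$ with the same constants that embeds over $E$ into any Liouville closed $\Hp$-field extension of $E$}, which follows from Proposition~\ref{prop:nlclosure}. Everything else is a routine back-and-forth/monster-model argument once model completeness is in hand.
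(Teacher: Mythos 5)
Your overall strategy (reduce to a common $\upo$-free newtonian Liouville closed $\Hp$-subfield and then invoke model completeness of $T^{\nl}\pow$) is the right shape, but the step you flag as ``essentially bookkeeping'' is a genuine gap, and it is precisely the point where the paper's proof does something different. You want an $\upo$-free $\Hp$-field $H^*\supseteq H$ that embeds over $H$ into \emph{every} maximal Hardy field extension of $H$ and into $\T\pow$. Corollary~\ref{cor:upofreeext} gives no such universal property: its construction passes through Lemma~\ref{lem:makegap} (which has no embedding property at all) and through the gap-filling Lemmas~\ref{lem:gapgoesup} and~\ref{lem:gapgoesdown}, which produce two \emph{non-isomorphic} grounded extensions (adjoining an integral of $s$ that is infinitesimal, resp.\ infinite). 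If $H$ had a genuine gap $vs$, which of the two extensions embeds into a given Liouville closed target depends on whether the integral of $s$ in that target is infinitesimal or infinite, and nothing you cite guarantees that $M_1$, $M_2$, and $\T\pow$ all make the same choice. Proposition~\ref{prop:bigembedding} cannot rescue this, since its hypothesis is that $E$ is already $\upo$-free (hence has asymptotic integration and no gap); the bookkeeping you defer to its proof simply is not there.

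The paper avoids the issue by exploiting that $H$ embeds into $\T\pow$: it proves that such an $H$ is either equal to $\R$, or grounded, or $\upo$-free. The key case is $H\supsetneq\R$ ungrounded, where one takes $f\in H^>$ with $vf<0$, forms the sequence $\gamma_0=vf$, $\gamma_{n+1}'=\gamma_n^\dagger$ in $\Gamma_H^<$, and uses that the iterated logarithms $\log_n\imath(f)$ are coinitial among the positive infinite transseries to conclude that $\imath(\Gamma_H^<)$ is cofinal in $\Gamma_\T^<$; Fact~\ref{fact:dalgupofree} then gives $\upo$-freeness of $H$. The grounded case is handled by Lemma~\ref{lem:groundedtoomegafree}, whose extension genuinely does embed over $H$ into any Liouville closed $\Hp$-field extension, and the case $H=\R$ (where $0$ is a gap) is handled by Corollary~\ref{cor:bigembedding}, using that \emph{small derivation} on both sides forces the gap at $0$ to be filled the same way. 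Your proposal needs this trichotomy (or the much heavier $\Upl\Upo$-field machinery the paper alludes to for general Hardy fields) to close; without it, the uniform embedding of $H^*$ is unjustified.
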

\begin{proof}
We fix a maximal Hardy field $M \supseteq H$, and we view $M$ as a model of $T^{\nl}\dsmall$. We need to show that $M\models \sigma$ if and only if $\T\pow\models \imath(\sigma)$. If $H$ is $\upo$-free, then this follows from Theorem~\ref{thm:mainthm}. If $H$ is grounded, then we may use Lemma~\ref{lem:groundedtoomegafree} to replace $H$ by an $\upo$-free $\Hp$-field $H\wpow$, thereby reducing to the case that $H$ is $\upo$-free. If $H = \R$, then this follows from Corollary~\ref{cor:bigembedding}:\ take an $|M|^+$-saturated elementary $\Hp$-field extension $L \supseteq \T\pow$ and extend $\imath\colon \R\to L$ to an $\Hp$-field embedding $M\to L$. This embedding is elementary, since $T^{\nl}\pow$ is model complete, so $M$ and $L$ (and therefore $M$ and $\T\pow$) have the same $\cL\pow(H)$-theory.

Suppose now that $H \supsetneq \R$ and that $H$ is ungrounded. We will show that $H$ must be $\upo$-free (so the above cases are exhaustive). As $\T\pow$ is $\upo$-free, it is enough by Fact~\ref{fact:dalgupofree} to show that $\imath(\Gamma_H^<)$ is cofinal in $\Gamma_{\T}^<$ (where we write $\imath\colon \Gamma_H\to \Gamma_{\T}$ for the map induced by $\imath\colon H\to \T\pow$). Let $f \in H^>$ with $vf<0$, and put $g \coloneqq \imath(f)$.
Using the fact that $H$ is ungrounded, we define a sequence $(\gamma_n)_{n\in \N}$ from $\Gamma_H^<$ as follows:
\[
\gamma_0 \coloneqq vf,\qquad \gamma_{n+1}' = \gamma_n^\dagger
\]
Then $v(\log_ng) = \imath(\gamma_n)$ for each $n$, where $\log_ng$ is the $n$-fold iterated logarithm of $g$. It is well-known that $(\log_ng)$ is coinitial among the positive infinite elements of $\T$, so $(\imath(\gamma_n))_{n \in \N}$ is cofinal in $\Gamma_{\T}^<$.
\end{proof}

Our proof of Theorem~\ref{thm:transfer} relies on the fact that any $H$-subfield of $\T$ that properly contains $\R$ is either grounded or $\upo$-free, a dichotomy that does not apply to all Hardy fields. 
Given a real closed Hardy field $H\supseteq \R$ that is closed under powers, one may ask whether (1) and (2) in Theorem~\ref{thm:transfer} are always equivalent, even when $H$ doesn't embed into $\T\pow$. That is, do all maximal Hardy field extensions of $H$ have the same $\cL\pow(H)$-theory? With a bit of care, one can show that this is indeed the case, arguing along the lines of~\cite[Theorem 12.3]{ADH24}. This uses the formalism of $\Upl\Upo$-fields from~\cite[Chapter 16]{ADH17} and the key fact that Hardy fields admit canonical $\Upl\Upo$-expansions; see~\cite[Lemma 12.1]{ADH24}. Explicitly, one needs to show that the various $\Upl\Upo$-field extensions used in proving ~\cite[Proposition 16.4.1]{ADH17} can be carried out with a constant power map around.% In the interest of space, we leave the details to the interested reader.

%----------------------------------------%
\subsection{Surreal numbers}
The field $\No$ of surreal numbers is a real closed field extension of $\R$ introduced by Conway~\cite{Co76}. The surreals may be defined in several equivalent ways, but for our purposes, we define a surreal number to be a map $a\colon\gamma\to \{-,+\}$, where $\gamma$ is an ordinal. For such $a$, the ordinal $\gamma$ is called the \textbf{length of $a$} (sometimes called the \emph{tree-rank} or \emph{birthday} of $a$, depending on which definition of the surreals is being used). The collection of all surreal numbers is a proper class, and each ordinal $\gamma$ is identified with the surreal number of length $\gamma$ that takes constant value $+$. For each $\gamma$, we let $\No(\gamma)$ be the set of surreal numbers of length $<\gamma$.

The surreals admit an exponential, defined by Kruskal and Gonshor~\cite{Go86}, and with this exponential, $\No$ is an elementary extension of $\R_{\exp}$~\cite{DE01}. More recently, Berarducci and Mantova equipped the surreals with a derivation that makes $\No$ a Liouville closed $H$-field with constant field $\R$ and satisfies the identity $\der \exp(a) = \exp(a)\der a$. We let $\No\pow$ denote the expansion of $\No$ by the Berarducci--Mantova derivation and the constant power map $(f,r)\mapsto \exp(r\log f)\colon \No^>\times \R\to\No^>$, so $\No\pow\models T\pow$ by Proposition~\ref{prop:exptopow}. The $H$-field $\No$ was shown to be $\upo$-free and newtonian in~\cite{ADH19}, so $\No\pow$ is even a model of $T^{\nl}\dsmall$.

Let $\kappa$ be a regular uncountable cardinal. Then the set $\No(\kappa)$ is an $\upo$-free newtonian Liouville closed $H$-subfield of $\No$ containing $\R$~\cite[Corollary 4.6]{ADH19}. Lemma~\ref{lem:Hpsubfield} gives that $\No(\kappa)$ is an elementary $\cL\pow$-substructure of $\No\pow$. The next proposition, an analog of~\cite[Theorem 3]{ADH19}, shows that the surreal numbers are \emph{universal} among models of $T\pow$ with small derivation and archimedean constant field.

\begin{proposition}\label{prop:Noembedding}
Let $K$ be a set-sized $\Hp$-field with small derivation and archimedean constant field. Then $K$ admits an $\Hp$-field embedding into $\No\pow$.
\end{proposition}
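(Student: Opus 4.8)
The plan is to reduce the statement to Corollary~\ref{cor:bigembedding}: first replace $K$ by a differentially algebraic $\Hp$-field enlargement to which that corollary applies, and then embed this enlargement into a sufficiently large initial segment $\No(\kappa)$ of the surreals.

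First I would enlarge $K$. Using Corollary~\ref{cor:upofreeext}, extend $K$ to an $\upo$-free $\Hp$-field with the same constant field $C$; then, using Proposition~\ref{prop:nlclosure}, pass to a Newton-Liouville closure $\hat K$ of that extension. Thus $\hat K$ is a set-sized $\upo$-free newtonian Liouville closed $\Hp$-field with $C_{\hat K}=C$, still archimedean, and --- this is the point requiring care --- it still has small derivation, since the constructions used in Section~\ref{sec:HPext} (gap-creating extensions, spherically complete immediate extensions, and the $\upo$-free and Newton-Liouville closures) preserve small derivation, exactly as their $H$-field counterparts do in~\cite{ADH17,ADH19}. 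Since $\No(\kappa)$ is an $\cL\pow$-substructure of $\No\pow$ for every regular uncountable $\kappa$, it suffices to embed $\hat K$ into $\No(\kappa)$ for a suitable $\kappa$.

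Next I would set up the target. Fix a regular uncountable cardinal $\kappa>|\hat K|$ and put $L\coloneqq\No(\kappa)$. By the remarks preceding the proposition, $L$ is an $\upo$-free newtonian Liouville closed $H$-field with constant field $\R$, and by Lemma~\ref{lem:Hpsubfield} it is an elementary $\cL\pow$-substructure of $\No\pow$; in particular it is an $\Hp$-field with small derivation and with real exponential constant field $\R$. For $\kappa$ regular, $L$ is moreover $|\hat K|^+$-saturated as an ordered set and $\operatorname{cf}(\Gamma_L^<)>|\Gamma_{\hat K}|$; these are standard facts about $\No(\kappa)$ used in~\cite{ADH19}. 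So $\hat K$ and $L$ satisfy all hypotheses of Proposition~\ref{prop:bigembedding} except that their constant fields differ, which is precisely the situation of Corollary~\ref{cor:bigembedding}.

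Finally I would supply the constant-field embedding. Since $C$ is archimedean there is a unique ordered field embedding $\imath_0\colon C\to\R$, and it is automatically an ordered exponential field embedding for the induced exponential $c\mapsto 2^c$ on $C$ and the real exponential on $\R$: by (C1) we have $2^1=2$, hence $2^q$ is the real number $2^q$ for all $q\in\Q$, so for $c\in C$ and $q\in\Q$ one has $q<\imath_0(c)\iff q<c\iff 2^q<2^c\iff 2^q<\imath_0(2^c)$, whence $\imath_0(c)$ and $\log_2\imath_0(2^c)$ realize the same rational cut in $\R$ and coincide; thus $\imath_0(2^c)=2^{\imath_0(c)}$. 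As $\hat K$ and $L$ both have small derivation, Corollary~\ref{cor:bigembedding} extends $\imath_0$ to an $\Hp$-field embedding $\hat K\to L\subseteq\No\pow$, and its restriction to $K$ is the desired embedding. The main obstacle is the bookkeeping in the second step: one must verify that the closure operations of Section~\ref{sec:HPext} keep the derivation small, so that $\hat K$ can indeed land inside $\No$; everything else is a direct appeal to the embedding machinery together with the short cut argument about archimedean exponential fields.
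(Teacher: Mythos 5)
Your overall route is the paper's: pass to an $\upo$-free newtonian Liouville closed $\Hp$-field enlargement with the same constant field, embed the archimedean constant field into $\R$ compatibly with the induced exponentials, and apply Corollary~\ref{cor:bigembedding} with $L=\No(\kappa)$ for $\kappa$ large, using the saturation of $\No(\kappa)$ from~\cite{ADH19}. Your direct cut argument for the constant-field step is a legitimate replacement for the paper's appeal to the Laskowski--Steinhorn theorem, and it is correct: since $2^1=2$ and the induced exponential is an ordered group isomorphism, the unique ordered field embedding $C\to\R$ automatically commutes with $c\mapsto 2^c$.

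The genuine gap is the step you yourself flag as ``the point requiring care'' and then dispose of by assertion: the claim that the closure operations of Section~\ref{sec:HPext} preserve small derivation is false as a blanket statement. Small derivation of $K$ means exactly that either $0\in\Psi^{\downarrow}$ or $0$ is a gap in $K$. In the first case every $\Hp$-field extension $M$ of $K$ does have small derivation, since $\Psi\subseteq\Psi_M<(\Gamma_M^>)'$ forces $0\notin(\Gamma_M^>)'$. But in the second case the two ways of removing the gap are a real fork: adjoining $a\prec 1$ with $a'=1$ as in Lemma~\ref{lem:gapgoesup} puts $0=(va)'\in(\Gamma_{K(a)\pow}^>)'$, i.e.\ produces \emph{large} derivation, and such an extension can never embed into $\No\pow$. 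Corollary~\ref{cor:upofreeext}, as stated, does not promise to make the right choice. So before invoking Corollary~\ref{cor:upofreeext} and Proposition~\ref{prop:nlclosure} you must, in the case that $0$ is a gap in $K$, first apply Lemma~\ref{lem:gapgoesdown} with $s=1$ to adjoin $b\succ 1$ with $b'=1$; this puts $0\in(\Gamma_{K(b)\pow}^<)'\subseteq\Psi_{K(b)\pow}^{\downarrow}$, after which every further $\Hp$-field extension automatically has small derivation and the rest of your argument goes through. This is exactly the maneuver with which the paper's proof begins.
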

\begin{proof}
It suffices to show that some $\Hp$-field extension of $K$ admits an $\Hp$-field embedding into $\No\pow$. Since $K$ has small derivation, either $0 \in \Psi^\downarrow$ or $0$ is a gap in $K$. In the case that $0$ is a gap, we can use Lemma~\ref{lem:gapgoesdown} with $s = 1$ to extend $K$ to a grounded $\Hp$-field $M$ with $0 \in (\Gamma_M^<)' \subseteq \Psi_M^\downarrow$ and $C_M = C$. Replacing $K$ with $M$, we may assume that $0 \in \Psi^\downarrow$, so any $\Hp$-field extension of $K$ has small derivation. By Corollary~\ref{cor:upofreeext} and Proposition~\ref{prop:nlclosure}, $K$ has a Liouville closed $\upo$-free newtonian $\Hp$-field extension with the same constant field as $K$, so we may assume that $K \models T^{\nl}\dsmall$. Let $\kappa:= |K|^+$. Then $\No(\kappa)\models T^{\nl}\dsmall$ and by~\cite[Lemma 5.3]{ADH19}, the underlying ordered sets of $\No(\kappa)$ and $\Gamma_{\No(\kappa)}$ are $\kappa$-saturated. As $C$ with its induced exponential is an archimedean model of $\Th(\R,x\mapsto 2^x)$, we get an elementary exponential field embedding $(C,c\mapsto 2^c)\to (\R,x\mapsto 2^x)$ by the Laskowski--Steinhorn theorem~\cite{LS95}. By Corollary~\ref{cor:bigembedding} with $\No(\kappa)$ in place of $L$, this embedding extends to an $\Hp$-field embedding $K\to\No(\kappa)$.
\end{proof}

\begin{corollary}
Every real closed Hardy field $H\supseteq \R$ that is closed under powers admits an $\Hp$-field embedding into the surreal numbers over $\R$. 
\end{corollary}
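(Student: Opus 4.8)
The plan is to obtain this corollary as an essentially immediate consequence of Proposition~\ref{prop:Noembedding}. First I would observe that a real closed Hardy field $H\supseteq\R$ that is closed under powers (as an $H$-field) is, by Corollary~\ref{cor:HardyTnlsmall}, a model of $T\pow$ once equipped with the constant power map $(f,r)\mapsto f^r = \exp(r\log f)$. Since any Hardy field containing $\R$ has small derivation and constant field $\R$, and since the cardinality of any Hardy field is at most that of the continuum, $H$ is a set-sized $\Hp$-field with small derivation and archimedean constant field. Proposition~\ref{prop:Noembedding} then directly yields an $\Hp$-field embedding $H\to\No\pow$.

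It then remains only to check that such an embedding automatically fixes $\R$, so that it is an embedding \emph{over} $\R$. Indeed, its restriction to the constant field is an ordered field embedding $\R\to\R$, and the only such embedding is the identity: it fixes $\Q$, and an order-preserving map is determined on $\R$ by its values on a dense subset. (Concretely, tracing the proof of Proposition~\ref{prop:Noembedding}, the Laskowski--Steinhorn step produces an elementary exponential field embedding of the constant field of $H$---namely $\R$---into $\R$, which by the above is the identity; the subsequent application of Corollary~\ref{cor:bigembedding} then extends the identity on $\R$.) Hence the $\Hp$-field embedding $H\to\No\pow$ is over $\R$, as desired.

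I do not anticipate any real obstacle here: the substance is carried entirely by Proposition~\ref{prop:Noembedding} and Corollary~\ref{cor:HardyTnlsmall}, and the only points meriting a second look are bookkeeping---confirming that the constant power map with which $H$ is regarded as a model of $T\pow$ is the same one induced by its exponential (both arise via Proposition~\ref{prop:exptopow} from $\exp$), and recalling the standard facts that Hardy fields containing $\R$ have small derivation and are of size at most $2^{\aleph_0}$.
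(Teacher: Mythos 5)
Your proposal is correct and matches the paper's (implicit) argument: the corollary is stated as an immediate consequence of Proposition~\ref{prop:Noembedding}, with $H$ qualifying via Corollary~\ref{cor:HardyTnlsmall} together with the standard facts that Hardy fields containing $\R$ have small derivation, constant field $\R$, and cardinality at most that of the continuum. Your extra check that the embedding is over $\R$ (since the only ordered field embedding $\R\to\R$ is the identity) is a correct and worthwhile piece of bookkeeping that the paper leaves unstated.
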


There is a natural ordered exponential field embedding $\imath\colon \T\to \No$, which was shown to be an elementary $H$-field embedding in~\cite{ADH19}. As $\imath$ respects the constant power map, it is even an elementary $\Hp$-field embedding $\T\pow\to \No\pow$.

%\bibliographystyle{abbrv}
%\bibliography{../../Research/ElliotBiblio}

\begin{thebibliography}{10}

\bibitem{AD02}
M.~Aschenbrenner and L.~van~den Dries.
\newblock {$H$}-fields and their {L}iouville extensions.
\newblock {\em Math. Z.}, 242(3):543--588, 2002.

\bibitem{AD05}
M.~Aschenbrenner and L.~van~den Dries.
\newblock Liouville closed ${H}$-fields.
\newblock {\em J. Pure Appl. Algebra}, 197(1):83--139, 2005.

\bibitem{ADH17}
M.~Aschenbrenner, L.~van~den Dries, and J.~van~der Hoeven.
\newblock {\em Asymptotic Differential Algebra and Model Theory of
  Transseries}.
\newblock Number 195 in Annals of Mathematics Studies. Princeton University
  Press, 2017.

\bibitem{ADH19}
M.~Aschenbrenner, L.~van~den Dries, and J.~van~der Hoeven.
\newblock The surreal numbers as a universal {$H$}-field.
\newblock {\em J. Eur. Math. Soc. (JEMS)}, 21(4):1179--1199, 2019.

\bibitem{ADH24}
M.~Aschenbrenner, L.~van~den Dries, and J.~van~der Hoeven.
\newblock The theory of maximal {H}ardy fields.
\newblock 2024.
\newblock Preprint, \url{https://arxiv.org/abs/2408.05232}.

\bibitem{Be53}
R.~Bellman.
\newblock {\em Stability theory of differential equations}.
\newblock McGraw-Hill Book Co., Inc., New York-Toronto-London, 1953.

\bibitem{Bo76}
N.~Bourbaki.
\newblock {\em Fonctions d'une Variable R\'{e}elle, \emph{Chapitre V},
  \'{E}tude Locale des Fonctions}.
\newblock Hermann, Paris, 1976.

\bibitem{Co76}
J.~H. Conway.
\newblock {\em On Numbers and Games}.
\newblock L.M.S. monographs. Academic Press, 1976.

\bibitem{DG87}
B.~Dahn and P.~G\"{o}ring.
\newblock Notes on exponential-logarithmic terms.
\newblock {\em Fund. Math.}, 127(1):45--50, 1987.

\bibitem{DE01}
L.~van~den Dries and P.~Ehrlich.
\newblock Fields of surreal numbers and exponentiation.
\newblock {\em Fund. Math.}, 167(2):173--188, 2001.

\bibitem{DMM97}
L.~van~den Dries, A.~Macintyre, and D.~Marker.
\newblock Logarithmic-exponential power series.
\newblock {\em J. Lond. Math. Soc. (2)}, 56(3):417--434, 1997.

\bibitem{Ec92}
J.~\'{E}calle.
\newblock {\em Introduction aux Fonctions Analysables et Preuve Constructive de
  la Conjecture de {D}ulac}.
\newblock Actualit\'{e}s Math\'{e}matiques. Hermann, Paris, 1992.

\bibitem{FK21}
A.~Fornasiero and E.~Kaplan.
\newblock Generic derivations on o-minimal structures.
\newblock {\em J. Math. Log.}, 21(2):Paper No. 2150007, 45, 2021.

\bibitem{Go86}
H.~Gonshor.
\newblock {\em An introduction to the theory of surreal numbers}, volume 110 of
  {\em London Mathematical Society Lecture Note Series}.
\newblock Cambridge University Press, Cambridge, 1986.

\bibitem{vdH06}
J.~van~der Hoeven.
\newblock {\em Transseries and real differential algebra}, volume 1888 of {\em
  Lecture Notes in Mathematics}.
\newblock Springer-Verlag, Berlin, 2006.

\bibitem{Ka21}
E.~Kaplan.
\newblock {\em Derivations on o-minimal fields}.
\newblock PhD thesis, University of Illinois at Urbana-Champaign, 2021.

\bibitem{Ka22}
E.~Kaplan.
\newblock ${T}$-convex ${T}$-differential fields and their immediate
  extensions.
\newblock {\em Pacific J. Math.}, 320(2):261--298, 2022.

\bibitem{Ka23}
E.~Kaplan.
\newblock Liouville closed ${H}_{T}$-fields.
\newblock {\em J. Algebra}, 628:265--327, 2023.

\bibitem{LS95}
M.~C. Laskowski and C.~Steinhorn.
\newblock On o-minimal expansions of {A}rchimedean ordered groups.
\newblock {\em J. Symbolic Logic}, 60(3):817--831, 1995.

\bibitem{MW96}
A.~Macintyre and A.~J. Wilkie.
\newblock On the decidability of the real exponential field.
\newblock In {\em Kreiseliana}, pages 441--467. A K Peters, Wellesley, MA,
  1996.

\bibitem{Mi94B}
C.~Miller.
\newblock Expansions of the real field with power functions.
\newblock {\em Ann. Pure Appl. Logic}, 68(1):79--94, 1994.

\bibitem{PC19}
N.~Pynn-Coates.
\newblock Newtonian valued differential fields with arbitrary value group.
\newblock {\em Comm. Algebra}, 47(7):2766--2776, 2019.

\bibitem{RSS24}
J.-P. Rolin, T.~Servi, and P.~Speissegger.
\newblock Transasymptotic expansions of o-minimal germs.
\newblock 2024.
\newblock Preprint, \url{https://arxiv.org/abs/2402.12073}.

\bibitem{Ro81}
M.~Rosenlicht.
\newblock On the value group of a differential valuation. {II}.
\newblock {\em Amer. J. Math.}, 103(5):977--996, 1981.

\bibitem{Wi96}
A.~J. Wilkie.
\newblock Model completeness results for expansions of the ordered field of
  real numbers by restricted {P}faffian functions and the exponential function.
\newblock {\em J. Amer. Math. Soc.}, 9:1051--1094, 1996.

\end{thebibliography}

\end{document}